%
%
%
%

\documentclass[a4paper,11pt]{amsart}

\usepackage{latexsym,amsmath,amscd,amssymb,amsthm,epsfig,times}
\usepackage{graphicx}

\usepackage{color}
\definecolor{maus}{rgb}{0.2,0.2,0.2}

\setlength{\headheight}{6.15pt}

\newtheorem{thm}{Theorem}[section]
\newtheorem{prop}[thm]{Proposition}
\newtheorem{lem}[thm]{Lemma}
\newtheorem{cor}[thm]{Corollary}

\newtheorem*{thm*}{Theorem}

\newtheorem*{sympfillthm*}{Theorerm \ref{t:no polygons if filled}}
\newtheorem*{ballapproxthm*}{Theorem \ref{t:tight on balls}}
\newtheorem*{nostarsimplytbthm*}{Theorem \ref{t:no stars imply tb}}

\theoremstyle{definition}
\newtheorem{defn}[thm]{Definition}

\newtheorem{rem}[thm]{Remark}
\newtheorem*{ex*}{Example}

\newcommand{\thmref}[1]{Theorem~\ref{#1}}
\newcommand{\lemref}[1]{Lemma~\ref{#1}}
\newcommand{\propref}[1]{Proposition~\ref{#1}}
\newcommand{\defref}[1]{Definition~\ref{#1}}

\newcommand{\remref}[1]{Remark~\ref{#1}}

\newcommand{\figref}[1]{Figure~\ref{#1}}
\newcommand{\secref}[1]{Section~\ref{#1}}

\newcommand{\lra}{\longrightarrow}

\newcommand{\Alpha}{\mathcal{A}}

\newcommand{\II}{\mathcal{ I}}

\newcommand{\ZZ}{\mathcal{ Z}}
\newcommand{\LL}{\mathcal{ L}}

\newcommand{\R}{\mathbb{ R}}

\newcommand{\Z}{\mathbb{ Z}}
\newcommand{\selfl}{\mathrm{sl}}
\newcommand{\N}{\mathbb{N}}

\newcommand{\tsig}{\widetilde{\Sigma}}

\newcommand{\mlabel}[1]{
                        \label{#1}}

\newcommand{\eing}[1]{\big|_{#1}}
\newcommand{\x}{\times}

\newcommand{\ww}{\wedge}

\newcommand{\eps}{\varepsilon}

\DeclareMathAccent{\ring}{\mathalpha}{operators}{"17}

\author{T. Vogel}

\title{Rigidity versus flexibility of tight confoliations}

\address{Mathematisches~Institut,~Ludwig-Maximilians-Universit\"at M\"unchen, Theresienstr.~39, 80333 M\"unchen, Germany}

\email{tvogel@math.lmu.de}

\date{\today; MSC 2000: 57R17, 57R30}

\setcounter{tocdepth}{1}

\begin{document}


\begin{abstract}
In \cite{confol} Y.~Eliashberg and W.~Thurston gave a definition of tight confoliations. We give an example of a tight confoliation $\xi$ on $T^3$ violating the Thurston-Bennequin inequalities. This answers a question from \cite{confol} negatively. Although the tightness of a confoliation does not imply the Thurston-Bennequin inequalities, it is still possible to prove restrictions on homotopy classes of plane fields which contain tight confoliations. 

The failure of the Thurston-Bennequin inequalities for tight confoliations is due to the presence of overtwisted stars. Overtwisted stars are particular configurations of Legendrian curves which bound a disc with finitely many punctures on the boundary. We prove that the Thurston-Bennequin inequalities hold for tight confoliations without overtwisted stars and that symplectically fillable confoliations do not admit overtwisted stars.
\end{abstract}

\maketitle

\tableofcontents


\section{Introduction}\mlabel{s:intro}


In \cite{confol} Eliashberg and Thurston explore the relationship between foliations and contact structures on oriented $3$-manifolds. Foliations respectively contact structures are locally defined by $1$-forms $\alpha$ such that $\alpha\ww d\alpha\equiv 0$ respectively $\alpha\ww d\alpha>0$ (more precisely this defines {\em positive} contact structures).

One of the main results of \cite{confol} is the following remarkable theorem.
\begin{thm}[Theorem 2.4.1 in \cite{confol}]\mlabel{t:El-Th approx}
Suppose that a $C^2$-foliation $\xi$ on a closed oriented $3$-manifold is different from the product foliation of $S^1\times S^2$ by spheres. Then $\xi$ can be $C^0$-approximated by a positive contact structure. 
\end{thm}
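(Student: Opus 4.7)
The plan is to implement the Eliashberg--Thurston strategy of interpolating between a foliation and a contact structure through the class of confoliations (plane fields defined by $\alpha$ with $\alpha\ww d\alpha\geq 0$), arguing in two stages: first make the foliation contact on a small open set using non-trivial holonomy, then propagate the contact condition to all of $M$ while keeping the perturbation $C^0$-small.

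The first stage rests on a dichotomy coming from the structure theory of codimension-one foliations. Using Novikov's theorem together with the hypothesis that $\xi$ is not the sphere foliation of $S^1\times S^2$, I would locate a closed curve $\cc$ transverse to $\xi$ that witnesses non-trivial linear holonomy. In the presence of a Reeb component one takes $\cc$ to be a meridional transversal on the torus boundary, where the holonomy is a contraction on one side. If there is no Reeb component, one searches among closed leaves with non-trivial holonomy; if the foliation is minimal (no closed leaves), one first performs a preliminary $C^0$-small perturbation inside the class of foliations (a ``turbulization'' or leaf-absorption modification) to create such holonomy. Writing $\xi=\ker\alpha$, I then perturb near $\cc$ to $\tilde\alpha=\alpha+\eps\beta$ where $\beta$ is supported in a tubular neighborhood $N$ of $\cc$ and is tuned to the direction of the holonomy; the contraction property guarantees $\tilde\alpha\ww d\tilde\alpha>0$ on $N$, producing a confoliation which is genuinely contact there and agrees with $\xi$ outside a slightly larger neighborhood.

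The second, and much harder, stage is the propagation. The key idea is that the leaves of $\xi$ (or their closures) must meet $N$, so the non-negativity of $\alpha\ww d\alpha$ combined with the strict positivity on $N$ can be ``diffused'' along the foliation. Concretely, I would use a partition of unity adapted to a flow box decomposition of $M\setminus N$, add small perturbations to $\alpha$ in each flow box whose exterior derivative contributes to the positive direction, and scale these perturbations so that they decay away from $N$. The delicate point, and the main obstacle, is the simultaneous control of two competing quantities: the $C^0$-size of the global perturbation (which must be arbitrarily small) and the uniform positivity of $\tilde\alpha\ww d\tilde\alpha$ (which must be strict everywhere). This is especially subtle where the return time of leaves to $N$ is large or where holonomy pseudogroup orbits are non-uniform, and it is here that the $C^2$-regularity of the foliation and a quantitative use of the hyperbolicity of the holonomy at $\cc$ enter crucially to guarantee that the positivity created locally survives after spreading over all of $M$.
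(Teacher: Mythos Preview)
The paper does not give its own proof of this theorem; it is quoted from \cite{confol} with only a one-line description of the strategy. That said, the ingredients of the Eliashberg--Thurston argument appear later in the paper (\lemref{l:reminder on holonomy approx}, \propref{p:sacksteder}, and the references to Lemma~2.8.2 of \cite{confol}), and against these your two-stage outline has the right overall shape but real errors in each stage.

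In Stage~1 you speak of a closed curve $\cc$ \emph{transverse} to $\xi$ carrying non-trivial holonomy; this is backwards. Holonomy is computed along a closed Legendrian curve (tangent to a leaf), acting on a short transversal --- the meridian on the torus leaf of a Reeb component lies in the leaf, it is not a transversal. Your case analysis for locating holonomy is also incomplete: the actual argument invokes Sacksteder's theorem (cf.~\propref{p:sacksteder}) to find linear holonomy inside any exceptional minimal set, and handles foliations without holonomy via Tischler's theorem (they fiber over $S^1$ and are perturbed explicitly), not by an ad hoc turbulization within the class of foliations. Note too that \emph{linear} holonomy is stronger than needed; \lemref{l:reminder on holonomy approx} shows that sometimes-attractive holonomy already suffices for the local contact modification. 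In Stage~2, the propagation is not carried out by partitions of unity and decaying perturbations on a flow-box decomposition --- precisely the tension you flag between $C^0$-smallness and uniform positivity makes that route intractable. Instead one extends the contact region geometrically along Legendrian arcs that reach from the foliated part into $H(\xi)$ (Lemma~2.8.2 of \cite{confol}, used repeatedly in \secref{s:symp fill ot star}), exploiting the monotonicity interpretation of the confoliation condition given in \secref{ss:confolmeaning}; no global analytic estimate is required.
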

In the main part of the proof of this theorem a given foliation on $M$ is modified so that the resulting plane field is somewhere integrable while it is a positive contact structure on other parts of $M$. This motivates the following definition.  
\begin{defn}
A {\em positive confoliation} on $M$ is a $C^2$-smooth plane field on a $3$-manifold $M$ which is locally defined by a $1$-form $\alpha$ such that $\alpha\ww d\alpha\ge 0$. We denote the region where $\xi$ is a contact structure by $H(\xi)$. 
\end{defn}
\thmref{t:El-Th approx} remains true when foliations are replaced by confoliations. Like foliations and contact structures the definition of confoliations can be generalized to higher dimensions (cf. \cite{AlW, confol}) but in this article we are only concerned with dimension $3$. All plane fields appearing in this article will be oriented, in particular these plane fields have an Euler class.  

In the last chapter of \cite{confol} Eliashberg and Thurston discuss several properties of foliations (tautness, absence of Reeb components) and contact structures (symplectic fillability, tightness) and what can be said about a contact structure approximating a taut or Reebless foliation. For example they establish the following theorem.
\begin{thm*}[Eliashberg, Thurston, \cite{confol}]
If a contact structure $\xi$ on a closed $3$-mani\-fold is sufficiently close to a taut foliation in the $C^0$-topology, then $\xi$ is symplectically fillable and therefore tight.
\end{thm*}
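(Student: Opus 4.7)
The plan is to build a weak symplectic filling (or semi-filling) of $(M,\xi)$ and then invoke the Gromov--Eliashberg theorem that weak fillability excludes overtwisted discs. The structural input from the taut foliation $\FF$ that $\xi$ approximates is Sullivan's characterisation of tautness: there is a closed $2$-form $\omega$ on $M$ with $\omega|_{T\FF}>0$, equivalently $\omega\ww\beta>0$ for any $1$-form $\beta$ defining $\FF$.

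The key observation is that positivity of $\omega$ on an oriented plane field is an open condition in the $C^{0}$-topology on plane fields, so if $\xi$ is $C^{0}$-close enough to $T\FF$ one still has $\omega|_\xi>0$; equivalently, for any positive contact form $\alpha$ defining $\xi$ one has $\omega\ww\alpha>0$ pointwise. I would then consider $W=M\x[-\eps,\eps]$ with coordinate $t$ and projection $\pi\colon W\ra M$, and put
$$\Omega\;=\;\pi^{*}\omega+d(t\,\pi^{*}\alpha)\;=\;\pi^{*}\omega+dt\ww\pi^{*}\alpha+t\,\pi^{*}d\alpha.$$
Since $\omega\ww\omega$, $\omega\ww d\alpha$ and $d\alpha\ww d\alpha$ are pullbacks of $4$-forms from the $3$-manifold $M$ and hence vanish on $W$, a direct expansion gives
$$\Omega\ww\Omega\;=\;2\,dt\ww\bigl(\pi^{*}(\omega\ww\alpha)+t\,\pi^{*}(\alpha\ww d\alpha)\bigr).$$
The factor $\omega\ww\alpha$ is a positive volume form on $M$ by the previous step, and $\alpha\ww d\alpha$ is positive by the contact condition, so $\Omega$ is symplectic on all of $W$ once $\eps>0$ is small enough.

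To finish I would verify that $(W,\Omega)$ weakly fills $(M,\xi)$ on the positive boundary $M\x\{\eps\}$: the restriction of $\Omega$ there is $\omega+\eps\,d\alpha$, and $(\omega+\eps\,d\alpha)|_\xi=\omega|_\xi+\eps\,d\alpha|_\xi>0$. The other boundary component carries a contact structure close to $\FF$ with the opposite coorientation, and one either treats it as an additional weakly filled boundary component (a weak semi-filling, which still forces tightness of $\xi$) or caps it off using the global closed $2$-form $\omega$ to obtain a genuine weak filling of $(M,\xi)$. The main obstacle in the plan is not the symplectic construction, which is essentially the one-line computation above once $\omega$ is in hand, but the appeal to the Gromov--Eliashberg theorem that a weakly fillable contact structure cannot be overtwisted; that result rests on a delicate compactness and monodromy argument for the moduli space of pseudoholomorphic discs with boundary on a perturbation of an overtwisted disc pushed into $W$.
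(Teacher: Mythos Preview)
The paper does not give its own proof of this statement; it is quoted in the introduction as a theorem of Eliashberg and Thurston from \cite{confol} and used only as background, so there is no in-paper argument to compare against. Your proposal is essentially the original Eliashberg--Thurston proof: Sullivan's criterion for tautness supplies the closed $2$-form $\omega$ with $\omega|_{T\FF}>0$, positivity on an oriented plane field is a $C^0$-open condition so $\omega|_\xi>0$ for nearby $\xi$, and the explicit form $\Omega=\pi^*\omega+d(t\,\pi^*\alpha)$ on $M\times[-\eps,\eps]$ is symplectic by exactly the computation you wrote down. The resulting object is a weak semi-filling with both ends dominated, and the Gromov--Eliashberg holomorphic-disc argument applies to such semi-fillings without any need to cap off. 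So your outline is correct and coincides with the argument in \cite{confol}.
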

Another result in this direction is due to V.~Colin.
\begin{thm*}[Colin, \cite{colin pert}]
A $C^2$-foliation without Reeb components on a closed oriented $3$-manifold can be $C^0$-approximated by tight contact structures.
\end{thm*}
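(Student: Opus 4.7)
\medskip

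\textbf{Proof proposal.} The argument naturally splits into an approximation step and a tightness step. For approximation I would invoke \thmref{t:El-Th approx} directly to obtain a sequence of positive contact structures $\xi_n \ra \FF$ in the $C^0$-topology; the exceptional case of the product foliation on $S^1\x S^2$ is Reebless but must be handled separately, using the fact that $S^1\x S^2$ admits a tight contact structure (Eliashberg) that can be written down in explicit coordinates as a $C^0$-approximation of the product foliation. All of the content of the theorem lies in the tightness step.

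A first attempt at tightness would be a direct contradiction argument: suppose each $\xi_n$ is overtwisted and pick overtwisted discs $D_n\subset M$. Because $\xi_n$ is $C^0$-close to $\FF$, the Legendrian boundary $\del D_n$ is nearly tangent to $\FF$, yet $\xi_n\eing{D_n}$ twists by $2\pi$ relative to $D_n$; one would like to extract from $(\del D_n)$ a null-homotopic closed transversal of $\FF$ and apply Novikov's theorem to produce a Reeb component, contradicting hypothesis. The main obstacle here is compactness: overtwisted discs can wander through $M$ arbitrarily, so no subsequence need converge to anything useful.

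A more robust plan, which I would actually pursue, is to reduce to the taut case. If $\FF$ is Reebless but not taut, decompose $M$ along the torus leaves of $\FF$ into pieces $M_j$ on which $\FF\eing{M_j}$ is taut. On each piece the Eliashberg--Thurston symplectic filling theorem quoted in the introduction produces a tight contact structure $\xi_j$ that is $C^0$-close to $\FF\eing{M_j}$ and has prescribed behaviour near $\del M_j$. One then glues the $\xi_j$ back together across the separating tori using a gluing result for tight contact structures along incompressible pre-Lagrangian tori.

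The hard step is the gluing: the characteristic foliations of the chosen $\xi_j$ on either side of each torus leaf must be normalized (typically to linear foliations with matching slopes) and their dividing sets aligned so that the glued contact structure remains tight while staying globally $C^0$-close to $\FF$. This is where convex surface theory enters, and where the Reebless hypothesis really pays off: it is precisely Reeblessness that controls the holonomy of $\FF$ around its torus leaves enough to allow a compatible normal form on both sides. With the gluing accomplished, the resulting globally tight contact structure is the desired $C^0$-approximation.
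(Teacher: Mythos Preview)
The paper does not prove this theorem; it is quoted in the introduction as a known result of Colin \cite{colin pert} and is not revisited anywhere in the body of the paper. So there is no ``paper's own proof'' to compare against---your proposal should be measured against Colin's original argument, not against anything in this article.

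That said, your overall strategy is in the right spirit and is close to what Colin actually does: the heart of his paper is precisely a gluing theorem for tight contact structures along incompressible tori, and Reeblessness enters exactly because it forces all torus leaves to be incompressible (Novikov). Your identification of the gluing step as the hard part is accurate.

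There is, however, a genuine gap in your reduction step. You write ``decompose $M$ along the torus leaves of $\FF$ into pieces $M_j$ on which $\FF\eing{M_j}$ is taut.'' This is not available as stated: a Reebless non-taut foliation may have infinitely many torus leaves, and even after cutting along a finite subcollection the resulting pieces need not carry taut foliations. What one actually has (via Goodman's characterization of tautness) is a finite collection of torus leaves bounding \emph{dead-end components}; outside these the foliation is taut, but inside them it is not, and the Eliashberg--Thurston filling theorem does not apply there. Colin handles the dead-end pieces by a separate argument (they are essentially $T^2\times I$ with controlled boundary behaviour), not by reducing them to the taut case. Your sketch would need to supply this missing analysis of the dead-end components before the gluing step can be invoked.
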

In \cite{cont from fol} J.~Etnyre shows that every contact structure (tight or not) may be obtained by a perturbation of a foliation with Reeb components. This result is implicitly contained in \cite{mori}. Moreover, J.~Etnyre improved \thmref{t:El-Th approx} by showing that $C^k$-smooth foliations can be $C^k$-approximated by contact structures provided that $k\ge 2$ (a written account will hopefully be available in the near future, cf. \cite{etnyreCk}).  

In order to understand better the relationship between geometric properties of foliations and properties of the contact structures approximating them, it is interesting to ask about properties of confoliations which appear in the approximation process. For example the notion of symplectic fillability can be extended to confoliations in an obvious fashion. 

The question how to generalize the notion of tightness is more complicated. One aim of this article is to clarify this point. The following definition is suggested in \cite{confol}. 
\begin{defn} \mlabel{d:tight confol}
A confoliation $\xi$ on $M$ is {\em tight} if for every embedded disc $D\subset M$ such that
\begin{itemize}
\item[(i)] $\partial D$ is tangent to $\xi$,
\item[(ii)] $TD$ and $\xi$ are transverse along $\partial D$
\end{itemize}
there is an embedded disc $D'$ satisfying the following requirements
\begin{itemize}
\item[(1)] $\partial D=\partial D'$,
\item[(2)] $D'$ is everywhere tangent to $\xi$,
\item[(3)] $e(\xi)[D\cup D']=0$.
\end{itemize}
\end{defn}
This definition is motivated by the following facts. If $\xi$ is a contact structure, then there are no surfaces tangent to $\xi$ and \defref{d:tight confol} reduces to a definition of tightness for contact structures. In the case when $\xi$ is a foliation on a closed manifold \defref{d:tight confol} is equivalent to the absence Reeb components by a theorem of Novikov \cite{No}. Thus \defref{d:tight confol} interpolates between tight contact structures and Reebless foliations. The following theorem is also shown in \cite{confol} (we recall the definition of symplectic fillability in \secref{s:tightness of confol}).

\begin{thm}[Theorem 3.5.1. in \cite{confol}] \mlabel{t:fillable confol are tight}
Symplectically fillable confoliations are tight.
\end{thm}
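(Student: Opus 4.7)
The plan is to adapt the Eliashberg--Gromov pseudoholomorphic disc argument used to prove that fillable contact structures are tight, now deployed to construct, rather than contradict, a tangent disc $D'$. Let $(W^{4}, \omega)$ be a symplectic filling of $(M, \xi)$, so $\partial W = M$ and, in the sense appropriate for confoliations, $\omega|_\xi$ is nondegenerate. Choose an $\omega$-compatible almost complex structure $J$ on $W$ for which $\xi$ is $J$-invariant along $M$, so that $\xi = TM \cap J(TM)$ there.

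Given $D$ satisfying (i) and (ii), I would first analyze the singular characteristic foliation $D_\xi := \xi \cap TD$. By (i), $\partial D$ is a leaf of $D_\xi$, and by (ii), $D_\xi$ is nonsingular on $\partial D$. After a $C^\infty$-small perturbation of $D$ relative to $\partial D$, the interior singularities may be assumed to be elliptic or hyperbolic, and a signed index count (using $\chi(D) = 1$ together with the rotation of $\xi$ along $\partial D$) produces at least one interior elliptic singular point $p$. Near $p$ the Bishop construction yields a smooth $1$-parameter family of small embedded $J$-holomorphic discs $u_t \colon (\mathbb{D}, \partial \mathbb{D}) \to (W, D)$ of Maslov index $2$, shrinking to $p$ as $t \to 0$.

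Next, I would continue this family to a maximal connected moduli space $\mathcal{M}$ of embedded $J$-holomorphic discs with boundary on $D$. By Gromov compactness together with the uniform symplectic area bound (coming from Stokes' theorem and $\omega|_\xi \ge 0$ along $M$), the closure $\overline{\mathcal{M}}$ is compact. Sphere bubbling is excluded by a careful choice of $J$ inside the filling. The only end of $\mathcal{M}$ other than $p$ then corresponds to the boundaries of the $u_t$ sweeping outward to $\partial D$. In this limit the discs flatten onto $M$: since their boundaries live in $M$ and their symplectic areas tend to zero, the limit is an embedded disc $D' \subset M$ with $\partial D' = \partial D$. The tangency $TD' \subset \xi$ holds because the limiting tangent planes lie in $TM$ and are $J$-invariant, hence must coincide with $\xi = TM \cap J(TM)$. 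Finally, the total Maslov index of the family being $2$, combined with the standard identification of the Maslov index of a totally real boundary condition with the Euler number of the associated complex line bundle, delivers $e(\xi)[D \cup D'] = 0$.

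The main obstacle is the compactification of the moduli space. Ruling out bubbling in a filling of unrestricted topology requires a careful choice of $J$ and a monotonicity-type control on the symplectic areas; and verifying that the limit configuration is a single embedded flat disc in $M$ tangent to $\xi$, rather than a broken chain of holomorphic pieces, requires the full Gromov--Eliashberg compactness analysis, here to be adapted so that $J$ is well-behaved across the integrable locus $M \setminus H(\xi)$ where $\omega|_\xi$ may only be weakly positive.
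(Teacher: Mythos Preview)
Your strategy is the original one from \cite{confol}, which is what the paper cites for this theorem; the paper itself only sketches, at the very end of Section~\ref{s:symp fill ot star}, an \emph{alternative} argument that avoids holomorphic curves altogether. There one attempts to $C^0$-approximate $\xi$ by contact structures while keeping $\partial D$ Legendrian, using the perturbation machinery developed for \thmref{t:no polygons if filled}; the approximation must fail (since otherwise one would produce an overtwisted disc in a fillable contact structure), and the only way it can fail is if $\partial D$ bounds an integral disc $D'$ in the maximal tangent surface. The Euler-class condition is then read off from a nearby sphere. So the paper's route is a reduction to the contact case via approximation, while yours is the direct analytic one.

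Your outline, however, contains a genuine error in the limiting step. You assert that as the Bishop discs $u_t$ sweep toward $\partial D$ ``their symplectic areas tend to zero, the limit is an embedded disc $D'\subset M$.'' This is false: by \defref{d:symp filling} the form $\omega|_\xi$ is \emph{nondegenerate}, so any disc $D'\subset M$ tangent to $\xi$ satisfies $\int_{D'}\omega>0$. The areas of the $u_t$ therefore cannot tend to zero, and vanishing area is not the mechanism that forces the limit into $M$. (Your final paragraph compounds this by saying $\omega|_\xi$ ``may only be weakly positive'' on the integrable locus; it is nondegenerate everywhere by hypothesis.) What actually happens is that the limit disc $D'$ is itself $J$-holomorphic with positive area, lying in $M$ because $\xi$ is $J$-invariant; the subtlety is that the totally real boundary condition degenerates along $\partial D$ (where $T D$ becomes $J$-complex), and controlling the moduli space through this degeneration is precisely the content of Hind's work \cite{hind} on weakly pseudoconvex boundary conditions. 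Without invoking or reproducing that analysis, the compactness step in your argument is a gap, not merely a technicality.
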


As pointed out in \cite{confol} there are inequalities imposing restrictions on the Euler class $e(\xi)$ of $\xi$ when $\xi$ is a tight contact structure or a Reebless foliation. Before we can state these inequalities we need one more definition. 

\begin{defn} 
Let $\gamma$ be a nullhomologous knot in a confoliated manifold $(M,\xi)$ which is positively transverse to $\xi$. For each choice $F$ of an oriented Seifert surface of $\gamma$ we define the  {\em self linking number} $\selfl(\gamma,F)$ of $\gamma$ as follows. Choose a nowhere vanishing section $X$ of $\xi|_F$ and let $\gamma'$ be the knot obtained by pushing $\gamma$ off itself by $X$. Then 
$$
\selfl(\gamma,F)=\gamma'\cdot F~.
$$
\end{defn}
\noindent Obviously $\selfl(\gamma,F)$ depends only on $[F]\in H_2(M,\gamma;\mathbb{Z})$. 

In \cite{Be} D.~Bennequin proved an inequality between $\selfl(\gamma)$ of a transverse knot in the standard contact structure  $\ker(dz+x\,dy)$ on $\R^3$ and the Euler number of a Seifert surface of $\gamma$. This inequality was extended to all tight contact structures by Eliashberg in \cite{El}. From Thurston's work in \cite{Th} it follows that the same inequalities hold for surfaces in foliated manifolds without Reeb components. We summarize these results as follows.

\begin{thm}[Eliashberg \cite{El}, Thurston \cite{Th}] \mlabel{t:TB Ungl tight}
Let $\xi$ be a tight contact structure or a foliation without Reeb components on a closed manifold $M$ (different from a foliation by spheres) and $F\subset M$ an embedded oriented surface.
\begin{itemize}
\item[a)] If $F\simeq S^2$, then $e(\xi)[F]=0$. 
\item[b)] If $\partial F=\emptyset$ and $F\not\simeq S^2$, then $|e(\xi)[F]|\le-\chi(F)$.
\item[c)] If $\partial F\neq\emptyset$ is positively transverse to $\xi$, then $\selfl(\gamma,[F])\le -\chi(F)$.
\end{itemize}
\end{thm}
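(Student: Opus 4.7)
The plan is to derive parts (a)--(c) by analyzing the characteristic foliation $F_\xi$ induced by $\xi$ on $F$, extracting $e(\xi)[F]$ (and $\selfl$) from its singularities, and then using the defining hypothesis to bound the signed count of singularities by the unsigned one. Because tight contact structures and Reebless foliations admit rather different technology, I would run two separate arguments that combine to give the single conclusion.

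\textbf{Contact case.} First I perturb $F$ by a $C^\infty$-small isotopy so that it is \emph{convex} in Giroux's sense; its characteristic foliation then has only non-degenerate elliptic and hyperbolic tangencies and carries a dividing multicurve $\Gamma_F\subset F$ splitting $F\setminus\Gamma_F$ into positive and negative regions $F_+$ and $F_-$ (according to whether the contact plane agrees with the orientation of $F$). A direct sign count of singularities (equivalently, integration of an Euler cocycle computed from a generic section of $\xi\eing{F}$) yields
\begin{equation*}
\chi(F)=\chi(F_+)+\chi(F_-),\qquad e(\xi)[F]=\chi(F_+)-\chi(F_-),
\end{equation*}
and, when $\partial F\neq\emptyset$ is positively transverse, $\selfl(\partial F,[F])=\chi(F_-)-\chi(F_+)$. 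Tightness enters through Giroux's criterion: on a convex surface in a tight contact manifold, no component of $F\setminus\Gamma_F$ is a disc, except if $F\simeq S^2$, in which case $\Gamma_F$ is a single circle and $F_\pm$ are each discs. Part (a) follows since then $\chi(F_\pm)=1$, and parts (b) and (c) follow from $\chi(F_\pm)\le 0$ together with the triangle inequality $|\chi(F_+)-\chi(F_-)|\le -\chi(F_+)-\chi(F_-)=-\chi(F)$.

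\textbf{Foliation case.} Here I would appeal to Thurston's original argument in \cite{Th}. Novikov's theorem forbids vanishing cycles in a Reebless foliation, and this allows one to isotope $F$ so that $F_\xi$ has only saddle-type (hyperbolic) singularities: an elliptic center would bound a small disc in $F$ whose boundary is a closed leaf of $F_\xi$, and pushing this loop out of $F$ would yield a vanishing cycle in $\xi$, contradicting Novikov. With $F_\xi$ reduced to saddles, a Poincaré--Hopf count gives $\chi(F)=-(h_++h_-)$ and $e(\xi)[F]=h_--h_+$, whence $|e(\xi)[F]|\le -\chi(F)$; the boundary variant furnishes (c). Part (a) is the case $\chi(F)=2$: only centers survive after cancellation with saddles, and a further pairing of positive with negative centers along leaves of $F_\xi$ (again using that obstruction to such a pairing would recreate a vanishing cycle) forces $e(\xi)[S^2]=0$.

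\textbf{Main obstacle.} In both cases the algebra---Poincaré--Hopf and a triangle inequality---is immediate; the entire difficulty lies in translating the \emph{global} hypothesis into a \emph{local} constraint on $F_\xi$. In the contact case this is Giroux's criterion, whose non-trivial direction must construct an overtwisted disc from a disc component of $F\setminus\Gamma_F$ via a careful bifurcation argument on the characteristic foliation. In the foliation case this is the reduction (through Novikov) to a characteristic foliation without centers, which requires a controlled surface isotopy preserving transverse boundary conditions. I would expect essentially all the work to be concentrated in these structural steps.
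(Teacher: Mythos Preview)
The paper does not give its own proof of this theorem: it is stated as background with citations to Eliashberg \cite{El} and Thurston \cite{Th}. What the paper \emph{does} prove are generalizations to confoliations (\thmref{t:discs and spheres} and \thmref{t:no stars imply tb}), and those proofs follow Eliashberg's original strategy rather than yours. So the comparison is really between your approach and the Eliashberg-style argument the paper adopts.

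For the contact case you use convex surface theory and Giroux's criterion, extracting $e(\xi)[F]=\chi(F_+)-\chi(F_-)$ from the dividing set. The paper instead works directly with the characteristic foliation: from
\[
\chi(F)-e(\xi)[F]=2\big(e_-(F)-h_-(F)\big)
\]
one reduces to killing all negative elliptic points. For each such point $x$ one studies the basin $B(x)$ as a Legendrian polygon; tightness forces a negative hyperbolic pseudovertex on $\partial B(x)$, and the elimination lemma (\lemref{l:elim}) cancels the pair. Your route is cleaner once Giroux's criterion is in hand, and it packages the ``no disc component'' obstruction in one stroke; the elimination approach is more hands-on but is what generalizes to confoliations, which is exactly why the paper uses it. Both are correct for the contact case.

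For the foliation case your sketch is in the right spirit (Roussarie--Thurston general position, Novikov to exclude vanishing cycles), though the sentence producing a vanishing cycle from an elliptic center is too brisk: the actual isotopy to a surface with only saddle tangencies is the content of the Roussarie--Thurston normal form and is not a one-line consequence of Novikov. Your handling of part (a) for foliations is also loose; the paper's analogue (in the confoliation setting) instead cuts the sphere along cycles using integral discs supplied by tightness (\lemref{l:cut}, \lemref{l:cut2}) and then reduces to the disc case. None of this is a genuine error in your plan, but you should flag that the foliation-side reduction is where the real work hides.
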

The inequalities stated in this theorem are usually referred to as Thurston-Benne\-quin inequalities. They imply that only finitely many classes in $H^2(M;\Z)$ are Euler classes of tight contact structures or foliations without Reeb components. Foliations by spheres violate a) and we exclude such foliations from our discussion. 

It was conjectured (Conjecture 3.4.5 in \cite{confol}) that tight confoliations satisfy the Thurston-Bennequin inequalities. We give a counterexample $(T^3,\xi_T)$ with the property that $e(\xi)[T_0]=-4$ for an embedded torus in $T^3$. Therefore every contact structure which is close to $\xi_t$ must be overtwisted. This yields a negative answer to Question 1 on p.~63 of \cite{confol}. The construction of $(T^3,\xi_T)$ is based on the classification of tight contact structures on $T^2\times[0,1]$ due to E.~Giroux and K.~Honda.

In this article we show that a) is true for tight confoliations and c) holds when $F$ is a disc. On the other hand we give an example of a tight confoliation $\xi_{T}$ on $T^3$ which violates b) and c) for surfaces which are not simply connected. 

Our example indicates that tight confoliations are much more flexible objects than tight contact structures or foliations without Reeb components. For example infinitely many elements of $H^2(T^3;\Z)$ are Euler classes of tight confoliations. Nevertheless, tight confoliations have some rigidity properties. In addition to the Thurston-Bennequin inequalities for simply connected surfaces we show the following theorem.

\begin{ballapproxthm*} 
Let $M$ be a manifold carrying a tight confoliation $\xi$ and $B\subset M$ a closed embedded ball in $M$. There is a neighbourhood of $\xi$ in the space of plane fields with the $C^0$-topology such that $\xi'\eing{B}$ is tight for every contact structure $\xi'$ in this neighbourhood of $\xi$.
\end{ballapproxthm*}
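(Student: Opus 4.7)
The plan is to argue by contradiction, leveraging the Thurston–Bennequin inequality for discs on tight confoliations (the disc case of property~(c)), which is established elsewhere in this article. If no such $C^0$-neighbourhood existed, one would have a sequence $\xi_n$ of contact structures converging to $\xi$ in the $C^0$-topology for which $\xi_n\eing{B}$ is overtwisted for every $n$. Inside any overtwisted region one can find a \emph{standard} overtwisted disc, so I would extract from each $\xi_n\eing{B}$ an embedded disc $\Delta_n\subset B$ with Legendrian boundary satisfying $\mathrm{tb}(\partial\Delta_n)=0$ and rotation number $r(\partial\Delta_n)=0$.

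Next I would convert each $\Delta_n$ into a transverse unknot violating the Bennequin bound. Taking the positive transverse pushoff of $\partial\Delta_n$ inside an arbitrarily thin tubular neighbourhood gives a knot $\gamma_n\subset B$ positively transverse to $\xi_n$ and bounding a disc $D_n\subset B$ isotopic to $\Delta_n$, whose self-linking number (computed for $\xi_n$) is
\[
\selfl_{\xi_n}(\gamma_n,D_n)=\mathrm{tb}(\partial\Delta_n)-r(\partial\Delta_n)=0>-1=-\chi(D_n).
\]
So for every $n$ the pair $(\gamma_n,D_n)$ violates the disc case of the Bennequin inequality for the ambient contact structure $\xi_n$.

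The crucial step is to transfer this violation from $\xi_n$ to $\xi$ itself. Positive transversality of a fixed knot to a plane field is a $C^0$-open condition on the plane field, so as soon as $\xi_n$ is sufficiently $C^0$-close to $\xi$ the knot $\gamma_n$ is positively transverse to $\xi$ as well. On the disc $D_n$ the oriented plane fields $\xi_n\eing{D_n}$ and $\xi\eing{D_n}$ are then $C^0$-close and can be joined by the straight-line homotopy in the oriented $2$-Grassmannian bundle, along which any nowhere-vanishing section of $\xi_n\eing{D_n}$ deforms to one of $\xi\eing{D_n}$ with its small pushoff of $\gamma_n$ meeting $D_n$ in the same signed count. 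As $\selfl$ is integer-valued and locally constant under such deformations, $\selfl_\xi(\gamma_n,D_n)=0$. This directly contradicts the Thurston–Bennequin inequality for discs in the tight confoliation $\xi$, completing the argument.

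The hard part will be the last step: justifying the $C^0$-stability of $\selfl$ under perturbation of the ambient plane field, i.e., simultaneously preserving positive transversality of $\gamma_n$ and the signed intersection count of the pushoff with $D_n$ along a short homotopy from $\xi_n$ to $\xi$. Once that stability is in hand, the contradiction with the disc case of~(c) for $\xi$ is immediate and yields the desired $C^0$-neighbourhood.
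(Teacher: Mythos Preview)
Your argument has a genuine gap at what you correctly identify as ``the crucial step.'' The problem is that $\gamma_n$ is not a fixed knot: it is constructed from the overtwisted disc $\Delta_n$, which in turn depends on $\xi_n$. Positive transversality of a \emph{fixed} knot to a plane field is indeed a $C^0$-open condition, but the size of that open neighbourhood depends on the knot---specifically, on how large the angle between the tangent of the knot and the plane field is in a fixed background metric. For your argument to go through you would need a uniform $\delta>0$ such that for every $n$ the knot $\gamma_n$ is positively transverse to every plane field within $C^0$-distance $\delta$ of $\xi_n$. Nothing in your construction provides such a $\delta$: the overtwisted disc $\Delta_n$ may be arbitrarily small or contorted, and the transverse pushoff $\gamma_n$ may meet $\xi_n$ at arbitrarily shallow angles, so that even the tiny perturbation from $\xi_n$ to $\xi$ can destroy transversality. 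The same lack of uniformity undermines the self-linking computation: the straight-line homotopy from $\xi_n|_{D_n}$ to $\xi|_{D_n}$ need not stay transverse to $\gamma_n$ throughout, so there is no reason $\selfl_\xi(\gamma_n,D_n)$ should agree with $\selfl_{\xi_n}(\gamma_n,D_n)$.

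This is not a minor technicality but exactly the obstacle the paper's proof is designed to overcome. Rather than extracting objects from the unknown $\xi'$, the paper works with a \emph{fixed} family of spheres $S_t$ foliating $B_1\setminus\ring{B}_0$ (with $B_0$ a small ball in the contact region) and shows that the characteristic foliation $S_t(\xi')$ admits a taming function for every contact structure $\xi'$ sufficiently $C^0$-close to $\xi$. The stability ingredients (\lemref{l:stab} and \propref{p:deform taming fct}) work because the transversality conditions that must survive the perturbation involve only finitely many fixed pieces of the fixed surfaces $S_t$, not objects that move with $\xi'$. Once taming functions exist for all $S_t(\xi')$, Giroux's results on convex spheres yield tightness of $\xi'|_B$. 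In short, the paper fixes the surfaces and varies the plane field; you fixed the target plane field but let the discs vary with $n$, and that is where the needed uniformity is lost.
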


This theorem leads to restrictions on the homotopy class of plane fields which contain tight confoliations. For example only one homotopy class of plane fields on $S^3$ contains a tight confoliation by Eliashberg's classification of tight contact structures on balls together with \thmref{t:tight on balls}. For the proof of \thmref{t:tight on balls} we study the characteristic foliation $S(\xi)=TS\cap\xi$ on embedded spheres $S\subset M$ (we generalize the notion of taming functions introduced in \cite{El} to confoliations and use results from \cite{giroux2}). 

Motivated by the example $(T^3,\xi_T)$ we define the notion of an overtwisted star. Roughly speaking, an overtwisted star on an embedded surface $F$ is a domain in $F$ whose interior is homeomorphic to a disc, the boundary of this domain consists of Legendrian curves and all singularities on the boundary have the same sign. The main difference between overtwisted stars and overtwisted discs is that the set theoretic boundary of an overtwisted star may contain closed leaves or quasi-minimal sets of the characteristic foliation. 

An example of an overtwisted star is shown in \figref{b:starfish} on p.~\pageref{b:starfish}. It will be clear from the definition of overtwisted stars that contact structures which admit overtwisted stars are not tight, ie. they are overtwisted in the usual sense. Following Eliashberg's strategy from \cite{El} we prove the following theorem.

\begin{nostarsimplytbthm*}
Let $(M,\xi)$ be an oriented tight confoliation such that no compact embedded oriented surface contains an overtwisted star and $(M,\xi)$ is not a foliation by spheres. 

Every embedded surface $F$ whose boundary is either empty or positively transverse to $\xi$ satisfies the following relations. 
\begin{itemize}
\item[a)] If $F\simeq S^2$, then $e(\xi)[F]=0$. 
\item[b)] If $\partial F=\emptyset$ and $F\not\simeq S^2$, then $|e(\xi)[F]|\le-\chi(F)$.
\item[c)] If $\partial F\neq\emptyset$ is positively transverse to $\xi$, then $\selfl(\gamma,[F])\le -\chi(F)$.
\end{itemize}
\end{nostarsimplytbthm*}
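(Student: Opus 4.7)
The plan is to push through Eliashberg's proof of \thmref{t:TB Ungl tight} from \cite{El} in the confoliated setting, replacing the appeal to tightness excluding overtwisted discs by the hypothesis that no compact embedded oriented surface contains an overtwisted star. Case (a) has already been established for all tight confoliations earlier in the paper, so I focus on (b) and (c).

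\textbf{Setting up the count.} First, put $F$ in general position with respect to $\xi$. In the contact region $F\cap H(\xi)$ the characteristic foliation $S(\xi)=TF\cap\xi$ has only standard elliptic and hyperbolic singularities carrying signs $\pm$; denote their numbers by $e_\pm,h_\pm$. In the integrable locus $F\setminus H(\xi)$ the tangencies form, generically, isolated singularities of the same standard types together with closed tangency curves along which $TF=\xi$; near these curves the characteristic foliation behaves as a union of closed leaves and quasi-minimal sets. Poincaré--Hopf and the index computation of a section of $\xi|_F$ at each type of singularity give
\begin{align*}
\chi(F) &= (e_+ + e_-) - (h_+ + h_-), \\
e(\xi)[F] &= (e_+ - e_-) - (h_+ - h_-),
\end{align*}
with closed leaves and tangency curves contributing trivially to both counts. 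When $\partial F$ is positively transverse to $\xi$ the identical counts hold with $\selfl(\gamma,[F])$ in place of $e(\xi)[F]$, since the positive transverse direction trivializes $\xi|_{\partial F}$. Thus (b) and (c) both reduce to the two inequalities $e_+\le h_+$ and $e_-\le h_-$.

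\textbf{Simplification and trapping regions.} Next I apply Giroux's elimination lemma iteratively to cancel every elliptic--hyperbolic pair of the same sign joined by a leaf of $S(\xi)$; this requires only a $C^0$-perturbation of $F$ inside $H(\xi)$ and therefore remains available in the confoliated setting. The key claim is that after simplification every remaining positive elliptic is joined by one of its outgoing separatrices to a positive hyperbolic, and symmetrically for negative elliptics; the standard bigon argument (two separatrices from distinct ellipses landing at a common hyperbolic again bound a disc of the excluded type) then upgrades this to an injection and yields $e_\pm\le h_\pm$. Suppose for contradiction that a positive elliptic $p$ has no outgoing separatrix reaching a positive hyperbolic. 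A Poincaré--Bendixson analysis on $F$ produces a compact connected region $D\subset F$ containing $p$ whose interior is a topological disc and whose set-theoretic boundary is a union of Legendrian arcs together with closed leaves and/or quasi-minimal sets of $S(\xi)$ coming from tangency curves in $F\setminus H(\xi)$; by construction every singularity in $D$ is positive. This $D$ matches the definition of an overtwisted star given in the paper, contradicting the hypothesis. Combining the inequalities with the identities above gives
\begin{equation*}
\chi(F)+e(\xi)[F]=2(e_+-h_+)\le 0, \qquad \chi(F)-e(\xi)[F]=2(e_--h_-)\le 0,
\end{equation*}
hence $|e(\xi)[F]|\le -\chi(F)$ in case (b); the transverse-boundary version of the count yields (c).

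\textbf{Main obstacle.} The crux is the Poincaré--Bendixson analysis on $F$ in the presence of the integrable locus. Unlike in the pure contact setting, orbits of $S(\xi)$ can asymptote to closed leaves or enter quasi-minimal sets arising from tangency curves of $F\setminus H(\xi)$. One must verify that the $\omega$-limit set of the outgoing separatrices of a trapped positive elliptic, together with the separatrices themselves, assembles into the boundary of a topological disc whose boundary arcs are Legendrian (in the broad sense allowing closed leaves and quasi-minimal pieces) and whose interior contains only positive singularities, so that the resulting region genuinely satisfies the definition of an overtwisted star. This requires a careful local description of $S(\xi)$ near tangency curves and of how leaves transition between $H(\xi)$ and $F\setminus H(\xi)$, and is the essentially new technical ingredient beyond Eliashberg's contact-geometric proof.
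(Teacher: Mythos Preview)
Your overall strategy (follow Eliashberg's argument, with overtwisted stars playing the role that overtwisted discs play for contact structures) is the same as the paper's, but the proposal skips over exactly the places where the confoliated case requires real work beyond the contact case.

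First, a sign slip: if $p$ is a positive elliptic and no outgoing leaf of $p$ reaches a positive hyperbolic, then the pseudovertices on the boundary of the basin are all \emph{negative} hyperbolics and the elliptic vertices are negative sinks; so the boundary singularities are all negative, not positive as you write. That is indeed the configuration in \defref{d:overtwisted star}, so your conclusion survives, but the reasoning as stated is inverted.

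More seriously, you assert without argument that the region $D$ you obtain ``matches the definition of an overtwisted star''. Two conditions of \defref{d:overtwisted star} are not verified and genuinely can fail. (1) The polygon $Q$ covering the basin need not be a disc: whenever an unstable leaf of a pseudovertex is part of a cycle, the construction of \lemref{l:leg poly} attaches a $1$-handle and $Q$ acquires extra boundary components. The paper handles this by an explicit Legendrian sliding isotopy (pushing part of the cycle into $H(\xi)$ along a Legendrian arc) so that the offending hyperbolic is no longer the corner of a cycle; this has to be iterated and one must check it creates no new singularities. (2) Condition (iii) requires that for every virtual vertex $v$ the cycle $\gamma_v$ does \emph{not} bound an integral disc of $\xi$. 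Since $\xi$ is tight, cycles on $F$ that bound discs in $F$ \emph{do} bound integral discs, so the naive basin of $p$ will typically fail (iii). The paper first reduces to $F$ incompressible, then repeatedly surgers $F$ along such integral discs using \lemref{l:cut}, \lemref{l:cut2}, \lemref{l:cut3}; incompressibility guarantees the spherical pieces split off and the remaining surface is still diffeomorphic to $F$ with the same $e(\xi)[F]$. Only after these two reductions is the basin of the elliptic a genuine overtwisted star, at which point the hypothesis yields a same-sign pseudovertex and one applies the elimination lemma (which in the confoliated setting, \lemref{l:elim}, also requires the elliptic to lie in $H(\xi)$ --- another point you do not address, and which the surgery step arranges). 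Your ``bigon/injection'' packaging is unnecessary once you eliminate one elliptic at a time and iterate.
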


Moreover, \thmref{t:fillable confol are tight} can be refined as follows.
\begin{sympfillthm*}
Symplectically fillable confoliations do not admit overtwisted stars.
\end{sympfillthm*}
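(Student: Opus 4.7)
The natural approach is to reduce to Eliashberg's statement that symplectically fillable contact structures are tight, i.e.\ the contact case of \thmref{t:fillable confol are tight}. Assume for contradiction that $\xi$ is a symplectically fillable confoliation on $M$ carrying an overtwisted star $\Sigma$ on some embedded surface $F\subset M$. The plan is to perturb $\xi$ to a nearby contact structure $\xi'$ that is still symplectically fillable but admits an honest overtwisted disc near $\Sigma$, contradicting tightness of $\xi'$.

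For the perturbation step, I would approximate $\xi$ by positive contact structures $\xi_n\to\xi$ in the $C^0$-topology using the confoliation version of \thmref{t:El-Th approx}. One then has to verify that symplectic fillability is open in the appropriate sense: given a filling $(W,\omega)$ of $\xi$, attach a thin collar $M\times[0,\eps)$ to $\partial W$ and extend $\omega$ so that it tames every plane field sufficiently $C^0$-close to $\xi$ on the new boundary. Consequently each $\xi_n$ is symplectically fillable and, being a contact structure, tight.

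The heart of the proof is to extract an overtwisted disc for $\xi_n$ from the star $\Sigma$ for $n$ large. On the interior of $\Sigma$, which is a disc whose boundary is a union of Legendrian arcs joining same-sign singularities, the characteristic foliation $TF\cap\xi_n$ converges to $TF\cap\xi$, and the corner singularities persist under $C^0$-perturbation because they are elliptic/hyperbolic features of the same sign. The set-theoretic boundary of $\Sigma$ is more subtle: closed leaves or quasi-minimal sets of $TF\cap\xi$ on $\partial\Sigma$ cannot survive as leaves of $TF\cap\xi_n$, because the latter is a genuine contact foliation on $F$ and has no tangencies on open sets. The same-sign hypothesis on the corner singularities, together with a Poincar\'e-Hopf index argument applied to the disc $\Sigma$, forces the perturbed characteristic foliation to develop an additional leaf that closes up slightly inside $\Sigma$ and bounds a disc in $F$. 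Such a closed leaf is precisely the boundary of an overtwisted disc for $\xi_n$, and combined with the previous paragraph this yields the desired contradiction.

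The main obstacle is this last step: the set-theoretic boundary of an overtwisted star may contain arbitrarily complicated quasi-minimal sets of the characteristic foliation, and one needs a careful structural theorem describing how such sets decompose under a $C^0$-perturbation of $\xi$ to a contact structure. This is analogous to the analysis of characteristic foliations near non-trivial minimal sets used by Giroux, and I would expect the proof in the paper to draw on the same circle of ideas already invoked for \thmref{t:tight on balls}, in particular the taming function machinery of \cite{El} adapted to confoliations.
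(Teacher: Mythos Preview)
Your overall strategy---deduce a contradiction to the tightness of something symplectically fillable---matches the paper, and you are right that fillability is $C^0$-open. However, the crucial middle step is not an argument but a hope, and your guess about how it is filled in is wrong.

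You propose to pass directly to a contact approximation $\xi_n$ and then argue that the overtwisted star forces a closed leaf of $F(\xi_n)$ bounding an overtwisted disc. The sentence ``the same-sign hypothesis \ldots\ together with a Poincar\'e--Hopf index argument \ldots\ forces the perturbed characteristic foliation to develop an additional leaf that closes up'' is precisely the content of the theorem, and no index argument yields it: the virtual vertices $v\in V$ correspond to $\omega$-limit sets $\gamma_v$ which may be quasi-minimal sets lying in the fully foliated part of $\xi$, and after a generic contact perturbation those sets simply disappear---the leaves that used to accumulate on $\gamma_v$ can now run off anywhere on $F$, possibly into hyperbolic singularities of the \emph{wrong} sign created elsewhere. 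There is no local control. You acknowledge this as ``the main obstacle'' but do not address it, and your suggestion that the taming-function machinery from \secref{s:rigid} is what resolves it is mistaken; that machinery is used only for spheres in the proof of \thmref{t:tight on balls} and plays no role here.

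The paper takes a completely different route. Rather than perturbing once to a contact structure, it performs a long sequence of controlled $C^0$-small modifications of the confoliation \emph{and} of the surface $F$, each preserving fillability, designed to eliminate the virtual vertices of the star one by one. The tools are specific to foliation theory: Sacksteder's theorem on exceptional minimal sets (\propref{p:sacksteder}), thickening of compact leaves (\lemref{l:thickening}), careful manipulation of holonomy (\lemref{l:reminder on holonomy approx}, \lemref{l:approx with control2}), and an analysis of the ends of the integral surface containing each $\gamma_v$. Once every $\gamma_v$ has been pushed through the contact region, \lemref{l:create} converts the star into a genuine Legendrian polygon with $V=\emptyset$; a further sequence of eliminations and foldings makes it an embedded disc violating \defref{d:tight confol}, contradicting \thmref{t:fillable confol are tight}.
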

These results indicate that tightness in the sense of \defref{d:tight confol} together with the absence of overtwisted stars is the right generalization of tightness to confoliations.

This article is organized as follows: In \secref{s:facts} we recall several facts about confoliations and characteristic foliations. \secref{s:manipulation} contains a discussion of several methods for the manipulation of characteristic foliation on embedded surfaces. For example we generalize the elimination lemma to confoliations and we discuss several surgeries of surfaces when integral discs of $\xi$ intersect the surface in a cycle. In \secref{s:example} we describe an example of a tight confoliation on $T^3$ which violates the Thurston-Bennequin inequalities while we prove \thmref{t:tight on balls} in \secref{s:rigid}.

In \secref{s:discussion} we discuss overtwisted stars and establish the Thurston-Bennequin inequalities for tight confoliations without overtwisted stars. Moreover, we prove that symplectically fillable confoliations do not admit overtwisted stars.

Throughout this article $M$ will be a connected oriented $3$-manifold without boundary and $\xi$ will always denote a smooth oriented plane field on $M$. Moreover, we require $M$ to be compact.

{\em Acknowledgements:} The author started working on this project in the fall of 2006 during a stay at Stanford University, the financial support provided by the "Deutsche Forschungsgemeinschaft" is gratefully acknowledged. It is a pleasure for me to thank Y. Eliashberg for his support, hospitality and interest.  Moreover, I would like to thank V.~Colin and J.~Etnyre for helpful conversations.


\section{Characteristic foliations, non-integrability and tightness } \mlabel{s:facts}



In this section we recall some definitions, notations and well known facts which will be used throughout this paper. Most notions discussed here are generalizations of definitions which are well-known in the context of contact structures (cf. for example \cite{Aeb}, \cite{intro-etnyre}, \cite{giroux} and the references therein). 

\subsection{Characteristic foliations on surfaces}

We consider an embedded oriented surface $F$ in a confoliated $3$-manifold $(M,\xi)$ and we assume that $\xi$ is cooriented. The singular foliation $F(\xi):=\xi\cap TF$ is called the {\em characteristic foliation} of $F$. The leaves of the characteristic foliation are examples of {\em Legendrian curves}, ie. curves tangent to $\xi$. 

The following convention is used to orient $F(\xi)$: Consider $p\in F$ such that $F(\xi)_p$ is one-dimensional. For $X\in F(\xi)(p)$ we choose $Y\in\xi(p)$ and $Z\in T_pF$ such that $X,Y$ represents the orientation of $\xi(p)$ and $X,Z$ induces the orientation of the surface. Then $X$ represents the orientation of the characteristic foliation if and only if $X,Y,Z$ is a positive basis of $T_pM$.

With this convention, the characteristic foliation points out $F$ along boundary components of $F$ which are positively transverse to $\xi$. An isolated singularity of $F(\xi)$ is called {\em elliptic} respectively {\em hyperbolic} when its index is $+1$ respectively $-1$. A singularity is {\em positive} if the orientation of $\xi$ coincides with the orientation of $F$ at the singular point and {\em negative} otherwise. Given an embedded surface $F\subset M$ we denote the number of positive/negative elliptic singularities by $e_\pm(F)$ and the number of positive/negative hyperbolic singularities is $h_\pm(F)$. 


\subsection{(Non-)Integrability} \mlabel{ss:confolmeaning}

The condition that $\xi$ is a confoliation can be interpreted in geometric terms. The following interpretation can be found in \cite{confol}.  

Let $D$ be a closed disc of dimension $2$ and $\xi$ a positive confoliation transverse to the fibers of $\pi : D\times\R\lra D$. Then $\xi$ can be viewed as a connection. We assume in the following that this connection is complete, ie. for every differentiable curve $\sigma$ in $D$ there is a horizontal lift of $\sigma$ starting at a given point in the fiber over the starting point of $\sigma$.

We consider the holonomy of the characteristic foliation on $\pi^{-1}(\partial D)$ 
\begin{equation} \label{e:holonomy}
h_{\partial D} : \pi^{-1}(p)\simeq\R\lra\R\simeq\pi^{-1}(p)
\end{equation}
where $h_{\partial D}(x)$ is defined as the parallel transport of $x\in\R$ along $\partial D$.
\begin{lem}[Lemma 1.3.4. in \cite{confol}] \mlabel{l:neg-curv}
If the confoliation $\xi$ on $\pi : D\times\R\lra D$ defines a complete connection, then $h_{\partial D}(x)\le x$ for all $x\in\pi^{-1}(p)$ and $p\in\partial D$. Equality holds for all $x\in\pi^{-1}(p)$ if and only if $\xi$ is integrable. 

If $D=D\times\{0\}$ is tangent to $\xi$, then the germ of the holonomy is well defined without any completeness assumption and $h_{\partial D}(x)\le x$ for all $x$ in the domain of $h$. The germ of $h_{\partial D}$ coincides with the germ of the identity if and only if a neighbourhood of $D$ is foliated by discs. 
\end{lem}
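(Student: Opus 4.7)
Proof plan. My strategy is to lift the disc $D$ to a smooth section of $\pi$ by horizontally parallel-transporting along radial rays from an interior base point, use the confoliation inequality to prove non-negativity of the pullback of $\alpha$ along such sections, and then run an ODE comparison along $\partial D$ between the boundary of this lifted disc and the horizontal lift of $\partial D$.

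Fix an interior point $p_0\in D$ and take polar coordinates $(r,\phi)$ on $D$ centered at $p_0$. For each $x\in\R$ define the smooth section $s_x\colon D\to D\times\R$ by horizontally transporting $(p_0,x)$ along the radial segment from $p_0$ to each point; completeness ensures $s_x$ is defined on all of $D$. Because $\xi$ is transverse to the fibers we may write $\alpha=dz-A\,dx-B\,dy$, and because $s_x$ is horizontal in the radial direction the pullback takes the form $s_x^*\alpha=g(r,\phi)\,d\phi$ with $g(0,\phi)=0$. The main computation is to evaluate $s_x^*(d\alpha)(\partial_r,\partial_\phi)$ by unpacking $d\alpha$ and using the horizontal lifting identities $Z_r=A\cos\phi+B\sin\phi$ and $Z_\phi = g-Ay+Bx$; after the many chain-rule terms cancel one is left with
\[
\partial_r g \;=\; r\,\rho \;+\; (A_z\cos\phi+B_z\sin\phi)\,g,\qquad \rho \;=\; A_y-B_x-AB_z+BA_z.
\]
The crucial observation is that $\rho$ is precisely the function representing $\alpha\wedge d\alpha$ in the frame $dx\wedge dy\wedge dz$, so positivity of the confoliation gives $\rho\ge 0$. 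Hence $g$ solves a first-order linear ODE in $r$ with non-negative source and zero initial value, and variation of parameters yields $g\ge 0$ everywhere on $D$. Arranging the cancellations so as to identify the coefficient of $r$ with the confoliation integrand is the step I expect to be the main technical obstacle.

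With $g\ge 0$ in hand, the holonomy inequality follows from an ODE comparison on $\partial D$. Parametrize $\partial D$ by $\phi\in[0,2\pi]$, let $Y(\phi)$ be the $z$-coordinate of $s_x(\sigma(\phi))$ (so $Y(2\pi)=Y(0)$ because $s_x$ is a section), and let $\tilde Z(\phi)$ be the $z$-coordinate of the horizontal lift of $\partial D$ starting at the common basepoint $s_x(p)=(p,Y(0))$. The pullback relation $s_x^*\alpha=g\,d\phi$ on $\partial D$ yields $Y'(\phi)=F(Y(\phi),\phi)+g(R,\phi)$, while horizontality of $\tilde Z$ gives $\tilde Z'(\phi)=F(\tilde Z(\phi),\phi)$ for the same function $F(z,\phi)=A\dot\sigma_1+B\dot\sigma_2$ evaluated along $\sigma$. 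Since $Y(0)=\tilde Z(0)$ and $g\ge 0$, a standard ODE comparison (Gronwall) yields $Y(\phi)\ge \tilde Z(\phi)$ on $[0,2\pi]$; evaluating at $\phi=2\pi$ gives $Y(0)\ge h_{\partial D}(Y(0))$. As $x$ ranges over $\R$, so does $Y(0)$ by completeness, proving $h_{\partial D}(x)\le x$ for every $x$. Equality $h_{\partial D}\equiv\mathrm{id}$ forces the comparison to be sharp hence $g(R,\cdot)\equiv 0$; by the ODE for $g$ this forces $\rho\equiv 0$ on every section $s_x(D)$, and since these sections sweep out $D\times\R$ one obtains $\alpha\wedge d\alpha\equiv 0$, i.e.\ $\xi$ is integrable. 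The germ statement is proved by running the same argument in a small neighbourhood of $D\times\{0\}$ where completeness is automatic because horizontal lifts starting near the tangent disc stay close to it; triviality of the germ of $h_{\partial D}$ together with Frobenius then produces the foliation of a neighbourhood of $D$ by nearby disc-leaves.
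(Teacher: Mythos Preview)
The paper does not actually prove this lemma; it is quoted verbatim as Lemma~1.3.4 from Eliashberg--Thurston \cite{confol} and then used repeatedly as a black box. So there is no ``paper's own proof'' to compare against. That said, your argument is correct and self-contained, and it is worth recording why.

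Your key identity
\[
\partial_r g \;=\; r\rho \;+\; (A_z\cos\phi+B_z\sin\phi)\,g,
\qquad \rho = A_y-B_x-AB_z+BA_z,
\]
does hold: writing $\alpha=dz-A\,dx-B\,dy$ one computes $\alpha\wedge d\alpha=\rho\,dx\wedge dy\wedge dz$, and pulling $d\alpha$ back along the radially-transported section $s_x$ and equating it with $d(g\,d\phi)=g_r\,dr\wedge d\phi$ gives exactly that formula after the cancellations you anticipate. Positivity of the confoliation gives $\rho\ge 0$, the integrating-factor formula then yields $g\ge 0$, and your ODE comparison on $\partial D$ between the section's boundary trace $Y$ and the horizontal lift $\tilde Z$ is the standard Lipschitz comparison (here $F$ is smooth). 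Surjectivity of $x\mapsto Y(0)$ follows from completeness, since that map is the holonomy along a radial segment, hence a diffeomorphism of~$\R$. The equality analysis is also sound: $W=Y-\tilde Z$ satisfies a linear ODE with nonnegative forcing $g(R,\cdot)$, so $W(2\pi)=0$ forces $g(R,\cdot)\equiv 0$, and the integral representation of $g$ then forces $\rho\equiv 0$ along $s_x(D)$; varying $x$ covers all of $D\times\R$.

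For the germ statement, your remark that horizontal lifts starting near the tangent disc $D\times\{0\}$ remain nearby (continuous dependence) is exactly what replaces the completeness hypothesis, and Frobenius then gives the disc foliation. One cosmetic point: you implicitly identify $D$ with a round disc so that $\partial D$ is $\{r=R\}$; since the statement is diffeomorphism-invariant this is harmless, but it is worth saying explicitly.
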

Of course, the second part of the lemma applies to the case when on considers only the part lying above or below $D\times\{0\}\subset D\times\R$. A consequence of \lemref{l:neg-curv} is the following generalization of the Reeb stability theorem to confoliations.
\begin{thm}[Proposition 1.3.9. in \cite{confol}] \mlabel{t:Reeb stability}
Let $M$ be a closed oriented manifold carrying a positive confoliation $\xi$. Suppose that $S$ is an embedded sphere tangent to $\xi$. Then $(M,\xi)$ is diffeomorphic to the product foliation on $S^2\times S^1$ by spheres.
\end{thm}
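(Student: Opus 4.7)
\emph{Proof plan.} I would use the fact that $\pi_1(S) = 0$ to upgrade the one-sided holonomy inequality of \lemref{l:neg-curv} to genuine triviality of holonomy around every loop in $S$, and then propagate the resulting sphere foliation across $M$. Concretely, for any embedded disk $D \subset S$, choose a vector field transverse to $\xi$ near $D$ to obtain a product neighborhood $D \times \R$; the second half of \lemref{l:neg-curv} gives $h_{\partial D}(x) \le x$. Because $S$ is a $2$-sphere, the complement $D' := S \setminus \operatorname{int}(D)$ is also an embedded integral disk whose boundary is $\partial D$ with reversed orientation. Applying the same lemma to $D'$ gives $h_{\partial D}^{-1}(x) \le x$, so combining, $h_{\partial D}$ is the germ of the identity. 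By the second part of \lemref{l:neg-curv} a neighborhood of $D$ is then foliated by integral disks of $\xi$. Covering $S$ by two overlapping disks and gluing the corresponding local disk foliations along overlaps produces a tubular neighborhood $N(S) \cong S \times (-\eps, \eps)$ in which $\xi$ is the horizontal foliation by $2$-spheres.

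Next I would globalize. Let $U \subset M$ be the set of points lying on an embedded $2$-sphere tangent to $\xi$; the previous step shows $U$ is open. For closedness, take $p_n \to p$ with $p_n \in S_n$ for sphere leaves $S_n$ tangent to $\xi$. Using compactness of $M$ and the $C^2$-regularity of $\xi$, I would parametrize each $S_n$ near a point closest to $p$ as a $C^2$ graph over the plane $\xi(p)$; the graphs are uniformly $C^2$-bounded because their tangent planes are prescribed pointwise by the distribution $\xi$. An Arzel\`a--Ascoli argument then extracts a $C^2$ limit graph through $p$ tangent to $\xi$, and since the $S_n$ are topologically $2$-spheres embedded in the compact manifold $M$, this local limit surface extends to a closed embedded sphere tangent to $\xi$ through $p$. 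Hence $p \in U$, so $U$ is closed, and by connectedness of $M$ we conclude $U = M$.

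The manifold $M$ is therefore foliated by $2$-spheres tangent to $\xi$. Its leaf space is a compact connected $1$-manifold without boundary, namely $S^1$, so $M$ is an oriented $S^2$-bundle over $S^1$, which must be trivial, giving $M \cong S^2 \times S^1$ with $\xi$ the product foliation by spheres.

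The main technical obstacle is the closedness step: one must show that a $C^2$-limit of sphere leaves of a $C^2$ confoliation is itself an embedded sphere leaf. The pointwise tangent-plane condition passes easily to the limit, but ruling out degenerations (pinching, escape) of the approximating spheres $S_n$ requires compactness of $M$ together with their spherical topology. Once the limit sphere is produced through $p$, the first step reapplied to it completes both the closedness argument and furnishes the desired local product neighborhood.
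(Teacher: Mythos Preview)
The paper gives no proof here; the statement is quoted from \cite{confol} with only the remark that it is a consequence of \lemref{l:neg-curv}. Your route is the intended one, and your first step is exactly the argument in \cite{confol}: applying the second part of \lemref{l:neg-curv} to a disc $D\subset S$ and to the complementary disc $S\setminus\ring{D}$ forces $h_{\partial D}=\mathrm{id}$, so a neighbourhood of $S$ is genuinely foliated by integral spheres. This is the confoliation-specific input; everything afterwards is the classical Reeb stability mechanism.

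Where your write-up is thin is the closedness step, as you yourself note. The sentence ``since the $S_n$ are topologically $2$-spheres \dots\ this local limit surface extends to a closed embedded sphere'' is an assertion rather than an argument: Arzel\`a--Ascoli produces only a local limit graph, and one must still exclude global degeneration of the $S_n$ (area escaping, or distant pieces of $S_n$ colliding in the limit). What makes this work is uniform control on the geometry of the leaves: each $S_t$ is tangent to the fixed $C^2$ plane field $\xi$ on compact $M$, so its second fundamental form is bounded independently of $t$. This lets one patch the local Arzel\`a--Ascoli limits into a global $C^1$ limit that is a compact embedded integral surface $L$, and since the $S_t$ converge to $L$ in $C^1$ as parametrized surfaces, $L$ is again a sphere. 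Your first step applied to $L$ then contradicts maximality of the sphere-foliated region, so that region is all of $M$ and the leaf space closes up to a circle.
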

Foliations by spheres appear as exceptional case in some theorems. They will therefore be excluded from the discussion.

Another useful geometric interpretation of the confoliation condition can be found on p.~4 in \cite{confol} (and many other sources): Let $X$ be a Legendrian vector field and $F$ a surface transverse to $X$. The slope of line field $F_t(\xi)$ on the image of $F$ under the time-$t$-flow of $X$ is monotone in $t$ if and only if $\xi$ is a confoliation. This interpretation is useful when one wants extends confoliations along flow line which are Legendrian where the confoliation is already defined. 

We define the {\em fully foliated part} of a confoliation $\xi$ on $M$ as the complement of 
$$
\{x\in M | \textrm{ there is a Legendrian curve connecting }x\textrm{ to }H(\xi) \}.
$$
If $\gamma$ is a Legendrian curve in a leaf of $\xi$ and $A\simeq\gamma\times(-\delta,\delta), \delta>0$ an annulus transverse to the leaf such that $\gamma=\gamma\times\{0\}$, then we will consider several types of holonomy $h_A$ of the characteristic foliation on $A$. 
\begin{itemize}
\item We say that there is {\em linear holonomy} or {\em non-trivial infinitesimal holonomy} along $\gamma$ if $h'_A(0)\neq 0$. 
\item The holonomy is {\em sometimes attractive} if there are sequences $(x_n),(y_n)$ which converge to zero such that $x_n>0>y_n$ and   
\begin{align*} 
h_A(x_n) <x_n,   h_A(y_n) >y_n   \textrm{ for all } n\in\N.
\end{align*}
\end{itemize}

\subsection{Tightness of confoliations} \mlabel{s:tightness of confol}
In this section we summarize several facts about tight confoliations. We shall always assume that $\xi$ is a tight confoliation but it is not a foliation by spheres.

If $(M,\xi)$ is tight and $D\subset M$ is an embedded disc such that $\partial D$ is tangent to $\xi$ and $\xi\eing{\partial D}$ is transverse to $TD$, then the disc $D'$ whose existence is guaranteed by \defref{d:tight confol} is uniquely determined. Otherwise there would be a sphere tangent to $\xi$ and by \thmref{t:Reeb stability} $\xi$ would be a foliation by spheres. But we explicitly excluded this case. 

The definition of tightness refers to smoothly embedded discs but of course it has implications for discs with piecewise smooth boundary and slightly more generally for unions of discs.

\begin{lem} \mlabel{l:limit cycles on spheres imply integral disc}
Suppose that $(M,\xi)$ is a tight confoliation and $S\subset M$ is an embedded sphere such that the characteristic foliation $S(\xi)=TS\cap\xi$ has only non-degenerate hyperbolic singularities along a connected cycle $\gamma$ of $S(\xi)$.  Then there are immersed discs $D_i', i=1,\ldots k$ in $M$ which are tangent to $\xi$ and 
$$ 
\partial\left(\bigcup_{i=1}^k D_i\right)=\partial D.
$$
\end{lem}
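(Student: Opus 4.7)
The plan is to reduce to Definition~\ref{d:tight confol} by approximating the disc on $S$ bounded by $\gamma$ with smooth discs whose boundaries are smooth Legendrian curves along which $\xi$ is transverse to the disc. Since $S$ is a sphere and $\gamma$ is an embedded Jordan curve, $\gamma$ bounds a smoothly embedded disc $D\subset S$ with $\partial D=\gamma$. The obstacle to applying tightness directly to $D$ is that at each of the finitely many non-degenerate hyperbolic singularities $p_1,\ldots,p_n$ of $S(\xi)$ on $\gamma$ one has $T_{p_j}D=T_{p_j}S=\xi(p_j)$, so condition (ii) of \defref{d:tight confol} fails along $\partial D$ and $\gamma$ is only piecewise Legendrian, with genuine corners at the $p_j$.

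The key technical step is a local corner-rounding near each $p_j$. Near $p_j$ the characteristic foliation $S(\xi)$ has a non-degenerate saddle whose two separatrices on $\gamma$ are one stable and one unstable leaf. Working in a small ball $U_j\ni p_j$ in $M$, I would replace $D\cap U_j$ by a small smoothly embedded patch $P_j$ whose boundary consists of two short truncated arcs of the incoming and outgoing separatrices of $\gamma$ together with a smooth Legendrian arc $\alpha_j$ joining their free endpoints across $U_j$. The arc $\alpha_j$ will be produced as an orbit of the characteristic foliation on a $C^{\infty}$-small perturbation $\tilde S_j$ of $S$ supported in $U_j$ whose new characteristic foliation has no singularity inside $U_j$; the existence of such a perturbation follows from the saddle normal form together with the characteristic-foliation manipulation techniques that are developed in \secref{s:manipulation}, while the monotonicity interpretation of the confoliation condition from \secref{ss:confolmeaning} ensures that the relevant Legendrian arcs are actually available in $M$ near $p_j$. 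Performing this rounding at every $p_j$ yields a smooth (possibly disconnected) surface $\tilde D=D_1\sqcup\cdots\sqcup D_k$ whose boundary is a disjoint union of smooth Legendrian circles along which $\xi$ is transverse to $\tilde D$; the number $k$ depends on how the rounding pairs up incoming and outgoing separatrices.

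Definition~\ref{d:tight confol} now applies to each $D_i$ separately and produces an embedded integral disc $D_i'\subset M$ with $\partial D_i'=\partial D_i$ and $e(\xi)[D_i\cup D_i']=0$. Letting the rounding supports $U_j$ shrink to the $p_j$, the uniqueness of the integral disc discussed at the start of \secref{s:tightness of confol} (valid since $(M,\xi)$ is not a foliation by spheres, by \thmref{t:Reeb stability}) makes the limit well-defined and yields immersed integral discs whose boundaries collectively recover $\gamma=\partial D$. The step I expect to be the main obstacle is the construction of the Legendrian rounding arcs $\alpha_j$ together with the $C^0$-control needed to take the limit: one must exhibit a family of Legendrian curves $C^0$-close to the corner at $p_j$ along which a perturbed surface is both smooth and transverse to $\xi$, and this relies essentially on the hyperbolic non-degeneracy of the $p_j$ and on the confoliation inequality supplied by \lemref{l:neg-curv}.
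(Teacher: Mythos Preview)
Your approach is essentially the same as the paper's: the paper disposes of the lemma in one sentence, saying it ``follows by considering $C^\infty$-small perturbations of $S$ such that $\gamma$ is approximated by closed leaves of the characteristic foliation of the perturbed sphere.'' Your local corner-rounding near each hyperbolic singularity, carried out by a $C^\infty$-small perturbation of $S$ supported near $p_j$, is exactly this move spelled out in detail, and your limiting step is what the paper leaves implicit in the word ``approximated.''

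One correction worth making: you assert at the outset that $\gamma$ is an embedded Jordan curve bounding a smoothly embedded disc $D\subset S$. This need not be true. A connected cycle containing non-degenerate hyperbolic singularities can pass through the same singularity using both stable (or both unstable) leaves, producing for instance a figure-eight; the ``disc'' it bounds in $S$ may then be a pinched annulus rather than a smooth disc (the paper notes this immediately after the proof). You clearly already sense this, since you allow $\tilde D$ to be disconnected and say $k$ depends on how the rounding pairs separatrices, but the opening sentence should be weakened accordingly. Once that is done the argument goes through and matches the paper.
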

This follows by considering $C^\infty$-small perturbations of $S$ such that $\gamma$ is approximated by closed leaves of the characteristic foliation of the perturbed sphere. We will continue to say that a disc bounds the cycle $\gamma$ although the ``disc'' might have corners or be a pinched annulus, for example. 

The most important criterion to prove tightness is \thmref{t:fillable confol are tight}. It is based on the following definition.
\begin{defn} \mlabel{d:symp filling}
A positive confoliation $\xi$ on a closed oriented manifold $M$ is {\em symplectically fillable} if there is a compact symplectic manifold $(X,\omega)$ such that 
\begin{itemize}
\item[(i)] $\omega\big|_\xi$ is non-degenerate and
\item[(ii)] $\partial X=M$ as oriented manifolds where $X$ is oriented by $\omega\ww\omega$.
\end{itemize}
\end{defn}
In this definition we use the ``outward normal first'' convention for the orientation of the boundary. There are several different notions of symplectic fillings and the \defref{d:symp filling} is often referred to as weak symplectic filling. It is clear from \thmref{t:fillable confol are tight} (and \thmref{t:no polygons if filled}) that the existence of a symplectic filling is an important property of a confoliation. 

Note that if $(M,\xi)$ is symplectically fillable, then the same is true for confoliations $\xi'$ which are sufficiently close to $\xi$ in the $C^0$-topology. 

\thmref{t:fillable confol are tight} can sometimes be extended to non-compact manifolds. Then one obtains the following consequence. 
\begin{prop}[Proposition 3.5.6. in \cite{confol}] \mlabel{p:complete connection} 
If a confoliation $\xi$ is transverse to the fibers of the projection $\R^3\lra \R^2$ and if the induced connection is complete, then $\xi$ is tight. 
\end{prop}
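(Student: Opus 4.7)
The proposition is presented as a consequence of a non-compact extension of \thmref{t:fillable confol are tight}, so the natural plan is to (i) exhibit a symplectic filling of $(\R^3,\xi)$ and (ii) invoke such a non-compact extension of the fillability-implies-tightness argument.

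For step (i), observe that transversality of $\xi$ to the fibres of $\pi$ means $\xi$ is globally the kernel of a $1$-form
$$
\beta = dz + a(x,y,z)\,dx + b(x,y,z)\,dy,
$$
and the confoliation condition becomes $\beta\ww d\beta = C\,dx\ww dy\ww dz$ with $C\ge 0$. Set $X=\R^3\times[0,1)$ with coordinate $t$ on the second factor and define
$$
\omega = dx\ww dy + d(t\beta) = dx\ww dy + dt\ww\beta + t\,d\beta.
$$
A direct calculation yields $d\omega=0$ and $\omega\ww\omega = 2(1+tC)\,dt\ww dx\ww dy\ww dz$, which is nowhere vanishing because $C\ge 0$ and $t\ge 0$. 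On the boundary $t=0$ one has $\omega|_{T\R^3}=dx\ww dy=\pi^*(dx\ww dy)$, and this is non-degenerate on $\xi$ because $\xi$ is transverse to $\partial_z$ (the standard basis $\partial_x-a\partial_z,\partial_y-b\partial_z$ of $\xi$ satisfies $(dx\ww dy)(\partial_x-a\partial_z,\partial_y-b\partial_z)=1$). Thus $(X,\omega)$ is a weak symplectic filling of $(\R^3,\xi)$ in the sense of \defref{d:symp filling}.

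For step (ii), the proof of \thmref{t:fillable confol are tight} in \cite{confol} proceeds by pseudoholomorphic disc techniques: a putative overtwisted disc $D$ provides a boundary constraint for a family of pseudoholomorphic discs in $X$, and a bubbling plus area argument produces a contradiction. To run this argument on the non-compact filling $X$ constructed above one needs uniform $C^0$-bounds on the pseudoholomorphic discs. The completeness hypothesis is precisely what supplies these bounds: by \lemref{l:neg-curv}, parallel transport over any compact region of the base $\R^2$ extends to the whole $z$-fibre, and with a choice of tame almost-complex structure compatible with $\pi$, the $\pi$-projections of the pseudoholomorphic discs stay in a compact subset of $\R^2$ (controlled by $\pi(\partial D)$), while completeness then bounds the $z$-coordinate, enabling Gromov compactness.

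The main difficulty is exactly this non-compact Gromov compactness in step (ii); the product structure of $X$ and the compatibility of $\omega$ with $\pi$ have been tailored to make it tractable. An attractive but more elementary alternative would avoid pseudoholomorphic curves altogether and apply \lemref{l:neg-curv} directly to the planar disc $\Delta$ bounded by $\pi(\partial D)$: since $\partial D$ is a closed horizontal lift, the holonomy $h_{\partial\Delta}$ has a fixed point at the height of $\partial D$, and combined with the monotonicity $h_{\partial\Delta}(x)\le x$ one would try to force integrability of $\xi$ on $\pi^{-1}(\Delta)$ at that height, producing the integral disc $D'$ required by \defref{d:tight confol} (the Euler-class condition is automatic as $\R^3$ is contractible). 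The obstacle to this more direct route is that $\pi|_{\partial D}$ is merely an immersion, so $\pi(\partial D)$ need not be embedded and one must carry out a careful desingularisation of the bounding disc before applying the holonomy lemma.
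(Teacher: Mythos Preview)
The paper does not supply its own proof of this proposition; it simply cites \cite{confol} and remarks that \thmref{t:fillable confol are tight} ``can sometimes be extended to non-compact manifolds'' with this proposition as a consequence. Your proposal follows precisely that indicated route, and the filling you construct is essentially the one used in \cite{confol}: the key observation is that $\pi^*(dx\wedge dy)$ is non-degenerate on $\xi$, and the product $X=\R^3\times[0,1)$ with $\omega=dx\wedge dy+d(t\beta)$ does the job. Your identification of completeness as the ingredient that yields the $C^0$-bounds needed for Gromov compactness in the non-compact setting is also the point of the hypothesis in \cite{confol}.

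One small correction: with $\omega\wedge\omega=2(1+tC)\,dt\wedge dx\wedge dy\wedge dz$ and the ``outward normal first'' convention, the induced boundary orientation at $t=0$ is $-dx\wedge dy\wedge dz$, opposite to the orientation determined by $\beta\wedge d\beta\ge 0$. You should take $X=\R^3\times(-1,0]$ (or equivalently replace $t$ by $-t$) so that the outward normal is $+\partial_t$ and the orientations match. This is cosmetic and does not affect the argument.

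Your alternative elementary approach via \lemref{l:neg-curv} is an interesting idea, and you correctly identify its obstruction: $\pi|_{\partial D}$ need not be an embedding, so there is no obvious disc in the base to which the holonomy lemma applies. This route is not pursued in \cite{confol}.
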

In \cite{confol} one can find an example which shows that the completeness condition can not be dropped.


\section{Properties and modifications of characteristic foliations} \mlabel{s:manipulation}


The characteristic foliations on embedded surfaces in manifolds with contact structures has several properties reflecting the positivity of the contact structure. Moreover, there are methods to manipulate the characteristic foliation by isotopies of the surface. Similar remarks apply when $\xi$ is a foliation. In this section we generalize this to the case when $\xi$ is a confoliation. If $\xi$ is tight, then there are more restrictions on characteristic foliation. Some of these additional restrictions shall be discussed in \secref{s:rigid}.      

\subsection{Neighbourhoods of elliptic singularities}

With our orientation convention positive elliptic singular points lying in the contact region are sources. The following lemma shows that this statement can be interpreted such that it generalizes to confoliation.

\begin{lem} \mlabel{l:little discs}
Let $(M,\xi)$ be a confoliated manifold and $F$ an immersed surface whose characteristic foliation has a non-degenerate positive elliptic singularity $p$. 

There is an open disc $p\in D\subset F$ such that each leaf of the characteristic foliation on $D$ is either a circle or there is a closed transversal of $F(\xi)$ through the leaf. If $p$ is positive respectively negative and $\partial D$ is transverse to $F(\xi)$, then $F(\xi)$ points outwards respectively inwards.
\end{lem}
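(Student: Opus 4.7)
The plan is a purely local dynamical analysis of the smooth line field $F(\xi)$ near the non-degenerate elliptic singularity $p$; the confoliation inequality $\alpha\ww d\alpha\ge 0$ does not enter this lemma, which is appropriate because the statement concerns the germ of the characteristic foliation at an elliptic point and is of the same form for foliations, contact structures and confoliations alike.

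First I pass to local coordinates on $F$ centred at $p$ and direct $F(\xi)$ by a smooth vector field $X$ with $X(p)=0$. Non-degeneracy makes $DX(p)$ invertible, and the elliptic index $+1$ together with the orientation convention from \secref{s:facts} forces the two eigenvalues of $DX(p)$ to have the same real sign, which is positive when $p$ is positive elliptic and negative when $p$ is negative elliptic. In the main hyperbolic case, where the real parts are non-zero, I use an inner product on $T_pF$ adapted to $DX(p)$ to manufacture a quadratic Lyapunov function $V(x)=\|x-p\|^2$ with $dV(X)>0$ (resp.\ $<0$) on a punctured neighbourhood of $p$, and I take $D=\{V<\eps\}$ for sufficiently small $\eps$. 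Its boundary $\partial D$ is a smooth Jordan curve transverse to $F(\xi)$ with the line field pointing outwards (resp.\ inwards), which already proves the last sentence of the lemma.

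For the leaf-by-leaf alternative, strict monotonicity of $V$ along integral curves of $X$ rules out closed orbits inside $D$ and forces every leaf $\ell\subset D$ to be an open arc with $p$ in one of its limit sets and its other endpoint on $\partial D$, so $\partial D$ itself is a closed transversal through $\ell$ and the ``circle'' alternative is in fact vacuous for this choice of disc. The residual case in which the eigenvalues of $DX(p)$ are purely imaginary is handled by the ``circle'' alternative itself, since then the local phase portrait is filled by closed orbits around $p$, and any leaf that fails to be closed still crosses a sufficiently small transverse Jordan curve around $p$. The sole technical ingredient is the classical construction of the adapted Lyapunov function; I foresee no real obstacle, and neither the confoliation hypothesis nor the global geometry of $(M,\xi)$ plays any role in the proof.
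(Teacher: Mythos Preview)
Your approach has a genuine gap in the center case, and your opening claim that the confoliation inequality does not enter is precisely where the argument fails.

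The linearization $DX(p)$ has trace proportional to $(\alpha\wedge d\alpha)(p)$ once $X$ is oriented by the convention in \secref{s:facts}. Thus the ``main hyperbolic case'' you treat first is exactly the case where $p$ lies in the interior of the contact region $H(\xi)$; there your Lyapunov argument is fine and is essentially the classical contact argument cited from \cite{giroux}. But when $d\alpha(p)=0$ (for instance $p$ on the boundary of $H(\xi)$ or in the foliated part), the trace vanishes and the eigenvalues are purely imaginary. In that situation your assertion that ``the local phase portrait is filled by closed orbits around $p$'' is false for a general smooth vector field with a linear center: the nonlinear system may be a weak focus spiralling either inwards or outwards. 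Concretely, for $\alpha=dz+(x\,dx+y\,dy)+\eps(x^2+y^2)(x\,dy-y\,dx)$ one computes $\alpha\wedge d\alpha=4\eps(x^2+y^2)\,dx\wedge dy\wedge dz$, so $\eps\ge 0$ gives a positive confoliation while $\eps<0$ gives a negative one; on $F=\{z=0\}$ the oriented characteristic foliation is $-\partial_\theta+\eps r^{3}\partial_r$ in polar coordinates, a weak source for $\eps>0$ and a weak \emph{sink} for $\eps<0$. Hence for a negative confoliation a positive elliptic singularity can have every nearby transverse circle with $F(\xi)$ pointing \emph{inwards}, contradicting the last sentence of the lemma. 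The lemma is therefore not a statement about arbitrary plane fields, and the sign of $\alpha\wedge d\alpha$ must be used.

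The paper's proof addresses exactly this point: when $d\alpha(p)=0$ it builds a half-ball over a small disc in $F$, rescales so that the confoliation extends continuously across the apex, and then invokes \lemref{l:neg-curv} (the holonomy inequality $h_{\partial D}(x)\le x$ for positive confoliations) to show that the Poincar{\'e} return map along an outward radial segment is non-decreasing. This is what rules out the inward-spiralling weak focus and gives the correct direction along any transverse circle. Your argument would be repaired by inserting such a step in the purely imaginary case; without it the direction claim is unproven.
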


\begin{proof}
We fix a defining form $\alpha$ for $\xi$ on a neighbourhood of $p$. If $d\alpha(p)\neq 0$, then $p$ lies in the interior of the contact region and the claim follows from \cite{giroux}. When $d\alpha(p)=0$, then $F(\xi)$ is transverse to the gradient vector field $R$ of a Morse function which has a critical point of index $0$ or $2$ at $p$. 

In the following we assume that $p$ is positive and $R$ points away from $p$ and coorients $\xi$ away from $p$ (the other cases are similar). The Poincar{\'e} return map characteristic foliation is well defined on a small neighbourhood of $p$ in a fixed radial line starting at the origin (cf. \cite{marsden} for example) and by our orientation convention $F(\xi)$ is oriented clockwise near $p$. We want to show that Poincar{\'e} return map is non-decreasing when the orientation of the radial line points away from $p$. In the following we assume that the Poincar{\'e} return map is not the identity because in that situation our claim is obvious. 

Let $D\subset F$ be a small disc containing $p$ such that $\partial D$ is transverse to $F(\xi)$. Fix a vector field $Z$ coorienting both $F$ and $\xi$. We write $D_z$ for the image of $F$ under the time $z$-flow of $Z$. We may assume that the tangencies of $D_z$ and $\xi$ are exactly the points on the flow line $\gamma_p$ of $Z$ through $p$. 

We extend $R$ to a vector field on a neighbourhood of $p$ tangent to $D_z$ such that it remains transverse to $\xi$ on $U\setminus\gamma_p$. Then the vector field $T=zZ+R$ is transverse to $\xi$ on $\{z\ge 0\}\setminus\{p\}\subset U$. The flow of $T$ exists for all negative times $t$ and every flow line of $T$ approaches $p$ as $t\to-\infty$. Since $d\alpha(p)=0$ there are local coordinates $x,y$ on $D$ around $p$ such that $p$ corresponds to the origin and 
\begin{equation} \label{e:alpha}
\alpha=dz+\left(xdx+ydy\right)+\widetilde{\alpha}
\end{equation}
where $\widetilde{\alpha}$ denotes a $1$-form such that $\widetilde{\alpha}/(x^2+y^2)$ and $\widetilde{\alpha}/z$ remain bounded when one approaches the origin. 

We choose a closed embedded disc $D'$ in $\{z\ge 0\}$ which is transverse to $T$ and $D$ such that $\partial D'=\partial D$ and $D\cup D'$ bound a closed half ball $B$. The half ball is identified with a Euclidean half ball of radius $1$ and we fix spherical coordinates $\rho,\vartheta,\phi$ (where $\rho$ denotes the distance of a point from the origin, $\vartheta$ is the angle between $\gamma_p$ and the straight line connecting the point with the origin) such that $T$ corresponds to $\rho\partial_\rho$. In this coordinate system
\begin{equation} \label{e:pullback alpha}
\alpha=\cos(\vartheta) d\rho+\rho\sin(\vartheta)\left(-d\vartheta+\sin(\vartheta)d\rho+\cos(\vartheta)\rho d\vartheta \right)+\widetilde{\alpha}
\end{equation}
and $\widetilde{\alpha}/(\rho^2\sin^2(\vartheta))$ and $\widetilde{\alpha}/(\rho\cos(\vartheta))$ remain bounded when one approaches the origin. 

Consider a closed disc $D''$ lying in the interior of $D'$. We identify the union of all flow lines of $T$ which intersect $D''$ with $D''\times(0,1]$ such that the second factor corresponds to flow lines of $T$. On $D''\times(0,1]$ the factor $\cos(\vartheta)$ is bounded away from $0$. By \eqref{e:pullback alpha} the plane field $\ker(\alpha)$ extends to a smooth plane field on $D''\times[0,1]$ such that $D''\times\{0\}$ is tangent to the extended plane field. Therefore $\ker(\alpha)$ extends to a continuous plane field on $(D'\times[0,1])\setminus(\partial D'\times\{0\})$ which is a smooth confoliation on $D'\times(0,1]$. 

The holonomy of the characteristic foliation on $\partial D''\times[0,1]$ is non-increasing by \lemref{l:neg-curv} when $\partial D''\times\{0\}$ is oriented as the boundary of $D''$. Our orientation assumptions at the beginning of the proof imply that the characteristic foliation on $\partial D'\times(0,1]$ is oriented in the opposite sense. This implies that the Poincar{\'e}-return map of the characteristic foliation around $p$ is non-decreasing.
\end{proof}


\subsection{Legendrian polygons} \mlabel{ss:legpoly}

In the proof of rigidity theorems for tight confoliations and also in \secref{s:discussion} we well use the notion of basins and Legendrian polygons. In this section we adapt the definitions from \cite{El}.

\begin{defn} \mlabel{d:leg poly}
A {\em Legendrian polygon} $(Q,V,\alpha)$ on a compact embedded surface $F$ is a triple consisting of a connected oriented surface $Q$ with piecewise smooth boundary, a finite set $V\subset \partial Q$ and a differentiable map $\alpha : Q\setminus V \lra F$ which is an orientation preserving embedding on the interior such that
\begin{itemize}
\item[(i)] corners of $Q$ are mapped to singular points of $F(\xi)$,
\item[(ii)] smooth pieces of $\partial Q$ are mapped onto smooth Legendrian curves on $F$,
\item[(iii)] for points $v\in V$ the image $\alpha(b_\pm)$ of the two segments $b_\pm\subset\partial Q\setminus V$ which end at $v$ have the same $\omega$-limit set $\Gamma_v$ and $\Gamma_v$ is not a singular point.
\end{itemize}
A {\em pseudovertex} is a point $x\in\partial Q$ such that $\alpha(x)$ is a hyperbolic singularity and $\alpha|_{\partial Q}$ is smooth at $\alpha(x)$. 
\end{defn}
A hyperbolic singularity $\alpha(x)$ on $\alpha(\partial Q)$ can be a pseudovertex only if both unstable or both unstable leaves are contained in $\alpha(\partial Q)$.

The points in $V$ should be thought of as missing vertices in the boundary of $Q$. \figref{b:legpoly} shows the image $\alpha(Q)$ of a Legendrian polygon $(Q,V,\alpha)$ where $Q$ is a disc, $V=\{v\}\subset\partial Q$ and the corresponding ends of $\partial Q\setminus\{v\}$ are mapped to leaves of the characteristic foliation whose $\omega$-limit set is the closed leaf $\gamma_v$. There are three pseudovertices.
\begin{figure}[htb] 
\begin{center}
\includegraphics{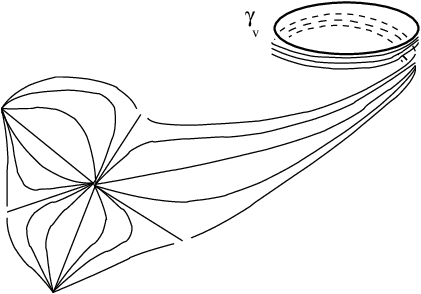}
\end{center}
\caption{\label{b:legpoly}}
\end{figure}
The following definition generalizes the notion of injectivity of a Legendrian polygon to the context of confoliations.
\begin{defn} \mlabel{d:identify}
A Legendrian polygon $(Q,V,\alpha)$ {\em identifies edges} if there are edges $e_1,\ldots,e_l, l\ge 2$ in $\partial Q$ such that $\alpha(e_1)\cup\ldots\cup\alpha(e_l)$ is a cycle containing the image of the pseudovertices lying $e_1,\ldots,e_l$ and leaves of the characteristic foliation such that
\begin{itemize}
\item[(i)] the preimage of each point of the cycle $\gamma_{e_1\ldots e_l}$ except the image of pseudovertices has exactly one element while
\item[(ii)] the preimage of points on the segments and of the images of the pseudovertices consists of exactly two elements.
\end{itemize} 
A Legendrian polygon which does not identify edges is called {\em injective}. 
\end{defn}
Notice that $\alpha$ may identify vertices even if $(Q,V,\alpha)$ is injective. An example of a Legendrian polygon which identifies three edges such that $\gamma_{e_1e_2e_3}$ is not trivial is shown \figref{b:identify}.
\begin{figure}[htb] 
\begin{center}
\includegraphics{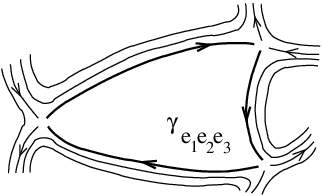}
\end{center}
\caption{\label{b:identify}}
\end{figure}

Because $F$ is compact and the singularities of $F(\xi)$ are isolated the limit sets of individual leaves of the characteristic foliation on $F$ belong to one and only one of the following classes (cf. Theorem 2.6.1. of \cite{flows})
\begin{itemize}
\item fixed points, 
\item closed leaves, 
\item cycles consisting of singular points and leaves connecting them and 
\item quasi-minimal sets, ie. closures of non-periodic recurrent trajectories. 
\end{itemize}
At this point we use the smoothness of $\xi$ (smoothness of class $C^2$ would suffice).

\begin{lem} \mlabel{l:leg poly}
Let $F\subset M$ be a surface and $\xi$ a confoliation on $M$ such that $\partial F$ is transverse to $\xi$ and the characteristic foliation points inwards along $\partial F$. Assume that $U\subset F$ is a submanifold of dimension $2$ such that every boundary component is either is tangent to $F(\xi)$ or transverse to $\xi$ and the characteristic foliation points outwards.

Let $B(U)$ be the union of all leaves of $F(\xi)$  which intersect $U$. Then $\overline{B(U)}$ has the structure of a Legendrian polygon.
\end{lem}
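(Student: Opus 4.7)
The plan is to realize $\overline{B(U)}$ as the image of an abstract compact surface $Q$ with piecewise smooth boundary, by unwrapping the accumulations of leaves of $B(U)$ onto the strata of the frontier $\partial B(U) := \overline{B(U)}\setminus B(U)$.

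First I would establish that $B(U)$ is an open, forward-flow-invariant subset of $F$ with well-controlled frontier. The hypothesis on $\partial F$ keeps the (singular) flow of $F(\xi)$ inside $F$ for positive time, and the hypothesis on $\partial U$ forces leaves to leave $U$ only transversally; hence $B(U)$ is open. Since $F$ is compact and singularities of $F(\xi)$ are isolated, only finitely many singularities can lie in $\overline{B(U)}$, and the classification cited before the lemma statement (Theorem 2.6.1 of \cite{flows}) implies that each leaf in $B(U)$ has an $\omega$-limit set of one of the four enumerated types. Elliptic singularities in the interior of $B(U)$ are controlled by \lemref{l:little discs}; the singularities that matter for the boundary structure of $Q$ are the hyperbolic (and degenerate) ones lying on $\partial B(U)$.

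Next I would build $(Q,V,\alpha)$ by attaching boundary strata to a copy of $B(U)$. The interior of $Q$ is identified with $B(U)$, and $\alpha$ restricted to the interior is the inclusion into $F$. To the interior I attach: one smooth boundary arc for each Legendrian curve in $\partial B(U)$ that is a limit of leaves of $B(U)$ from one side (these arise as separatrices of hyperbolic saddles in $\partial B(U)$ or as tangent boundary components of $U$); one corner of $Q$ for each sector of $B(U)$ meeting a hyperbolic saddle on $\partial B(U)$, with a pseudovertex arising precisely when two such saddle sectors are separated only by a pair of stable (or a pair of unstable) separatrices both lying in $\partial B(U)$, so that $\partial Q$ passes smoothly through the image; and one missing vertex $v \in V$ for each closed leaf or quasi-minimal set $\Gamma\subset\partial B(U)$ onto which leaves of $B(U)$ spiral, the two segments $b_\pm\subset\partial Q\setminus V$ at $v$ corresponding to the two sides of the spiralling region. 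Extending $\alpha$ by the obvious identification of each attached stratum with the corresponding piece of $\partial B(U)$, conditions (i)--(iii) of \defref{d:leg poly} become immediate: corners go to singular points, smooth edges to smooth Legendrian curves, and the two segments $b_\pm$ incident to each $v\in V$ share the common $\omega$-limit set $\Gamma_v = \Gamma$, which is not a singular point.

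The main obstacle is the finiteness of the combinatorial data of $\partial Q$. Finiteness of corners follows from compactness of $F$ and isolation of singularities, which bound the number of hyperbolic saddles in $\partial B(U)$. For the set $V$ of missing vertices one must show that only finitely many closed leaves and quasi-minimal sets of $F(\xi)$ can be approached by a single open flow-saturated set in a smooth flow on the compact surface $F$; this rests on the classical structure theorems for such flows together with the $C^2$ smoothness of the confoliation. Once these finiteness statements are established, verifying the axioms of \defref{d:leg poly} and the Legendrianity of the attached edges is essentially tautological from the construction.
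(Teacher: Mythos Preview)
Your approach---compactifying $B(U)$ directly by attaching boundary strata---is different from the paper's, which starts from $Q_0=U$ and builds $Q$ iteratively: it marks on $\partial U$ the intersections with stable separatrices of hyperbolic singularities (the candidate pseudovertices), then follows the unstable separatrices of those singularities and treats their $\omega$-limits case by case, attaching a $1$-handle to $Q_0$ whenever a pseudovertex is itself part of a cycle. Finiteness in the paper's argument is immediate: each hyperbolic singularity triggers at most one $1$-handle, and each segment of $\partial U$ between consecutive pseudovertices contributes at most one vertex (elliptic, virtual, or a chain of corners). Your ``intrinsic'' compactification would in principle yield the same $Q$, but the paper's route makes the combinatorics and the topology of $Q$ explicit.

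There is a genuine gap in your finiteness argument. You phrase the needed statement as ``only finitely many closed leaves and quasi-minimal sets of $F(\xi)$ can be approached by a single open flow-saturated set'', but this is not what has to be shown, and it is not what controls $|V|$: a single closed leaf can be $\gamma_v$ for several distinct $v\in V$. What actually bounds $|V|$ (and the number of corners) is the finite number of hyperbolic singularities: virtual vertices sit between images of pseudovertices, and the latter are in bijection with intersections of $\partial U$ with stable separatrices. Your sketch never isolates this mechanism. Relatedly, your sentence ``one missing vertex $v\in V$ for each closed leaf or quasi-minimal set $\Gamma\subset\partial B(U)$'' conflates limit sets with accesses to them.

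A second gap is the local structure of $\partial Q$ near cycles through hyperbolic points. You only list ``closed leaf or quasi-minimal set'' as possible $\Gamma_v$, but cycles containing hyperbolic singularities also occur, and more importantly the case where a pseudovertex itself lies on such a cycle (the paper's case (iii)) changes the topology of $Q$: this is exactly where the paper attaches a $1$-handle. In your language this means $B(U)$ can have strictly more boundary components (and higher genus) than $U$, and you have to verify that the abstract compactification near such a cycle is still a surface with piecewise smooth boundary. Your description of corners (``one corner of $Q$ for each sector of $B(U)$ meeting a hyperbolic saddle on $\partial B(U)$'') does not resolve this, because at such a saddle the sectors of $B(U)$ wrap around the cycle rather than closing up locally.
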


\begin{proof}
A preliminary candidate for $(Q,V,\alpha)$ is $Q_0:=U, V_0=\emptyset$ and $\alpha$ the inclusion of $Q_0$. We will define vertices and edges of $Q$ and we will glue $1$-handles to components of $\partial Q_0$. The existence of $\alpha$ will be immediate once the correct polygon with all pseudovertices, corners and elliptic singularities and $V$ are defined.

Each intersection of $\partial U$ with a stable leaf of a hyperbolic singularity of $F(\xi)$ defines a vertex of $Q_0$. We obtain a subset $P_0\subset\partial Q_0$ which will serve as a first approximation for the set of pseudovertices. For $p\in P_0$ we denote the corresponding hyperbolic singularity of $F(\xi)$ by $\alpha(p)$.

First we consider the boundary components $\Gamma$ of $Q_0$ which are transverse to $F(\xi)$ and $\Gamma\cap P_0=\emptyset$. All leaves of $F(\xi)$ passing through $\Gamma$ have the same $\omega$-limit set $\Omega(\Gamma)$ (cf. Proposition 14.1.4 in \cite{katok}). 
 
We claim that $\Omega(\Gamma)$ is an elliptic singularity or a cycle: Assume that $\Omega(\Gamma)$ is quasi-minimal. According to Theorem 2.3.3 in \cite{flows} there is a recurrent leaf $\gamma$ which is dense in $\Omega(\Gamma)$. There is a short transversal $\tau$ of $F(\xi)$ such that $|\gamma\cap\tau|\ge 2$ and there are leaves of $F(\xi)$ passing through $\Gamma$ which intersect $\tau$ between two points $p_1,p_2$ of $\gamma\cap\tau$. Because $\gamma$ is recurrent it cannot intersect $\Gamma$. Let $I\subset\tau$ be the maximal open segment lying between $p_1,p_2$ such that the leaves of $F(\xi)$ induce a map from $I$ to $\Gamma$. It follows (as in Proposition 14.1.4. in \cite{katok}) that the boundary points of $I$ connect to singular points of $F(\xi)$ which have to be hyperbolic by our assumptions. These hyperbolic singularities are part of a path tangent to $F(\xi)$ which connects $\Gamma$ with $\tau$ and this path passes only through hyperbolic singularities. This is a contradiction to our assumption $\Gamma\cap P_0=\emptyset$.

Thus if $P_0\cap\Gamma=\emptyset$, then there are two cases depending on the nature of $\Omega(\Gamma)$.
\begin{itemize}
\item If $\Omega(\Gamma)$ is an elliptic singularity respectively a closed leaf of $F(\xi)$, then we place no vertices on $\Gamma$ and $\alpha$ maps $\Gamma$ to the elliptic point respectively the closed leaf while $\alpha=\alpha_1$ outside a collar of $\Gamma$. 
\item If $\Omega(\Gamma)$ is a cycle containing hyperbolic points, then we place a corner on $\Gamma$ for each time the cycle passes through a hyperbolic singularity. The map $\alpha|_{\Gamma}$ is defined accordingly.  
\end{itemize}

Next we consider a boundary component $\Gamma$ of $Q_0$ which is transverse to $F(\xi)$ and contains an element $p$ of $P_0\cap\Gamma$. Let $\eta$ be an unstable leaf of the corresponding hyperbolic singularity $\alpha(p)$ of $F(\xi)$ and $\Omega(\eta)$ the $\omega$-limit set of $\eta$. Depending on the type of $\Omega(\eta)$ we distinguish four cases. 
\begin{itemize}
\item[(i)] $\Omega(\eta)$ is an elliptic singular point. Then we place an elliptic singularity on $\Gamma$ next to the pseudovertex. 
\item[(ii)] $\Omega(\eta)$ is a cycle of $F(\xi)$ or a quasi-minimal set. Then we place a point $v$ on $\Gamma$ and add this vertex to to the set of virtual vertices $V_0$. 
\item[(iii)] $\Omega(\eta)$ is a hyperbolic point and $\alpha(p)$ is part of a cycle. Some possible configurations in this case are shown in \figref{b:teardrop} (except the top right part). More precisely, the configurations in \figref{b:teardrop} correspond to the case when there are are at most two different hyperbolic singularities of $F(\xi)$ which are connected. This assumption is satisfied for surfaces in a generic $1$-parameter family of embeddings and it would suffice for our applications. 

In the present situation we add a $1$-handle to $Q_0$ along $\Gamma$. This defines a new polygon $Q_1$. We define $\alpha_1: Q_1\lra F$ such that one of two new boundary components is mapped to the cycle containing $\alpha(p)$ and we place a corner on this connected component of $\partial Q_1$ for each time the cycle passes trough a hyperbolic singularity. In particular $p$ is no longer a pseudovertex. Outside a collar of $\Gamma$ we require $\alpha=\alpha_1$.   

\item[(iv)] $\Omega(\eta)$ is a hyperbolic singularity and $\alpha(p)$ is not part of a cycle. Then we place a corner on $\Gamma$ which corresponds to $\Omega(\eta)$. We continue with the unstable leaf $\eta'\subset\overline{B_\omega(\Gamma)}$ of $\Omega(\eta)$ and place corners or vertices on $\Gamma$ depending on the nature of the $\omega$-limit set of $\eta'$. One possible configuration is shown in the top right part of \figref{b:teardrop}. 
\end{itemize}

\begin{figure}[htb]
\begin{center}
\includegraphics{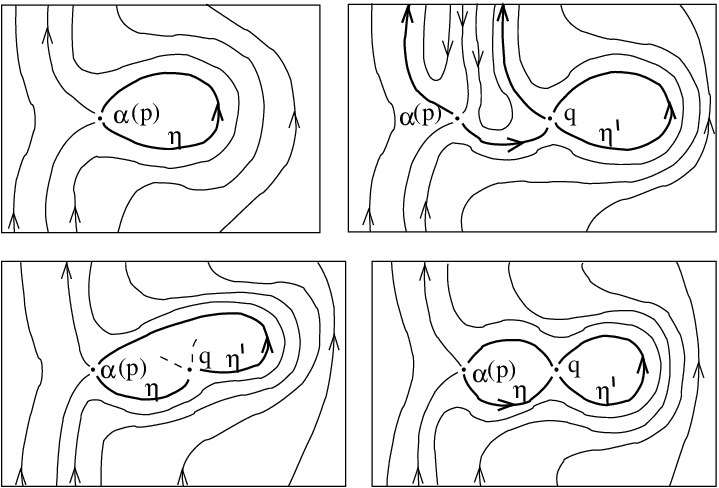}
\end{center}
\caption{\label{b:teardrop}}
\end{figure}
All unstable leaves of hyperbolic singularities in $F(\xi)$ which correspond to elements of $P_0\cap\Gamma$ can be treated in this way. 

We iterate the procedure (starting from the choice of pseudovertices) until no new $1$-handles are added and we have treated all occurring boundary components. This process is finite because each hyperbolic singularity can induce the addition of at most one $1$-handle and there are only finitely many hyperbolic singularities on $F$. In the end we obtain a polygon $Q$. The existence of a finite set $V\subset \partial Q$ and the immersion $\alpha: Q\setminus V\lra F$ with the desired properties follows from the construction.
\end{proof}


\subsection{The elimination lemma} 

There are several possibilities to manipulate the characteristic foliation on an embedded surface. Of course one can always perturb the embedding of the surface so that it becomes generic and that the singularities lie in the interior of the contact region $H(\xi)$ or in the interior of its complement. In addition to such perturbations we shall use two other methods.

The first method discussed in this section is called elimination of singularities and it is well known in the context of contact structures. The second method will be described in \secref{s:cutting}. 

By a $C^0$-small isotopy of the surface $F$ one can remove a hyperbolic and an elliptic singularity which are connected by a leaf $\gamma$ of $F(\xi)$ if the signs of the singularities agree. The characteristic foliation before the isotopy is depicted in \figref{b:vor-elim}. The segment $\gamma$ corresponds to the thickened segment in the middle of \figref{b:vor-elim}.

\begin{figure}[htb] 
\begin{center}
\includegraphics{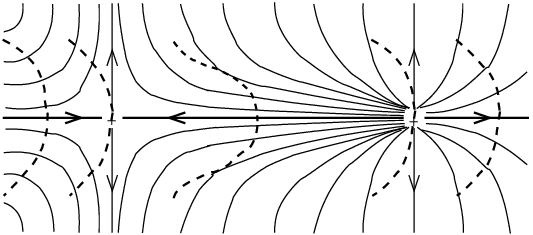}
\end{center}
\caption{\label{b:vor-elim}}
\end{figure}
After the elimination of a pair of singularities as in \lemref{l:elim} the characteristic foliation on a neighbourhood of $\gamma$ looks like in \figref{b:nach-elim}. 
\begin{figure}[htb] 
\begin{center}
\includegraphics{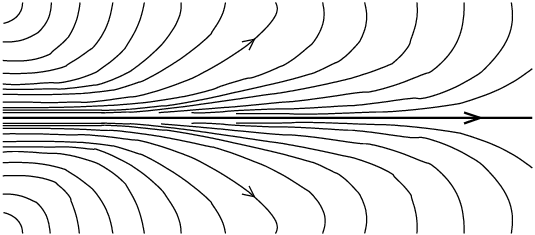}
\end{center}
\caption{\label{b:nach-elim}}
\end{figure}
The elimination of singularities plays an important role in Eliashberg's proof of \thmref{t:TB Ungl tight} for tight contact structures.

Below we give a proof of the elimination lemma which applies to confoliations under a condition on the location of the singularities. Usually the elimination lemma is proved using Gray's theorem but this theorem is not available in the current setting (this is explained in \cite{Aeb} for example). 

\begin{lem} \mlabel{l:elim}
Let $F$ be a surface in a confoliated manifold $(M,\xi)$. Assume that the characteristic foliation on $F$ has one hyperbolic singularity and one elliptic singularity of the same sign which are connected by a leaf $\gamma$ of the characteristic foliation. 

If the elliptic singularity lies in $H(\xi)$, then then there is a $C^0$-small isotopy of $F$ with support in a small open neighborhood $U$ of $\gamma$ such that the new characteristic foliation has no singularities inside of $U$. The isotopy can be chosen such that $\gamma$ is contained in the isotoped surface.
\end{lem}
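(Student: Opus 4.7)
The plan is to construct the isotopy directly as a graph perturbation of $F$ in a tubular neighbourhood of $\gamma$, since Gray's theorem is unavailable for confoliations. First I would choose coordinates $(x,y,z)$ on a neighbourhood $U$ of $\gamma$ in which $\gamma=\{y=z=0,\ 0\le x\le 1\}$, with the elliptic singularity $p_e$ at the origin and the hyperbolic singularity $p_h$ at $(1,0,0)$; I arrange $F\cap U=\{z=0\}$ and $\xi$ transverse to $\partial_z$, so that $\xi=\ker\alpha$ with
\[
\alpha=dz+A(x,y,z)\,dx+B(x,y,z)\,dy.
\]
Here $A$ vanishes identically on $\gamma$ (since $\gamma$ is Legendrian and tangent to $\partial_x$) and $A=B=0$ at the two singularities. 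The hypothesis $p_e\in H(\xi)$ together with Darboux's theorem lets me further normalize $\alpha=dz-y\,dx+x\,dy$ on a smaller neighbourhood $V$ of $p_e$.

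Next I would look for a graph deformation $F_\eps=\{z=\eps\,\phi(x,y)\}$ with $\phi$ supported in $U$ and $\phi(x,0)=0$; this guarantees that $\gamma\subset F_\eps$ and that the isotopy is $C^0$-small. The characteristic foliation on $F_\eps$ is the kernel of $(A(x,y,\eps\phi)+\eps\phi_x)\,dx+(B(x,y,\eps\phi)+\eps\phi_y)\,dy$, so the singularities of $F_\eps(\xi)$ are exactly the common zeros of these two coefficients. Inside $V$ the system reduces to $-y+\eps\phi_x=0$ and $x+\eps\phi_y=0$; choosing $\phi$ so that $\phi_y>0$ on a short interior segment of $\gamma$ forces any solution off $\gamma$ into the region $x<0$, i.e.\ out of the relevant part of $U$. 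A symmetric construction near $p_h$ using the hyperbolic normal form, joined to the elliptic construction by a function whose partial derivatives along $\gamma$ interpolate monotonically, yields a single $\phi$ whose associated coefficient system has no common zero in $U$. The confoliation condition $\alpha\wedge d\alpha\ge 0$ is preserved automatically, since only $F$ is modified.

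The main obstacle is the middle portion of $\gamma$, where $\xi$ may well be a genuine foliation and the classical rotational freedom of a contact structure is absent. There one must verify that the interpolation above is consistent with the constraints imposed by the confoliation condition on the linearizations of $A,B$ transverse to $\gamma$. The key input is \lemref{l:neg-curv}: applied to a thin annular neighbourhood of $\gamma$ fibred over $\gamma$, it shows that the holonomy of the characteristic foliation along $\gamma$ is strictly contracting near the elliptic end — precisely because $p_e\in H(\xi)$ — and this monotonicity is exactly what allows the perturbation to sweep the two singularities together and cancel them without introducing new ones elsewhere in $U$.
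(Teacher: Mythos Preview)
Your proposal identifies the right ingredients --- the contact condition at $p_e$ and \lemref{l:neg-curv} --- but the execution has a genuine gap in the middle region, and this is precisely where the proof lives.

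First, your invocation of \lemref{l:neg-curv} is not well-posed: that lemma concerns the holonomy along the boundary of a \emph{disc} tangent to $\xi$ (or more generally a closed curve bounding a disc transverse to the fibres), not ``the holonomy along $\gamma$'' for an open Legendrian arc. There is no annular neighbourhood fibred over $\gamma$ in any sense that feeds directly into that lemma. Second, your interpolation step is the entire content of the lemma and you have only asserted it. Near $p_h$ there is no Darboux normal form available for $\xi$ (only $p_e$ is assumed to lie in $H(\xi)$), so your ``symmetric construction near $p_h$'' is undefined; and even granting local models at both ends, the claim that a monotone interpolation of $\phi_y$ along $\gamma$ produces a graph with no singularities is exactly what needs proof. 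With $\phi(x,0)=0$ the equation $A+\eps\phi_x=0$ holds identically on $\gamma$, so any zero of $B+\eps\phi_y$ on $\gamma$ is a singularity of $F_\eps(\xi)$; controlling this along the whole of $\gamma$, and simultaneously off $\gamma$, is not automatic from monotonicity alone.

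The paper's argument is organized quite differently. It chooses coordinates with $\alpha=dz+a(x,y,z)\,dy$ and $\partial_x a\ge 0$ (so the $x$-direction is Legendrian), then foliates a neighbourhood of $\gamma$ in $F$ by short \emph{transverse} intervals $I_s$ (the dashed curves in the figure) and works in the vertical strips $T_s=I_s\times(-\eps,\eps)$. The desired isotopy exists if and only if each $I_s$ can be replaced, inside $T_s$, by a curve transverse to $T_s(\xi)$; this reduces to a concrete holonomy condition (``Condition~(s)'') comparing the heights of the two endpoints. When Condition~(s) fails for some $s$, one replaces $I_s$ by an interval $I_s'$ that detours close to the elliptic singularity, and now \lemref{l:neg-curv} is applied legitimately to the \emph{disc bounded by $\overline{I}_s\cup I_s'$}: this shows Condition~(s) for $I_s'$ is at least as good as for $I_s$, and near $p_e\in H(\xi)$ it holds outright. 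The passage through the contact region at $p_e$ is what makes the detoured holonomy strictly decreasing; this is the precise mechanism you were gesturing at, but it requires the transverse-interval decomposition to set up correctly.
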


Note that if $\xi$ is a foliation, then the situation of the lemma cannot arise since all leaves of the characteristic foliations in a neighbourhood of an elliptic singularity are closed. 

\begin{proof}[Proof of \lemref{l:elim}]
We assume that both singularities are positive. There is a neighbourhood $U$ of $\gamma$ with coordinates $x,y,z$ such that $\xi\eing{U}$ is defined by the $1$-form $\alpha=dz+a(x,y,z)dy$ such that the function $a$ satisfies $\partial_x a\ge0$. We assume that $\partial_z$ is positively transverse to $\xi$ and $F$, $\{z=0\}\subset F$ and the $x-$axis of the coordinate system contains $\gamma$.  

It follows that $\xi\eing{U'}$ can be extended to a confoliation $\xi_c$ on $\R^3$ which satisfies the assumptions of \lemref{l:neg-curv} if $U'\subset U$ is a ball and $\partial_x$ is tangent to $\partial U'$ along a circle. Since every step in the proof will take place in a fixed small neighbourhood of $\gamma$ we can apply \lemref{l:neg-curv} without any restriction. We choose $\eps>0$ so that $x\subset(-\eps,\eps)\subset U'$ for all $x$ in a neighbourhood $V\subset U'$ of $\gamma$. For a path $\sigma\subset V$ we will consider the hypersurface $T_\sigma=\sigma\times(-\eps,\eps)$. By our choices $T_\sigma(\xi)$ is transverse to the second factor of $T_\sigma$. 

Choose a smooth foliation $\II$ of a small neighbourhood (contained in $U$) of $\gamma$ in $F$ by intervals $I_s, s\in[-1,1]$ as indicated by the dashed lines in \figref{b:vor-elim}. We choose $\II$ such that it has the following properties.
\begin{itemize}
\item[(i)] Two intervals $I_{s_0},I_{s_1}$ pass through the singularities. 
One of them is tangent to the closure of the unstable separatrices of the hyperbolic singularity.
\item[(ii)] All intervals intersecting the interior of $\gamma$ have exactly two tangencies with the characteristic foliation on $F$. The intervals which do not intersect the closure of $\gamma$ are transverse to the characteristic foliation.
\item[(iii)] Let $\sigma$ by a path in $F$ which is shorter than $\delta$ with respect to a fixed auxiliary Riemannian metric. If $\delta>0$ is small enough, then the image of $(\sigma(0),0 )$ under the holonomy along $T_\sigma$ is defined. We assume that the length of each $I_s$ is smaller than $\delta$.  
\end{itemize}

We parameterize the leaf $I_s$ by $\sigma_s: [0,1] \lra F$ such that the intersection of $\gamma$ with $I_s$ is positive (or empty), ie. in \figref{b:vor-elim} the leaves of $\II$ are oriented towards the upper part of the picture. 

The following figures show neighbourhoods of $I_s$ in $T_s:=T_{\sigma_s}$ for certain $s\in[-1,1]$. In each of these figures the dotted line represents $I_s$, oriented from left to right. \figref{b:vorcrittrans} corresponds to a leaf $I_s$ which does not intersect $\gamma$. Then $I_s$ is nowhere tangent to the characteristic foliation on $T_s$. By our orientation conventions and the choice of $\II$ the slope of $\xi\cap T_s$ is negative along $I_s$.

\begin{figure}[htb] 
\begin{center}
\includegraphics{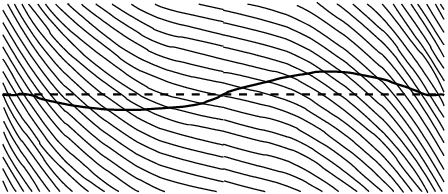}
\end{center}
\caption{\label{b:vorcrittrans}}
\end{figure}

The leaves $I_{s_0},I_{s_1}$ contain the singular points of the characteristic foliation on $F$. As shown in \figref{b:critpointtrans} there is exactly one tangency of $F$ and the characteristic foliation on $T_{s_0},T_{s_1}$. The slope of the characteristic foliation on $T_{s_0},T_{s_1}$ is negative along $I_{s_0},I_{s_1}$ except at the point of tangency.

\begin{figure}[htb] 
\begin{center}
\includegraphics{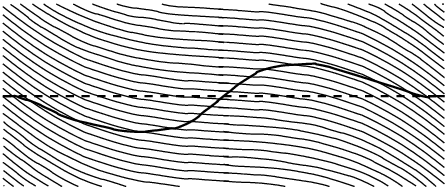}
\end{center}
\caption{\label{b:critpointtrans}}
\end{figure}

Finally, the leaves $I_s,s\in[s_0,s_1]$ intersect the interior of $\gamma$ and $I_s$ is tangent to $F(\xi)$ in exactly two points. This is shown in \figref{b:middletrans}. Between the two points of tangency, the slope of the characteristic foliation on $T_s$ is positive along $I_s$, it is zero at the tangencies and negative at the remaining points of $I_s$.

\begin{figure}[htb] 
\begin{center}
\includegraphics{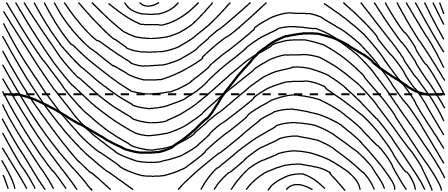}
\end{center}
\caption{\label{b:middletrans}}
\end{figure}
We want to find a smooth family of isotopies of the intervals $I_s$ within $T_s$ such that 
\begin{itemize}
\item[(i)] for all $s$ the isotopy is constant near the endpoints of $I_s$ and
\item[(ii)] after the isotopy, the intervals $I_s$ are transverse to the characteristic foliation on $T_s$. 
\end{itemize}

This will produce the desired isotopy of $F$. Such a family of isotopies exists if and only if the following condition (s) is satisfied for all $s\in[-1,1]$:

\smallskip

{\bf Condition (s):}   The image of $\sigma_s(0)\times\{0\}$ under the holonomy along $\sigma_s$ lies {\em below} the other endpoint $\sigma_s(1)\times\{0\}$ of $I_s$ or  the leaf of $T_s(\xi)$ which passes through $(\sigma_s(0),0)$ exits $T_s$ through $(\sigma_s,-\eps)\subset \partial T_s$.

\smallskip

Note that this condition is automatically satisfied for $s\in[-1,1]$ if $I_s$ does not intersect $\gamma$ or this intersection point is close enough to a singularity of the characteristic foliation.

If (s) is not satisfied for all $s$, then we will replace $\II$ by another foliation $\II'$ by intervals  $I_s'$ (the corresponding embeddings of intervals are denoted by $\sigma_s'$) as follows:

\begin{itemize}
\item[(i)] If $I_s$ does not intersect $\gamma$, then $\sigma_s=\sigma'_s$. $I_s'$ intersects $\gamma$ if and only if $I_s$ does.
\item[(ii)] $I_s'$ is tangent to the characteristic foliation on $F$ along two closed intervals (which may be empty or points). The complement of these two intervals is the union of three intervals such that each of these intervals is mapped to a curve of length $\le\delta$.
\item[(iii)] $I_s$ and $I'_s$ coincide on those intervals where the characteristic foliation on $T_s$ has negative slope for all $s\in[-1,1]$.
\item[(iv)] $\overline{I}_s\cup I'_s$ bounds a positively oriented disc (here $\overline{I}_s$ denotes the interval $I_s$ with the opposite orientation). 
\end{itemize}

In \figref{b:vor-elim2} the dashed line corresponds to $I_s'$ while the thick solid line represents $I_s$.

\begin{figure}[htb] 
\begin{center}
\includegraphics{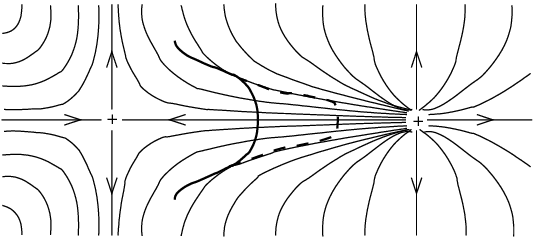}
\end{center}
\caption{\label{b:vor-elim2}}
\end{figure}

For $s\in(s_0,s_1)$ we define a curve $I''_s$ by replacing the segment of $I_s$ lying between the tangencies with $F(\xi)$ by two segments of leaves of $F(\xi)$ whose $\Alpha$-limit set is the elliptic singularity in $V$. Then the holonomy on $I''_s\times(-\eps,\eps)$ clearly satisfies the condition (s). This shows that for each $s$ one can choose $I'_s$ with the desired properties. 

Moreover, whenever $I_s$ satisfies (s) then so does $I'_s$ by \lemref{l:neg-curv}. It follows that we can choose the foliation $\II'$ such the leaf $I'_s$ of $\II'$ satisfies (s) for all $s\in[-1,1]$. The desired isotopy of $F$ can be constructed such that the surface is transversal to $\partial_z$ throughout the isotopy. 
\end{proof}

The following lemma is a partial converse of the elimination lemma. Because is only concerned with the region where $\xi$ is a contact structure we omit the proof. It can be found in \cite{El,giroux}.

\begin{lem} \mlabel{l:create}
Let $F\subset M$ be an embedded surface in a confoliated manifold and $\gamma\subset F$ a compact segment of a nonsingular leaf of the characteristic foliation on $F$ which lies in the contact region of $\xi$. 

Then there is a $C^0$-small isotopy of $F$ with support in a little neighbourhood of $\gamma$ such that after the isotopy there is an additional pair of singularities (one hyperbolic and ons  elliptic) having the same sign. The isotopy can be performed in such a way that $\gamma$ is still tangent to the characteristic foliation and connects the two new singularities.
\end{lem}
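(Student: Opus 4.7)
The plan is to reduce the problem to a local model in a Darboux chart around a subsegment of $\gamma$ and to apply Giroux's flexibility theorem for characteristic foliations. Since $\gamma$ lies in the contact region $H(\xi)$, I would first fix a compact subsegment $\gamma_0$ in the interior of $\gamma$ and invoke the standard neighbourhood theorem for nonsingular Legendrian arcs to choose Darboux coordinates on a neighbourhood $U \subset H(\xi)$ of $\gamma_0$ in which $\xi = \ker(dz - y\,dx)$ and $\gamma_0 = [0,1] \times \{0\} \times \{0\}$. After a preliminary $C^\infty$-small isotopy of $F$ supported in $U$ and fixing $\gamma_0$ pointwise, I may assume that $F \cap U$ is in standard position, so that an open disc $D \subset F \cap U$ containing $\gamma_0$ carries a product characteristic foliation by arcs parallel to $\gamma_0$ and transverse to a fixed pair of short arcs across $\gamma_0$.

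Next I would construct a target singular foliation $\mathcal{F}'$ on $F$ that agrees with $F(\xi)$ outside $D$ and inside $D$ is a Morse-type singular foliation with exactly one non-degenerate elliptic and one non-degenerate hyperbolic singularity lying on $\gamma_0$ and connected by a segment of $\gamma_0$. The sign of each created singularity is determined by the coorientation of $\xi$ relative to $F$ near $\gamma_0$, which is constant along the small disc $D$; consequently both singularities automatically have the same sign. By construction the foliation $\mathcal{F}'$ matches $F(\xi)$ smoothly near $\partial D$, and $\gamma_0$ remains a singular leaf of $\mathcal{F}'$ which connects the two singularities.

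Finally, I would apply Giroux's flexibility theorem for characteristic foliations (in the form used in \cite{El, giroux}): inside the contact region, any singular foliation on $D$ sufficiently close to $F(\xi)|_D$ and agreeing with $F(\xi)$ near $\partial D$ is the characteristic foliation of some $C^\infty$-small normal perturbation of $F$ supported in a collar of $D$. Applying this to $\mathcal{F}'$ produces the desired $C^0$-small isotopy of $F$, after which the arc $\gamma_0 \subset \gamma$ contains the two new singularities and is tangent to the new characteristic foliation.

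The main technical obstacle is arranging the normal form for $F$ near $\gamma_0$ and constructing $\mathcal{F}'$ so that the hypotheses of Giroux's realization lemma are met, in particular matching $F(\xi)$ to sufficient order near $\partial D$ and prescribing the non-degeneracy and signs of the created singularities. Once these local pictures are in place, the realization step is automatic from standard contact-topological flexibility, which explains the author's decision to refer the reader to \cite{El, giroux} for the details.
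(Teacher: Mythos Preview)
Your proposal is correct and is exactly the kind of argument the paper has in mind: the paper omits the proof entirely, noting that since $\gamma$ lies in the contact region the result is purely contact-topological and can be found in \cite{El,giroux}. Your outline---reduce to a Darboux neighbourhood of a subarc of $\gamma$, write down the target singular foliation with a canceling elliptic/hyperbolic pair on $\gamma_0$, and invoke Giroux's realization lemma to produce the $C^0$-small isotopy---is the standard route and matches what those references contain.
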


We end this section with mentioning a particular perturbation of an embedded surface $F$ which also appears in \cite{El}. Consider an injective Legendrian polygon $(Q,V,\alpha)$ such that there is an elliptic singularity $x$ of $F(\xi)$ such that $\alpha^{-1}(x)$ consists of more than one vertex of $Q$.  

Then $F$ can be deformed by a $C^0$-small isotopy near $x$ into a surface $F'$ such that there is a map $\alpha' : Q \lra F'$ with the same properties as $\alpha$ which coincides with $\alpha$ outside a neighbourhood of $\alpha^{-1}(x)$ and $\alpha'$ maps all vertices in $\alpha^{-1}(x)$ to different elliptic singularities of $F'(\xi)$, cf. \figref{b:split}.

\begin{figure}[htb] 
\begin{center}
\includegraphics{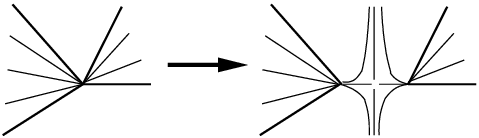}
\end{center}
\caption{\label{b:split}}
\end{figure}

\subsection{Modifications in the neighbourhood of integral discs} \mlabel{s:cutting}

The second method for the manipulation of the characteristic foliation on an embedded surface $F$ is by surgery of the surface along a cycle $\gamma$ which is part of an integral disc of $\xi$. The latter condition is satisfied when the confoliation is tight and $\gamma$ bounds a disc in $F$ (for example when $F$ is simply connected). 

While the elimination lemma is used in the proof of the Thurston-Bennequin inequalities for embedded surfaces in tight contact manifolds, the following lemmas adapt lemmas appearing in \cite{rous, Th} (cf. also \cite{cc}) which are used in the proof the the existence of the Roussarie-Thurston normal form for surfaces in $3$-manifolds carrying a foliation without Reeb components. The existence of this normal forms implies the Thurston-Bennequin inequalities for such foliations.

\begin{lem} \mlabel{l:cut} 
Let $F$ be a surface and $\gamma$ a closed leaf of the characteristic foliation on $F$ such that there is a disc $D$ tangent to $\xi$ which bounds $\gamma$ and has $F\cap D=\gamma$. 

Then there is a surface $F'$ which is obtained from $F$ by removing an annulus around $\gamma$ and gluing in two discs $D_+,D_-$. The discs can be chosen such that the $D_+(\xi),D_-(\xi)$ have exactly one elliptic singularity in the interior of $D_+,D_-$. 

If the germ of the holonomy $h_{\partial D}$ has non trivial holonomy along $\gamma$ on one side of $\gamma$, then we can achieve that the elliptic singularity on the disc on that side lies in the interior of the contact region and every leaf of the characteristic foliation on the new discs connects the singularity with the boundary of the disc.
\end{lem}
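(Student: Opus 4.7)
\textbf{The plan} is to carry out the surgery inside a tubular neighbourhood of $D$ adapted to the confoliation, to build $D_\pm$ as small graphical bumps over $D$, and to analyse the resulting characteristic foliation using the holonomy estimate of \lemref{l:neg-curv} together with \lemref{l:little discs}.

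First I would pick a vector field $Z$ defined near $D$ that is transverse to both $\xi$ and $D$; such a $Z$ exists because $\xi$ is cooriented and $D$ is a compact integral disc. The flow of $Z$ identifies a tubular neighbourhood $N$ of $D$ with $D\times(-\eps,\eps)$ so that $Z=\partial_z$ and $D=D\times\{0\}$, and after rescaling the defining form I arrange
\[
\alpha = dz+\beta_z
\]
on $N$, where $\beta_z$ is a smooth family of $1$-forms on $D$ satisfying $\beta_0\equiv 0$ (this encodes $TD\subset\xi$ at $z=0$). Transversality of $F$ to $\xi$ along $\gamma$ together with $F\cap D=\gamma$ lets me straighten a bicollar of $\gamma$ in $F$ to $\gamma\times(-\delta,\delta)\subset\partial D\times(-\delta,\delta)\subset N$. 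Excising the annulus $A=\gamma\times[-\delta/2,\delta/2]$ from $F$ leaves two boundary circles $\gamma_\pm=\gamma\times\{\pm\delta/2\}$. I cap them by the graphs $D_\pm$ of smooth functions $\phi_\pm\colon D\to\R$ equal to $\pm\delta/2$ on a collar of $\partial D$ and having a unique non-degenerate interior critical point $p_\pm$ (a maximum for $\phi_+$, a minimum for $\phi_-$), and smooth the gluing along $\gamma_\pm$ to obtain $F'$. A $\xi$-tangency of $D_+$ at $x\in D$ is a zero of the $1$-form $\mu(x):=d\phi_+(x)+\beta_{\phi_+(x)}\eing{T_xD}$; since $\beta_z=O(z)$ and $\phi_+$ is $O(\delta)$, the perturbation $\beta_{\phi_+}$ is $C^1$-small, so the implicit function theorem produces a unique non-degenerate zero of $\mu$ near $p_+$ of index $+1$, i.e.\ an elliptic singularity of the appropriate sign. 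The analogous argument applies to $D_-$.

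For the refinement in the presence of non-trivial holonomy on, say, the positive side, the second half of \lemref{l:neg-curv} combined with tightness rules out the possibility that a one-sided neighbourhood of $D$ in $\{z>0\}$ is foliated by discs, which forces the contact region $H(\xi)$ to accumulate on $\gamma$ from within $\{z>0\}\subset N$. I then slide the apex of $\phi_+$ until $(p_+,\phi_+(p_+))\in H(\xi)$, placing the elliptic singularity in the interior of the contact region. It remains to show that no closed leaves appear in $D_+(\xi)$: by Poincar\'e--Bendixson any such closed leaf $c$ would enclose $p_+$, hence be a Legendrian circle bounding a subdisc of $D_+$, and tightness together with \lemref{l:limit cycles on spheres imply integral disc} would yield an integral disc $D'$ of $\xi$ with $\partial D'=c$. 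However, $c\subset D_+$ lies close to $D$ and projects to a loop in $D$, so the non-triviality of $\xi$-holonomy along $\gamma$ propagates through \lemref{l:neg-curv} across $N$ to yield strictly contracting holonomy along $c$, contradicting the existence of $D'$. Combined with the local picture near $p_+$ provided by \lemref{l:little discs}, this forces every leaf of $D_+(\xi)$ to run from $p_+$ to $\partial D_+$. The main obstacle in the whole plan is exactly this no-closed-orbit argument: transporting the non-triviality of holonomy from $\gamma\subset D$ to a hypothetical Legendrian cycle $c\subset D_+$ requires a careful choice of $\phi_+$ (e.g.\ a radial bump whose level sets in $N$ can be compared to the $\xi$-parallel transport along $\gamma$), and this is where the bulk of the technical work resides.
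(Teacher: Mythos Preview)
Your overall surgery picture is reasonable, but the no-closed-leaf step has a genuine gap, and the paper's construction is organised quite differently precisely to avoid it.

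The lemma does \emph{not} assume that the ambient $\xi$ is tight, so you cannot invoke tightness (or \lemref{l:limit cycles on spheres imply integral disc}, which in any case is stated for spheres) to conjure an integral disc $D'$ with $\partial D'=c$. Even granting tightness, the contradiction you describe does not occur: strictly contracting one-sided holonomy along $c$ is perfectly compatible with $c$ bounding an integral disc --- \lemref{l:neg-curv} only asserts $h_{\partial D'}(x)\le x$, and strict inequality is allowed. So ``contracting holonomy contradicts the existence of $D'$'' is not a valid inference. The claim that non-trivial holonomy along $\gamma=\partial D$ ``propagates'' to an arbitrary Legendrian cycle $c\subset D_+$, whose projection to $D$ need not be near $\partial D$, is also not justified.

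The paper circumvents both issues by building $D_+$ not as a graph of a bump function but from \emph{horizontal lifts} of a radial foliation on $D$ centred at a point $x$ for which a short vertical segment $\{x\}\times[\eta,\eta']$ lies in $H(\xi)$ (such a segment exists by \lemref{l:neg-curv} exactly because the holonomy is non-trivial). This makes $D_+(\xi)$ radial by construction, so the single elliptic singularity is automatic and the collar issue you would face with a bump (where $d\phi_+=0$ but $\beta_{\delta/2}$ may vanish) never arises. For the absence of closed leaves, the paper observes that $\xi\eing{D\times[0,\eps]}$ extends to a confoliation on $\R^3$ that is a complete connection, hence tight by \propref{p:complete connection}. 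In that model any closed leaf $\tau\subset D_+$ must bound an integral disc, and the \emph{only} candidate is the disc $D_\tau$ swept by horizontal lifts through $\tau$; but $D_\tau$ meets the contact region near $(x,\eta)$, a contradiction. This local passage to a complete connection on $\R^3$ --- supplying tightness for free and pinning down the integral disc uniquely --- is the missing idea in your argument.
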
  

\begin{proof}
We will construct the upper disc $D_+$ in the presence of non-trivial holonomy on the upper side of $\gamma\subset F$. The construction of the other disc is analogous.

Fix a closed neighbourhood $U\simeq D\times(-2\eps,2\eps), \eps>0$ of $D$ such that the fibers of $D\times(-\eps,\eps)$ are positively transverse to $\xi$. We assume $F\cap U=\partial D\times(-2\eps,2\eps)$ and we identify $D\times\{0\}$ with the unit disc in $\R^2$. 

By \lemref{l:neg-curv} there is $x\in D$ and $0<\eta<\eta'<\eps$ such that $x\times[\eta,\eta']$ is contained in the interior of the contact region of $\xi$. On $D$ we consider the singular foliation consisting of straight lines starting at $x$. For $t\in[\eta,\eta']$ let $D_t$ be the disc formed by horizontal lifts of leaves of the singular foliation on $D$ with initial point $(x,t)$. By Gray's theorem we may assume that $\xi$ is generic near $x\times[\eta,\eta']$. Then $D_t(\xi)$ is homeomorphic to the singular foliation by straight lines on $D$ and the singularity is non-degenerate for all $t\in[\eta,\eta']$.  

Let $\rho : [\eta,\eta']\lra[1/2,1]$ be a monotone function which is smooth on $(\eta,\eta']$ such that $\rho\equiv 1$ near $\eta'$ and the graph of $\rho$ is $C^\infty$-tangent to a vertical line at $(\eta,1/2)$. We denote the boundary of the disc of radius $\rho(t)$ in $D_t$ by $S_t$. The union of all $S_t,t\in[\eta,\eta']$ with the part of $D_\eta$ which corresponds to the disc with radius $1/2$ is the desired disc $D_+$. We remove the annulus $\partial D\times[0,\eta']$ from $F$ and add $D_+$.

By construction the only singular point of $D_+(\xi)$ is $(x,\eta)$, the singularity is elliptic and contained in the contact region. Its sign depends on the orientation of $F$. 

In order to show that all leaves of $D_+(\xi)$ accumulate at the elliptic singularity it is enough to show that there are no closed leaves on $D_+$. Assume that $\tau$ is a closed leaf of $D_+(\xi)$. Let $D_\tau$ be the disc formed by lifts of the leaves of the radial foliation on $D$ with initial point on $\tau$. 

The restriction of $\xi$ to $D\times[0,\eps]$ extends to a confoliation $\widetilde{\xi}$ on $\R^2\times\R$ which is a complete connection. By \propref{p:complete connection} $\widetilde{\xi}$ is tight. Hence $\tau$ must bound an integral disc of $\xi'$. Now $D_\tau$ is the only possible candidate for such a disc. But $D_\tau$ cannot be an integral disc of $\widetilde{\xi}$ because it intersects the contact region of $\widetilde{\xi}$ (or equivalently $\xi$) in an open set. This contradiction finishes the proof.
\end{proof}

The following two lemmas are analogues to the elimination lemma in the sense that we will remove pairs of singularities. Note however that new singularities can be introduced. In particular in \lemref{l:cut3} we will obtain a surface whose characteristic foliation is not generic. However this will play no role in later applications since the locus of the non-generic singularities will be isolated from the rest of the surface by closed leaves of the characteristic foliation.  

\begin{lem} \mlabel{l:cut2}
Let $F$ be a surface in a confoliated manifold, $D$ an embedded disc tangent to $\xi$ and  $D\cap F=\gamma$ is a cycle containing exactly one hyperbolic singularity $x_0$.

Then there is a surface $F'$ which coincides with $F$ outside of a neighbourhood of $\gamma$ and is obtained from $F$ by removing a tubular neighbourhood of $\gamma$ and gluing in two discs $D_+,D_-$. The characteristic foliation of $F'$ has no singularities on $D_-$ and one elliptic singularity on $D_+$ whose sign is the opposite of the sign of $x_0$. 
\end{lem}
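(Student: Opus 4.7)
The construction adapts that of \lemref{l:cut}, where the absence of a singularity on the closed leaf $\gamma$ forced an elliptic singularity on both new discs. In our setting, the hyperbolic singularity $x_0$ on $\gamma$ is removed by the surgery and contributes $-1$ to the Poincar\'e--Hopf index count; hence the index bookkeeping permits one new disc (say $D_+$) to carry an elliptic singularity of index $+1$ while the other, $D_-$, is made nonsingular.

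I would first set up a tubular neighborhood $U\simeq D\times(-2\eps,2\eps)$ of $D$ with fibers positively transverse to $\xi$ and identify $D=D\times\{0\}$. Then $\xi|_U=\ker(dz+\beta)$ for a $1$-form $\beta$ with $\beta|_{D\times\{0\}}=0$, reflecting that $D$ is integral. At a smooth point of $\gamma$ the surface $F$ crosses $D$ transversely, and at $x_0$ we have $T_{x_0}F=\xi_{x_0}=T_{x_0}D$ with a saddle-type quadratic tangency between $F$ and $D$ (this is the hyperbolic condition on $F(\xi)$). Choose a small open tubular neighborhood $N$ of $\gamma$ in $F$; after a $C^0$-small perturbation of $F$ supported near $x_0$, the two boundary curves of $F\setminus N$ can be arranged to lie one in $\{z>0\}$ and one in $\{z<0\}$, call them $c_+$ and $c_-$ respectively.

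The disc $D_+$ is constructed exactly as in the proof of \lemref{l:cut}: by \lemref{l:neg-curv} I pick $x\in D$ and $0<\eta<\eta'<\eps$ such that $\{x\}\times[\eta,\eta']$ lies in the contact region of $\xi$, and assemble $D_+$ from the radially-lifted discs $D_t$ ($t\in[\eta,\eta']$) truncated at suitable radii so that its outer boundary can be isotoped to $c_+$. The only characteristic singularity on $D_+$ is the non-degenerate elliptic point $(x,\eta)$, and an orientation check comparing the induced orientation of $D_+$ (from $F$) with the coorientation of $\xi$, via the saddle model at $x_0$, shows that this elliptic has sign opposite to that of $x_0$. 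For $D_-$, I take a graph $\{z=g(p):p\in D\}$ with $g\le 0$, $g\equiv 0$ near $\partial D$, and $\partial D_-$ isotopic to $c_-$. A characteristic singularity of $D_-$ at $(p,g(p))$ corresponds to the pointwise equation $dg_p=-\beta_{(p,g(p))}|_{T_pD}$; since $\beta$ vanishes on $D\times\{0\}$ but becomes nonzero on the contact region nearby, one can choose $g$ so that this forbidden graph is globally avoided, giving $D_-$ a nonsingular characteristic foliation. Setting $F':=(F\setminus N)\cup D_+\cup D_-$ then produces the required surface.

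The main obstacle is in the construction of $D_-$: one must exhibit a function $g$ with the prescribed boundary data whose differential avoids the graph of $-\beta$ globally, while respecting the generic Poincar\'e--Hopf constraint that a nonsingular line field on a disc has two boundary tangencies. Without the hyperbolic point $x_0$ on $\gamma$ the Euler-characteristic count would force a singularity on $D_-$ as well, as in \lemref{l:cut}; the saddle geometry at $x_0$ provides precisely the extra topological room needed to globally solve the nonsingularity condition on $D_-$.
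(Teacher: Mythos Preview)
Your treatment of $D_+$ matches the paper's. The gap is in $D_-$.

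The graph ansatz $D_- = \{z = g(p) : p \in D\}$ with $g \equiv 0$ near $\partial D$ cannot work as written: on that annulus the graph coincides with the integral disc $D$, so every point there is tangent to $\xi$ and hence singular for $D_-(\xi)$. More seriously, you never actually construct $g$; you isolate the nonsingularity condition $dg_p \neq -\beta_{(p,g(p))}$ as the main obstacle and then appeal only to index bookkeeping (``the saddle geometry at $x_0$ provides the extra topological room'') to assert it is globally solvable. A Poincar\'e--Hopf count is necessary but not sufficient; it does not produce the disc. Nor do you address how a graph over $D$ is to be glued to $c_- \subset F$, given that $F$ meets $D$ transversally along the smooth part of $\gamma$.

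The paper's construction of $D_-$ is quite different and fully explicit. One picks a Legendrian arc $\sigma$ running from $x_0$ across $D$ to a second point $x_1 \in \gamma$, not tangent to the separatrices, and considers the strip $T_\sigma = \sigma \times (-\eps,\eps)$ with second factor transverse to $\xi$. The key picture is $T_\sigma \cap F$: it has a single quadratic tangency with $T_\sigma(\xi)$ at $x_0$ and a transverse crossing at $x_1$. Precisely because there is only one tangency, one can draw a curve $\hat\sigma$ in $T_\sigma$ below $\sigma \times \{0\}$ which is everywhere transverse to $T_\sigma(\xi)$ and joins two points $y_0,y_1 \in F$ on either side. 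Then $D_-$ is swept out by a family of arcs $\hat I_s$ in the strips $I_s \times (-\eps,\eps)$ (where the $I_s$ foliate a slightly larger disc $\widetilde D$ transversally to $\sigma$), each tangent to $\xi$ only where it crosses $\hat\sigma$ and transverse to $\xi$ elsewhere; along $\hat\sigma$ the other tangent direction $\hat\sigma'$ is transverse to $\xi$, so $D_-$ is nowhere tangent to $\xi$. Thus the hyperbolic point enters not as an abstract index correction but as the concrete geometric reason the transverse arc $\hat\sigma$ exists.
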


\begin{proof}
The assumptions of the lemma imply that $x_0$ has a stable and an unstable leaf which do not lie on $D$.

Choose a simple curve $\sigma\subset D$ connecting $x_0$ to another boundary point $x_1$ of $D$ such that $\sigma$ is not tangent to a separatrix of $x$ and extend $\sigma$ to a Legendrian curve such that $x_0,x_1$ become an interior points of $\sigma$. Fix a product neighbourhood $U\simeq\widetilde{D}\times(-\eps,\eps)$ of $D$ with the following properties. 
\begin{itemize}
\item[(i)] $D$ is contained in the interior of the disc $\widetilde{D}\times\{0\}$.
\item[(ii)] There is a simple Legendrian curve $\sigma\subset \widetilde{D}$ containing $x_0$ in its interior and intersecting $\partial D$ respectively $\partial \widetilde{D}$ in two points such that $\gamma$ is nowhere tangent to $\sigma$ respectively $\partial\widetilde{D}$ is transverse to $\sigma$.
\item[(iii)] The fibers of the projection $\pi:\widetilde{D}\times(-\eps,\eps)\lra \widetilde{D}$ are transverse to $\xi$. 
\end{itemize}
Now consider $T_\sigma=\sigma\times(-\eps,\eps)$. The intersection $T_\sigma\cap F$ has a non-degenerate tangency with $T_\sigma(\xi)$ in $x_0$and meets $\sigma\times\{0\}$ transversely in $x_1$. We choose two points $y_0,y_1\in T_\sigma\cap F$ such that $x_0,x_1$ lie between $\pi(y_0)$ and $\pi(y_1)$, as indicated in \figref{b:cut2}.

\begin{figure}[htb] 
\begin{center}
\includegraphics{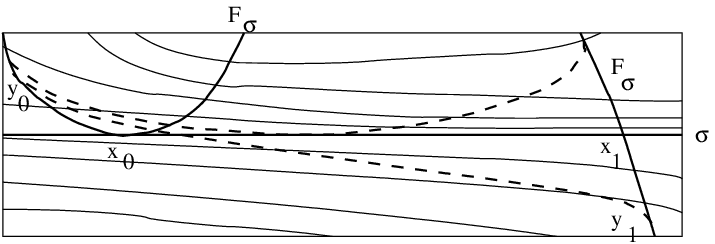}
\end{center}
\caption{\label{b:cut2}}
\end{figure}

The points $y_0,y_1$ can be connected by a curve $\hat{\sigma}\subset T_\sigma$ transverse to the characteristic foliation on this strip provided that $y_0,y_1$ are close enough to $\widetilde{D}$. Moreover, we may assume that $\hat{\sigma}$ is tangent to $F$ near its endpoints (cf. the lower dashed curve in \figref{b:cut2}).

The curve $\hat{\sigma}$ is going to be part of $D_-$. In order to finish the construction of $D_-$ we choose a foliation of $\widetilde{D}$ by a family $I_s, s\in\sigma$ of intervals that connect boundary points of $\widetilde{D}$ and are transverse to $\sigma$. The characteristic foliation on $T_{I_s}$ consists of lines which are mapped diffeomorphically to $I_s$ by $\pi$.

If $\hat{\sigma}$ was chosen close enough to $\widetilde{D}$, then there is a smooth family of curves $\hat{I}_s$ in $I_s\times(-\eps,\eps)$ which
\begin{itemize}
\item[(i)] intersect $\hat{\sigma}$ and are tangent to $\xi$ in these points,
\item[(ii)] are transverse to $\xi$ elsewhere and
\item[(iii)] are tangent to $F$ near $y_0,y_1$.
\end{itemize}
The choices we made for $\hat{\sigma}$ and $\hat{I}_s, s\in\sigma$ ensure that the union of all curves $\hat{I}_s$ is a disc $D_-$ which is transverse to $\xi$. 

The disc $D_+$ is obtained as in the proof of \lemref{l:cut}. The statement about the sign of the singularity of $D_+(\xi)$ follows from the construction. 
\end{proof}


\begin{lem} \mlabel{l:cut3}
Let $F\subset M$ be an embedded surface in a manifold carrying a confoliation $\xi$ such that $F(\xi)$ contains a hyperbolic singularity $x$ and the stable and unstable leaves of $x$ bound an annulus $A\subset F$ which is pinched at $x$. We assume that the pinched annulus is bounded by an integral disc $D$ of $\xi$ such that $\partial A=F\cap D$. 

Then there is an embedded surface $F'$ which is obtained from $F$ by removing a neighbourhood of $\gamma$ and gluing in an annulus $A'$ and a disc $D'$ such that $A'(\xi)$ has one of the following properties.
\begin{itemize}
\item[(i)] $A'(\xi)$ has no singularity. 
\item[(ii)] The singularities of $A'(\xi)$ form a circle and a neighbourhood in $F'$ of this circle is foliated by closed leaves of $F(\xi')$. 
\end{itemize}
The characteristic foliation on $D'$ has exactly one singularity which is elliptic and whose sign is opposite to the sign of $x$. 
\end{lem}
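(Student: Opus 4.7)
The plan is to adapt the constructions of \lemref{l:cut} and \lemref{l:cut2} to the pinched annulus setting. Fix a product neighbourhood $U \cong \widetilde{D}\times(-\eps,\eps)$ of the integral disc $D$ whose fibers are positively transverse to $\xi$ and with $D=\widetilde{D}\times\{0\}$. A regular neighbourhood $N\subset F$ of the figure-eight $\partial A$ is a pair of pants; its three boundary circles consist of two on the $A$-side (to be capped by $A'$) and one on the complementary side (to be capped by $D'$). The topological count matches the Euler characteristic jump of $+2$ coming from trading the hyperbolic singularity $x$ for an elliptic singularity of opposite sign on $D'$.

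The disc $D'$ would be built by the cone construction used for $D_+$ in the proofs of \lemref{l:cut} and \lemref{l:cut2}. On the $F\setminus A$-side I choose a point $(y_0,t_0)\in U$ in the contact region $H(\xi)$; such a point exists arbitrarily close to $D$ because $\partial A$ passes through the hyperbolic singularity $x$, so by \lemref{l:neg-curv} the holonomy along a loop of $\partial A$ on that side cannot be the identity, forcing $\xi$ to be contact in every neighbourhood of the loop. Then $D'$ is the cone over a radial foliation on $\widetilde{D}$ lifted to start at $(y_0,t_0)$ and truncated at the boundary circle of $F\setminus N$. It carries exactly one elliptic singularity, lying in $H(\xi)$, whose sign is opposite to that of $x$ by the same orientation argument as in \lemref{l:cut2}.

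For the annulus $A'$, the two boundary circles $c_1,c_2$ of $F\setminus N$ on the $A$-side lie slightly below $D$ and are approximately parallel to the two loops $\gamma_1,\gamma_2$ of $\partial A$. I attempt to produce $A'$ transverse to $\xi$ using the strip technique from \lemref{l:cut2}: for each Legendrian curve $\sigma\subset\widetilde{D}$ on the $A$-side, the characteristic foliation on $T_\sigma=\sigma\times(-\eps,\eps)$ admits a curve $\hat{\sigma}$ transverse to $T_\sigma(\xi)$ arbitrarily close to $D$, provided the holonomy of $\xi$ along the corresponding loop of $\partial A$ on the $A$-side is strictly attractive. If this holds for both loops, assembling the curves $\hat{\sigma}$ over an appropriate foliation of $\widetilde{D}$ by intervals yields $A'$ transverse to $\xi$ everywhere, giving case~(i). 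Otherwise \lemref{l:neg-curv} shows that a neighbourhood of the problematic loop on the $A$-side is foliated by closed leaves of $\xi$, and I let $A'$ coincide with one such leaf along a circle $c$ and extend transversally elsewhere, giving case~(ii) with a collar around $c$ foliated by closed leaves.

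The main obstacle will be the pinch point $x$: the two tongues of $A$ meeting at $x$ must be handled in a single construction so that $A'$ is smoothly embedded and no new singularities of $A'(\xi)$ appear in the region near $x$. I would resolve this by choosing a Legendrian curve $\sigma_x\subset\widetilde{D}$ through $x$ transverse to all four separatrices of $x$, analogous to the curve used in the proof of \lemref{l:cut2}, so that the strip constructions on either side of $x$ glue smoothly across $\sigma_x$. Since the fibers of $U$ are transverse to $\xi$, the tangency of $F$ with $D$ at $x$ does not obstruct the extension of $A'$ across this region, so no new singularities of $A'(\xi)$ arise except possibly along the circle of tangencies in case~(ii).
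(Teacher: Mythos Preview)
Your construction of the annulus $A'$ has a genuine gap. Assembling curves $\hat{\sigma}$ over a foliation of the disc $\widetilde{D}$ by intervals produces a \emph{disc}, not an annulus; this is exactly the mechanism that gives $D_-$ in \lemref{l:cut2}, and no amount of care at the pinch point will change the resulting topology. Relatedly, the ``holonomy along each loop $\gamma_i$ of $\partial A$ on the $A$-side'' is not the right object: each loop passes through the hyperbolic singularity $x$, so the one-sided holonomy you want is not defined along $\gamma_i$ in the usual sense, and your dichotomy (i)/(ii) is formulated in terms of the wrong curves.

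The paper's key idea, which you are close to but do not quite reach, is that the immersed disc $D$ has exactly two boundary points mapping to $x$, so an arc in the abstract disc joining these two preimages becomes in $M$ a simple \emph{closed} Legendrian curve $\sigma\subset D$ through $x$. One then works in a \emph{solid torus} $C\cong\sigma\times[-1,1]\times[-1,1]$ (second factor Legendrian, third transverse to $\xi$) rather than in a product ball over $\widetilde{D}$. The annulus $A'$ is built as a union over $s\in S^1$ of arcs in the squares $P_s=\{\sigma(s)\}\times[-1,1]\times[-1,0]$; the $S^1$-parameter is precisely what furnishes the annulus topology, and since $\sigma$ is a smooth closed curve the pinch point causes no difficulty at all. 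The dichotomy (i)/(ii) is then governed by whether some nearby parallel circle $\sigma\times\{(t,0)\}$ has nontrivial holonomy or fails to be Legendrian; if all such circles are Legendrian with trivial holonomy, \lemref{l:neg-curv} forces $\xi$ to be a foliation near $\sigma$ and one lands in case (ii). Your Legendrian arc $\sigma_x$ through $x$ is the germ of this idea, but you must recognise that it closes up to a circle and then change the ambient model from a ball to a solid torus.

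Two smaller points. Your argument that the elliptic singularity on $D'$ lies in $H(\xi)$ is not justified: the presence of the hyperbolic point $x$ on $\partial A$ is a statement about $F(\xi)$, not about the holonomy of $\xi$ along a loop in $D$ on the relevant side, and the lemma does not assert this. Also, your assignment of sides is inverted: the interior of a pinched annulus is an open disc, so removing a neighbourhood of the figure-eight leaves a single boundary circle on the $A$-side (capped by $D'$) and two circles on the complementary side (connected by $A'$). The circles parallel to $\gamma_1,\gamma_2$ that you describe lie on the $F\setminus A$ side, not on the $A$-side.
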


\begin{proof}
The disc $D$ in the statement of the lemma is an immersed disc which is an embedding away from two points in the boundary. These two points are identified to the single point $x$. Let $S^1\simeq\sigma\subset D$ be a simple closed curve in $D$ which meets $x$ exactly once.

We choose a solid torus $C=\sigma\times[-1,1]\times[-1,1]$ such that $\sigma=\sigma\times\{(0,0)\}$ and the foliation corresponding to the second factor is Legendrian while the foliation corresponding to the third factor is transverse to $\xi$. For $s\in[-1,1]$ let $A_s=\sigma\times\{s\}\times[-1,1]$. The torus is chosen such that $D\subset\sigma\times[-1,1]\times\{0\}$ and $F$ intersects $A_-=\sigma\times[-1,1]\times\{-1\}$ in two circles while $F\cap (\sigma\times[-1,1]\times\{1\})$ is a circle which bounds is disc in $\sigma\times[-1,1]\times\{1\}$.

If $C$ is thin enough, then a disc $D'$ which bounds $F\cap (\sigma\times[-1,1]\times\{1\})$ with the desired properties can be constructed as in the proof of \lemref{l:cut}.

Let $P_s:=\sigma(s)\times[-1,1]\times[-1,0],s\in S^1$. The characteristic foliation on $P_s$ consists of lines transverse to the last factor of $P_s$ and $\sigma(s)\times[-1,1]\times\{0\}$ is a leaf of $P_s(\xi)$ 

If $\xi$ one of the annuli $\sigma\times\{t\}\times(-1,0], t\in(-1,1)$ has non-trivial holonomy along $\sigma\times\{(t,0)\}$ or if $\sigma\times\{(t,0)\}$ is not Legendrian, then one can choose a curve $\sigma'$ in that annulus which is transverse to $\xi$. The annulus $A'$ is the union of curves in $P_s, s\in S^1$ which connect the two points of $F\cap(\sigma(s)\times[-1,1]\times\{-1\}$ and pass through $\sigma'\cap P_s$. These curves can be chosen such that they are transverse to $P_s(\xi)$ everywhere except in $\sigma'\cap P_s$. By construction $A'(\xi)$ has the property described in (i) of the lemma.

This construction also applies if we choose $\sigma'$ in annuli which are $C^\infty$-close to $\sigma\times\{t\}\times[-1,0]$ for a suitable $t\in [-1,1]$. If all annuli of this type have trivial holonomy along their boundary curve which is close to $\sigma\times\{(t,0)\}$, then  $\xi$ is a foliation on a neighbourhood of $\sigma$ in $\sigma\times[-1,1]\times[-1,0]$ by \lemref{l:neg-curv} whose holonomy along $\sigma$ is trivial. The same construction as in the previous case (with $\sigma'=\sigma$) yields an annulus $A'$ with the properties described in (ii).  
\end{proof}
\lemref{l:cut} and \lemref{l:cut2} suffice for \secref{s:rigid} because the embedded surfaces in that section are going to be simply connected. Then one can apply \lemref{l:cut2} to one  of the boundary components of the pinched annulus. 

In the lemmas of this section we have assumed that $F\cap D=\gamma$. In general $F$ and $D$ may intersect elsewhere. Since all singularities of the characteristic foliation on $\gamma$ are non-degenerate or of birth-death type, there is a neighbourhood of $\gamma$ in $D$ such that $\gamma$ is the intersection of $F$ with this neighbourhood. After a small perturbation with support outside of a neighbourhood of $\gamma$ we may assume that $F$ is transverse to $D$ on the interior of $D$. Now we can apply \lemref{l:cut} a finite number of times to circles in $F\cap D$ in order to achieve that the resulting surface intersects $D$ only along $\gamma$. Then we can apply the lemmas of this section.


\section{Tight confoliations violating the Thurston-Bennequin inequalities}  \mlabel{s:example}


The example given in this section shows that tightness (as defined in \defref{d:tight confol}) is a much weaker condition for confoliations compared to the rigidity of tight contact structures or foliations without Reeb components. It also shows that it may happen that {\em every} contact structure obtained  by a sufficiently small perturbation of a tight confoliation is overtwisted. This is in contrast to the situation of foliations without Reeb components: According to \cite{colin pert} every foliation without a Reeb component can be approximated by a tight contact structure.

The starting point for the construction of a tight confoliation violating the Thur\-ston-Benne\-quin inequalities is the classification of tight contact structures on $T^2\x I$ such that the characteristic foliation on $T_t=T^2\x \{t\}, t\in\{0,1\}$ is linear (cf. \cite{giroux2}). We fix an identification $T^2\simeq \R^2/\Z^2$ and the corresponding vector fields $\partial_1,\partial_2$. 

According to \cite{giroux2} (Theorem 1.5) there is a unique tight contact structure $\xi$ on $T^2\times I$ such that 
\begin{itemize}
\item[(i)] the characteristic foliation on $\partial(T^2\times I)$ is a pair of linear foliations whose slope is $2$ respectively $1/2$ on $T_0$ respectively $T_1$,
\item[(ii)] the obstruction for the extension of the vector fields which span the characteristic foliation on $\partial (T^2\times I)$ is Poincar{\'e}-dual to $(2,2)\in H_1(T^2;\Z)\simeq\Z^2$. 
\end{itemize}
\figref{b:movie} shows the characteristic foliation on $T^2\times\{t\}$ at various times and its orientation. The two curves in $T^2\times\{1/2\}$ where the characteristic foliation is singular represent the homology class $(2,2)\in H_1(T^2;\Z)$. 
\begin{figure}[htb] 
\begin{center}
\includegraphics{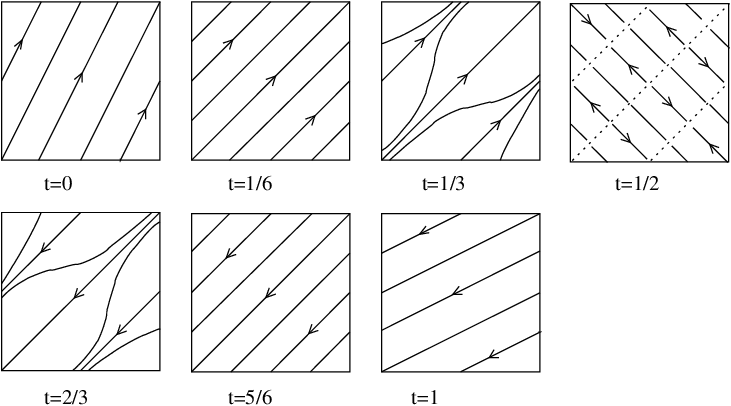}
\end{center}
\caption{\label{b:movie}}
\end{figure}
We may assume that the contact structure is $T^2$-invariant and tangent to $\partial_t$ on a neighbourhood of the boundary (cf. \cite{giroux}).  Then there are smooth functions $f_i,g_i, i\in\{0,1\}$ on this neighbourhood such that $\xi$ is spanned by $\partial_t$ and

\begin{align} \label{e:confol repr}
\begin{split}
f_0(t)\partial_1+g_0(t)\partial_2 & \textrm{ near } T^2\times\{0\} \\
f_1(t)\partial_1+g_1(t)\partial_2 & \textrm{ near } T^2\times\{1\}. 
\end{split}
\end{align}
Because $\xi$ is a positive contact structure, the functions $f_i,g_i$ satisfy the inequalities $f_i'(t)g_i(t)-g_i'(t)f_i(t)>0$ for $i\in\{0,1\}$ on their respective domains. 

We now modify $\xi$ to a confoliation $\tilde{\xi}$ on $V=T^2\times[0,1]$. For this replace the functions $f_i,g_i$ in \eqref{e:confol repr} by $\tilde{f}_i,\tilde{g}_i$ such that for $i=0,1$

\begin{itemize}
\item $\tilde{f}_i,\tilde{g}_i$ coincide with $f_i,g_i$ outside of small open neighbourhoods of $T^2\times\{i\}$   
\item there is $\tau>0$ such that $\tilde{f}_i'(t)\tilde{g}_i(t)-\tilde{g}_i'(t)\tilde{f}_i(t)>0$ if $t\in(\tau,1-\tau)$ and 
\item $\tilde{f}_i'(t)\tilde{g}_i(t)-\tilde{g}_i'(t)\tilde{f}_i(t)\equiv 0$   for $t\in[0,\tau]\cup[1-\tau,1]$
\item $\tilde{f}_i,\tilde{g}_i$ coincide with $f_i,g_i$ at $t=0,1$.
\end{itemize}

\begin{rem} \mlabel{r:tight interior}
From the proof of Theorem 1.5 in \cite{giroux2} it follows that the contact structure $\tilde{\xi}$ on $T^2\times(\tau,1-\tau)$ is tight.
\end{rem}
We write $\xi$ for the confoliation constructed so far. In the next step we will extend $\xi$ to a smooth confoliation on $T^2\times[-1,2]$ such that the boundary consists of torus leaves. 

Let $h$ be a diffeomorphism of $\R^+_0$ such that $h(s)<s$ for $s>0$ and all derivatives of $h(s)-s$ vanish for $s=0$. The suspension of this diffeomorphism yields a foliation on $S^1\times\R^+_0$ whose only closed leaf is $S^1\times\{0\}$ and all other leaves accumulate on this leaf. In this way we obtain a foliation on $S^1\times(S^1\times\R^+_0)$ such that the boundary is a leaf and the characteristic foliation on $S^1\times (S^1\times \{\sigma\})\simeq T^2\times\{\sigma\}, \sigma>0$ corresponds to the first factor. In particular it is linear. 

Using suitable elements of $\{A\in\mathrm{Gl}(2,\Z)|\det(A)=\pm 1\}$ we glue two copies of the foliation on $T^2\times[0,\sigma],\sigma>0$ to $T^2\times[0,1]$. We obtain an oriented confoliation on $T^2\times[-1,2]$ such that the boundary is the union of two torus leaves and we may assume the orientation of the boundary leaves coincides with the orientation of the fiber of $T^2\times[-1,2]$.  

After identifying the two boundary components by an orientation preserving diffeomorphism, we get a closed oriented manifold $M$ carrying a smooth positive confoliation which we will denote again by $\xi$. 

\medskip

{\it Claim: $\xi$ is tight.} 

We show that the assumption of the contrary contradicts \remref{r:tight interior}. Let $\gamma\subset M$ be a Legendrian curve which bounds an embedded disc $D$ in $M$ such that $\xi$ is nowhere tangent to $D$ along $\gamma$ and violates the requirements of \defref{d:tight confol}. By construction $\xi$ has a unique closed leaf $T$. If $\gamma$ is contained in $T$, then $\gamma$ bounds a disc in $T$ because $T$ is incompressible. Thus we may assume that $\gamma$ lies in the complement of $T$ and we can consider the manifold $M\setminus T= T^2\times(-1,2)$.

By \remref{r:tight interior}, $\gamma$ cannot be contained in $T^2\times(\tau,1-\tau)$. If $\gamma$ lies completely in the foliated region $T^2\times\big((-1,\tau]\cup[1-\tau,2)\big)$, then it bounds a disc in its leaf because all leaves are incompressible cylinders. 

It remains to treat the case when the $\gamma$ intersects the contact region and the foliated region. All leaves of $\xi$ in $M\setminus T= T^2\times(-1,2)$ are cylinders which can be retracted into the region $T^2\times[0,\tau)\cup(1-\tau,1]$. Hence we may assume that $\gamma$ is contained in $T^2\times[0,1]$.

First we show that there is a Legendrian isotopy of $\gamma$ such that the resulting curve is transverse to the boundary of the contact region $B=T^2\times\{\tau,1-\tau\}$. A similar isotopy will be used later, therefore we describe it in detail. 

Let $T^2\times(0,\tau')$ with $0<\tau<\tau'$ be a neighbourhood of one component of $B$ where $\xi$ can be defined by the $1$-form 
$$
\alpha_0=dx_1-\frac{\tilde{f}_0(t)}{\tilde{g}_0(t)}dx_2.
$$
We consider the projection $\mathrm{pr} : T^2\times[0,\tau']\lra S^1\times[0,\tau']$ such that the fibers are tangent to $\partial_1$. Note that $d\alpha_0$ is the lift of the $2$-form 
$$
\omega=\frac{\tilde{f}_i'(t)\tilde{g}_i(t)-\tilde{g}_i'(t)\tilde{f}_i(t)}{\tilde{g}_0^2(t)}dx_2\ww dt.
$$ 
The fibers of $\mathrm{pr}$ are transverse to $\xi$. Let $\hat{\gamma}$ be a segment of $\gamma$ which is contained in $T^2\times[0,\tau']$ and whose endpoints do not lie on $B$. 

If $\hat{\gamma}$ is contained in the foliated part of $\xi$, then we isotope $\hat{\gamma}$ within its leaf such that the resulting curve is disjoint from $T^2\times\{\tau\}$ and the isotopy does not affect the curve on a neighbourhood of its endpoints. 

Now assume that some pieces of $\hat{\gamma}$ are contained in the contact region of $\xi$. Then $\mathrm{pr}(\hat{\gamma})$ passes through the region of $S^1\times(\tau,\tau']$ where $\omega$ is non-vanishing. We consider an isotopy of the projection of $\hat{\gamma}$ which is fixed near the endpoints and the area of the region bounded by $\hat{\gamma}$ is zero for all curves in the isotopy. By Stokes theorem this implies that one obtains closed Legendrian curves when $\hat{\gamma}$ is replaced by horizontal lifts of curves of the isotopy (with starting point on $\gamma$).

Hence we may assume that $\gamma$ is transverse to $T^2\times\{\tau\}$ and $\gamma$ is decomposed into finitely many segments whose interior is completely contained in either the contact region or the foliated region of $\xi$.  

Let $\gamma_0\subset\gamma$ be an arc with endpoints in the contact region of $\xi$ such that $\gamma_0$ contains a exactly one sub arc of $\gamma$ lying in the foliated region. Because $\gamma_0$ is embedded, it bounds a compact half disc in a leaf tangent to $\xi$ and we can choose $\gamma_0$ such that the half disc does not contain any other segment of $\gamma$. 

Now we isotope $\gamma_0$ relative to its endpoints such that after the isotopy this segment lies completely in the contact region of $\xi$. As above we deform $\mathrm{pr}(\gamma_0)$ through immersions such that the resulting arc $\hat{\gamma}_0$ has the following properties
\begin{itemize} 
\item the integral of $\omega$ over the region bounded by $\hat{\gamma}_0$ and $\mathrm{pr}(\gamma_0)$ is zero and the same condition applies to every curve in the isotopy,
\item $\hat{\gamma}_0$ is completely contained in $S^1\times(\tau,\tau']$.
\end{itemize}
Then the horizontal lift of $\hat{\gamma}_0$ can be chosen to have the same endpoints as $\gamma_0$ and we can replace $\gamma_0$ by this horizontal lift. The resulting curve is Legendrian isotopic to $\gamma$ but it the number of pieces which lie in the foliated region has decreased by one. 

After finitely many steps we obtain a Legendrian isotopy between $\gamma_0$ and a closed Legendrian curve which lies completely in the interior of the contact region. The Thurston-Bennequin invariant of the resulting curve is still zero. But this is impossible because the contact structure on $T^2\times(\tau,1-\tau)$ is tight.

\medskip

{\it Claim: If $M=T^3$, then $\xi$ violates b) of \thmref{t:TB Ungl tight}.}

The trivialization of $\xi$ induced by the characteristic foliation on $T^2\times\{0,1\}$ extends to the complement of $T^2\times[0,1]$ in $T^3$. The obstruction for the extension of the trivialization from $T^2\times\{0,1\}$ to $T^2\times[0,1]$ is Poincar{\'e}-dual to $(1,1)\in H_1(T^2\times[0,1])$. Hence $e(\xi)$ is Poincare-dual to $(2,2,0)\in H^1(T^2)\oplus \Z$ where the second factor corresponds to the homology of the second factor of $T^3\simeq T^2\times S^1$. This means that $\xi$ violates the Thurston-Benneuqin inequalities since these inequalities imply $e(\xi)=0$ because every homology class in $t^3$ can be represented by a union of embedded tori.    


An example of a torus in $(T^3,\xi_T)$ which violates the Thurston-Bennequin inequality can be described very explicitly. Let $T_0$ be the torus which is invariant under the $S^1$-action transverse to the fibers and it intersects each fiber in a curve of slope $-1$, hence this curve represents $(1,-1)\in H_1(T^2)$ when $T_0$ is suitably oriented. It follows from the description of $\xi$ given above, that $\tau=T_0\cap (T^2\times\{1/2\})$ is Legendrian and the characteristic foliation on $T_0$ has exactly four singular points which lie on $\tau$ and have alternating signs.

Moreover, $T_0\cap T$ is a Legendrian curve and $\xi$ is transverse to all tori $T^2\times\{t\}, t\in(-1,2)$ except in the singular points on $T_0\cap(T^2\times\{1/2\})$. \figref{b:starfish} shows a singular foliation homeomorphic to the one on $T_0$. 
\begin{figure}[htb]
\begin{center}
\includegraphics{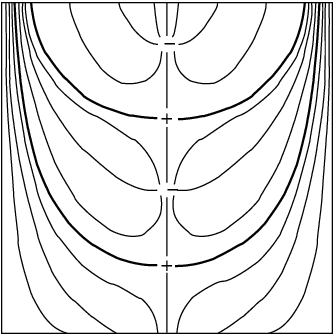}
\end{center}
\caption{\label{b:starfish}}
\end{figure}
We choose the orientation of $T_0$ such that $e(T_0)=-4$. In order to find an example of a surface with boundary which violates the inequality c) from \thmref{t:TB Ungl tight} it suffices to remove a small disc containing one of the elliptic singularities in $T_0$.   

Finally, note that according to \cite{confol} every positive confoliation can be approximated (in the $C^0$-topology) by a contact structure, it follows that tightness is {\em not} an open condition in the space of confoliations with the $C^0$-topology.  Actually $\xi$ can be approximated by contact structures which are $C^\infty$-close to $\xi$. This can be seen by going through the proof of Theorem 2.4.1 and Lemma 2.5.1 in \cite{confol}: By construction the holonomy of the closed leaf on $T_0$ is attractive, therefore it satisfies conditions which imply the conclusion of Proposition 2.5.1, \cite{confol} (despite of the fact that the infinitesimal holonomy is trivial).  The main part of this lemma is stated in \lemref{l:reminder on holonomy approx} together with an outline of the proof. 

Thus tightness is not an open condition for confoliations in general. This answers question 1 from the section 3.7 in \cite{confol} (when tightness is defined as in \defref{d:tight confol}).


\section{Rigidity results for tight confoliations} \mlabel{s:rigid} 


The example from the previous section shows that tight confoliations are quite flexible objects compared to tight contact structures and foliations without Reeb components. In this section we establish some restrictions on the homotopy class of plane fields which contain tight confoliations. 

The first restriction is the Thurston-Bennequin inequality for simply connected surfaces. Note that this imposes no restriction on the Euler class $e(\xi)$ of a tight confoliation $\xi$ on a closed manifold $M$ unless the prime decomposition of $M$ contains $(S^1\times S^2)$-summands. The second restriction on the homotopy class of $\xi$ is a consequence of
\begin{thm} \mlabel{t:tight on balls}
Let $M$ be a manifold carrying a tight confoliation $\xi$ and $B\subset M$ a closed embedded ball in $M$. There is a neighbourhood of $\xi$ in the space of plane fields with the $C^0$-topology such that $\xi'\eing{B}$ is  tight for every contact structure $\xi'$ in this neighbourhood.
\end{thm}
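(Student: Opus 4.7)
The plan is to argue by contradiction. Suppose no such $C^0$-neighbourhood of $\xi$ exists; then there is a sequence of contact structures $\xi_n$ converging to $\xi$ in $C^0$ such that $\xi_n|_B$ is overtwisted for every $n$. For each $n$, pick an overtwisted disc $\Delta_n \subset B$ for $\xi_n$ and let $S_n \subset B$ be the boundary of a thin tubular neighbourhood of $\Delta_n$. Then $S_n(\xi_n)$ is the standard overtwisted sphere model: two elliptic singularities of opposite signs connected by a Legendrian limit cycle $\gamma_n$ from both sides.

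After passing to a subsequence, the spheres $S_n$ converge in $C^\infty$ to an embedded sphere $S \subset B$. Since $\xi_n \to \xi$ in $C^0$, the singular foliations $S_n(\xi_n)$ converge to the (possibly non-generic) singular foliation $S(\xi)$. The two elliptic singularities persist by index reasons and lie in $H(\xi)$, since they arise as limits of singularities from the contact region of $\xi_n$; the cycles $\gamma_n$ converge to a compact connected invariant set $\Gamma \subset S$ of $S(\xi)$ that separates $S$ into two half-discs $S^\pm$, each containing exactly one elliptic point. The core step is to apply the machinery of \secref{s:manipulation}: use \lemref{l:elim} to cancel coherent hyperbolic-elliptic pairs on $S^\pm$, and the splitting construction at the end of \secref{s:manipulation} to separate coincident vertex images, so that the basins of the two elliptic singularities in $S(\xi)$ become injective Legendrian polygons in the sense of \defref{d:leg poly} via \lemref{l:leg poly}. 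The cutting lemmas \lemref{l:cut}, \lemref{l:cut2}, \lemref{l:cut3}, combined with \lemref{l:limit cycles on spheres imply integral disc}, then produce embedded (possibly pinched) integral discs of $\xi$ bounding the cycles occurring on the boundaries of these polygons.

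Finally, I would derive a contradiction from tightness of $\xi$. Feeding the resulting integral discs into \defref{d:tight confol} ultimately yields two distinct integral discs $D'$ bounding the same Legendrian cycle on $S$, which by the uniqueness remark following that definition would force the existence of a sphere tangent to $\xi$ and hence, by \thmref{t:Reeb stability}, force $\xi$ to be a foliation of $M$ by spheres; a direct verification shows that $\xi'|_B$ is automatically tight for all $\xi'$ near any such $\xi$, giving the desired contradiction. The main obstacle is the generalization of Eliashberg's taming functions from \cite{El} to the confoliation setting: such a taming function on $S$ is needed to choose the Legendrian polygons compatibly with the $C^0$-limit and to track Euler-number contributions from the non-generic pieces of $S(\xi)$. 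Combined with the classification of $T^2$-invariant tight contact structures from \cite{giroux2}, this taming function performs the bulk of the technical work, in particular excluding ambiguous configurations of $\Gamma$ that would otherwise obstruct the cycle analysis.
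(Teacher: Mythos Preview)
Your proposal has a genuine gap at the very first step, and the overall architecture differs substantially from the paper's proof.

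The compactness claim ``after passing to a subsequence, the spheres $S_n$ converge in $C^\infty$ to an embedded sphere $S\subset B$'' is unjustified and in general false. Overtwisted discs $\Delta_n$ for $\xi_n$ can degenerate in all sorts of ways as $n\to\infty$: they can shrink to points, become long and thin, or develop unbounded geometry. The space of embedded $2$-spheres in $B$ is not compact in any topology that would make the rest of your argument work. Without a uniform control on the $S_n$ there is no limiting sphere $S$, and the subsequent analysis of $S(\xi)$ never gets off the ground. Related to this, even if $S_n\to S$, the assertion that the elliptic singularities ``lie in $H(\xi)$, since they arise as limits of singularities from the contact region of $\xi_n$'' is vacuous: every point of $M$ lies in $H(\xi_n)$ because $\xi_n$ is a contact structure, so this gives no information about $H(\xi)$.

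The paper avoids this compactness problem entirely by never taking a limit of overtwisted discs. Instead it \emph{fixes in advance} a product $S^2\times[0,1]\simeq B_1\setminus\ring{B}_0$ (with $B_0$ a small standard ball in $H(\xi)$) and proves two things: first, for the given tight $\xi$, every leaf $S_t(\xi)$ admits a taming function (\propref{p:taming functions on spheres}); second, this property is $C^0$-open in the plane field (\propref{p:deform taming fct}, using \lemref{l:stab} to control neighbourhoods of cycles). Once every $S_t(\xi')$ is tamed for a nearby contact structure $\xi'$, Giroux's Lemma~2.17 from \cite{giroux2} (stated here as \lemref{l:make convex}) lets one isotope $\xi'$ rel boundary so that all $S_t$ are convex, and then tightness of $\xi'\eing{B}$ follows from Giroux's Theorem~2.19 and Colin's gluing. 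So the taming functions are not used on a hypothetical limiting sphere, but on a fixed $1$-parameter family; and the input from \cite{giroux2} is the convexity/bifurcation machinery, not the classification of $T^2$-invariant tight structures you mention (that classification is used only in \secref{s:example} to build the counterexample on $T^3$).
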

The proof of this theorem is given in \secref{s:perturb}. Let us explain an application of \thmref{t:tight on balls} which justifies the claim that \thmref{t:tight on balls} is a rigidity statement about tight confoliations. 

By \thmref{t:El-Th approx} every confoliation on a closed manifold can be $C^0$-approximated by a contact structure unless it is a foliation by spheres. Hence \thmref{t:tight on balls} can be applied to every confoliation. Recall the following theorem.
\begin{thm}[Eliashberg, \cite{El}] \mlabel{t:ball class}
Two tight contact structures on the $3$-ball $B$ which coincide on $\partial B$ are isotopic relative to $\partial B$. 
\end{thm}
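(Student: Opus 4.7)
The plan is to prove the classification of tight contact structures on the $3$-ball by reducing two such structures to a common standard form via a careful manipulation of characteristic foliations on a collapsing family of spheres.

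First I would normalize the contact structures near the boundary. Using the standard fact that the germ of a contact structure along an embedded surface is determined, up to isotopy relative to the surface, by the characteristic foliation it induces on that surface, one may apply an ambient isotopy of $B$ supported in a collar of $\partial B$ so that after replacing $\xi_1$ with an isotopic contact structure, $\xi_0$ and $\xi_1$ coincide on an open neighbourhood $U$ of $\partial B$. The problem thus reduces to producing an isotopy of $\xi_0$ to $\xi_1$ which is fixed on $U$.

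Next I would choose a smooth family of embedded spheres $\{S_t\}_{t\in[0,1]}$ foliating $B\setminus\{p_*\}$, with $S_0=\partial B$ and $S_t$ shrinking to an interior point $p_*\in B$ as $t\to 1$. The core of the argument is to arrange, after ambient isotopy of each contact structure, that the characteristic foliations $S_t(\xi_0)$ and $S_t(\xi_1)$ agree for every $t$. For each sphere $S_t$ one uses the elimination lemma (\lemref{l:elim}) and its converse (\lemref{l:create}) to cancel same-sign elliptic-hyperbolic pairs, reducing each $S_t(\xi_i)$ to the \emph{standard} form on a sphere: one positive elliptic source and one negative elliptic sink. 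Tightness of $\xi_i$ is essential here, since it forbids overtwisted discs and so guarantees that the hyperbolic singularities needed for cancellation are always available and that the elimination produces no obstruction. Giroux's realization lemma then produces an ambient isotopy of $B$ realizing any such standard foliation on $S_t$. Near the limit point $p_*$, Darboux's theorem provides a chart identifying both $\xi_0$ and $\xi_1$ with the standard contact structure on a small ball around $p_*$.

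Patching together the boundary normalization, the sphere-by-sphere standardization on $t\in(0,1)$, and the Darboux chart around $p_*$, one obtains a global isotopy of $B$ carrying $\xi_0$ to $\xi_1$ relative to $\partial B$, possibly after a final Moser-type argument to match defining forms with identical kernels on the nose. The main obstacle is the parametric step in the middle: the elimination lemma and Giroux's realization must be applied smoothly in the parameter $t$, and one must ensure that the necessary hyperbolic singularities remain available throughout the family without running into closed leaves or limit cycles of the characteristic foliation on some $S_t$. Tightness of both $\xi_0$ and $\xi_1$, together with a careful choice of the sphere foliation (perturbing $\{S_t\}$ generically so that only finitely many bifurcations of the characteristic foliations occur), is what makes this parametric control possible.
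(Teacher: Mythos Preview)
The paper does not prove this theorem at all: it is stated with attribution to Eliashberg's paper \cite{El} and immediately used as a black box (``It follows from this theorem that\ldots''). So there is no ``paper's own proof'' to compare against.

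Your proposal is a reasonable outline of Eliashberg's original argument in \cite{El}, and the broad strokes are right: normalize near $\partial B$, sweep out by spheres, put each $S_t(\xi_i)$ into the standard two-singularity form, and match. However, you are honest that the real content sits in the parametric step, and your sketch does not actually carry it out. Two points are genuinely delicate. First, the elimination lemma on a single sphere is not enough: you need to show that for a generic one-parameter family the characteristic foliations can be put into standard form \emph{simultaneously} for all $t$, handling the finitely many bifurcation times where singularities are born or die. Eliashberg organizes this via taming functions (the notion the present paper generalizes in \secref{s:perturb}), not by naive sphere-by-sphere elimination, precisely because elimination moves do not automatically vary smoothly in $t$. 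Second, ``Giroux's realization lemma then produces an ambient isotopy'' is too quick: realization gives an isotopy near a single convex surface, and you must argue that these stitch together across the family and across the Darboux ball at $p_*$. None of this is wrong in spirit, but as written the proposal is an outline with the hard parametric analysis deferred rather than a proof.
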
 
It follows from this theorem that two tight contact structures on $S^3$ are isotopic and therefore homotopic as plane fields. In contrast to this every homotopy class of plane fields on $S^3$ contains a contact structure which is not tight. Thus the following consequence of \thmref{t:tight on balls} shows that there are restrictions on the homotopy classes of plane fields containing tight confoliations.
\begin{cor}
Only one homotopy class of plane fields on $S^3$ contains a positive tight confoliation. 
\end{cor}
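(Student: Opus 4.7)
The plan is to show that any positive tight confoliation $\xi$ on $S^3$ is homotopic, as a plane field, to the standard tight contact structure $\xi_{\text{std}}$. The strategy is to approximate $\xi$ by a positive contact structure $\xi'$ that is $C^0$-close to $\xi$, establish tightness of $\xi'$ using \thmref{t:tight on balls}, and then appeal to Eliashberg's classification of tight contact structures on $S^3$ (a consequence of \thmref{t:ball class}) to conclude.

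First I would decompose $S^3=B_1\cup B_2$ as the union of two closed embedded $3$-balls meeting along their boundary $2$-sphere. Applying \thmref{t:tight on balls} to $\xi$ with each $B_i$ separately gives $C^0$-neighbourhoods $\mathcal{U}_1,\mathcal{U}_2$ of $\xi$ such that $\xi'\eing{B_i}$ is tight for every contact structure $\xi'\in\mathcal{U}_i$. Since $S^3$ is not $S^2\times S^1$, it carries no foliation by spheres, so the confoliation version of \thmref{t:El-Th approx} (remarked immediately after its statement in the introduction) supplies a positive contact structure $\xi'\in\mathcal{U}_1\cap\mathcal{U}_2$. Then both $\xi'\eing{B_1}$ and $\xi'\eing{B_2}$ are tight. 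Because $\xi'$ may be taken arbitrarily $C^0$-close to $\xi$, the fibrewise straight-line homotopy in the oriented Grassmannian bundle of $2$-planes in $TS^3$ yields a homotopy of plane fields $\xi\simeq\xi'$.

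Next I would argue that $\xi'$ is tight on all of $S^3$. If not, $\xi'$ would admit an embedded overtwisted disc $D$ contained in some $3$-ball $U\subset S^3$. Using the looseness of overtwisted structures, I would transport $D$ into $\text{int}(B_1)$: choose a smooth ambient isotopy of $S^3$ carrying $U$ into $\text{int}(B_1)$, apply Gray stability to the induced smooth path of contact structures on $S^3$ obtained by pushforward, and compose with the Gray diffeomorphism to obtain a contactomorphism of $(S^3,\xi')$ that sends $D$ into $\text{int}(B_1)$. The image would be an overtwisted disc of $\xi'\eing{B_1}$, contradicting tightness. Hence $\xi'$ is tight.

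The final step invokes Eliashberg's uniqueness of tight contact structures on $S^3$, which follows from \thmref{t:ball class}: after a preliminary isotopy making $\xi'$ and $\xi_{\text{std}}$ agree on a common Darboux ball, \thmref{t:ball class} applied to the complementary $3$-ball produces an isotopy rel boundary, so $\xi'$ is isotopic — in particular homotopic as a plane field — to $\xi_{\text{std}}$. Composing with $\xi\simeq\xi'$ places $\xi$ in the homotopy class of $\xi_{\text{std}}$. The main obstacle is the penultimate paragraph: confining the overtwisted disc to $B_1$ by a contact isotopy. This rests on a standard flexibility property of overtwisted contact structures (Gray stability combined with the connectedness of $S^3$) which is not discussed in the excerpt but is classical.
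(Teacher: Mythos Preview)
Your overall strategy matches the paper's: approximate $\xi$ by a nearby contact structure $\xi'$, show $\xi'$ is tight, then invoke Eliashberg's classification. The paper organises this slightly differently---it first observes that $H(\xi)\neq\emptyset$ (Novikov's theorem forces any foliation on $S^3$ to have a Reeb component, so a tight confoliation on $S^3$ cannot be a foliation), picks a small ball $B\subset H(\xi)$, approximates so that $\xi'=\xi$ on $B$, and applies \thmref{t:tight on balls} once to the complementary ball $S^3\setminus\ring{B}$---but the essential content is the same.

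The genuine gap is exactly where you locate it, and the Gray-stability argument you propose does not work. If $\phi_t$ is an ambient isotopy with $\phi_1(D)\subset\mathrm{int}(B_1)$ and $\psi_t$ is the Gray isotopy for the path $(\phi_t)_*\xi'$, then $\psi_1^{-1}\circ\phi_1$ is indeed a contactomorphism of $(S^3,\xi')$, but it sends $D$ to $\psi_1^{-1}(\phi_1(D))$, and you have no control over where $\psi_1^{-1}$ takes $B_1$. Put differently, $\phi_1(D)\subset B_1$ is an overtwisted disc for $(\phi_1)_*\xi'$, not for $\xi'$. This is not a ``classical flexibility property'': knowing that $\xi'\eing{B_1}$ and $\xi'\eing{B_2}$ are tight does \emph{not} immediately give tightness of $\xi'$ by any soft argument.

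The correct ingredient is Colin's gluing theorem \cite{colin}: a contact structure that is tight on each side of an embedded $2$-sphere is tight on the union. The paper invokes this explicitly. Once you cite Colin in place of the Gray argument, your two-ball decomposition works (as does the paper's one-ball version), and the remainder of your proof is fine.
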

\begin{proof}
Let $\xi$ be a tight confoliation on $S^3$. It is well known that every foliation of rank $2$ on $S^3$ contains a Reeb component, cf. \cite{No}. Thus $H(\xi)$ is not empty. We choose $p\in H(\xi)$ and a ball $B\subset H(\xi)$ around $p$.

According to \cite{confol} $\xi$ can be $C^0$-approximated by a contact structure $\xi'$ on $S^3$ such that $\xi$ and $\xi'$ coincide on $B$. By \thmref{t:tight on balls} the restriction of $\xi'$ to $S^3\setminus B$ is tight and by a result from \cite{colin} $\xi'$ is a tight contact structure on $S^3$ which is homotopic to $\xi$.
\end{proof}
More generally, \thmref{t:tight on balls} together with \thmref{t:ball class} implies that the homotopy class of a tight confoliation $\xi$ as a plane field is completely determined by the restriction of $\xi$ to a neighbourhood of the $2$-skeleton of a triangulation of the underlying manifold. 


\subsection{The Thurston-Bennequin inequality for discs and spheres} \mlabel{s:tb sphere disc}

In this section we prove the Thurston-Bennequin inequalities for a tight confoliation $\xi$ in the cases where $F$ is a sphere or a disc (with transverse boundary). For this we adapt the arguments in \cite{El}. We shall discuss why Eliashberg's proof cannot be adapted for non-simply connected surfaces in tight confoliations after the proof \thmref{t:discs and spheres}. Recall that the self-linking number $\selfl(\gamma,F)$ of a null-homologous knot $\gamma$ which is positively transverse to $\xi$ with respect to a Seifert surface $F$ satisfies $e(\xi)[F]=-\selfl(\gamma,F)$ where $e(\xi)[F]$ corresponds to the obstruction for the extension the characteristic foliation near $\partial F$ to a trivialization of $\xi\eing{F}$.   

\begin{thm} \mlabel{t:discs and spheres}
Let $(M,\xi)$ be a manifold with a tight confoliation. Then
\begin{itemize}
\item[a)] $e(\xi)[S^2]=0$ for every embedded $2$-sphere $S^2\subset M$ and
\item[b)] $\selfl(\partial D, D)\le -1$ for every embedded disc whose boundary is positively transverse to $\xi$.
\end{itemize}
\end{thm}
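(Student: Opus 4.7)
\medskip

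\noindent\textbf{Proof plan.} Both statements reduce, via the index formula relating Euler number (resp.\ self-linking) to the singularities of $F(\xi)$, to a lower bound on $e_+(F)-h_+(F)$: for a disc with transverse boundary oriented so that the characteristic foliation points inward, $\selfl(\partial D,D)= -\bigl((e_+-h_+)-(e_--h_-)\bigr)$ and $\chi(D)=1$, so $\selfl\le -1$ is equivalent to $e_+(D)\ge h_+(D)+1$; for a sphere, $e(\xi)[S^2]=0$ follows by showing both $e_\pm(S^2)\ge h_\pm(S^2)+1$ and using $\chi(S^2)=2$. The plan is to adapt Eliashberg's argument from \cite{El}, with the additional care required by the presence of a foliated region.

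\medskip

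First I would perturb $F$ so that $F(\xi)$ has only non-degenerate singularities, no saddle connections between hyperbolic points, and so that every elliptic singularity either lies in $H(\xi)$ or is the center of a neighbourhood foliated by closed leaves of $F(\xi)$ (this second option is possible by \lemref{l:little discs}). Singularities of the second type I dispose of immediately: around such an elliptic point $p$ the Poincar\'e return map has germ equal to the identity on one side, so a small closed transversal $\gamma_p$ of $F(\xi)$ bounds an integral disc of $\xi$ by \lemref{l:little discs} and \lemref{l:limit cycles on spheres imply integral disc}; I then apply \lemref{l:cut} to surger $F$ along $\gamma_p$, replacing the disc containing $p$ by two discs each carrying a single elliptic singularity in $H(\xi)$. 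After repeating finitely often I may assume every elliptic singularity of $F(\xi)$ lies in $H(\xi)$; in particular the hypothesis of the elimination lemma is available.

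\medskip

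Next, I would apply \lemref{l:elim} to remove every pair of same-sign singularities connected by a Legendrian orbit, obtaining a Morse--Smale characteristic foliation in which no positive elliptic point is joined by an orbit to a positive hyperbolic, and similarly for negative singularities. Suppose for contradiction that $e_+(F)\le h_+(F)$. Choose the union $U$ of small neighbourhoods of all positive elliptic points given by \lemref{l:little discs}, and form the basin $B(U)$ as in \lemref{l:leg poly}. The Legendrian polygon $(Q,V,\alpha)$ associated with $\overline{B(U)}$ has $e_+(F)$ corners coming from positive elliptic sources, while each positive hyperbolic whose unstable separatrices both leave $B(U)$ contributes a pseudovertex. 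A count of boundary arcs together with $e_+\le h_+$ forces some edge of $\partial Q$ to be mapped by $\alpha$ onto a cycle $\gamma$ of $F(\xi)$ (or, more generally, onto the boundary of a Legendrian polygon that identifies edges in the sense of \defref{d:identify}). By tightness and \lemref{l:limit cycles on spheres imply integral disc}, $\gamma$ bounds an immersed integral disc $D'\subset M$ of $\xi$. Applying the surgery lemmas \lemref{l:cut}, \lemref{l:cut2} or \lemref{l:cut3} to $F$ along $\gamma$ using $D'$ produces a new embedded surface $F'$ with the same Euler evaluation but strictly fewer singularities; iterating, the procedure must terminate, producing a contradiction with the assumption $e_+\le h_+$.

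\medskip

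For the sphere case the same argument, run a second time with the orientation of $S^2$ reversed (which swaps the roles of $e_+,h_+$ with $e_-,h_-$), gives $e_-(S^2)\ge h_-(S^2)+1$; combined with $\chi(S^2)=2$ this yields $e(\xi)[S^2]=0$. For the disc statement one does not need a symmetric argument on the other side since the sign of the boundary already pins down which inequality is needed.

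\medskip

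\noindent\textbf{Main obstacle.} The delicate point is Step 2: in the contact case \cite{El}, after elimination one may assume the only limit sets of $F(\xi)$ are singularities, and the Legendrian polygon of a basin automatically bounds a topological disc in $F$, to which tightness applies directly. In the confoliation setting, however, the characteristic foliation may still harbour closed cycles and quasi-minimal sets coming from the foliated part of $\xi$, and the surgeries of \lemref{l:cut2} and \lemref{l:cut3} can introduce new (non-generic) singularities along closed leaves. Keeping track of these, and verifying that each surgery yields a genuine strict decrease of some well-chosen complexity so that the iteration terminates, is the technically hardest part of the argument. A secondary difficulty is that \lemref{l:elim} requires the elliptic singularity in question to lie in $H(\xi)$; this is precisely the point that the initial surgeries using \lemref{l:cut} are designed to guarantee, and why the argument does not extend to non-simply-connected surfaces (as the authors mention immediately afterwards).
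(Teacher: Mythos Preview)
Your overall strategy---adapt Eliashberg's basin/elimination argument and use the surgery lemmas \lemref{l:cut}--\lemref{l:cut2} to cope with the cycles produced by the foliated region---is the paper's strategy too, but the order of operations is inverted and this creates a real gap. The paper \emph{first} surgically removes all cycles: it applies \lemref{l:cut} and \lemref{l:cut2} to cycles that are maximal with respect to inclusion of the discs they bound in $D$; maximality guarantees non-trivial one-sided holonomy on the outer side, so the new elliptic singularities land in $H(\xi)$ automatically by the last clause of \lemref{l:cut}. After this single pass the disc has no cycles and every elliptic lies in $H(\xi)$. Only then is Eliashberg's argument run, and it is run locally: pick one negative elliptic $y$, form the Legendrian polygon covering $\overline{B(U)}$ for a small disc $U\ni y$; since there are no cycles, $V=\emptyset$ and every boundary component of $\alpha(Q)$ passes through a positive elliptic point in $H(\xi)$, so tightness forces a negative hyperbolic pseudovertex on that boundary, and \lemref{l:elim} removes the pair. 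No further surgery is needed once the cycles are gone.

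Your route instead eliminates first and then, under the hypothesis $e_+\le h_+$, tries to locate a cycle by a ``count of boundary arcs''. That step is not an argument as written, and your description of the Legendrian polygon is off: if $U$ is the union of neighbourhoods of the positive elliptic sources, those sources lie in the \emph{interior} of $B(U)$ and of $Q$, not on $\partial Q$; the corners and pseudovertices on $\partial Q$ correspond to hyperbolic singularities reached from $U$, and the elliptic points on $\alpha(\partial Q)$ are the negative sinks. So the sentence ``$(Q,V,\alpha)$ has $e_+(F)$ corners coming from positive elliptic sources'' is incorrect, and with it the counting that is supposed to force a cycle. You also give no complexity that strictly decreases under your eliminate--surger--repeat loop. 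The paper sidesteps all of this by clearing the cycles first and then applying tightness to the basin of a single negative elliptic singularity, which is exactly the configuration \defref{d:tight confol} is tailored to.
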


\begin{proof}
We perturb the surface such that it becomes generic and the elliptic singularities lie in the interior of $H(\xi)$ or in the interior of the foliated region. Furthermore, we will assume in the following that there are no connections between different hyperbolic singularities of characteristic foliations. 

We show $e(\xi)[D]\ge 1$ for every disc as in b). By the Poincar{\'e} index theorem
\begin{align} \label{e:chie}
\begin{split} 
\chi(D) & = e_+(D) + e_-(D) -h_+(D) - h_-(D) \\
e(\xi)(D) & = e_+(D) - e_-(D) - h_+(D) + h_-(D).
\end{split}
\end{align}
Subtracting these equalities we obtain $\chi(D)- e(\xi)[D]=2(e_--h_-)$. In order to prove the b) it suffices to replace $D$ by an embedded disc $D'$ with $e(\xi)[D]=e(\xi)[D']$ such that $D'$ contains no negative elliptic singularities. 

Because $\xi$ is tight and $D$ is simply connected each cycle of $D(\xi)$ is the boundary of an integral disc. We can apply \lemref{l:cut} or \lemref{l:cut2} to such discs to obtain a new embedded disc $D'$. By (iii) of \defref{d:tight confol} $e(\xi)[D]=e(\xi)[D']$. 

We now choose particular cycles of $D(\xi)$ to which we apply \lemref{l:cut} and \lemref{l:cut2}: Define $\gamma\le\gamma'$ for two cycles $\gamma,\gamma'$ of the characteristic foliation if $\gamma'$ bounds an embedded disc containing $\gamma$. We apply \lemref{l:cut} and \lemref{l:cut2} to cycles which are maximal with respect to $\le$. This means in particular that the holonomy of maximal cycles which are closed leaves of $D(\xi)$ is not trivial on the outer side of the cycle.

Hence we obtain a disc $D'$ whose characteristic foliation does not have closed cycles and all elliptic singularities are contained in $H(\xi)$. In particular there are no integral discs of $\xi$ which pass though elliptic singularities of the characteristic foliation of $D$. Moreover, $e(\xi)[D]=e(\xi)[D']$. From now on we will write $D$ instead of $D'$. 

Adapting arguments from \cite{El} we eliminate one negative elliptic singularity $y$. Let $U$ be a disc such that $\partial U$ is transverse to $D(\xi)$ and $y\in U$. According to \lemref{l:leg poly} there is a Legendrian polygon $(Q,V,\alpha)$ covering $\overline{B(U)}$. In the present situation $V=\emptyset$ since $D(\xi)$ has no cycles or exceptional minimal sets.   
Note that $B(U)\subset D$ because the characteristic foliation is pointing outwards along $\partial D$.  After a small perturbation of $D$ we may assume that $\alpha$ identifies vertices of $\partial Q$ only if adjacent edges are also identified, for elliptic vertices this is illustrated in \figref{b:split}. In this situation all boundary components of $\partial\overline{B(y)}$ are embedded piecewise smooth circles.  

Recall that $D(\xi)$ contains no cycles. Then every boundary component $\gamma_{o}$ of $\overline{B(y)}$ therefore contains an elliptic singularity (which has to be positive). If all singularities of $D(\xi)$ on $\gamma_{o}$ are positive, then we obtain a contradiction to the tightness of $\xi$. Hence $\gamma_{o}$ contains a negative singularity which has to be hyperbolic. According to our assumptions it is a pseudovertex of the Legendrian polygon, ie. its unstable leaf ends at $y$ while the other unstable leaf never meets $B(y)$.

Therefore the application of the elimination lemma (\lemref{l:elim}) does not create new cycles on the disc. We continue with the elimination of negative elliptic singularities until $e_-=0$. This finishes the proof of b)

Now we come to the prove of a). First we use \lemref{l:cut} and \lemref{l:cut2} in order to decompose $S$ into a disjoint union of embedded spheres such that there are no cycles which contain hyperbolic singularities. In the following we consider each sphere individually, so we continue to write $S$. If $S(\xi)$ contains a closed leaf, then the claim follows immediately from the definition of tightness: Let $D_1,D_2\subset S$ be the two discs with $\partial D_1=\gamma=\partial D_2$. Then there is an integral disc $D'$ of $\xi$ such that $\partial D'=\gamma$. We orient $D'$ such that $D_1\cup D'$ is a cycle and denote by $-D'$ the disc with the opposite orientation. Then $[S]=[D_1\cup D']+[(-D')\cup D_2]$ and the claim follows from (iii) of \defref{d:tight confol} applied to $D_1,D_2$: 
$$
e(\xi)[S]=e(\xi)[D_1\cup D'] + e(\xi)[(-D')\cup D_2]=0.
$$
Finally if $S(\xi)$ has neither closed leaves or cycles, then one can prove a) using b) when one considers complements of small discs around positive or negative elliptic singularities.  
\end{proof}

Consider a Legendrian polygon $(Q,V,\alpha)$ in $F\subset M$ when $\xi$ is a contact structure on $M$. Generically the characteristic foliation on $F$ is of Morse-Smale type (cf. \cite{giroux}). In particular there are no quasi-minimal sets. If the set of virtual vertices of the Legendrian polygon $(Q,V,\alpha)$ associated to $U$ is not empty, then by \lemref{l:create} one can create of a canceling pair of singularities along on $\gamma_v$ for $v\in V$ such that all leaves which accumulated on $\gamma_v$ now accumulate on an elliptic or a hyperbolic singularity.

For this reason the case $V\neq\emptyset$ plays essentially no role when $\xi$ is a contact structure. If the $\omega$-limit set of $\gamma$ is contained in the fully foliated part of $\xi$, then it not possible to apply \lemref{l:create} (cf. \secref{s:example}). It is at this point where the proof of the Thurston-Bennequin inequalities for tight contact structures fails when one tries to adapt the arguments from \cite{El} to tight confoliations and surfaces which are not simply connected.

We finish this section with a remark that will be useful later.
\begin{rem} \mlabel{r:d+} 
Let $\xi$ be a tight confoliation. For an embedded surface $F\subset M$ we define $d_\pm(F)=e_\pm(F)-h_\pm(F)$ for open subsets of $F$. Note that if $F$ is a sphere, then $d_+(F)=d_-(F)=1$ by \thmref{t:discs and spheres} and $\chi(F)=2$.

Part b) \thmref{t:discs and spheres} can be strengthened: It is not only possible to replace $D$ be a disc with the same boundary and $e(\xi)[D]=e(\xi)[D']$ such that $D'(\xi)$ has no negative elliptic singularities. Consider $\alpha$-limit set of stable leaves of positive hyperbolic singularities of $D'$. Since $D'(\xi)$ contains no cycles the $\alpha$-limit set is generically a positive elliptic singularity. Thus we may eliminate all negative elliptic and all positive hyperbolic singularities from $D'(\xi)$. This implies the following inequalities:
\begin{align*}
d_-(D) & =e_-(D)-h_-(D)= e_-(D')-h_-(D')\le 0 \\
d_+(D) & =e_+(D)-h_+(D)= e_+(D')-h_+(D')\ge 0
\end{align*}
In a later application we shall consider discs such that $\partial D$ is negatively transverse to $\xi$. Then the two inequalities above will be interchanged. 
\end{rem}


\subsection{Perturbations of tight confoliations on balls}  \mlabel{s:perturb}


The proof \thmref{t:tight on balls} is given in the following sections. It has two main ingredients: First we generalize taming functions on spheres to confoliations. We show that the characteristic foliation on an embedded sphere $S$ can be tamed if $\xi$ is tight and that this remains true for contact structures $\xi'$ which are close enough to $\xi$. Then we apply arguments from \cite{giroux2} to conclude that $\xi'|B$ is tight if $\xi'$ is a contact structure.

In the following sections $\xi$ will always be an oriented tight confoliation on $M$ and $S$ denotes an embedded oriented sphere. We do not consider foliations by spheres.


\subsubsection{Properties of characteristic foliations on spheres}

The tightness of $\xi$ leads to restrictions on the signs of hyperbolic singularities on $\gamma$. \lemref{l:signs in cycles on spheres} is concerned with signs of hyperbolic singularities on cycles of $S(\xi)$ when $\xi$ is a tight confoliation. To state it we need the following definition:
\begin{defn}
A cycle connected $\gamma$ of $S(\xi)$ is an {\em internal subcycle} if there is another cycle $\gamma'$ of $S(\xi)$ such that $\gamma\cap\gamma'$ is not empty and the integral disc which bounds $\gamma'$ contains the integral disc which bounds $\gamma$.

A leaf $\gamma$ of $S(\xi)$ is called {\em internal} if there are two cycles of $S(\xi)$ which bound discs tangent to $\xi$ whose interiors are disjoint. We say that a hyperbolic singularity on $\gamma$ is {\em essential} if it is not lying on an internal subcycle of $\gamma$. 

The union of singular points and cycles of $S(\xi)$ will be denoted by $\Sigma(S)$. This set is compact.
\end{defn}
An example of an internal subcycle is shown in \figref{b:fol-taming-ex}. Note that one can create internal cycles intersecting a fixed cycle of $S(\xi)$ with arbitrary sign using an inverse of the construction explained in \lemref{l:cut2}.

If a connected cycle $\gamma$ of $S(\xi)$ contains hyperbolic singularities, then the holonomy along $\gamma$ can be defined at most on one side. The one-sided holonomy is defined if and only if there is an immersion of a disc $D$ into $S$ which is an embedding on $\ring{D}$ and $\partial D$ is mapped onto $\gamma$ such that the image of $\ring{D}$ does not contain a stable or unstable leaf of a hyperbolic singularity on $\gamma$.  We will say that $D$ is a disc in $S$ although some points on the boundary may be identified. 

The singularities on $\gamma$ can be decomposed into two classes

\begin{align*}
A(\gamma) & = \{ \textrm{hyperbolic singularities on }\gamma \textrm{ such that } \gamma \textrm{ contains}\\ 
& \quad  \textrm{ both stable leaves}\} \\
B(\gamma) & = \{ \textrm{hyperbolic singularities on }\gamma \textrm{ such that } \gamma \textrm{ contains}\\
& \quad  \textrm{ only one of the two stable leaves}\}.
\end{align*}
Let $\gamma$ be a cycle of $S(\xi)$ and $D\subset S$ a disc with $\partial D=\gamma$ whose interior does not contain a stable leaf of a hyperbolic singularity on $\gamma$. Then the one-sided holonomy along $\gamma$ is well defined. Because $\xi$ is tight there is a disc $D'$ tangent to $\xi$ such that $\partial D' = \gamma$. We orient $D'$ using the orientation of $\xi$. 
\begin{defn} \mlabel{d:pot}
We say that $\gamma$ is {\em potentially attracting} if
\begin{itemize}
\item[(i)] $D$ lies below respectively above $D'$ (with respect to the coorientation of $\xi$) in a neighbourhood of $D'$ and
\item[(ii)] the orientation of $\gamma$ is opposite respectively equal to the orientation of $\partial D'$. 
\end{itemize}
In the opposite case, $\gamma$ is {\em potentially repulsive}.   
\end{defn} 
According to \lemref{l:neg-curv} the holonomy along potentially repulsive respectively attractive cycles is non-repelling respectively non-attracting. The terminology of \defref{d:pot} is introduced to deal with the case when the holonomy is trivial (and therefore non-repelling and non-attracting at the same time). 

\begin{lem} \mlabel{l:signs in cycles on spheres}
Let $\gamma$ be a cycle of $S(\xi)$ containing a hyperbolic singularity and such that the one-sided holonomy is defined. 

Then all essential singularities in $A(\gamma)$ have the same sign and all essential singularities in $B(\gamma)$ have the opposite sign. The one-sided holonomy is potentially attractive (respectively repulsive) if and only if all singularities in $A(\gamma)$ are negative (respectively positive) and all singularities in $B(\gamma)$ are positive (respectively negative).

The signs of the non-essential singularities in $A(\gamma)$ respectively $B(\gamma)$ is opposite to the sign of the  essential singularities in $A(\gamma)$ respectively $B(\gamma)$.  
\end{lem}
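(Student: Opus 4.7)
First I would invoke tightness: by \lemref{l:limit cycles on spheres imply integral disc} applied to a $C^\infty$-small perturbation of $S$, the cycle $\gamma$ is bounded by a (possibly pinched or immersed) integral disc $D'$ of $\xi$. At each hyperbolic singularity $p$ on $\gamma$, both $S$ and $D'$ are tangent to $\xi_p$, so in local coordinates $(x,y,z)$ adapted to $\xi$ with $D'=\{z=0\}$ one may write $S$ as the graph $z=\phi(x,y)$ of a function with a Morse saddle at the origin (the saddle type coming from the hyperbolicity of $p$ in $S(\xi)$). The intersection $\gamma$ near $p$ coincides with the cross $\{\phi=0\}$, whose four branches are the stable and unstable separatrices of $p$, and the four quadrants carry alternating signs of $\phi$, so $S$ lies alternately above and below $D'$.

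\textbf{Local sign analysis at essential singularities.} The key point is that the side of $\gamma$ on which the disc $D$ lies is pinned down, near each $p$, by the combinatorial type of $p$ (in $A(\gamma)$ or $B(\gamma)$) together with the sign of $p$. If $p\in B(\gamma)$, then $\gamma$ has a single corner at $p$ (one stable together with one adjacent unstable separatrix), and the condition that the other stable separatrix not enter the interior of $D$ forces $D$ locally to be the single quadrant enclosed by that corner; the sign of $\phi$ on this quadrant is determined by the sign of $p$. If $p\in A(\gamma)$, the cycle passes through $p$ twice so $D$ locally is the union of two opposite quadrants, which carry a common sign of $\phi$ also determined by the sign of $p$. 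Matching the sign of $\phi$ in the quadrants occupied by $D$ across the two cases shows that one specific sign pattern --- either ($A$ negative, $B$ positive) or ($A$ positive, $B$ negative) --- places $D$ consistently on a prescribed side of $D'$ (above or below in the $\xi$-coorientation), while the opposite sign pattern corresponds to $D$ lying on the other side.

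\textbf{Global conclusion and identification with the holonomy.} Because $D$ is connected, the local side relation must be the same at every essential hyperbolic singularity on $\gamma$, which forces the sign patterns for $A(\gamma)$ and $B(\gamma)$ asserted in the first part of the lemma. To identify these patterns with "potentially attractive" and "potentially repulsive" I would apply \lemref{l:neg-curv} to $\partial D'$, which always gives $h_{\partial D'}(x)\le x$. Combined with the orientation of $\gamma$ induced by the characteristic foliation and \defref{d:pot}, the configuration ($A$ negative, $B$ positive) corresponds to $D$ lying on the side of $D'$ opposite to the $\xi$-coorientation with $\gamma$ oriented opposite to $\partial D'$ --- precisely the potentially attractive case --- while in the other configuration \lemref{l:neg-curv} forces the one-sided holonomy to be the identity, giving the potentially repulsive case.

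\textbf{Non-essential singularities and main obstacle.} If $p\in A(\gamma)$ (or $B(\gamma)$) is non-essential, it lies on an internal subcycle $\gamma_0\subset\gamma$ and therefore bounds its own integral disc $D''$ contained in the interior of the region enclosed by $\gamma$. Near $p$, the two integral discs $D'$ and $D''$ sit on opposite sides of $S$ by the nesting definition of an internal subcycle, so applying the local sign analysis above with $D''$ in place of $D'$ flips the sign conclusion and yields the last assertion of the lemma. The step I expect to require the most care is the local sign analysis itself: tracking simultaneously the orientation of $\xi$, the coorientation of $S$, the orientation of $\gamma$ induced by $S(\xi)$, and the orientation of $\partial D'$ induced from $D'$, and then verifying the quadrant combinatorics in both the $A$ and $B$ cases. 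Once this bookkeeping is settled, tightness is used only to supply $D'$ and $D''$, and \lemref{l:neg-curv} is used only to fix the direction of the one-sided holonomy, so the two analytic inputs enter in a completely modular way.
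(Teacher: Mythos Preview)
Your approach is the paper's, carried out in more detail. The paper simply says: obtain the integral disc $D'$ from tightness, observe that a collar of $\partial D$ lies on one side of $D'$ in a tubular neighbourhood, and then ``the statement about the signs of singularities follows by looking how $D$ approaches $D'$ near the tangencies''; your Morse-saddle graph model $z=\phi(x,y)$ and the quadrant bookkeeping for the $A$ versus $B$ cases make this precise in exactly the intended way, and the holonomy identification via the orientation conventions and \lemref{l:neg-curv} is the same.

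One correction: \lemref{l:neg-curv} does not ``force the one-sided holonomy to be the identity'' in the second configuration. It only gives $h_{\partial D'}(x)\le x$, so in that configuration the holonomy is non-attracting, not necessarily trivial. The matching of the two sign patterns with ``potentially attractive'' versus ``potentially repulsive'' should be read off directly from \defref{d:pot} (which side of $D'$ the collar of $D$ is on, together with the orientation of $\gamma$ relative to $\partial D'$); \lemref{l:neg-curv} then tells you, after the fact, which way the actual holonomy can go in each case.

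For the non-essential case your idea is in the right direction, but the sentence ``$D'$ and $D''$ sit on opposite sides of $S$'' needs care. When $\gamma$ has an internal subcycle $\gamma_0$, the integral ``disc'' $D'$ bounding $\gamma$ is a pinched region (for instance a pinched annulus, as the paper notes after \lemref{l:limit cycles on spheres imply integral disc}) rather than a genuine disc containing $D''$; it is only with this reading that near $p$ the regions $D'$ and $D''$ occupy complementary quadrant pairs in the integral leaf and hence lie on opposite sides of $S$, making your flip argument go through. You should also track that the $A/B$ type of $p$ relative to $\gamma_0$ need not agree with its type relative to $\gamma$, and check that this change together with the side flip produces exactly the asserted sign reversal. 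The paper leaves all of this implicit in the same phrase about how $D$ approaches $D'$, so your instinct that this is the step requiring the most care is well placed.
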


\begin{proof}
Let $D\subset S$ be the disc in $S$ with $\partial D=\gamma$ such that the one-sided holonomy is defined on the side of $\gamma$ where $D$ is lying. Because $\xi$ is tight, there is a disc $D'$ tangent to $\xi$ which bounds $\gamma$. Consider a tubular neighbourhood of $D'$ which contains a collar of $\partial D$ and the collars lies on  one side of $D'$ in the tubular neighbourhood.

The statement about the signs of singularities now follows by looking how $D$ approaches $D'$ near the tangencies and the relation between the signs and the holonomy is a consequence of our orientation conventions and \lemref{l:neg-curv}. \end{proof} 

The following proposition is a generalization of Lemma 4.2.1 in \cite{El}. It will play an important role in the proof of \thmref{t:tight on balls}.

\begin{prop} \mlabel{p:basins in spheres}
Let $\xi$ be a tight confoliation on $M$ and $S\subset M$ an embedded sphere such that the singularities of $S(\xi)$ are non-degenerate. 
Let $U\subset S$ be a connected submanifold of dimension $2$ such that $\partial U$ is transverse to $S(\xi)$ and $S(\xi)$ points outwards along $\partial U$. Each connected component $\Gamma$ of the boundary  the associated Legendrian polygon $(Q,V,\alpha)$ has the following properties. 
 
\begin{itemize}
\item[(i)] If there is a negative elliptic singularity $x$ on $\alpha(\Gamma)$ such that $\alpha(Q)$ is not a neighbourhood of $x$ or a cycle $\gamma_v$ with $v\in V\cap \Gamma$ such that $\alpha(Q)$ is not a one-sided neighbourhood of $\gamma_v$, then $\alpha(\Gamma)$ contains a positive pseudovertex. 
\item[(ii)] If $d_+(U)=1$ and $(Q,V,\alpha)$ identifies the edges $e_1,\ldots,e_l$ of $\Gamma$, then $\alpha$ maps the pseudovertices on $e_1,\ldots,e_l$ to negative hyperbolic singularities of $S(\xi)$.
\end{itemize}  
\end{prop}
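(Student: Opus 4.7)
The plan is to adapt the argument of Lemma 4.2.1 in \cite{El}, which establishes the analogous result for tight contact structures, to the setting of tight confoliations. The main technical tools are \lemref{l:signs in cycles on spheres} (constraining the signs of hyperbolic singularities on cycles), \lemref{l:little discs} (describing the local structure near elliptic singularities), and \thmref{t:discs and spheres} (the Thurston--Bennequin inequality for discs and spheres).

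For part (i), I would argue by contradiction, assuming that $\alpha(\Gamma)$ contains no positive pseudovertex. Consider first the case of a negative elliptic singularity $x \in \alpha(\Gamma)$ such that $\alpha(Q)$ is not a neighbourhood of $x$. Because the characteristic foliation points outwards along $\partial U$, all leaves in $B(U)$ flow outward from $\partial U$, so the $\omega$-limit sets of these leaves consist of sinks; in particular every elliptic corner of $\Gamma$ is negative. Tracing around $\alpha(\Gamma)$ starting from $x$, one passes through a sequence of Legendrian arcs joined at negative elliptic corners and (by hypothesis) only negative pseudovertices. By \lemref{l:little discs} there is a small disc $D_x \subset S$ whose boundary is transverse to $S(\xi)$ with inward-pointing flow; enlarging $\alpha(Q)$ by adjoining $D_x$ and keeping track of the modified boundary shows that the hypothesis \emph{``$\alpha(Q)$ is not a neighbourhood of $x$''} forces a non-trivial remaining portion of $\alpha(\Gamma)$. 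Following the oriented Legendrian arcs around $\alpha(\Gamma)$ and appealing to the sign constraints of \lemref{l:signs in cycles on spheres} applied to any cycles encountered, I would locate a disc in $S$ whose boundary is a cycle of $S(\xi)$ containing only negative singularities; by tightness this cycle bounds an integral disc $D'$ of $\xi$, and an index count via \eqref{e:chie} on this configuration contradicts \thmref{t:discs and spheres}. The argument for a virtual vertex $v \in V \cap \Gamma$ with associated cycle $\gamma_v$ is parallel: $\gamma_v$ is an $\omega$-limit of leaves from $\alpha(Q)$, hence potentially attractive on the side of $\alpha(Q)$, and the absence of positive pseudovertices combined with \lemref{l:signs in cycles on spheres} contradicts the hypothesis that $\alpha(Q)$ extends beyond a one-sided collar of $\gamma_v$.

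For part (ii), the identification of the edges $e_1, \ldots, e_l$ produces a cycle $\gamma_{e_1 \ldots e_l}$ of $S(\xi)$ lying in the $\omega$-limit set of leaves in $B(U)$, so the one-sided holonomy on the $\alpha(Q)$-side is non-repelling and by tightness $\gamma_{e_1 \ldots e_l}$ bounds an integral disc of $\xi$ (as in \lemref{l:limit cycles on spheres imply integral disc}). The hypothesis $d_+(U) = 1$ forces $\gamma_{e_1 \ldots e_l}$ to be potentially attractive in the sense of \defref{d:pot}: a local computation near the cycle shows that potential repulsivity, combined with the orientation of $\gamma_{e_1 \ldots e_l}$ induced from $\partial U$, would give $d_+(U) \neq 1$ (the $d_\pm$-contribution coming from the identified portion would force additional positive elliptic singularities). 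By construction of identification, each pseudovertex on $e_1, \ldots, e_l$ is a hyperbolic singularity both of whose stable leaves lie in $\gamma_{e_1 \ldots e_l}$, hence belongs to $A(\gamma_{e_1 \ldots e_l})$. \lemref{l:signs in cycles on spheres} applied to the potentially attractive cycle then shows that these pseudovertices map to negative hyperbolic singularities, as required.

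The main obstacle will be the global bookkeeping in part (i): one must organize the traversal of $\alpha(\Gamma)$ so as to actually produce the contradicting cycle in $S(\xi)$, rather than merely a chain of arcs, and in particular treat the case where the $\omega$-limit sets of consecutive pseudovertices themselves meet in quasi-minimal sets. Unlike the tight contact case in \cite{El}, cycles of $S(\xi)$ may have trivial infinitesimal holonomy (they can lie in the fully foliated part of $\xi$), so the sign restrictions cannot be read off from a linearization and must be obtained purely from the geometric \defref{d:pot} together with \lemref{l:signs in cycles on spheres}. The second subtlety, absent in the contact case by \lemref{l:create}, is the genuine presence of virtual vertices $v \in V \cap \Gamma$; the proof of part~(i) must handle this case in parallel with, and independently of, the negative elliptic case.
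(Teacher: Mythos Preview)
Your proposal has genuine gaps, most seriously in part (ii).

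For part (ii) you try to read the sign of the pseudovertices directly off the cycle $\gamma_{e_1\ldots e_l}$ via \lemref{l:signs in cycles on spheres}. This does not work as stated. First, your claim that each pseudovertex has \emph{both stable leaves} on $\gamma_{e_1\ldots e_l}$, hence lies in $A(\gamma)$, is wrong: a pseudovertex on $\partial Q$ is a hyperbolic singularity whose \emph{unstable} leaves lie in $\alpha(\partial Q)$, while its stable leaves come from the interior of $B(U)$. Following the construction in \lemref{l:edges of poly}, at each pseudovertex on the cycle one stable leaf (the incoming $\eta_{i-1}$) and one unstable leaf (the outgoing $\eta_i$) lie on $\gamma$, so the pseudovertex belongs to $B(\gamma)$, not $A(\gamma)$. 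Second, the one-sided holonomy along $\gamma_{e_1\ldots e_l}$ is defined on the side \emph{away} from $\alpha(Q)$, precisely because the other stable leaf of each pseudovertex enters $\alpha(Q)$; so your attractiveness argument is on the wrong side. Third, your link between $d_+(U)=1$ and potential attractiveness is asserted but never carried out, and with the corrected $B(\gamma)$ membership the sign conclusion from \lemref{l:signs in cycles on spheres} would go the wrong way even if you could establish it.

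The paper's argument for (ii) is entirely different and does not use \lemref{l:signs in cycles on spheres} at all. After reducing to $V=\emptyset$, the hypothesis $d_+(U)=1$ is used to show that the graph in $U$ formed by positive singularities and stable leaves of positive hyperbolic singularities is a connected \emph{tree}. One then perturbs so that both stable leaves $\eta,\eta'$ of a chosen pseudovertex $\alpha(x_i)$ land on positive elliptic singularities in $U$, and joins $\eta,\eta'$ by the unique path in this tree to form a simple closed Legendrian curve on which every singularity except $\alpha(x_i)$ is positive and which meets $H(\xi)$. If $\alpha(x_i)$ were positive, this curve would violate tightness; hence $\alpha(x_i)$ is negative.

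For part (i), your outline is closer in spirit but misses the key first step: the paper eliminates all virtual vertices by surgering $S$ along the integral discs bounding the cycles $\gamma_v$ (\lemref{l:cut}, \lemref{l:cut2}), carefully tracking $d_+$ through the cuts via \remref{r:d+} and \eqref{e:d+ spaltung} so that the hypothesis $d_+(U)=1$ persists on each piece. After this reduction the argument is short: the connected component $c$ of $\partial(\alpha(Q))$ through $x$ is a simple closed Legendrian curve which, after a small perturbation, has no corners and passes through $x\in H(\xi)$; if all its singularities were negative it could not bound an integral disc, contradicting tightness, so it must carry a positive pseudovertex. Your ``trace around $\alpha(\Gamma)$ and do an index count'' never isolates such a curve $c$, and handling the virtual-vertex case ``in parallel'' rather than by surgery leaves you with no concrete object on which to run the tightness contradiction. (Incidentally, on a sphere Poincar\'e--Bendixson rules out quasi-minimal sets, so that worry in your final paragraph does not arise.)
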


\begin{proof}
It was shown in \lemref{l:leg poly} that $\overline{B(U)}$ is covered by a Legendrian polygon $(Q,V,\alpha)$. Recall that $\alpha$ is defined only on $\Gamma\setminus(\Gamma\cap V)$, but we shall denote $\alpha(\Gamma\setminus(\Gamma\cap V))$ by $\alpha(\Gamma)$. 

First we reduce the situation to the case when $V=\emptyset$. By the theorem of Poincar{\'e}-Bendixon, the $\omega$-limit sets corresponding to points of $V$ are cycles. Because $\xi$ is tight, these cycles bound integral discs of $\xi$ and we can apply \lemref{l:cut} or \lemref{l:cut2}. 
Since the discs bounding these cycles may intersect $U$ it is also necessary to consider cycles in $U$. 

Let $v\in V$ and $D_v$ the integral disc of $\xi$ which bounds $\gamma_v$ and $\gamma_i$ a cycle of $S(\xi)$ which is contained in $D_v$. We assume that the disc $D_i\subset D_v$ bounded by $\gamma_i$ intersects $S$ only along $\gamma_i$.  The cycle $\gamma_{i}$ is either contained in $U$ or in the complement of $U$. 

We begin with the case $\gamma_i\subset U$. In this case we obtain two embedded spheres $S',S''$ by cutting along $\gamma_{i}$. When we use \lemref{l:cut} for this the subset $U\subset S$ induces two subsets $U'\subset S', U''\subset S''$ such that $U'$ respectively $U''$ contains one positive respectively one negative singularity in addition to singularities which were already present in $S$, $\partial U'$ respectively $\partial U''$ is transverse to $S'(\xi)$ respectively $S''(\xi)$ and the characteristic foliation points outwards.  The pseudovertices of the Legendrian polygons associated to the basins of $U',U''$ coincide with the pseudovertices of $(Q,\alpha,V)$.
If $d_+(U)=1$, then
\begin{align}
\begin{split} \label{e:d+ spaltung}
d_+(U')+d_+(U'')& = d_+(U)+1 \\
d_+(S'\setminus U')+d_+(U') & = d_+(S')=1 \\
d_+(S''\setminus U'')+d_+(U'') & = d_+(S'')=1.
\end{split}
\end{align}
Notice that $(S'\setminus U')\cup (S''\setminus U'')=S\setminus U$ and $\partial(S\setminus U)$ is negatively transverse to $S_\xi$. It follows from \remref{r:d+} that $d_+(S'\setminus U')\le 0$ and $d_+(S''\setminus U'')\le 0$. Together with \eqref{e:d+ spaltung} this implies $d_+(U')=d_+(U'')=1$. 

If we applied \lemref{l:cut2} and the hyperbolic singularity was positive respectively negative, then $h_+(U'\cup U'')=h_+(U)-1$ respectively $e_+(U'\cup U'')=e_+(U)+1$ and one of the sets, say $U'$ coincides with $U$. Then $d_+(U)=1$ implies $d_+(U'')=1$. 

When $\gamma_{i}$ lies in the complement of $U$, cutting along $\gamma_{i}$ will not affect $U$ or $d_+(U)$ but the basin of $U$ can change: We might remove a virtual vertex, or after the surgery process some boundary components of the Legendrian polygon might be mapped to a negative elliptic singularity while they were accumulated on a cycle before. The pseudovertices are not affected. Note also that if $\alpha(Q)$ is a one--sided neighbourhood of a cycle $\gamma_v$, then the Legendrian polygon which results from the surgery along $\gamma_v$ will be a neighbourhood of the negative elliptic singularity which results from surgery process. (Recall that $\gamma_v$ has well defined attractive one--sided holonomy on the side of $\alpha(Q)$). 

After finitely many steps we obtain a finite union of embedded spheres $S_j$ and subsets $U_j$ with the same properties as $U$ such that the associated Legendrian polygon $(Q_j,V_j,\alpha_j)$ satisfies $V_j=\emptyset$. Therefore is suffices to prove the claim when $\overline{B(U)}$ is covered by a Legendrian polygon $(Q,V,\alpha)$ with $V=\emptyset$. Let $\Gamma$ be a boundary component of $Q$. 

We now prove (i). Let $x\in\alpha(\Gamma)$ be an elliptic singularity such that $\overline{\alpha(Q)}$ is not a neighbourhood of $x$. Then the connected component of $\partial(\alpha(Q))$ containing $x$ is a piecewise smooth closed curve $c$. After a perturbation of the sphere we may assume that $c$ does not contain corners, $x\in H(\xi)$ and $c$ is embedded (cf. \figref{b:split}). If all singularities on $c$ were negative, then we would get a contradiction to the tightness of $\xi$ since no integral surface of $\xi$ can meet $x$. Since all elliptic singularities on $c\subset\alpha(\partial Q)$ are attractive and therefore negative there must be a positive pseudovertex on $c$. 

It remains to prove (ii). Assume $d_+(U)=1$ and let $x_1,\ldots,x_l,l\ge 2$ be the pseudovertices on the edges $e_1,\ldots,e_l\subset\Gamma$. 

When $\alpha(e_i)=\alpha(e_j)$ for $i\neq j$, then $l=2$. Let $\eta,\eta'$ be the two stable leaves of $\alpha(x_1)$. After a small perturbation of $S$ in the complement of $U$ we may assume that the $\alpha$-limit sets of $\eta,\eta'$ are contained in $U$. 

If $\alpha(e_i)\neq\alpha(e_j)$ for all $i\neq j$, then let $\alpha(x_i),\alpha(x_j)$ be two hyperbolic singularities which lie on the cycle associated to identified edges (cf. \defref{d:identify}) and are connected by a piecewise smooth simple oriented path $\sigma$ in the complement of $U$ consisting of leaves of $S(\xi)$ and hyperbolic singularities (as corners) such that $\sigma$ starts at $\alpha(x_i)$ and ends at $\alpha(x_j)$ without passing through images of other pseudovertices. After a small perturbation of $S$ in the neighbourhood of $\alpha(x_j)$ we obtain a sphere $S'$ such that the  $\alpha$-limit sets $\Alpha(\eta),\Alpha(\eta')$ of the two stable leaves $\eta,\eta'$ of $\alpha(x_i)$ are contained in $U$. 

We may assume that neither $\Alpha(\eta)$ or $\Alpha(\eta')$ is a hyperbolic singularity or a singularity of birth-death type. By the Poincar{\'e}-Bendixon theorem $\Alpha(\eta)$ is either an elliptic singularity or a cycle. The same is true for $\Alpha(\eta')$. Using \lemref{l:cut} and \lemref{l:cut2} we can ensure that $\Alpha(\eta)$ is an elliptic singularity, which has to be positive. Note that $\eta,\eta'$ lie in the same connected component of the two spheres obtained by the surgery along cycles in $U$.

For the same reason we may assume that the $\alpha$-limit set of each stable leaf of hyperbolic singularities in $U$ is an elliptic singularity in $U$. Under these conditions the hypotheses $d_+(U)=1$ implies that the graph formed by positive singularities (except birth-death type singularities) and stable leaves of hyperbolic singularities is a connected tree.  

Both stable leaves of $\alpha(x_1)$ together with the simple path on the tree $\Gamma$ connecting $\Alpha(\eta)$ with $\Alpha(\eta')$  form a simple closed curve $\gamma$ on $S$ which is Legendrian. All singularities on $\gamma$ except $\alpha(x_i)$ are positive by construction. Moreover, $\gamma$ contains an elliptic singularities which lies in $H(\xi)$. If $\alpha(x_i)$ is positive we obtain a contradiction to the tightness of $\xi$ since $c$ cannot bound an integral disc of $\xi$. 
\end{proof}

In order to apply the previous proposition efficiently it remains to show that either one of the two parts of \propref{p:basins in spheres} can be used or $\Gamma\subset\partial Q$ does not contain any pseudovertices at all. This is done in the following lemma.

\begin{lem} \mlabel{l:edges of poly}
In the situation of \propref{p:basins in spheres} $\partial Q$ has more connected components or one of the following statements holds for each connected component $\Gamma$ of $\partial Q$. 
\begin{itemize}
\item[(i)] There is a connected component $\Gamma$ of $\partial Q$ such that $\alpha(\Gamma)$ is an elliptic singularity and $\alpha(Q)$ is a neighbourhood of $x$ or $\alpha(\Gamma)$ is a cycle and $\alpha(Q)$ is a one-sided neighbourhood of that cycle.
\item[(ii)] $\alpha(\Gamma)$ contains a cycle of $S(\xi)$ such that $\alpha(Q)$ is not a one-sided neighbourhood of $\alpha(\Gamma)$ or $\alpha(\Gamma)$ contains an elliptic singularity such that $\alpha(Q)$ is not a neighbourhood of $x$.
\item[(iii)] $\alpha$ identifies edges on $\Gamma$.   
\end{itemize}
\end{lem}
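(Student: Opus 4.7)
The plan is to prove this lemma by a direct case analysis on each connected component $\Gamma$ of $\partial Q$. If $\partial Q$ is disconnected, the first alternative of the lemma is automatically satisfied, so I may restrict attention to a single connected component $\Gamma$ and show that one of (i), (ii), (iii) must hold.

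First I would dispose of (iii): if $\alpha$ identifies edges on $\Gamma$ in the sense of \defref{d:identify}, we are in case (iii). Assume therefore that $\alpha$ does not identify edges on $\Gamma$; thus $\alpha$ is injective on the interior of each edge, although (as noted in the paper after \defref{d:identify}) $\alpha|_{\Gamma}$ may still identify some vertices. Next, examine the set $\alpha(\Gamma)\subset S$. If it contains an elliptic singularity $x$ for which $\alpha(Q)$ is not a full neighbourhood of $x$, or a cycle $\gamma$ for which $\alpha(Q)$ is not a one--sided neighbourhood of $\gamma$, then (ii) holds and we are done. So we may further assume that every elliptic singularity on $\alpha(\Gamma)$ has $\alpha(Q)$ as a full neighbourhood and every cycle on $\alpha(\Gamma)$ has $\alpha(Q)$ as a one--sided neighbourhood, and I aim to show under these hypotheses that $\alpha(\Gamma)$ is a single elliptic point or a single cycle, realising (i).

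The key step is to exclude any "intermediate" structure in $\alpha(\Gamma)$. Arguing locally near an elliptic singularity $x\in\alpha(\Gamma)$ with $\alpha(Q)$ a full neighbourhood of $x$: the sectors at $x$ contributed by the corners of $\Gamma$ mapping to $x$ must sum to the full circle of directions around $x$, and the absence of edge identifications forces the edges of $\Gamma$ incident to these corners to lie on distinct leaves of $S(\xi)$ approaching $x$. Since $\Gamma$ is a single connected circle, tracing along $\Gamma$ and using this full--neighbourhood constraint forces the entire image to collapse to $\{x\}$. The analogous argument at a cycle $\gamma\subset\alpha(\Gamma)$ with $\alpha(Q)$ a one--sided neighbourhood excludes in particular the existence of virtual vertices on $\Gamma$: an edge $b\subset\partial Q$ adjacent to $v\in V\cap\Gamma$ would have $\alpha(b)$ accumulating on $\Gamma_v$ from precisely the $\alpha(Q)$--side, which contradicts the one--sided neighbourhood hypothesis since such an accumulation forces $\alpha(b)$ to lie in the interior of $\alpha(Q)$ rather than on its topological boundary. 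A similar argument rules out Legendrian arcs of $\alpha(\Gamma)$ emerging from $\gamma$, and thus $\alpha(\Gamma)=\gamma$.

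The principal obstacle I anticipate is the careful topological bookkeeping in this last step: one must distinguish between $\alpha(\Gamma)$ as the combinatorial image of $\partial Q$ (which may revisit points because of vertex identifications even without edge identifications) and the set--theoretic boundary of $\alpha(Q)$ inside $S$, and use the full/one--sided neighbourhood assumption to bridge the two. The delicate point is verifying that the combinatorial structure of the Legendrian polygon really does collapse as claimed, rather than admitting more complicated images (e.g.\ a "theta"--like graph of Legendrian arcs with only hyperbolic corners), which would have to be recognised as a cycle in the sense of \secref{s:facts} and thus still fall under (i).
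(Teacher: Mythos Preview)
Your overall logical strategy is a legitimate reorganization of the paper's argument: where the paper shows $\neg(\mathrm{i})\wedge\neg(\mathrm{ii})\Rightarrow(\mathrm{iii})$, you attempt the contrapositive variant $\neg(\mathrm{iii})\wedge\neg(\mathrm{ii})\Rightarrow(\mathrm{i})$. Both yield $(\mathrm{i})\vee(\mathrm{ii})\vee(\mathrm{iii})$. However, the executions differ substantially, and your version leaves the hard step unproved.

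The paper's proof is constructive. It first reduces to $\Gamma\cap V=\emptyset$ (by cutting along cycles, as in \propref{p:basins in spheres}), takes an elliptic singularity $x_1\in\alpha(\Gamma)$, finds the pseudovertex $y_1$ whose unstable leaf reaches $x_1$, and follows the \emph{other} unstable leaf $\eta_1$ of $y_1$. If $\eta_1$ lands at another elliptic point, then (since $\neg(\mathrm{ii})$ forces the full--neighbourhood property there too) $y_1$ lies in the interior of $\alpha(Q)$ and the two edges through $y_1$ are identified. Otherwise $\eta_1$ lands at a further pseudovertex $y_2$, and one iterates; finiteness of the set of pseudovertices forces a repeat $y_r=y_1$, and the leaves $\eta_1,\dots,\eta_{r-1}$ then form the cycle $\gamma_{e_1\dots e_l}$ required by \defref{d:identify}. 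This explicit chase is what makes the argument go through cleanly.

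Your sketch, by contrast, tries to argue directly that under $\neg(\mathrm{ii})\wedge\neg(\mathrm{iii})$ the image $\alpha(\Gamma)$ must collapse to a point or a cycle. The local observation you rely on---that an edge ending at an elliptic point with full neighbourhood must be flanked by $\alpha(\ring{Q})$ on both sides---is correct and is essentially the same mechanism the paper uses. But you stop at ``there must be another edge with the same image'', and this is not yet $(\mathrm{iii})$: \defref{d:identify} asks for a \emph{cycle} built from the images of the edges and their pseudovertices, not merely a single coincidence $\alpha(e)=\alpha(e')$. To upgrade a local coincidence to such a cycle you would have to iterate along $\Gamma$ exactly as the paper does, and at that point your argument becomes the paper's. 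The ``theta--graph'' worry you raise is a symptom of the same gap: without the leaf--tracing iteration you have no mechanism to rule out (or recognise as a cycle) more complicated images consisting only of hyperbolic corners.

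Two smaller points. First, your opening sentence misreads the first alternative: ``$\partial Q$ has more connected components'' refers to the comparison with $\partial U$ from the construction in \lemref{l:leg poly} (this is Case~(o) in the proof of \propref{p:taming functions on spheres}), not to $\partial Q$ being disconnected in absolute terms. Second, the paper disposes of virtual vertices by reducing to $\Gamma\cap V=\emptyset$ before the main argument, rather than handling them via the one--sided neighbourhood condition as you propose; your treatment of the $v\in V$ case is correspondingly vague.
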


\begin{proof}
After a small perturbation of $S$ we may assume that all negative elliptic singularities on $S$ lie in $H(\xi)$ or the interior of the complement of $H(\xi)$. As in the proof of the previous proposition the problem can be reduced to the case when $\Gamma\cap V=\emptyset$. 

We show that if (i) and (ii) do not hold for $\Gamma$, then (iii) applies to $\Gamma$. 
In the following discussion we ignore corners on $\alpha(\Gamma)$ if two of their separatrices lie in the complement of $\overline{\alpha(Q)}$.

Let $x_1\in\alpha(\Gamma)$ be an elliptic singularity. Since $\alpha(\Gamma)\neq x_1$ there is an unstable leaf $\eta_1'$ of a pseudovertex $y_1$ which ends at $x_1$. Let $\eta_1$ be the other unstable leaf of $y_1$. 

If the $\alpha$-limit set of $\eta_1$ is a negative elliptic singularity, then $y_1$ is contained in the interior of $\alpha(Q)$ and the two edges of $\Gamma$ which correspond to $y_1$ are identified by $\alpha$. Otherwise the $\omega$-limit set of $\eta_1$ is a hyperbolic singularity $y_2$ and we can assume that $y_2$ is a pseudovertex of $\Gamma$. There is a unique unstable leaf $\eta_2$ of $y_2$ which is not contained in the interior of $\alpha(Q)$. In particular the $\omega$-limit set of $\eta_2$ cannot by an elliptic singularity. Thus the $\omega$-limit set of $\eta_2$ is the image $y_3$ of a pseudovertex of $Q$. 
If $y_3=y_1$, then $\alpha$ identifies the edges corresponding to $y_1$ and $\eta_1,\eta_2$ form a non-trivial cycle of $S(\xi)$. 

Otherwise we continue as above until a pseudovertex appears for the second time. This happens after finitely many steps since $\Gamma$ contains only finitely many pseudovertices. If we obtained a sequence $y_1,y_2,\ldots,y_r, r\ge 2$ with $y_1=y_r$, then $\alpha$ identifies the edges corresponding to the pseudovertices $y_1,\ldots,y_{r-1}$. Thus if (i) and (ii) do not apply, then (iii) is true. 
\end{proof}


\subsubsection{Taming functions for characteristic foliations on spheres}

Taming functions for characteristic foliations were introduced by Y.~Eliashberg in \cite{El}. In this section we extend the definition of taming functions so that it can be applied to spheres embedded in manifolds carrying a tight confoliation.  

Let $S$ be an embedded sphere in a confoliated manifold such that the singularities of the characteristic foliation $S(\xi)$  are non-degenerate or of birth-death type. This assumption holds in particular for spheres in a generic $1$-parameter family of embeddings. In addition we may assume that there are at most two different hyperbolic singularities which are connected by their stable/unstable leaves. 

\begin{defn} \mlabel{d:taming fct} 
Let $U\subset S$ be a compact submanifold of dimension $2$ in $S$ whose boundary is piecewise smooth and does not intersect $\Sigma(S)$. Moreover, we assume that every connected component $\Gamma\subset\partial U$ satisfies one of the following conditions:
\begin{itemize} 
\item[(1)] $\Gamma$ is either transverse or tangent to $S(\xi)$.
\item[(2)] $\Gamma$ intersects one respectively two stable leaves of hyperbolic singularities of $S(\xi)$ (these singularities may be part of a cycle, cf. \figref{b:levelset} or $U$ is a neighbourhood of a hyperbolic singularity). Each smooth segment of $\Gamma$ intersects exactly one separatrix of a hyperbolic singularity in $U$ and each segment is transverse to $S(\xi)$.
\item[(3)] $U$ is disc and a neighbourhood of a birth-death type singularity of $S(\xi)$ such that $\partial U$ consists of two smooth segments transverse to $S(\xi)$.     
\end{itemize}
A function $f: U\lra \R$ is a {\em taming function} for $S(\xi)$ if it has the following properties.
\begin{itemize}
\item[(o)] If a component $\Gamma\subset\partial U$ belongs to the class (1), then $f$ is assumed to be constant along $\Gamma$. If $\Gamma$ is of class (2) or (3) we require that $f\eing{\Gamma}$ has exactly one critical point in the interior of each of the smooth segments of $\Gamma$.
\item[(i)] The union of the singular points of $S(\xi)$ with all points on internal leaves coincides with the set of critical points of $f$. The function is strictly increasing along leaves of $S(\xi)$ which are not part of a cycle and $f$ is constant along cycles of $S(\xi)$.
\item[(ii)] Positive respectively negative elliptic points of $S(\xi)$ are local minima respectively maxima of $f$. 
\item[(iii)] If the level set $\{f=C\}$ contains only hyperbolic singularities, then as $C$ increases the number of closed connected components of $\{f=C\}$ changes by $h_-(\{f=C\})-h_+(\{f=C\})$.
\end{itemize}
\end{defn}
Requirement (i) in \defref{d:taming fct} is slightly more complicated than one might expect. \figref{b:fol-taming-ex} gives an example of a sphere $S$ in $\R^3$ equipped with the foliation by horizontal planes. A part of the characteristic foliation is indicated in the right part of \figref{b:fol-taming-ex} where the cycle containing the internal subcycle is thickened. If one requires that singular points of $S(\xi)$ should coincide with critical points of the taming function, then $S(\xi)$ cannot be tamed although the confoliation in question is tight.
\begin{figure}[htb]
\begin{center}
\includegraphics{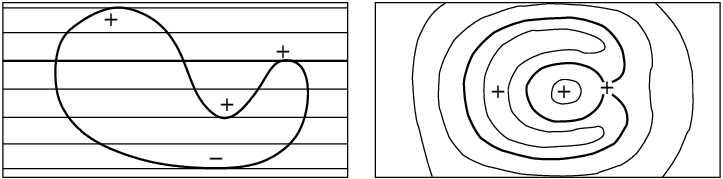}
\end{center}
\caption{\label{b:fol-taming-ex}}
\end{figure}

Assume that $(X,\omega)$ is a symplectic filling of $(M,\xi)$ and a compatible almost complex structure on $M$ is fixed such that $\xi$ consists of complex lines. By Theorem 1 of \cite{hind} an embedded $2$-sphere $S\subset M$ can be filled by holomorphic discs when the embedding of $S$ satisfies several technical conditions. The singular foliation in the formulation of Theorem 1 in \cite{hind} is very similar to the singular foliation formed by level sets of a taming function. The appearance of internal cycles should be compared with Remark 2 in \cite{hind}.


\subsubsection{Construction and deformations of taming functions}

Let $S\subset M$ an embedded oriented $2$-sphere. The tightness of $\xi$ leads to several restrictions on the combinatorics of the cycles of $S(\xi)$ and their holonomy. This will be used to construct a taming function for $S(\xi)$. 

Recall that the orientations of $S$ and $\xi$ induce an orientation of $S(\xi)$ and integral surfaces of $\xi$ are oriented by $\xi$. If $\gamma$ is a cycle of $S(\xi)$, then by tightness there is an integral disc $D_\gamma$ of $\xi$ such that $\partial D_\gamma=\gamma$ but the orientation of $\partial D_\gamma$ as boundary of $D_\gamma$ does not coincide with the orientation of $\gamma$ in general. Recall also that $D_\gamma$ is uniquely determined because $\xi$ is not a foliation by spheres.

For a $2$-dimensional submanifold $U\subset S$ with piecewise smooth boundary we define the following quantities:
\begin{align*}
d_+(U) & = e_+(U)-h_+(U) \\
N_-(U) & = \textrm{Number of connected components }\Gamma\textrm{ of }\partial U\textrm{ where }S(\xi)\\  & \quad\textrm{ points transversally into }U \textrm{ or } \Gamma \textrm{ is tangent to } S(\xi)  \\
& \quad  \textrm{ and }\Gamma \textrm{ is potentially repulsive on the side of }U. \\
N_s(U) & = \textrm{Number of boundary components of }\partial U \textrm{ through which}\\
& \quad\textrm{ stable leaves of negative hyperbolic singularities enter.}\\
P_s(U) & = \textrm{Number of stable leaves of positive hyperbolic singularities in }U\\
& \quad\textrm{ which intersect }\partial U.
\end{align*}
These quantities will be used in the construction of taming functions.

\begin{lem} \mlabel{l:taming functions near cycles} 
For each path connected component $\Sigma_0$ of $\Sigma(S)$ there is a neighbourhood $U_0$ of $\Sigma_0$ and a taming function $f : U_0 \lra \R$ such that no connected component of $\partial U_0$ is tangent to $S(\xi)$ and
\begin{equation} \label{e:d+}
d_+(U_0)=1-N_-(U_0)-P_s(U_0)-N_s(U_0).
\end{equation}
\end{lem}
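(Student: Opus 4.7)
The plan is to proceed by a case analysis on the type of the path-connected component $\Sigma_0$, construct $U_0$ and $f$ explicitly in each case, and verify \eqref{e:d+} by combining the genericity of $S(\xi)$ with a Poincaré-Hopf count on $U_0$. Under the standing assumption that singularities are non-degenerate or of birth--death type and that at most two hyperbolic singularities are connected by separatrices, every path component of $\Sigma(S)$ falls into one of four types: (a) an isolated elliptic singularity; (b) an isolated hyperbolic singularity not lying on any cycle; (c) a single closed leaf disjoint from hyperbolic singularities; (d) a finite graph of hyperbolic singularities and connecting separatrices which organises into one or more cycles, possibly with internal subcycles.

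In cases (a) and (b) I would take $U_0$ to be the disc supplied by \lemref{l:little discs} (elliptic case) or a small rectangular neighbourhood whose boundary consists of four transverse arcs each crossing one separatrix (hyperbolic case, matching class (2) of \defref{d:taming fct}), and let $f$ be the standard Morse quadratic. The four sub-cases give respectively $(d_+,N_-,P_s,N_s)=(1,0,0,0)$, $(0,1,0,0)$, $(-1,0,2,0)$, $(0,0,0,1)$, and in each \eqref{e:d+} is immediate. For case (c), tightness together with \lemref{l:neg-curv} forces that $\gamma$ is either attracting/repelling (opposite sides), one-sidedly attracting or repelling with the other side trivial, or trivial on both sides; in the first two situations $U_0$ is a thin transverse annulus (or half-annulus pushed slightly into the non-trivial side) and $f$ is linear in the transverse coordinate. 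In the trivial/trivial case, since no closed transverse curve encloses $\gamma$ inside a purely foliated annulus, I would instead enlarge the neighbourhood across the foliated strip until the boundary reaches transverse arcs associated with nearby nontrivial components of $\Sigma(S)$ or with a surgery from \secref{s:cutting}. In each subcase the count $(d_+,N_-,P_s,N_s)=(0,1,0,0)$ or $(0,2,0,0)$ holds and matches \eqref{e:d+} using the appropriate combinations.

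The main case is (d). I would order the cycles $\gamma_1,\dots,\gamma_r$ of $\Sigma_0$ using the partial order $\le$ via bounded integral discs of $\xi$ (as in \secref{s:tb sphere disc}), and take $U_0$ to be a regular neighbourhood of the underlying graph, chosen thin enough that $\partial U_0$ has only transverse pieces with each smooth arc crossing exactly one separatrix. Consistency of the construction of $f$ --- constant on each $\gamma_i$, with a standard saddle at every hyperbolic singularity, and monotone transverse to each $\gamma_i$ in the direction dictated by its one-sided holonomy --- rests on \lemref{l:signs in cycles on spheres}: the signs of essential hyperbolic singularities in $A(\gamma_i)$ versus $B(\gamma_i)$ are forced to be opposite, and the side on which the holonomy is defined is either potentially attracting or potentially repulsive, which unambiguously orients the transverse direction of growth of $f$. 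Nested internal subcycles are handled inductively: starting from the outermost cycle and peeling off inner cycles one at a time, the local models glue along the intermediate transverse annuli because the signs of elements of $A(\gamma)$, $B(\gamma)$ alternate coherently with the nesting.

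For \eqref{e:d+} itself I would use Poincaré-Hopf on $U_0$ applied to a vector field spanning $S(\xi)$: the sum of interior indices gives $\chi(U_0)=e_+(U_0)+e_-(U_0)-h_+(U_0)-h_-(U_0)$, modified by a boundary correction that decomposes exactly as $-N_-(U_0)-P_s(U_0)-N_s(U_0)$ once the boundary components are classified (inward-transverse arcs contribute $N_-$, arcs carrying stable leaves of positive/negative hyperbolic singularities contribute $P_s$/$N_s$). Subtracting off the identity $\chi(U_0)=d_+(U_0)+(e_--h_-)(U_0)$ and using the construction of $U_0$ to cancel the $e_--h_-$ contribution against the boundary yields \eqref{e:d+}. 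The hardest part is not any single local model but the combinatorial bookkeeping in (d) with internal subcycles, and I expect the induction on the number of cycles (peeling from outside in, with the inductive hypothesis furnishing the piece of $U_0$ around the inner part of $\Sigma_0$) to be the step requiring the most care.
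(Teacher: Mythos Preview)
Your local analysis in cases (a)--(b) is fine and matches the paper. There are, however, genuine gaps in both case (c) and, more seriously, in your proposed verification of \eqref{e:d+}.

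First, a small but telling slip: you assert that $(d_+,N_-,P_s,N_s)=(0,2,0,0)$ ``matches \eqref{e:d+}'', but $1-2-0-0=-1\neq 0$. An annulus around a closed leaf that is attracting on both sides really does have $N_-=2$ and $d_+=0$, so it violates the formula as you have set things up. The paper avoids this by never taking such an annulus as a building block: closed leaves are taken as starting points only when one side is sometimes attracting and the \emph{other} side is reached by a later extension step across a foliated cylinder, during which exactly one unit is subtracted from $N_-$ when two pieces are merged. Your case analysis misses this bookkeeping entirely.

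The deeper problem is your plan to deduce \eqref{e:d+} from Poincar\'e--Hopf. The formula $d_+=1-N_--P_s-N_s$ is \emph{sign-sensitive}: it distinguishes positive from negative hyperbolic points (through $d_+$ versus $d_-$, and through $P_s$ versus $N_s$). A Poincar\'e--Hopf count sees only indices, so any boundary correction you extract from it will be insensitive to signs. Your sentence about ``cancelling the $e_--h_-$ contribution against the boundary'' is exactly where the argument needs a mechanism that knows signs, and you have not supplied one. The only available source of that information is \lemref{l:signs in cycles on spheres}, which you invoke for constructing $f$ but not for proving \eqref{e:d+}.

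The paper's proof of \eqref{e:d+} is organised entirely differently. For a cycle $\gamma$ containing hyperbolic singularities it argues by induction on the number of such singularities: one isotopes $S$ near a chosen hyperbolic point $x_0$ so that $x_0$ is detached from the cycle, obtaining a cycle $\gamma'$ (possibly disconnected) on the perturbed sphere with one fewer singularity. One then checks, case by case (essential versus non-essential, $x_0$ positive versus negative, $\gamma$ staying connected versus splitting), exactly how each of $d_+$, $N_-$, $P_s$, $N_s$ changes, and \lemref{l:signs in cycles on spheres} is what pins down the sign of $x_0$ in terms of the potential attractiveness/repulsiveness of the relevant subcycle. After this induction handles one connected cycle, a second extension step glues across maximal cylinders of closed leaves to the next piece of $\Sigma_0$, again tracking the change in $N_-$. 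Your outside-in peeling of nested cycles is a different organisational scheme and could perhaps be made to work, but only if you replace the Poincar\'e--Hopf step by an explicit sign-tracking induction of this kind.
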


\begin{proof}
We will construct $U_0$ and $f: U_0\lra\R$ inductively. The starting point are connected cycles $\gamma$ and singularities of $S(\xi)$ in $\Sigma_0$ which belong to the following classes.
\begin{itemize}
\item[(i)] Positive elliptic singularities and hyperbolic or birth-death type singularities which do not belong to a cycle.
\item[(ii)] Closed leaves with sometimes attractive (non-trivial) one-sided holonomy.
\item[(iii)] Cycles $\gamma$ containing hyperbolic singularities which satisfy the following conditions:
\begin{itemize}
\item The only cycle of $S(\xi)$ containing $\gamma$ is $\gamma$.
\item If $\gamma_0\subset\gamma$ is a subcycle with potentially attractive one-sided holonomy, then this one-sided holonomy is not trivial.
\end{itemize}
\end{itemize}
If the positive elliptic singularity $y$ in (i) is dynamically hyperbolic, then it is a source and there is a taming function on a neighbourhood $U$ whose boundary is transverse to $S(\xi)$. If the elliptic singularity is not dynamically hyperbolic, then one obtains a taming function using the holonomy of an interval $[0,\eta), \eta>0$ which is transverse to $S(\xi)$ except at $y$ and $y$ corresponds $0$ (cf. \lemref{l:little discs}). If the holonomy is non-trivial, then we can choose the domain $U$ of the taming function such that $\partial U$ is transverse to $S(\xi)$. Otherwise we choose $U$ such that $\partial U$ is a closed leaf of $S(\xi)$. Moreover, $U$ satisfies \eqref{e:d+}.

If $x$ is a hyperbolic singularity or a singularity of birth-death type, then the existence of a taming function on a neighbourhood $U$ which satisfies \eqref{e:d+} is obvious. 

For a closed leaf $\gamma$ of $S(\xi)$ as in (ii) we choose an embedded interval $(-\eta,\eta),\eta>0$ transverse to $S(\xi)$ such that $0$ corresponds to a point in $\gamma$ and $(-\eta,0]$ corresponds to the side where the holonomy of $\gamma$ is sometimes attractive. This choice determines $f$ along the transverse segment and $f$ can be extended to a taming function on a neighbourhood of $\gamma$. If the holonomy on the side $\{f\ge 0\}$ is non-trivial (respectively trivial) we choose $U$ to be an annulus with transverse boundary (respectively such that $\partial U\cap\{f>0\}$ is a leaf of $S(\xi)$ and the other component of $\partial U$ is transverse to $S(\xi)$). Thus $N_-(U)=1$ and $U$ contains no singular points of $S(\xi)$. This means that \eqref{e:d+} holds for $U$. 

Now let $\gamma$ be a cycle containing hyperbolic singularities. For each subcycle with potentially attractive (respectively repelling) one-sided holonomy chose a transversal $(-\eps,0]$ (respectively $[0,\eps)$) with $0$ lying on $\gamma$ and construct taming functions on collars of discs bounding the subcycle. When the germ of the one-sided holonomy is nontrivial, then we can choose the boundary corresponding boundary component of the domain $U$ of $f$ to be transverse to $S(\xi)$, otherwise we can choose the boundary of the domain to be tangent to a leaf of $S(\xi)$. 

If $\gamma$ contains a corner such that only one stable leaf of the hyperbolic singularity is part of $\gamma$, then the levelsets of $f$ near $\gamma$ can be chosen as suggested in \figref{b:levelset}. The thick curve represents a critical level of $f$ while the dashed curve corresponds to a regular level of $f$.

\begin{figure}[htb]
\begin{center}
\includegraphics{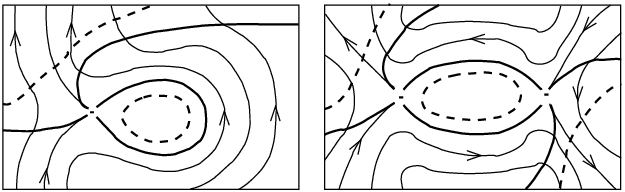}
\end{center}
\caption{\label{b:levelset}}
\end{figure}

By construction $f$ is constant along cycles and increasing along leaves of $S(\xi)$ which are not part of cycles. Singular points of $S(\xi)$ clearly are critical points of $f$. In order to show that requirement (i) of \defref{d:taming fct} is satisfied by $f$ we consider an internal leaf $\gamma_0\subset\gamma$.

Let $D_{0,1},D_{0,2}\subset S$ be discs which lie on opposite sides of $\gamma_0$ and contain no subcycle of $\gamma$ in their interior. Because $\gamma$ is an internal leaf $\ring{D}_{0,1}$ respectively $\ring{D}_{0,2}$ can not contain a stable or unstable leaf of a hyperbolic singularity on $\partial D_{0,1}$ respectively $\partial D_{0,2}$. Therefore the one-sided holonomy along $\partial D_{0,1}$ and $\partial D_{0,2}$ is well defined and by \lemref{l:neg-curv} the holonomy along $\partial D_{0,1}$ is potentially attractive if and only if the same is true for the holonomy along $\partial D_{0,2}$. Hence $f$ has a local minimum respectively maximum at every point of $\gamma_0$ when the holonomy is potentially repulsive respectively attractive. 

Using induction on the number of hyperbolic singularities in $\gamma$ we now prove requirement (iii) from \defref{d:taming fct} and \eqref{e:d+} for $f :U\lra\R$. We have already treated the case when $\gamma$ contains no hyperbolic singularity. 

Given a cycle $\gamma$ and a fixed hyperbolic singularity $x_0$ we isotope $S$ in a neighbourhood of $x_0$. We choose the isotopy such that segments of $S(\xi)$ in $S\cap S'$ which ended at $x_0$ before the perturbation are now connected be non-singular segments of $S'(\xi)$. In this way obtain a cycle $\gamma'$ on $S'$ which contains one singularity less than $\gamma$ and it may happen that $\gamma'$ is not connected. 

In order to construct an isotopy with the desired properties one moves $x_0$ away from the integral surface of $\xi$ which contains the cycle $\gamma$. When $x_0$ is part of an internal cycle or not all stable/unstable leaves of $x_0$ are contained in $\gamma$ one has to move $x_0$ into the interior of an integral surface of $\xi$ and then slightly above or below the integral surface with respect to the coorientation of $\xi$. Choosing to push upwards or downwards one can make sure that on obtains a cycle on the perturbed surface which is contained in the interior of the integral surface of $\xi$ which contains $\gamma$. \figref{b:shift} shows one particular instance of the isotopy in a neighbourhood of $x_0$. In that figure, we move $x_0$ downwards. In the left part of the figure all lines are part of $S$ while in the right part they straight line do not belong to $S'$. The cycles $\gamma$ respectively $\gamma'$ correspond to the thickened lines in the left respectively right part of \figref{b:shift}.

\begin{figure}[htb]
\begin{center}
\includegraphics{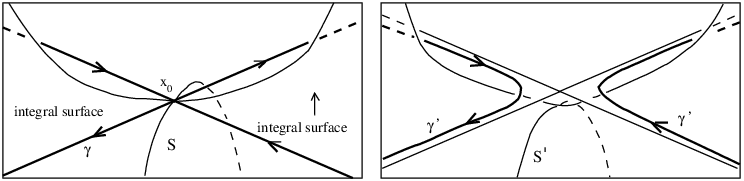}
\end{center}
\caption{\label{b:shift}}
\end{figure}


If there is a hyperbolic singularity $x_0\in\gamma$ such that $\gamma$ contains only one stable leaf of $x_0$, then $x_0$ is automatically an essential singularity on $\gamma$ and our orientation convention and the choice of the function in \figref{b:levelset} together with \lemref{l:signs in cycles on spheres} imply that the behavior of the level sets of $f$ near $x_0$ is compatible with requirement (iii) of \defref{d:taming fct}. 

In order to prove \eqref{e:d+} we perturb $S$. After an isotopy of $S$ in a neighbourhood of $x_0$ we obtain a cycle $\gamma'$ which contains one singularity less than $\gamma$ and the singularity we removed had a stable leaf which was not part of $\gamma$. We construct the function $f'$ on $U'\supset\gamma'$ as above. When $x_0$ is positive, then 
\begin{align*}
d_+(U') & = d_+(U) + 1 & N_-(U') & = N_-(U) \\
N_s(U') & = N_s(U)     & P_s(U') & = P_s(U)-1.
\end{align*}
Therefore \eqref{e:d+} holds for $U$ if and only if it holds for $U'$. If $x_0$ is negative we have to distinguish two cases: In the first case, the stable leaf of $x_0$ is the only stable leaf of a negative hyperbolic singularity intersecting the connected component of $\partial U$. Then 
\begin{align*}
d_+(U') & = d_+(U)   & N_-(U') & = N_-(U)+1 \\
N_s(U') & = N_s(U)-1 & P_s(U') & = P_s(U).
\end{align*}
If there are other stable leaves of other hyperbolic singularities of $\gamma$ which intersect the same connected component of $U$ as the stable leaf of $x_0$, then 
\begin{align*}
d_+(U') & = d_+(U)  & N_-(U') & = N_-(U)  \\
N_s(U') & = N_s(U)  & P_s(U') & = P_s(U).
\end{align*}
Again the validity of \eqref{e:d+} for $U$ follows from \eqref{e:d+} for $U'$. For the proof of \eqref{e:d+} we may assume from now on that all stable and unstable leaves of all hyperbolic singularities on $\gamma$ are contained in $\gamma$. In particular $N_s=P_s=0$ in the sequel.

Let $x_0\in\gamma$ be an essential hyperbolic singularity. We shall discuss the configuration shown in the left part of \figref{b:shift}. The other configurations can be handled in the same manner. The vertical arrow in \figref{b:shift} indicates the coorientation of $\xi$, the other arrows indicate orientations of leaves of $S(\xi)$ and $S'(\xi)$. In addition we assume that the stable leaf on the right (resp. left) hand side is connected in $\gamma\setminus\{x_0\}$ to the unstable leaf on the right (resp. left) hand side. 

In this situation $\gamma$ is split into two connected components $\gamma',\gamma''$ by the isotopy. For both connected components there is an integral disc of $\xi$ which bounds a cycle containing parts of one stable leaf of $x_0$. The two integral discs have disjoint interiors.  

Therefore there is one disc $D_b\subset S$ with well defined one-sided holonomy below the integral surface of $\xi$ and $x_0\in D_b$ and by \lemref{l:neg-curv} this holonomy is potentially attractive. There are two discs with well defined one-sided holonomy lying above the integral surface and each of the upper discs contains exactly one stable leaf of $x_0$ in its boundary while the lower disc contains both stable leaves of $x_0$ in its boundary. The one-sided holonomies along the boundary of each of the two discs  if potentially repulsive.  This is exactly the behavior prescribed by (iii) of \defref{d:taming fct}. 

We choose neighbourhoods $U',U''$ of $\gamma',\gamma''$ which satisfy \eqref{e:d+}. The relation between $d_+(U)$ and $d_+(U'),d_+(U'')$ is given by
\begin{align*}
d_+(U) & = d_+(U')+d_+(U'') & N_-(U) & = N_-(U')+N_-(U'')-1.
\end{align*}
Hence \eqref{e:d+} is true for $U$ because it is satisfied for $U',U''$. The other configurations can be handled in a similar manner. 
  
Now we assume that $x_0$ is a hyperbolic singularity such that one stable leaf is part of an internal cycle and the other one is part of a subcycle of $\gamma$ which is not internal (if there are internal subcycles, then there must be singularities with this property because $\gamma$ is connected).

Let $\gamma_{0,1},\gamma_{0,2}$ be the stable and unstable leaves of $x_0$ which are internal.  There is a disc $D_0\subset S$ whose boundary contains $\gamma_{0,1},\gamma_{0,2}$ such that the one-sided holonomy along $\partial D$ is well defined. If it is potentially attractive respectively repulsive, then $x_0$ is positive respectively negative by \lemref{l:signs in cycles on spheres}.

The remaining pair of separatrices is part of a cycle with well defined one-sided holonomy. It is potentially attractive if and only if the holonomy along $\partial D_0$ is potentially repulsive (cf. \lemref{l:signs in cycles on spheres}).

By a small isotopy we can obtain a connected cycle $\gamma'$ or two connected cycles $\gamma',\gamma''$ on the perturbed sphere $S'$ with one singularity less than $\gamma$ such that $\gamma_{0,1}, \gamma_{0,2}$ (ie. the segments lying outside of the support of the perturbation of $S$) are connected by a leaf of $S'(\xi)$ and the same is true for the other pair of separatrices of $x_0$. \figref{b:fol-taming-ex} shows a cycle which decomposes into a pair of connected cycles. The discussion above shows that $f:U\lra\R$ satisfies (iii) of \defref{d:taming fct} if the same is true for $f' : U'\lra\R$ and $f'':U''\lra\R$. 

We construct a taming function on a neighbourhood of the perturbed cycle. The following table summarizes the relations from \lemref{l:signs in cycles on spheres} between the invariants $d_+,N_-$ associated to $\gamma$ with the invariants for the perturbed cycle. 

\smallskip
\begin{tabular}{|l|c|c|}
\hline
                       & $x_0$ is positive & $x_0$ is negative \\
\hline
$\gamma$ remains connected & \begin{tabular}{c} $d_+=d_+'-1$ \\ $N_-=N_-'+1$ \end{tabular}   
& \begin{tabular}{c} $d_+=d_+'$ \\ $N_-=N_-'$ \end{tabular}  \\
\hline
$\gamma$ splits into two cycles  &\begin{tabular}{c} $d_+=d_+'+d_+''-1$ \\ $N_-=N_-'+N_-''$ \end{tabular} & \begin{tabular}{c} $d_+=d_+'+d_+''$ \\ $N_-=N_-'+N_-''-1$ \end{tabular} \\
\hline
\end{tabular}

\smallskip
Therefore \eqref{e:d+} holds for the neighbourhood $U$ of $\gamma$ and $f : U \lra \R$ has the desired properties. 

This finishes the first step in the construction of a taming function on a neighbourhood of $\Sigma_0$. If all components of $\partial U$ are transverse to $S(\xi)$, then $U_0:=U$ and $f$ tames $S(\xi)$ on $U_0$. Otherwise we iterate the above construction.

Assume we have constructed a taming function $f: U\lra\R$ and $\Gamma\subset\partial U$ is a closed leaf of $S(\xi)$ with trivial holonomy. By construction the holonomy is potentially attractive on the side of $\Gamma$ which is contained in $U$. Then there is a cylinder $S^1\times(0,1)\subset S$ such that $S(\xi)$ corresponds to the foliation by the first factor and $\overline C$ consists of two cycles $\gamma_0, \gamma_1$ such that $\gamma_0\subset U$ and $\gamma_1$ lies in the complement of $U$. We choose $C$ maximal among cylinders with these properties. Then $\gamma_1$ can not be a closed leaf with trivial holonomy. Therefore $\gamma_1$ belongs to one of the following classes. 
\begin{itemize}
\item[(i)] $\gamma_1$ is a negative elliptic singularity or a closed leaf such that the holonomy on the side which is not contained in $C$ is non-trivial and potentially repulsive. In this case it is easy to extend $f$ to a taming function on $U\cup \overline{C}$ such that \eqref{e:d+} is satisfied. 
\item[(ii)] $\gamma_1$ is a cycle containing hyperbolic singularities. If we did not yet define a taming function near $\gamma_1$, then we apply the above procedure to construct a taming function $g: V\lra\R$ on a set $V$ with $U\cap V=\emptyset$. In particular, $V$ satisfies \eqref{e:d+}. We add a constant to $g$ to ensure that $g\eing{\gamma_1}>f\eing{\Gamma}$. Then we extend $g\cup f: U\cup V\lra\R$ to a taming function on $U\cup V\cup C$. Note that $N_-(U\cup V\cup C)= N_-(U)+N_-(V)-1$. From this it follows that \eqref{e:d+} holds for $U\cup V\cup C$. 
\end{itemize}   
After finitely many steps we have constructed a taming function on a neighbourhood $U_0$ of $\Sigma_0$ with the desired properties. It is clear how to adapt the construction in the presence of birth-death type singularities.
\end{proof}

The following lemma implies that the existence of a taming function on a neighbourhood $U$ of $\Sigma$ is a property which is stable under $C^0$-small perturbations of $\xi$ if $U$ is small enough. For the statement of \lemref{l:stab} recall that for a given cycle in $S$ there is a unique integral disc of $\xi$ whose boundary is the cycle. 

\begin{lem} \mlabel{l:stab}
Let $\Sigma_0$ be a path connected component of $\Sigma(S)$ and $\tsig_0$ the union of all discs tangent to $\xi$ which bound cycles in $\Sigma_0$. There is a neighbourhood $\tsig_0\subset W\subset M$ and $\eps>0$ such that for every confoliation $\xi'$ on $M$ which is $\eps$-close (in the $C^0$-topology) there is a confoliation $\xi_c'$ on $\R^3$ which is transverse to the fibers of $\R^3\lra\R^2$ and complete as connection together with an embedding
$$
\varphi : \left(W,\xi'\eing{W}\right) \lra \left(\R^3,\xi_c'\right)
$$
such that $\varphi_*(\xi')=\xi_c'$. In particular, if $\xi'$ is a contact structure, then $\xi'\eing{W}$ is tight. 
\end{lem}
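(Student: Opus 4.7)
The plan is to realise a small neighbourhood $W$ of $\tsig_0$ as a subset of $\R^3$, equipped with a projection to $\R^2$ relative to which $\xi\eing{W}$ is transverse to the fibres, and then to extend any $C^0$-close confoliation $\xi'$ from $\varphi(W)$ to a complete confoliation $\xi_c'$ on $\R^3$ transverse to the vertical direction; tightness of $\xi'\eing{W}$ (when $\xi'$ is a contact structure) then follows from \propref{p:complete connection} applied to $\xi_c'$. To build $W$ and the projection, I fix a smooth vector field $Z$ on $M$ positively transverse to $\xi$. Every integral disc appearing in $\tsig_0$ is tangent to $\xi$ and therefore transverse to $Z$, so the orbits of $Z$ foliate a sufficiently thin tubular neighbourhood $W$ of $\tsig_0$ by short arcs $\FF$ transverse to $\xi$. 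Let $\pi: W \lra \Delta := W/\FF$ be the quotient map. At any point $p \in \tsig_0$ where several integral discs meet, all of them share the common tangent plane $\xi_p$, so their images in $\Delta$ do not cross transversely; together with the hypothesis that $\Sigma_0$ is path connected in $\Sigma(S)$ and the fact that tightness makes the integral disc bounding each cycle of $\Sigma_0$ unique, this forces $\tsig_0$ to have a tree-like combinatorial structure, and $\Delta$ carries the structure of a closed topological $2$-disc that embeds into $\R^2$. Via this embedding, $\varphi : W \hookrightarrow \Delta \times \R \subset \R^3$ realises $\pi$ as the standard projection $\R^3 \lra \R^2$ and $\xi\eing{W}$ as a confoliation transverse to $\partial_z$.

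Transversality of a plane field to $\partial_z$ is a $C^0$-open condition, so for some $\eps > 0$ every confoliation $\xi'$ that is $\eps$-close to $\xi$ is still transverse to $\partial_z$ on $W$, and $\varphi_*\xi'$ is a confoliation on $\varphi(W)$ transverse to the vertical direction. To obtain the complete extension $\xi_c'$, I set $\xi_c' := \varphi_*\xi'$ on $\varphi(W)$, let $\xi_c'$ agree with the horizontal foliation $\ker(dz)$ outside a compact set containing the closure of $\varphi(W)$, and interpolate in the intermediate collar through plane fields transverse to $\partial_z$. Because $\xi$ (and hence $\xi'$ when $\eps$ is small) is close to horizontal near $\tsig_0$ --- being tangent to the integral discs composing $\tsig_0$ --- the interpolation can be chosen $C^1$-close to horizontal, and this preserves the confoliation condition $\alpha\wedge d\alpha \ge 0$. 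Since $\xi_c'$ agrees with $\ker(dz)$ outside a bounded set, the slopes of $\xi_c'$ relative to the base $\R^2$ are globally bounded, so horizontal lifts of any curve in $\R^2$ exist for all time and the induced connection is complete. Then \propref{p:complete connection} gives that $\xi_c'$ is tight, and when $\xi'$ is a contact structure, the restriction $\xi'\eing{W} = \varphi^*\xi_c'$ is tight as well.

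The main obstacle is the first step: constructing the projection $\pi : W \lra \Delta$ onto a disc $\Delta \subset \R^2$ with $\xi$ transverse to its fibres. The set $\tsig_0$ can be intricate, with several integral discs meeting along shared cycles, at shared hyperbolic singularities, and in nested subcycle configurations; planarity of the quotient $\Delta$ relies on the detailed combinatorial description of $\tsig_0$ underlying \lemref{l:signs in cycles on spheres} and the proof of \lemref{l:taming functions near cycles}. Once $\pi$ is available, the remaining steps are essentially soft, using only the $C^0$-openness of transversality, a $C^1$-controlled interpolation in the collar to preserve $\alpha\wedge d\alpha \ge 0$, and the standard completeness criterion for connections with globally bounded slopes.
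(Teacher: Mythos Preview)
Your overall strategy---embed a neighbourhood of $\tsig_0$ in $\R^3$ transverse to the vertical direction, extend to a complete confoliation, invoke \propref{p:complete connection}---is exactly the paper's. The construction of the embedding using the tree-like structure of $\tsig_0$ is also in the same spirit, though the paper builds $\varphi$ explicitly by first embedding the quotient tree $T=\tsig_0/\sim$ in the $(y,z)$-plane and then thickening it to a family of horizontal discs; your description of $\Delta=W/\FF$ as a single $2$-disc is not quite right (different integral discs in $\tsig_0$ sit at different $z$-heights and may overlap in projection), but this is a matter of presentation rather than substance.

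The genuine gap is in your extension step. You claim that because $\xi$ and $\xi'$ are $C^0$-close to horizontal near $\tsig_0$, the interpolation to $\ker(dz)$ in the collar ``can be chosen $C^1$-close to horizontal, and this preserves the confoliation condition $\alpha\wedge d\alpha\ge 0$''. This is false: the confoliation condition is a first-order condition on $\alpha$, so it is not controlled by $C^1$-smallness of the plane field alone. For instance $\ker(dz-\eps x\,dy)$ is $C^1$-close to horizontal for small $\eps>0$ but is a \emph{negative} contact structure. A generic smooth interpolation between $\varphi_*\xi'$ and the horizontal foliation has no reason to be a positive confoliation.

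The paper avoids this by arranging, via a Legendrian line field $\LL$ tangent to $\xi$ (the $\partial_x$-direction), that $\varphi_*\xi$ takes the normal form $\ker(dz+f(x,y,z)\,dy)$; the confoliation condition then becomes the monotonicity $\partial_x f\ge 0$, and one extends $f$ to all of $\R^3$ preserving $\partial_x f\ge 0$ and setting $f\equiv 0$ for large $|z|$. For the perturbed $\xi'$, one replaces $\LL$ by the Legendrian line field $\LL(\varphi_*\xi')$ obtained by projecting $\partial_x$ to $\varphi_*\xi'$ along $\partial_z$; in coordinates adapted to this new line field the same normal form and extension go through. It is this choice of Legendrian direction, reducing the confoliation inequality to a scalar monotonicity that survives extension, that your proposal is missing.
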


\begin{proof}
Note that the integral discs which bound a cycle depend continuously on the cycle because the integral discs are uniquely determined. On $\tsig_0$ we define an equivalence relation as follows: $x\sim y$ for $x,y\in \tsig_0$ if and only if there is a piecewise smooth path in $\tsig_0$ tangent to $\xi$ which connects $x$ and $y$. 

The space $T:=\tsig_0/\sim$ should be thought of as a directed graph: Discs bounding singular cycles and closed leaves with non-trivial holonomy correspond to vertices while edges of $T$ correspond to families of integral discs of $\xi$ which bound a maximal connected cycle in $\Sigma_0$. (Because a disc in $\tsig_0$ may be part of a bigger disc in $\tsig_0$, a point in $\tsig_0/\sim$ does not correspond to a unique cycle of $S(\xi)$ in general. This happens for example in \figref{b:fol-taming-ex}.) The orientation of an edge is induced by the coorientation of $\xi$.

$T$ is a connected tree because $\tsig_0$ is connected and $S$ is a sphere. We embed $T$ in the $y,z$-plane in $\R^3$ such that $dz$ is consistent with the orientation of the edges of $T$. 

Let $\LL$ be the foliation on $\R^3$ by straight lines parallel to the $x$-axis and $\ZZ$ the foliation by planes parallel to the $x,y$-plane. 
We replace $T$ by a family of discs tangent to $\ZZ$: For each vertex of $T$ we choose a collection of discs $D_i$ such that
\begin{itemize}
\item each $D_i$ is tangent to the leaf of $\ZZ$ containing the vertex,
\item $\cup_i D_i$ is homeomorphic to the union of integral discs in $M$ which bound the corresponding cycle in $M$ and $\cup_i D_i$ intersects the original tree $T$ in exactly one point. 
\end{itemize}

Then we connect the discs which correspond to vertices of $T$ by families of discs tangent to $\ZZ$ as prescribed by the edges of $T$, ie. by the configuration of integral discs in $M$. This is done in such a way that outside of a small neighbourhood of the discs which correspond to vertices of the tree each leaf of $\LL$ intersects at most one disc and this intersection is connected. (In the presence of some configurations of critical points on cycles in $\Sigma_0$ it may be impossible to satisfy the last requirement everywhere without violating the requirement that each leaf of $\LL$ intersects at most one disc.)

So far we have obtained an embedding $\varphi_0: \tsig_0\lra\R^3$ with $\varphi_{0*}(\xi)=\ZZ$ and the Legendrian foliation $\varphi_{0*}^{-1}(\LL)$ on $\tsig_0$. We extend this foliation to a Legendrian foliation $\LL_0$ on an open neighbourhood $\tsig$ of $\tsig_0$ and we extend the embedding $\varphi_0$ such that the extended Legendrian foliation is mapped to $\LL$, the extension of $\varphi_0$ is the desired embedding $\varphi : \tsig\lra\R^3$ 
but we still have to find the right domain and the neighbourhood $W$.

We may assume that $\tsig$ was chosen such that the intersection of each leaf of $\LL$ with $\varphi(\tsig)$ is connected and $\varphi_*(\xi)$ is transverse to $\partial_z$. By construction $\varphi_*\left(\xi\eing{\tsig}\right)$ is the kernel of the $1$-form $\alpha=dz+f(x,y,z)dy$ with $\partial_x f\ge 0$ and $f\equiv 0$ on $\tsig_0$. 

By extending $f$ to a function on $\R^3$ we can extend $\alpha$ to a $1$-form $\alpha_c$ on $\R^3$ whose kernel is a confoliation $\xi_c$ with the desired properties: If we extend $f$ to a function on $\R^3$ with $\partial_xf\ge 0$ and  $f\equiv 0$ for $|z|$ big enough, then $\xi_c$ is a complete connection. 

For each plane field $\zeta$ on $\varphi(\tsig)$ such that $\zeta$ is transverse to $\partial_z$ we define a foliation $\LL(\zeta)$ which is tangent to the projection of $\partial_x$ to $\zeta$ along $\partial_z$. There is a neighbourhood $W\subset M$ of $\tsig_0$ and $\eps>0$ with the following properties:
\begin{itemize}
\item If $\xi'$ is $\eps$-close to $\xi$, then $\varphi_*(\xi)$ is transverse to $\partial_z$.
\item For every plane field $\xi'$ which is $\eps$-close to $\xi$ there is an open set $W'$ with $\tsig_0\subset W\subset W'\subset U$ such that the intersection of $\varphi(W')$ with leaves of $\LL(\varphi_*(\xi'))$ is connected. 
\end{itemize}  

This implies the claim of the lemma: If a confoliation $\xi'$ is sufficiently close to $\xi$ in the $C^0$-topology, then we can extend $\varphi_*(\xi'\eing{W})$ by extending (as above) the confoliation $\varphi_*(\xi\eing{W'})$ along leaves of a foliation $\LL'$ of $\R^3$ by lines transverse to the planes $\{x=\textrm{const}\}$ and which coincides with $\LL$ outside of $\varphi(\tsig)$. Thus we have found a confoliation $\xi'_c$ on $\R^3$ with the desired properties.

The statement about the tightness of $\xi'\eing{W}$ follows from \propref{p:complete connection}.  
\end{proof}

Next we show that the taming functions which we have constructed on pieces of $S$ in \lemref{l:taming functions near cycles} can be combined to obtain a taming function on a given generically embedded sphere. 

\begin{prop} \mlabel{p:taming functions on spheres}
If $(M,\xi)$ is tight and $S$ is an embedded sphere such that $S(\xi)$ has isolated singularities which are either non-degenerate or of birth-death type, then $S$ admits a taming function. 
\end{prop}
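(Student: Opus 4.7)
The plan is to glue together the local taming functions produced by \lemref{l:taming functions near cycles} into a single global taming function on $S$. First I would apply that lemma to each path-connected component $\Sigma_i$ of $\Sigma(S)$ to obtain a neighborhood $U_i \subset S$ and a taming function $f_i\colon U_i \to \R$ whose boundary components are all transverse to $S(\xi)$ and which satisfies the combinatorial relation $d_+(U_i) = 1 - N_-(U_i) - P_s(U_i) - N_s(U_i)$. Since the components $\Sigma_i$ are isolated from one another, the $U_i$ can be chosen with pairwise disjoint closures.

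Next I would analyze the complement $W = S \setminus \bigcup_i \mathring{U}_i$. By the Poincar\'e-Bendixson theorem, every leaf of $S(\xi)$ in $W$ has its $\alpha$- and $\omega$-limit sets contained in $\Sigma(S)$, hence in the interior of some $U_i$. Thus each connected component $W_j$ of $W$ is a rectangle-like region foliated by non-singular, non-recurrent leaves of $S(\xi)$ that run from one boundary curve of some $U_i$ (where $S(\xi)$ points outward from $U_i$) to a boundary curve of some $U_{i'}$ (where $S(\xi)$ points into $U_{i'}$). On each such $W_j$ I would extend the functions by choosing a smooth interpolation along the flow of $S(\xi)$ that is strictly increasing along each leaf and matches $f_i$ and $f_{i'}$ on the two boundary curves.

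For this to work globally, I first shift each $f_i$ by a constant so that the boundary values match across every $W_j$. Consider the directed graph $G$ with vertices $\{U_i\}$ and one edge from $U_i$ to $U_{i'}$ for each component $W_j$ connecting them, oriented by the flow of $S(\xi)$. A directed cycle in $G$ would yield a piecewise smooth oriented Legendrian loop $\sigma \subset S$ along which $f$ is supposed to strictly increase outside the $U_i$ and stay compatible inside — but such a $\sigma$ bounds a disc in $S$, and combining this with the tightness of $\xi$ and the structure of the $f_i$ near the cycles they traverse yields a contradiction. Hence $G$ is acyclic and constants realizing the required inequalities exist by topological sort.

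The main obstacle is verifying condition (iii) of \defref{d:taming fct} globally: as the level $C$ crosses a critical value containing only hyperbolic singularities, the number of closed components of $\{f = C\}$ must change by $h_-(\{f=C\}) - h_+(\{f=C\})$. Each $f_i$ realizes this locally, as recorded by the identity $d_+(U_i) = 1 - N_-(U_i) - P_s(U_i) - N_s(U_i)$, and the extension over $W_j$ introduces no new critical points. The verification that the local contributions add correctly is a bookkeeping argument relying on $\chi(S) = 2$, on $e(\xi)[S] = 0$ from \thmref{t:discs and spheres}~a) (which gives $d_+(S) = d_-(S) = 1$), and on the fact that the $N_-, N_s, P_s$ contributions from adjacent $U_i, U_{i'}$ cancel across each $W_j$ once the coorientation of $\xi$ and the signs established in \lemref{l:signs in cycles on spheres} are used to match the two sides. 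Birth-death singularities cause no additional difficulty since \lemref{l:taming functions near cycles} already accommodates them in its construction.
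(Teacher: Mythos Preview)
Your overall architecture---cover $\Sigma(S)$ by the local pieces $U_i$ from \lemref{l:taming functions near cycles}, then extend across the complement---matches the paper's, but the argument breaks down at the two places where tightness must do real work.

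First, the description of the complement is too optimistic. The boundary components of the $U_i$ produced by \lemref{l:taming functions near cycles} are not all transverse circles; near hyperbolic singularities on cycles they are of type~(2) in \defref{d:taming fct} (cf.~\figref{b:levelset}), so components of $W$ are not simply flow-boxes connecting one circle to another. More seriously, your directed graph $G$ need not be acyclic: a piece $U_i$ can have several boundary components with $S(\xi)$ pointing both in and out, and there is no obstruction to a cycle in $G$ built from such pieces. Your one-line contradiction (``such a $\sigma$ bounds a disc \ldots yields a contradiction'') is exactly the content of \propref{p:basins in spheres} and \lemref{l:edges of poly}, which you have not invoked.

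Second, and more fundamentally, shifting the $f_i$ by constants via a topological sort does \emph{not} verify condition~(iii) of \defref{d:taming fct}. That condition governs how the number of closed components of $\{f=C\}$ changes at hyperbolic levels, and it is sensitive to the \emph{sign} of each hyperbolic singularity through which the extension is made. The paper does not simply glue all pieces at once; it starts from a $\preceq$-minimal $U_j$ (which then has $d_+(U_j)=1$), examines the Legendrian polygon covering its basin via \lemref{l:leg poly}, and then invokes \propref{p:basins in spheres} to guarantee that the pseudovertex through which one extends has the \emph{correct sign} (positive in case~(ii), negative in case~(iii) of \lemref{l:edges of poly}). It is precisely this sign control---a consequence of tightness---that makes the equality \eqref{e:d+} propagate and hence condition~(iii) hold for the enlarged domain. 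Your ``bookkeeping argument'' that the $N_-, N_s, P_s$ contributions cancel across each $W_j$ is asserting exactly what needs to be proved, and it is false without choosing the extension route carefully: if you attached through a pseudovertex of the wrong sign, the level-set count would change the wrong way.
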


\begin{proof}
We construct $f$ inductively in a finite number of steps. By \lemref{l:taming functions near cycles} we can cover the compact set $\Sigma(S)$ by a finite collection of open sets $\mathbb{U}_0=\{U_1,\ldots,U_l\}$ with $U_j\subset S$ such that there is a taming function $f_j$ on $U_j, j=1,\ldots,l$ and the sets $U_j$ are pairwise disjoint. Recall that 
\begin{equation} \label{e:d+2}
d_+(U_j)=1-N_-(U_j)-P_s(U_j)-N_s(U_j)
\end{equation}
for all $j=1,\ldots,l$. For later applications we assume that each $U_j\in\mathbb{U}_0, j=1,\ldots,l$ has the property described in \lemref{l:stab} for $\eps_j>0$.

We define a partial order $\preceq$ on $\mathbb{U}_0$ as follows: $U_{j}\preceq U_{k}$ if and only if either $j=k$ or $U_{k}$ has a boundary component which bounds a disc in $S$ not containing $U_k$ and a leaf of the characteristic foliation coming from $U_j$ enters $U_k$ through this boundary component. 

By definition every cycle of $S(\xi)$ which intersects $U_j$ is completely contained in $U_{j}$. This implies that $U_{j}\preceq U_{k}$ and $U_{k}\preceq U_{j}$ if and only if $j=k$ and there is a set $U_j\in\mathbb{U}_0$ which is minimal with respect to $\preceq$. 
All connected components of $\partial U_j$ are transverse to $S(\xi)$ and the characteristic foliation points outwards along the boundary. Moreover, \eqref{e:d+2} implies $d_+(U_j)=1$. 

Let $f_j$ be a taming function on $U_j$ and consider the basin $B(U_j)$ of $U_j$. According to \lemref{l:leg poly} the closure of $B(U_j)$ is covered by a Legendrian polygon $(Q_j,V_j,\alpha_j)$. We consider four cases which correspond to the conclusion of \lemref{l:edges of poly}. Let us assume that there are no birth-death type singularities. This assumption will be removed below. 

{\em Case (o)}: $Q_j$ has more boundary components than $U_j$. This means that in the construction of $(Q_j,V_j,\alpha_j)$ in \lemref{l:leg poly} we did attach $1$-handles to $U_j$ (recall that we used $U_j$ as a starting point for the construction of $Q_j$). 

Let $\gamma_j$ be the stable leaf of a hyperbolic singularity $h_j$ such that $\gamma_j$ leaves $U_j$ and $h_j$ is a corner in a cycle $\eta$. This cycle is contained in one of the sets $U_{i(\eta)}\in\mathbb{U}_0$. Let $f_i$ be a taming function on $U_{i(\eta)}$. Now we extend $f_j$ to a taming function on a neighbourhood $U'_j$ of $\gamma_j\cup U_j\cup U_{i(\eta)}$ (it may be necessary to add a sufficiently large constant to $f_{i(\eta)}$).

The extended function tames the characteristic foliation on its domain and the new boundary component of $U'_j$ can be chosen transverse to $S(\xi)$. By construction 
\begin{align*}
N_-\left(U'_j\right) & = N_-(U_{i(\eta)}) \\
P_s\left(U'_j\right) & = \left\{ 
\begin{array}{ll} P_s(U_{i(\eta)})-1 & \textrm{ if }h_j \textrm{ is positive} \\
              P_s(U_{i(\eta)})   & \textrm{ if }h_j \textrm{ is negative} \\
\end{array}\right. \\
N_s\left(U'_j\right) & = \left\{ 
\begin{array}{ll} N_s(U_{i(\eta)})   & \textrm{ if }h_j \textrm{ is positive} \\
              N_s(U_{i(\eta)})-1   & \textrm{ if }h_j \textrm{ is negative.} \\
\end{array}\right.
\end{align*}
This implies $d_+(U'_j) = 1 - N_-(U'_j)-P_s(U'_j)-N_s(U'_j)$. 

In the following cases we consider a fixed connected component $\Gamma\subset \partial Q_j$ which was not covered in case (o).

{\em Case (i)}: $\alpha_j(\Gamma)$ is an elliptic singularity and $\alpha_j(Q_j)$ is a neighbourhood of $x$ or $\alpha_j(\Gamma)$ is a cycle and $\alpha_j(Q_j)$ is a one-sided neighbourhood of that cycle.

Let us start with the case when $\alpha_j(\Gamma)$ is an elliptic singularity. Because it is attractive, it must be negative and it is contained in $U_{i(\Gamma)}$ with $i(\Gamma)\neq j$. One can easily extend $f_j$ to a taming function on the union $U'_j$ of $U_j\cup U_{i(\Gamma)}$ with all leaves passing through $\Gamma$. Obviously \eqref{e:d+2} holds for $U'_j$.

If $\alpha_j(\Gamma)$ is a closed leaf or a cycle, then $\alpha_j(\Gamma)$ belongs to one of the sets $U_{i(\Gamma)}$ with $i(\Gamma)\neq j$. After eventually adding a constant to the taming function on $U_{i(\Gamma)}$ one obtains a taming function on the union of the flow lines leaving $U_j$ through $\Gamma$ with $U_j$ and $U_{i(\Gamma)}$. As before we denote the new domain by $U'_j$. From 
\begin{align*}
N_-\left( U'_j \right) & = N_-(U_{i(\Gamma)})-1\\
P_s\left( U'_j \right) & = P_s(U_{i(\Gamma)})  \\
N_s\left( U'_j \right) & = N_s(U_{i(\Gamma)}).
\end{align*}   
it follows that $d_+(U'_j)  = 1 - U_-( U'_j) -P_s(U'_j)-N_s(U'_j)$.

{\em Case (ii)}: $\alpha_j(\Gamma)$ contains an elliptic singularity such that $\alpha_j(Q_j)$ is not a neighbourhood of this singularity or there is $v_j\in V_j\cap\Gamma$ such that $\gamma_{v_j}$ is a cycle of $S(\xi)$ and $\alpha_j(Q_j)$ is not a one sided neighbourhood of $\gamma_{v_j}$ or 

According to \propref{p:basins in spheres} there is a positive pseudovertex $x$ on $\alpha_j(\Gamma)$ such that $\alpha_j(Q_j)$ is not a neighbourhood of $x$. Let $\eta$ be the stable leaf of $x$ which is not contained in $\alpha_j(Q_j)$. The $\alpha$-limit set of $\eta$ is contained in a set $U_{i(\eta)}$ while $x\in U_{i(x)}$. We obtain a taming function on the union of $U'_j$ of  $U_j\cup U_{i(\eta)}\cup U_{i(x)}$ with a neighbourhood of the stable leaves of $x$ (after adding a constant to the taming function on $U_{i(x)}$). 

Because $x$ is positive the requirements in the definition of taming functions are satisfied. Moreover, we can choose the domain $U'_j$ of the taming function such that its the new boundary component is transverse to $S(\xi)$. The equality $d_+(U'_j)=1-N_-(U'_j) - P_s(U'_j)- N_s(U'_j)$ follows from 
\begin{align*}
N_-\left(U'_j\right) & = N_-\left(U_{i(\eta)}\right) \\ 
P_s\left(U'_j\right) & = P_s\left(U_{i(\eta)}\right) \\
N_s\left(U'_j\right) & = N_s\left(U_{i(\eta)}\right) 
\end{align*}
and the fact that $x$ is positive.

{\em Case (iii)}: (0)-(ii) do not hold for $(Q_j,V_j,\alpha_j)$. Then $\alpha_j$ identifies edges on $\Gamma$ by \lemref{l:edges of poly}. We shall use the notation from the proof of that lemma. 

Let $e_1,\ldots,e_l$ be edges on $\Gamma$ which are obtained as in the proof of \lemref{l:edges of poly}. The cycle $\eta\subset\alpha_j(e_1)\cup\ldots\cup\alpha_j(e_l)$ is contained in $U_{i(\eta)}\in\mathbb{U}_0$ and we denote the stable leaves of the pseudovertices on $\eta$ which are not part of $\eta$ by $\sigma_1,\ldots,\sigma_l$. Let $U_j'$ be the union of $U_j\cup U_{i(\eta)}$ with neighbourhoods of $\sigma_1,\ldots,\sigma_l$. No other stable leaves of hyperbolic singularities enter $U_{i(\eta)}$ and all pseudovertices on $\eta$ are negative. After we add a sufficiently big constant to $f_{i(\eta)}$ we obtain a taming function $f'_j$ on $U'_{j}$. By construction we have
\begin{align*}
N_-\left(U'_j\right) & = N_-\left(U_{i(\eta)}\right) \\ 
P_s\left(U'_j\right) & = P_s\left(U_{i(\eta)}\right) \\
N_s\left(U'_j\right) & = N_s\left(U_{i(\eta)}\right)-1.
\end{align*}
These equalities immediately imply \eqref{e:d+}. 

We have now considered all cases occurring in \lemref{l:edges of poly}. Next we remove the assumption that there is not birth-death type singularity. Assume that in the step above we encounter a birth-death type singularity $x$. Then $x$ is contained in a set $U_{i(x)}$ from $\mathbb{U}_0$. In an intermediate step we extend $f$ to the union $U_j^{int}$ of $U\cup U_{i(x)}$ with the leaves of $S(\xi)$ which connect $U_{i(x)}$ to $U$. Then we continue as before with $U_j^{int}$ instead of $U_j$. 

Now we remove $U_j$ together with all $U_i$ which are contained in $U'_j$ from the collection $\mathbb{U}_0$ and we add $U'_j$. This yields a new collection of of subsets $\mathbb{U}_1$ such that on each domain in $\mathbb{U}_1$ we have a defined a taming function. Notice that the number of sets in $\mathbb{U}_1$ is strictly smaller than the number of sets in $\mathbb{U}_0$. 

We iterate the procedure after replacing $\mathbb{U}_0$ with $\mathbb{U}_1$. After finitely many steps we obtain a taming function on $S$.
\end{proof}


So far we have established the existence of a taming function on embedded spheres such that $S(\xi)$ has only non-degenerate or birth-death type singularities. Now we consider an embedding of a family of spheres $S^2\times[0,1]$ in $M$ and a $C^0$-approximation of $\xi$ by a confoliation $\xi'$. After a $C^\infty$-small perturbation of $S^2\times[0,1]$ each sphere $S_t=S^2\times\{t\}$ becomes generic. We want to show that the characteristic foliation $S_t(\xi')$ admits a taming function if the confoliation $\xi'$ is close enough to $\xi$ in the $C^0$-topology.

\begin{prop} \mlabel{p:deform taming fct}
There is a $C^0$-neighbourhood of $\xi$ such that for every confoliation $\xi'$ in that neighbourhood $S_t(\xi')$ admits a taming function for all $t\in[0,1]$ if $S_t$ is generic with respect to $\xi'$ for all $t$.

If $\xi'$ is a contact structure, then $S_t(\xi')$ admits a taming function which is strictly increasing along all leaves of $S_t(\xi')$.  
\end{prop}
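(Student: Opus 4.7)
The strategy is to apply Proposition \ref{p:taming functions on spheres} at the reference pair $(S_0,\xi)$ and then use Lemma \ref{l:stab} to transport the tightness near each tree of integral discs to all confoliations $C^0$-close to $\xi$, so that the taming-function construction reruns uniformly across the family.

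Concretely, the plan is to fix the pairwise disjoint open sets $U_1,\dots,U_l\subset S_0$ around the path-connected components $\Sigma_1,\dots,\Sigma_l$ of $\Sigma(S_0,\xi)$ produced by \lemref{l:taming functions near cycles} for $(S_0,\xi)$. For each $j$, \lemref{l:stab} supplies a neighbourhood $W_j\supset\widetilde{\Sigma}_j$ in $M$ and an $\eps_j>0$ such that whenever $\|\xi'-\xi\|_{C^0}<\eps_j$ the restriction $\xi'|_{W_j}$ embeds into $(\R^3,\xi'_c)$ with $\xi'_c$ a complete connection; in particular, $\xi'|_{W_j}$ is tight by \propref{p:complete connection}. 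Setting $\eps=\min_j\eps_j$ gives the candidate $C^0$-neighbourhood.

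The central claim to verify is then: possibly shrinking $\eps$, for every $\xi'$ in this neighbourhood and every $t\in[0,1]$ one has $\Sigma(S_t,\xi')\subset\bigcup_j(S_t\cap W_j)$, and every integral disc of $\xi'$ bounding a cycle of $S_t(\xi')$ lies in $\bigcup_j W_j$. I would prove the first inclusion by a structural-stability argument: on $S_0\setminus\bigcup_j U_j$ the characteristic foliation $S_0(\xi)$ is non-singular and non-recurrent (by the Poincar\'e-Bendixson trichotomy applied off the chosen neighbourhood of $\Sigma(S_0,\xi)$, every leaf exits in uniformly bounded time). Since the line field $TS_t\cap\xi'$ is $C^0$-close to $TS_0\cap\xi$ uniformly in $t$ by continuity of the isotopy and compactness of $[0,1]$, this non-recurrence persists, forcing all singularities, cycles, and quasi-minimal sets of $S_t(\xi')$ into $\bigcup_j(S_t\cap W_j)$. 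The disc-containment then follows: a cycle inside $W_j$ bounds a unique integral disc of the tight confoliation $\xi'|_{W_j}$, and by continuity this disc is close to the corresponding disc in $\widetilde{\Sigma}_j$.

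Granting the claim, every invocation of tightness in the proofs of \lemref{l:taming functions near cycles}, \propref{p:basins in spheres}, and \propref{p:taming functions on spheres} is local to some $W_j$, so those arguments apply verbatim to $(S_t,\xi')$ and yield the required taming function. For the contact-structure strengthening, when $\xi'$ is contact and $S_t$ is generic, $S_t(\xi')$ is Morse-Smale, so $\Sigma(S_t,\xi')$ reduces to the isolated non-degenerate singularities; the containment places these inside $\bigcup_j(S_t\cap W_j)$, and the taming function produced by the local construction is automatically strictly monotone along every non-singular leaf, since no cycle is present to host a plateau. The principal obstacle is the structural-stability step: one must ensure uniformly in the $C^0$-perturbation of $\xi$ and in $t$ that no new cycles or quasi-minimal sets of $S_t(\xi')$ escape the controlled region $\bigcup_j W_j$.
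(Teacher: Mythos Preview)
Your proposal has a genuine gap: you anchor the entire construction at the single sphere $S_0$, but the family $S_t$, $t\in[0,1]$, is macroscopic and there is no reason for $S_t(\xi)$ to be close to $S_0(\xi)$ when $t$ is away from $0$. The assertion that ``the line field $TS_t\cap\xi'$ is $C^0$-close to $TS_0\cap\xi$ uniformly in $t$ by continuity of the isotopy'' is false already for $\xi'=\xi$: the characteristic foliations on the different spheres $S_t$ can have entirely different numbers of singularities and different cycle structures (this is precisely why birth--death singularities are allowed in the setup). Consequently your key containment $\Sigma(S_t,\xi')\subset\bigcup_j(S_t\cap W_j)$ fails even at $\xi'=\xi$, since the sets $W_j\subset M$ were built from the integral discs bounding cycles of $S_0(\xi)$ and bear no a priori relation to the singularities or cycles on $S_t$ for $t$ far from $0$.

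The paper's argument avoids this by working at each $S_t$ separately. For fixed $t$, one reruns the iterative construction of \propref{p:taming functions on spheres} for $(S_t,\xi)$ and shrinks $\eps$ so that every smooth boundary segment of every set appearing in the finitely many collections $\mathbb{U}_0,\mathbb{U}_1,\ldots$ remains transverse to $S_t(\xi')$; this preserves the combinatorics of the extension process and hence yields a taming function for $S_t(\xi')$. One then obtains a $t$-dependent $\eps_t>0$, and uniformity over $[0,1]$ is recovered by compactness: for $\tau$ near $t$ the foliations $S_\tau(\xi)$ and $S_t(\xi)$ are $C^\infty$-close and share the same extension combinatorics (up to irrelevant birth--death events), so $\eps_t/2$ works on a neighbourhood of $t$. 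Note also that your final step --- ``those arguments apply verbatim'' --- understates what is needed: tightness of $\xi'|_{W_j}$ alone does not rebuild the taming function; one must control how the pieces are glued in the iteration, which is exactly what the transversality-of-boundaries condition secures.
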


\begin{proof}
We show that if $\xi'$ is close enough to $\xi$ in the $C^0$-topology and $S_t(\xi)$ has only non-degenerate singularities or singularities of birth death type, then the iteration process used for the construction of a taming function in \propref{p:taming functions on spheres} can be carried out to yield a taming function for $S_t(\xi')$. For this we first reconsider the proof of \propref{p:taming functions on spheres} in order to show the existence of $\eps>0$ with the desired properties for a fixed sphere $S_t$ and then we argue that $\eps$ can be chosen independently from $t\in[0,1]$.  

Recall that in the proof of \propref{p:taming functions on spheres} we required that all sets $U_j\in\mathbb{U}_0$ appearing in the initial stage of the construction are contained in a set $W_j$ with the stability property described in \lemref{l:stab} for $\eps_j>0$: The restriction of $\xi'$ to $W_j$ is tight when $\xi'$ is $\eps_j$-close to $\xi$. 

Moreover, we chose the $U_j$ such that each smooth segment in $\partial U_j$ is transverse to $S(\xi)$. This remains true when $\xi'$ is $\eps_j$-close to $\xi$ when $\eps_j>0$ is small enough. The iteration process in the proof of \propref{p:taming functions on spheres} stops after finitely many steps and we choose $\eps>0$ so small that each smooth segment contained in the boundary of a set in $\mathbb{U}_0,\mathbb{U}_1,\ldots$ is transverse to $S(\xi')$ when $\xi'$ is $\eps$-close to $\xi$. This requirement ensures also that the combinatorics of the extensions of $f$ is the same for $S_t(\xi)$ and $S_t(\xi')$. 

It remains to show that we can choose $\eps>0$ independently from $t\in[0,1]$. For this note that $\Sigma=\cup_t\Sigma(S_t)$ is compact. Thus a finite number of sets $W_j$ obtained from \lemref{l:stab} suffice to cover $\Sigma$. If $\tau$ is sufficiently close to $t$, then $S_\tau(\xi)$ is very close to $S_t(\xi)$ in  the $C^\infty$-topology and the combinatorics of extensions of a taming function for $S_t(\xi)$ and $S_\tau(\xi)$ coincide, ie. we connect subsets $U_j(t)$ of $S_t$ which are very close to subsets $U_j(\tau)$ of $S_\tau$ in the same order (with the possible but irrelevant exception of birth-death type singularities). 

When the above procedure for the choice of $\eps$ for $S_t$ yields $\eps_t>0$, then $\eps_t/2$ has the desired property with respect to the characteristic foliation on $S_{\tau'}$ when $\tau'$ is close enough to $t$. Since $[0,1]$ is compact, this proves the claim. 
\end{proof}

\subsubsection{Proof of \thmref{t:tight on balls}}   
For the proof of \thmref{t:tight on balls} we combine the results from the previous sections with results from \cite{giroux2}. 

Let $B\subset B_1\subset M$ be an embedded closed ball in a manifold $M$ with a tight confoliation $\xi$. We assume that the interior of $B_1$ contains points where $\xi$ is a contact structure since otherwise \thmref{t:tight on balls} follows immediately from \lemref{l:stab}. Moreover, we assume that $\partial B_1$ is generic. 

Let $B_0$ be a ball in the contact region whose characteristic foliation has exactly two singular points and the leaves of the characteristic foliation connect the two singularities. The existence of such a ball follows from the fact that every contact structure is locally equivalent to the standard contact structure $\mathrm{ker}(dz+xdy)$ on $\R^3$. Moreover, there is an open neighbourhood of $\xi|_{B_0}$ such that every confoliation in this neighbourhood is tight on $B_0$.

Let $\xi'$ be a contact structure on $B_1$. If $\xi''$ is a contact structure and  sufficiently close to $\xi'$ in the $C^\infty$-topology, then $\xi'|B$ is diffeomorphic to the restriction of $\xi''$ to a closed ball in $B_1$. Therefore it is enough to prove \thmref{t:tight on balls} for generic perturbations. 

We fix a generic identification $B_1\setminus\ring{B}_0\simeq S^2\times[0,1]$ such that $\partial B_i=S_i, i=0,1$. Because the confoliation $\xi$ is assumed to be tight, $S_t(\xi)$ can be tamed for all $t$. By \propref{p:deform taming fct} this remains true for generic confoliations $\xi'$ which are sufficiently close to $\xi$ in the $C^0$-topology.

Recall that an embedded surface in a contact manifold is called {\em convex} if there is a vector field transverse to the surface such that the flow of the vector field preserves the contact structure. According to \cite{giroux} convexity is a $C^\infty$-generic property, so we may assume that $\partial B_0$ and $\partial B_1$ are convex with respect to $\xi'$.

We will show that $\xi'$ can be isotoped on $S^2\times[0,1]$ relative to the boundary such that all leaves of the product foliation on $S^2\times[0,1]$ become convex with respect to the isotoped contact structure. Since $\partial B_0$ is convex and $\xi'$ is tight on a neighbourhood of $\partial B_0$ this implies that $\xi'|_B$ is tight by Theorem 2.19 in \cite{giroux2} (and the gluing result in \cite{colin}). 

In order to prove the existence of the desired isotopy of $\xi'$ we use the following lemma. Our formulation is a slight modification of Lemma 2.17 in \cite{giroux2} in the case $F\simeq S^2$.
\begin{lem} \mlabel{l:make convex}
Let $(M,\xi')$ be a contact manifold. Assume that the characteristic foliation on each sphere $S_t$ from the family $S^2\times[0,1]\subset M$ admits a taming function and $S_0,S_1$ are convex. Then there is a contact structure $\xi''$ such that 
\begin{itemize}
\item $\xi'$ and $\xi''$ are isotopic relative to the boundary and
\item the characteristic foliation of $\xi''$ on $S_t$ has exactly $\chi(S)=2$ singular points and  $S_t$ is convex with respect to $\xi''$ for all $t\in[0,1]$.
\end{itemize}
\end{lem}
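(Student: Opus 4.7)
The plan is to follow Giroux's normalization strategy for 1-parameter families of surfaces, using the taming function on each $S_t$ as the auxiliary data that both certifies convexity and organizes the simplification of the characteristic foliation. Since $S_0$ and $S_1$ are already convex, any construction can be arranged to be trivial in a collar of $t=0,1$, so the resulting isotopy will be relative to the boundary.

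First I would use the taming function on $S_t(\xi')$ to produce a candidate dividing multi-curve $\Gamma_t \subset S_t$. Because the function is strictly increasing along non-cyclic leaves of $S_t(\xi')$, with positive elliptic singularities as local minima and negative elliptic singularities as local maxima (property (ii) of \defref{d:taming fct}), a regular level set at a value separating positive critical values from negative ones is transverse to $S_t(\xi')$ and separates singularities of opposite sign. Properties (i) and (iii) of \defref{d:taming fct} ensure this level set extends coherently across regions containing cycles and hyperbolic singularities. After a $C^\infty$-small perturbation of $\xi'$ supported away from $S_0 \cup S_1$, the family of taming functions provided by \propref{p:deform taming fct}, and hence the family of dividing curves $\Gamma_t$, can be chosen to depend smoothly on $t$ outside finitely many bifurcation values at which only standard bifurcations of $S_t(\xi')$ (birth--death singularities, retrograde saddle connections, appearance or disappearance of internal subcycles) occur.

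Second, I would construct for each $t$ a contact vector field $X_t$ transverse to $S_t$ whose flow preserves $\xi'$: near singularities $X_t$ is taken in a standard local model (radial at elliptic points, saddle-preserving at hyperbolic points), and the local models are glued using $\Gamma_t$ and the taming function as in Giroux's realization lemma. The existence of a smoothly $t$-varying family $X_t$ certifies that each $S_t$ is convex with respect to a contact structure $\xi''_0$ isotopic to $\xi'$ relative to the boundary. Finally, applying the contact elimination lemma \lemref{l:elim} parametrically, I would cancel pairs consisting of a hyperbolic singularity and an adjacent elliptic singularity of the same sign; such pairs are read off from the critical-point structure of the taming function and its gradient flow. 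Because $\chi(S^2) = e_+ + e_- - h_+ - h_- = 2$, iterating eliminations terminates when $e_+ = e_- = 1$ and $h_+ = h_- = 0$, yielding the standard characteristic foliation on $S^2$ with two singularities, which is manifestly convex.

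The main obstacle is the parametric aspect. At bifurcation values of $t$ the combinatorial structure of the taming function and of the Legendrian polygons provided by \lemref{l:leg poly} jumps, so one has to match the local models for $X_t$, the curves $\Gamma_t$ and the order in which canceling pairs are eliminated across each bifurcation, without creating new singularities or losing convexity. The genericity of the family, the fact that taming functions can be interpolated across birth--death and saddle-connection bifurcations by standard local models, and the hypothesis that $S_0, S_1$ are already convex together localize these adjustments to small neighborhoods in $t$, so that the global isotopy from $\xi'$ to $\xi''$ remains relative to the boundary.
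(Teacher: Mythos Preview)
Your approach is essentially the same as the paper's: both reduce \lemref{l:make convex} to Giroux's Lemma~2.17 in \cite{giroux2}, using the taming function in place of Giroux's tightness hypothesis. The paper's treatment is more economical, however. Rather than sketching the full Giroux argument (dividing curves, contact vector fields, parametric elimination), the paper simply isolates the two properties of $S_t(\xi')$ that Giroux's proof actually uses from tightness: (1) there are no closed cycles on $S_t$, and (2) the graph $\Gamma_t^+$ (respectively $\Gamma_t^-$) formed by positive (negative) singularities and stable (unstable) leaves of positive (negative) hyperbolic points is a tree. Both follow immediately from the existence of a taming function once one notes that for a \emph{contact} structure the taming function is strictly increasing along \emph{every} leaf (the second part of \propref{p:deform taming fct}), so there are no cycles or internal leaves at all. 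With (1) and (2) in hand, Giroux's proof goes through verbatim.

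One small point to clean up in your write-up: you refer to ``regions containing cycles'' and ``appearance or disappearance of internal subcycles'' as part of the bifurcation analysis, but since $\xi'$ is a contact structure none of these occur --- the taming function has no critical points away from singularities of $S_t(\xi')$. Dropping that language both simplifies your Step~1 and makes the tree property of $\Gamma_t^\pm$ transparent.
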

The original statement of Giroux of this lemma contains tightness as an assumption. However the proof of Lemma 2.17 of \cite{giroux2} requires only properties of the characteristic foliation on $S_t,t\in[0,1]$ which follow from the existence of taming functions.
 
More specifically, the proof of Lemma 2.17 in \cite{giroux} yields a proof of \lemref{l:make convex} after the following modification: As we have already explained we may assume that the characteristic foliation of $\xi'$ on $S_t$ can also be tamed for all $t\in[0,1]$ by \propref{p:deform taming fct}. Moreover, because $\xi'$ is a contact structure, the taming functions are strictly increasing along leaves of the characteristic foliation. Therefore the following statements hold:
\begin{enumerate}
\item There is no closed cycle on $S\times\{t\}, t\in[0,1]$. 
\item The graph $\Gamma_t^+$ ($\Gamma_t^-$) on $F\times\{t\}$ formed by positive (negative) singular points and stable (unstable) leaves of positive (negative) hyperbolic singularities is a tree. 
\end{enumerate}
Using these two observations one obtains a proof of \lemref{l:make convex} from the proof of Lemma 2.17 in \cite{giroux2}. This finishes the proof of \thmref{t:tight on balls}.


\section{Overtwisted stars} \mlabel{s:discussion}

In this section we introduce overtwisted stars. Their definition is given in the next section and it is motivated by the discussion of the confoliation $(T^3,\xi_T)$ in \secref{s:example}. The absence of overtwisted stars in a tight confoliations implies all Thurston-Bennequin inequalities and we show that symplectically fillable confoliations do not admit overtwisted stars (in addition  to the fact that they are tight).

\subsection{Overtwisted stars and the Thurston-Bennequin inequalities} \mlabel{s:ot star}

As we have already mentioned the point where Eliashberg's proof of the Thurston-Bennequin inequalities fails in the case of tight confoliations is the following: Given an embedded surface $F$ and a tight confoliation $(M,\xi)$, there may be leaves of $F(\xi)$ which come from an elliptic singularity and accumulate on closed leaves $\gamma$ (or on quasi-minimal sets) of the characteristic foliation such that $\gamma$ is part of the fully foliated set of $\xi$. Even if all singular points on $\partial B(x)$ have the same sign it may be impossible to construct a disc from $B(x)$ which has the properties of the disc $D$ appearing in \defref{d:tight confol}.

This suggests the following definition of overtwisted stars on generically embedded surfaces $F$.

\begin{defn} \mlabel{d:overtwisted star}
An overtwisted star in the interior of a generically embedded compact surface $F\not\simeq S^2$ is the image of a Legendrian polygon $(Q,V,\alpha)$ with the following properties.
\begin{itemize}
\item[(i)] $Q$ is homeomorphic to a disc and $\alpha(\partial Q)$ contains singularities of $F(\xi)$.
\item[(ii)] All singularities of $F(\xi)$ on $\alpha(\partial Q\setminus V)$ have the same sign. There is a single singularity in the interior of $\alpha(Q)$; it is elliptic and its sign is opposite to the sign of the singularities on $\alpha(\partial Q)$.
\item[(iii)] If $\gamma_v$ is a cycle, then it does not bound an integral disc of $\xi$ in $M$.
\end{itemize}
\end{defn}
The torus shown \figref{b:starfish} contains two overtwisted stars. Note that the polygon is not required to be injective. Requirement (i) implies that either $V\neq\emptyset$ or $\alpha(\partial Q)$ contains an elliptic singularity of $F(\xi)$ and we may assume that this singularity is contained in $H(\xi)$. (Note that the elliptic singularity cannot lie in the interior of $M\setminus H(\xi)$. After a small perturbation and by \lemref{l:cut} the elliptic singularity lies in $H(\xi)$). In particular discs with the properties of $D$ in \defref{d:tight confol} are not overtwisted stars. 

If $\xi$ is a contact structure and $F\subset M$ is a generically embedded closed surface containing an overtwisted star $(Q,V,\alpha)$, then $\xi$ cannot be tight since $\xi$ is convex by the genericity assumption (therefore all $\gamma_v, v\in V$ are cycles) and has a homotopically trivial dividing curve (this terminology is standard in contact topology; because we shall not really use it we refer the reader to \cite{giroux} or \cite{honda}). This argument does not apply when $F\simeq S^2$. Since the definition of tightness in \defref{d:tight confol} can be applied efficiently to spheres and discs, the exceptional role of spheres in \defref{d:overtwisted star} will not play a role.

The following theorem is proved following Eliashbergs strategy from \cite{El} and \thmref{t:discs and spheres}.

\begin{thm} \mlabel{t:no stars imply tb}
Let $(M,\xi)$ be an oriented tight confoliation such that no compact embedded oriented surface contains an overtwisted star and $(M,\xi)$ is not a foliation by spheres. 

Every embedded surface $F$ whose boundary is either empty or positively transverse to $\xi$ satisfies the following relations. 
\begin{itemize}
\item[a)] If $F\simeq S^2$, then $e(\xi)[F]=0$. 
\item[b)] If $\partial F=\emptyset$ and $F\not\simeq S^2$, then $|e(\xi)[F]|\le-\chi(F)$.
\item[c)] If $\partial F\neq\emptyset$ is positively transverse to $\xi$, then $\selfl(\gamma,[F])\le -\chi(F)$.
\end{itemize}
\end{thm}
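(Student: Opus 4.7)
Part (a) is the sphere case of \thmref{t:discs and spheres}, so I focus on (b) and (c). From the Poincar\'e--Hopf identities
\begin{align*}
\chi(F) &= e_+(F)+e_-(F)-h_+(F)-h_-(F),\\
e(\xi)[F] &= e_+(F)-e_-(F)-h_+(F)+h_-(F),
\end{align*}
one reads off $\chi(F)-e(\xi)[F]=2d_-(F)$ and $\chi(F)+e(\xi)[F]=2d_+(F)$, with $d_\pm=e_\pm-h_\pm$ as in \remref{r:d+}. The bound in (c) is therefore equivalent to $d_-(F)\le 0$, while (b) amounts to $d_\pm(F)\le 0$. The strategy is to modify $F$ within its (relative) homology class, preserving $e(\xi)[F]$, until $e_-(F)=0$ (which yields (c) and the lower bound in (b)); the upper bound in (b) then follows by the same argument applied to $-F$, since reversing the orientation of a closed $F$ swaps the roles of $d_+$ and $d_-$.

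After a $C^\infty$-generic perturbation I may assume every singularity of $F(\xi)$ is either nondegenerate or of birth--death type and every elliptic singularity lies in $H(\xi)$ or in the interior of its complement. I then apply \lemref{l:cut}, \lemref{l:cut2} and \lemref{l:cut3} to every cycle of $F(\xi)$ which bounds an integral disc of $\xi$ in $M$; by condition (iii) of \defref{d:tight confol} the sum of (relative) Euler classes is preserved, and any sphere component produced on the way is disposed of by (a). An elliptic singularity $y$ lying in the fully foliated region is locally encircled by closed cycles of $F(\xi)$ contained in the leaf of $\xi$ through $y$; each bounds an integral disc in that leaf, so one further application of \lemref{l:cut} splits $y$ off as an elliptic singularity on a sphere component. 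After these reductions I may assume $F$ is connected, $F\not\simeq S^2$, every elliptic singularity of $F(\xi)$ lies in $H(\xi)$, and no cycle of $F(\xi)$ bounds an integral disc of $\xi$ in $M$.

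The core elimination step uses the absence of overtwisted stars exactly where Eliashberg used tightness in the proof of \thmref{t:discs and spheres}. Let $y$ be a negative elliptic singularity and $(Q,V,\alpha)$ the Legendrian polygon covering the basin $\overline{B(y)}$ provided by \lemref{l:leg poly}. Since $y$ is a source/sink of $F(\xi)$, generically $Q$ is homeomorphic to a disc, $\alpha(\partial Q)$ meets singularities of $F(\xi)$, and the only singularity in the interior of $\alpha(Q)$ is $y$ itself. Suppose every singularity of $F(\xi)$ on $\alpha(\partial Q\setminus V)$ (corners and pseudovertices, all hyperbolic) is positive. Then $(Q,V,\alpha)$ satisfies (i) and (ii) of \defref{d:overtwisted star}; property (iii) is exactly what the previous step arranged, because each $\gamma_v$ with $v\in V$ that happens to be a cycle no longer bounds an integral disc. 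This contradicts the no-overtwisted-star hypothesis. Hence some pseudovertex $h\in\partial Q$ is a negative hyperbolic; after a small isotopy of $F$ near a separatrix of $h$ (as illustrated in \figref{b:split} and used throughout \propref{p:basins in spheres}) one may assume that $h$ is joined to $y$ by a leaf of $F(\xi)$, and \lemref{l:elim} cancels the pair $(y,h)$.

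Iterating the surgery--elimination loop strictly decreases the number of negative elliptic singularities (each application of \lemref{l:elim} removes one, while every sphere-splitting surgery moves a negative elliptic into a sphere component that is discarded), so after finitely many passes $e_-(F)=0$, giving $d_-(F)=-h_-(F)\le 0$ and hence (c) together with the lower bound in (b); the symmetric argument on $-F$ supplies $d_+(F)\le 0$ and completes (b). The main technical obstacle is twofold: first, each application of \lemref{l:elim} can recreate cycles of $F(\xi)$ bounding integral discs, sending the procedure back to the surgery step, so the iteration must be set up so that the total singularity count decreases at each outer loop; second, one has to treat the degenerate case where $\alpha(\partial Q)$ contains no singularities at all (so that (i) of \defref{d:overtwisted star} fails and the hypothesis cannot be directly invoked). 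In that case every boundary component of $Q$ is a virtual vertex whose $\gamma_v$ is a closed leaf or quasi-minimal set; the sided holonomy results of \lemref{l:neg-curv} together with \lemref{l:signs in cycles on spheres} must be used to show that a further local perturbation of $F$ either produces a hyperbolic pseudovertex on $\partial\overline{B(y)}$, or exhibits a small disc around $y$ whose boundary bounds an integral disc of $\xi$, allowing $y$ to be split off as part of a sphere and reducing once more to (a).
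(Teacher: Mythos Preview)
Your overall strategy follows the paper's, but there is a genuine gap in the core elimination step. The assertion that ``generically $Q$ is homeomorphic to a disc'' is unjustified and will in general be false. Even after you arrange that no cycle of $F(\xi)$ bounds an integral disc of $\xi$, the characteristic foliation can still have cycles --- precisely the $\gamma_v$ that survive as virtual vertices --- and the construction in \lemref{l:leg poly} attaches a $1$-handle to $Q$ every time an unstable leaf of a pseudovertex runs into such a cycle (case (iii) in that proof). So $\partial Q$ may have several components and \defref{d:overtwisted star} cannot be invoked.

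The paper devotes most of its argument to reducing to the disc case. When $\partial Q$ has more than one component there is a hyperbolic corner $y$ lying on a cycle $\gamma_y$. If $y$ is negative the pair $(x,y)$ is eliminated directly; if $y$ is positive one cannot, and there is a further split: when $\gamma_y$ meets $H(\xi)$ a small perturbation breaks the cycle, but when $\gamma_y$ lies entirely in the foliated part the paper constructs an isotopy of $F$ along a Legendrian vector field carrying an arc in $\xi$ from $\gamma_y$ to $H(\xi)$, chosen close to a stable leaf of $y$ (cf.\ \figref{b:slide}), which drags part of the cycle into the contact region without creating new singularities. This ``slide'' is a nontrivial ingredient you are missing entirely. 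The paper also begins by passing to an incompressible representative of $[F]$, so that the integral discs coming from tightness meet $F$ only in null-homotopic curves and the cuts of \lemref{l:cut}--\lemref{l:cut3} split off spheres plus a surface diffeomorphic to $F$; and it handles your ``degenerate case'' (no singularities on $\alpha(\partial Q)$) not via holonomy considerations but by first cutting along the boundary of the \emph{maximal} disc $D_x\subset F$ about $x$ bounded by a cycle, so that by maximality the basin of the resulting elliptic point cannot again be bounded by a cycle.
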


\begin{proof}
The claim a) was already covered in \thmref{t:discs and spheres}. For the proof of b) and c) we may assume that $F$ is a generic representative of the homology class $[F]$ which is incompressible (this means that the map $\pi_1(F)\lra\pi_1(M)$ which is induced by the inclusion $F\hookrightarrow M$ is injective).  Recall that if $\partial F$ is positively transverse to $\xi$, then $F(\xi)$ points out of $F$ along $\partial F$. Recall that 
\begin{equation*}
\chi(F)-e(\xi)[F]=2(e_--h_-)
\end{equation*}
by \eqref{e:chie}. If there is no negative elliptic singularity, then this 
implies $-e(\xi)[F]\le-\chi(F)$. If there is a negative elliptic singularity $x$, then we shall use the absence of overtwisted stars to eliminate $x$ without creating new negative elliptic singularities. Let $D_x$ be the maximal open disc in $F$ such that
\begin{itemize}
\item $\partial D_x=\overline{D}_x\setminus D_x$ is a cycle of $F(\xi)$ and
\item $x$ is the only singularity of $F(\xi)$ in the interior of $D$. 
\end{itemize} 
Unless $D_x\neq\emptyset$ there is an integral disc $D_x'$ of $\xi$ whose boundary is $\partial D_x$ because $\xi$ is tight. Moreover, the intersection of the interior of $D_x'$ with $F$ consists of homotopically trivial curves in $F$ (otherwise we get a contradiction to the incompressibility of $F$).

Thus we can cut $F$ using \lemref{l:cut}, \lemref{l:cut2} and \lemref{l:cut3} so that the resulting surface $F'$ is the union of spheres and a surface which is diffeomorphic to $F$ and incompressible. Because $e(\xi)[S]=0$ for embedded spheres $S$ we can ignore the spherical components and we denote the remaining surface by $F'$. It follows 
that $e(\xi)[F]=e(\xi)[F']$.   

If we used \lemref{l:cut2} or \lemref{l:cut3}, then we have reduced the number of negative elliptic singularities by one. Note that if we have applied \lemref{l:cut3}, then $F'$ might contain a circle of singularities. This means that $F'$ is non-generic near that circle. Since this circle is isolated from the rest of $F'$ by closed leaves of $F'(\xi)$ and the singularities on this circle do not contribute to $e(\xi)[F']$ or $\chi(F')$, these singularities will play no role in the following. Therefore we can pretend that $F'$ is generic and eliminate the remaining negative elliptic singularities.  

If we used \lemref{l:cut}, then $F'$ contains a negative elliptic singularity $x'$. By construction $x'$ lies in $H(\xi)$. In the following we shall denote $x'$ again by $x$. 

The basin of $x$ is covered by a Legendrian polygon $(Q',V',\alpha')$ on $F'$. By the maximality property of $D_x$ the boundary of $Q'$ is not mapped to a cycle of $F'(\xi)$. If $\partial Q'$ has more than one connected component, then there is a hyperbolic singularity $y$ on $\alpha'(\partial Q')$ which is the corner of a cycle $\gamma_y$. If $y$ is negative, then we can eliminate the pair $x,y$. 

Now assume that $y$ is positive. If $\gamma_y$ does intersect $H(\xi)$, then we can perturb $F'$ in a small neighbourhood of a point on the cycle such that $y$ is no longer part of a cycle after the perturbation. If $\gamma_y$ does not intersect $H(\xi)$, then we push a part of the cycle into $H(\xi)$ by an isotopy of $F'$ without introducing new singularities of the characteristic foliation.

The isotopy is constructed as follows. Let $L$ be the maximal connected integral surface of $\xi$ which contains the cycle through $y$. We choose a simple curve $\sigma$ tangent to $\xi$ which connects the cycle to $H(\xi)$ and is disjoint from $F'$. This curve can be chosen close to the stable leaf of $y$ which is connected to $x\in H(\xi)$. We choose a vector field $X$ tangent to $\xi$ with support in a small neighbourhood of $\sigma$ such that $\sigma$ is a flow line of $X$ and $F'$ is transverse to $X$. We use the flow of $X$ to isotope $F'$ such that all unstable leaves of $y$ are connected to $H(\xi)$ after the isotopy. Since $X$ is transverse to $F'$ and tangent to $X$ the isotopy creates no new singular points of the characteristic foliation. \figref{b:slide} shows $L$ together with a part of the intersection $F'\cap L$. The curve $\sigma$ is represented by the thickened line while the shaded disc represents another part of $H(\xi)$ or non-trivial topology of $L$. 

\begin{figure}[htb]
\begin{center}
\includegraphics{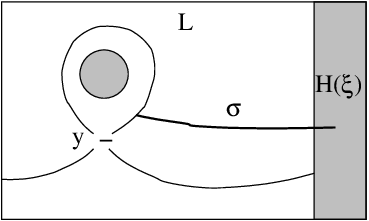}
\end{center}
\caption{\label{b:slide}}
\end{figure}

By this process we modified the basin of $x$ and the surface. Note that there are finitely many hyperbolic singularities on $F$ and the procedure described above does not create new ones. Therefore finitely many applications lead to a surface $F''$ with $e(\xi)[F]=e(\xi)[F'']$ such that the hyperbolic singularities of $F''(\xi)$ are also hyperbolic singularities of $F(\xi)$ and the basin of $x$ is homeomorphic to a disc. Also, the number of negative elliptic singularities did not increase. Note that $F''$ is not a sphere because $F''$ and $F$ have the same genus. Moreover, $F''$ has the following properties.

The basin of $x$ is covered by a Legendrian polygon $(Q'',V'',\alpha'')$ on $F''$ such that $Q''$ is a disc and $\alpha''(Q'')$ is not an elliptic singularity or a cycle of $F''(\xi)$. If necessary, we eliminate all elements of $v''$ with the property that $\gamma_{v''}$ is null homotopic in $F''$. 

Now the assumption of the theorem implies that $\partial Q''$ contains a negative pseudovertex. By \lemref{l:elim} we can isotope $F''$ to a surface containing less negative elliptic singularities than $F$ respectively $F''$. After finitely many steps we have eliminated all negative elliptic singularities. This finishes the proof of c) and one of the inequalities in b). The remaining inequality  in b) can be proved by eliminating all positive elliptic singularities. 
\end{proof}



\subsection{Overtwisted stars and symplectic fillings} \mlabel{s:symp fill ot star}

In this section we show that symplectically fillable confoliations do not admit overtwisted stars. In the proof we $C^0$-approximate a confoliation by another confoliation (cf. \thmref{t:El-Th approx}). Several techniques used in the proof are adaptations of constructions in \cite{confol}. Other useful references are \cite{pet} (where the proofs of Lemma 2.5.1 c) and Lemma 2.5.3 from \cite{confol} are carried out) and \cite{etnyre-notes}. For later use we summarize the proof of a lemma used to show \thmref{t:El-Th approx}.

\begin{lem}[Lemma 2.5.1 c) in \cite{confol}] \mlabel{l:reminder on holonomy approx}
Let $\gamma$ be a simple closed curve in the interior of an integral surface $L$ of $\xi$. If $\gamma$ has sometimes attractive holonomy, then in every $C^0$-neighbourhood of $\xi$ there is a confoliation $\xi'$ which 
\begin{itemize}
\item[(i)] is a contact structure on a neighbourhood of $\gamma$ and
\item[(ii)] coincides with $\xi$ outside a slightly larger neighbourhood. 
\end{itemize}  
\end{lem}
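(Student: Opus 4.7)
The plan is to construct $\xi'$ by explicitly modifying a defining $1$-form of $\xi$ in a tubular neighborhood of $\gamma$, using the sometimes attractive hypothesis to supply strict positivity of $\int d\alpha$ on small annuli near $\gamma$; this positivity is what allows a localized contact twist to be inserted without violating the confoliation inequality in the transition region. First I would fix flow-box coordinates $(s,t,u)\in S^1\x(-\delta,\delta)\x(-\eps,\eps)$ on a tubular neighborhood $U$ of $\gamma$ so that $\gamma=\{t=u=0\}$, $L\cap U=\{u=0\}$, and $\partial_u$ is transverse to $\xi$. Then $\xi|_U=\ker\alpha$ with $\alpha=du+b(s,t,u)\,ds+c(s,t,u)\,dt$ and $b(s,t,0)=c(s,t,0)=0$, so $\alpha\ww d\alpha$ vanishes identically on $L$.

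Next I would extract quantitative input from the sometimes attractive hypothesis. Using the transverse annulus $A$ to $L$ along $\gamma$ of \secref{ss:confolmeaning}, there exist points $x_n^\pm\to 0$ on both sides of $\gamma$ in $A$ with $h_A(x_n^\pm)$ strictly between $0$ and $x_n^\pm$. The leaves of the characteristic foliation on $A$ through these points close up with short transversal arcs into small annuli $\Sigma_n^\pm\subset A$ satisfying
\[\int_{\Sigma_n^\pm}d\alpha=\oint_{\partial\Sigma_n^\pm}\alpha=\pm(x_n^\pm-h_A(x_n^\pm))>0\]
by Stokes' theorem applied to the boundary consisting of a Legendrian arc (on which $\alpha=0$) and a short transversal. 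This strictly positive $d\alpha$-content, available arbitrarily close to $\gamma$ on both sides, is the geometric reservoir the perturbation will exploit.

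Now choose a bump $\chi$ supported in $U$ and equal to $1$ on a smaller neighborhood $V$ of $\gamma$, and a fixed $1$-form $\omega$ on $U$ that is contact near $\gamma$ (modeled on the standard $1$-jet contact form in a Legendrian tubular neighborhood of $\gamma$). Define $\tilde\alpha=\alpha+\lambda\chi\omega$ with $\lambda>0$ small and $\xi':=\ker\tilde\alpha$. On $V$ the form $\tilde\alpha=\alpha+\lambda\omega$ is contact, because the contribution of $\lambda\omega$ to $\tilde\alpha\ww d\tilde\alpha$ is strictly positive there and dominates the vanishing of $\alpha\ww d\alpha$ on $L$. Outside the support of $\chi$ we have $\tilde\alpha=\alpha$, so $\xi'=\xi$ there. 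Taking $\lambda$ small makes $\tilde\alpha$ arbitrarily $C^0$-close to $\alpha$.

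The main obstacle is the transition region $U\setminus V$, where the confoliation inequality for $\tilde\alpha$ involves cross terms $\chi(d\alpha\ww\omega+\alpha\ww d\omega)$ and $d\chi\ww\alpha\ww\omega$ of indefinite sign. To control them one must taper $\chi$ on the scale provided by the contracting annuli $\Sigma_n^\pm$, so that the strictly positive $d\alpha$-content supplied by the hypothesis dominates the negative contributions from the cutoff derivatives. Thus the sometimes attractive hypothesis is used here quantitatively: not only is $h_A$ not the identity, but it fails to be the identity arbitrarily close to $\gamma$ on both sides, and this is precisely what allows the calibration of $\chi$ against $\lambda$ to succeed. Once this calibration is in place, $\tilde\alpha\ww d\tilde\alpha\ge 0$ on all of $U$ and the conclusions (i) and (ii) of the lemma follow.
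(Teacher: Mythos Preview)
Your approach has a genuine gap in the transition region, and the difficulty is not where you think it is. The sometimes attractive hypothesis only controls the confoliation in the direction transverse to $L$ (your $u$-direction); it says nothing about what happens as you cut off along $L$ itself (your $t$-direction). Take the toy case $\alpha=du$ (so $L$ is a leaf of a product foliation near $\gamma$) and $\omega=du-t\,ds$, with $\chi=\chi(t,u)$. A direct computation gives
\[
\tilde\alpha\ww d\tilde\alpha=\lambda\bigl[(1+\lambda\chi)\chi+t\,\partial_t\chi\bigr]\,du\ww ds\ww dt
=\lambda\bigl[\partial_t(t\chi)+\lambda\chi^2\bigr]\,du\ww ds\ww dt.
\]
In the $t$-transition shell the function $t\chi$ must decrease from a positive value to $0$, so $\partial_t(t\chi)$ is on average $-1$ there, while $\lambda\chi^2\le\lambda$ is small. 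Hence $\tilde\alpha\ww d\tilde\alpha<0$ on part of the shell regardless of how you taper $\chi$. Your ``reservoir'' argument cannot rescue this: the positive $d\alpha$-content you extract from Stokes' theorem is an \emph{integral} over annuli in the annulus $A=\{t=0\}$, whereas the failure above is a \emph{pointwise} violation on $\{u=0,\ t\neq 0\}$, a region the hypothesis does not even touch.

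The paper's proof avoids this entirely by taking a different route. It chooses coordinates so that $\xi=\ker(dz+a(x,y,z)\,dx)$ with $\partial_y a\le 0$, and the sometimes attractive hypothesis yields sequences $\zeta'_n<0<\zeta_n$ with $a(x,0,\zeta'_n)<0<a(x,0,\zeta_n)$. The perturbation is \emph{not} an additive bump: one first constructs a diffeomorphism $g$ of the $z$-interval with $g'(z)a(x,0,z)<a(x,0,g(z))$, then pushes $\xi$ forward by a map interpolating between $\mathrm{id}$ and $g$ along the $y$-direction. The resulting plane field is then extended across the remaining region by letting it rotate along the Legendrian $\partial_y$-lines, using the slope-monotonicity interpretation of the confoliation condition; this is exactly what replaces your $t$-cutoff and is automatically a confoliation. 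A final conjugation by a diffeomorphism supported near $\gamma$ produces the $C^0$-closeness. The point is that the hypothesis is used to build $g$, and the extension along the Legendrian direction sidesteps the pointwise sign problem that your additive ansatz runs into.
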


\begin{proof}
We only indicate the main stages of the construction. Fix a neighbourhood $V\simeq S^1_x\times[-1,1]_y\times[-1,1]_z$ and coordinates $x,y,z$ such that the foliation by the second factor is Legendrian, $S^1\times[-1,-1]\times\{0\}\subset L$ and $S^1\times\{(0,0)\}$ corresponds to $\gamma$. We assume that $\gamma$ has sometimes attractive holonomy. As in Lemma 2.1.1 of \cite{pet} the coordinates can be chosen such that 
\begin{itemize}
\item $\xi$ is defined by the $1$-form $\alpha=dz+a(x,y,z)\,dx$ with $\partial_ya\le 0$ and
\item  
there are sequences $\zeta_n'<0<\zeta_n$ converging to zero such that $a(x,0,\zeta'_n)<0<a(x,0,\zeta_n)$ for all $x$.
\end{itemize}
At this point we use the assumption that the holonomy along $\gamma$ is sometimes attractive. We fix a pair $\zeta',\zeta$ of numbers from the sequences $(\zeta_n),(\zeta')_n$. 

According to Lemma 2.2.1 in \cite{pet} and Lemma 2.5.3 in \cite{confol} there is a diffeomorphism $g: [-1,1]\lra[-1,1]$ such that 
\begin{itemize}
\item[(i)] $g$ is the identity outside of $V:=(\zeta',\zeta)$ and
\item[(ii)] $g'(z)a(x,0,z)<a(x,0,g(z))$ for all $(x,0,z)\in S^1\times\{0\}\times V$.
\end{itemize}
It follows that $g$ converges uniformly to the identity as $\zeta,\zeta'\to 0$, but no claim is made with respect to the $C^1$-topology. The graph of $g$ is given in \figref{b:graph-f} (cf. \cite{pet}). The parameters $a,b$ with $\zeta'<a<0<b<\zeta$ are chosen such that $a(x,0,z)\neq 0$ for $z\in[\zeta',a]\cup[b,\zeta]$. 

\begin{figure}[htb]
\begin{center}
\includegraphics{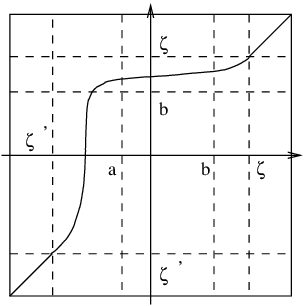}
\end{center}
\caption{\label{b:graph-f}}
\end{figure}

 In order to obtain the desired confoliation in a $C^0$-neighbourhood of $\xi$, one proceeds as follows.

{\em Step 1:} Replace $\xi$ on $S^1\times[-1/2,-1/4]\times V$ by the push forward of $\xi$ with the map $G$ which is defined by 
$$
G(x,y,z):=(x,y,u(y)g(z)+(1-u(y))z)
$$
where $u$ is a smooth non-negative function on $[-1/2,-1/4]$ such that $u\equiv 0$ near $-1/2$ and $u\equiv 1$ near $-1/4$. We extend $G$ to $M\setminus(S^1\times[-1/4,1/2]\times V)$ by the identity. As $\zeta,\zeta'\to 0$ the corresponding diffeomorphism $G$ converges to the identity uniformly but not with respect to the $C^1$-topology in general. Therefore $G_{*}(\xi)$ might not be $C^0$-close to $\xi$ on $S^1\times[-1/2,-1/4]\times V$. This will be achieved in the third step (at this point we follow the exposition on \cite{pet} closely). In the following step we replace the confoliation on $S^1\times[-1/4,1/2]\times V$. 

The dashed respectively the solid lines in \figref{b:reminder} show the characteristic foliations of $\xi'$ on neighbourhoods of $\gamma$ in $\{y=-1/4\}$ respectively on $\{y=1/2\}$ using dashed respectively solid lines in the simple case when $\gamma$ has attractive holonomy.   
\begin{figure}[htb] 
\begin{center}
\includegraphics{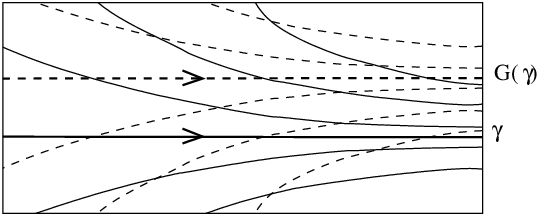}
\end{center}
\caption{\label{b:reminder}}
\end{figure}

{\em Step 2:} We extend $G_{*}(\xi)$ to a confoliation $\xi''$ on $M$ such that $\partial_y$ remains Legendrian: The plane field $\xi''$ rotates around the foliation $S^1\times[-1/4,1/2]\times V$ such that the characteristic foliation on $S^1\times\{-1/4,1/2\}\times V$ coincides with the characteristic foliation of $F_{n*}(\xi)$ on these annuli. This is possible by (ii) using the interpretation of the confoliation condition mentioned in \secref{ss:confolmeaning} (cf. \figref{b:reminder}). Note that $\xi''$ is a contact structure on the interior of $S^1\times[-1/4,1/2]\times V=:\widetilde{V}$.    

{\em Step 3:} We want to construct a diffeomorphism $\phi$ of $M$ with support in $V$ such that $\phi_{*}\xi''$ is $C^0$-close to $\xi$. For this one has to choose $V$ more carefully. This is carried out on p. 31--33 of \cite{pet}. The argument can be outlined as follows; cf. p.~16 in \cite{pet}: Assume that $r$ is chosen such that $V\subset[-r/2,r/2]$ and $\xi$ is $\eps$-close to the horizontal distribution on $S^1\times[-1,1]\times[-r,r]$. As we already mentioned $\xi''$ might be very far away from the horizontal distribution. Choose a very small number $\delta>0$ and a diffeomorphism $\varphi : [-r,r]\lra[-r,r]$ such that $\varphi([-r/2,r/2])\subset[-\delta,\delta]$. Then the push forward of the restriction of $\xi''$ to $S^1\times[-1/2,1/2]\times[-r,r]$ is $3\eps$-close to the horizontal distribution. One has to extend $\varphi$ such that this property is preserved.  
\end{proof}

We will need not only the statement of the lemma, but also the construction outlined in the proof since we need to understand how this modification of $\xi$ near a curve $\gamma$ with sometimes attractive holonomy affects the presence of overtwisted stars on embedded surfaces in $M$. The third step of the above proof is of course irrelevant at this point. 

\figref{b:near-tentacle} shows $F(\xi'')$ near a closed curve of $F(\xi'')$ in an embedded surface $F$ transverse to $\gamma$ after the second step of the proof of \lemref{l:reminder on holonomy approx}. The dot in the center of the figure represents $F\cap\gamma$ while the left inner rectangle represents the support of $G$. Finally, $\xi''$ is a contact structure in the inner rectangle on the right (this rectangle corresponds to the region $\widetilde{V}\cap F$ in the proof of \lemref{l:reminder on holonomy approx}). Recall that the characteristic foliation $F(\xi)$ was nearly horizontal in the region shown in \figref{b:near-tentacle}.

\begin{figure}[htb]
\begin{center}
\includegraphics{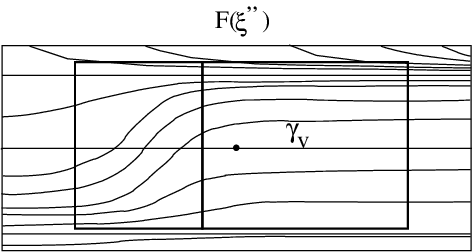}
\end{center}
\caption{\label{b:near-tentacle}}
\end{figure}

Note that if $\gamma$ even has non-trivial infinitesimal (or only attractive) holonomy, then the statement of \lemref{l:reminder on holonomy approx} can be sharpened in the sense that the lemma remains true for $C^\infty$-neighbourhoods of $\xi$ because the function $g : [-1,1]\lra[-1,1]$ can be chosen $C^\infty$-close to the identity. In the following we will consider only $C^0$-approximations. This allows us to choose the approximation of $\xi$ more freely. In particular we can preserve qualitative features of the characteristic foliation on surfaces transverse to $\gamma$.  

\begin{lem} \mlabel{l:approx with control1}
Let $\xi$ be a $C^k$-confoliation, $k\ge 1$, and $\gamma$ a simple Legendrian segment such that both endpoints of $\gamma$ lie in the contact region and $\gamma$ intersects $F$ transversely and at most once. 

Then every $C^k$-neighbourhood of $\xi$ contains a confoliation $\xi'$ such that $\xi'=\xi$ outside a neighbourhood of $\gamma$ and $\xi'$ is a contact structure on a neighbourhood of $\gamma$. Moreover, $F(\xi)=F(\xi')$. 
\end{lem}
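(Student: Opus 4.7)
The plan is to build $\xi'=\ker\alpha'$ with $\alpha'=\alpha+\sigma$, where $\alpha$ is a local defining form for $\xi$ near $\gamma$ and $\sigma$ is a $1$-form supported in a thin tubular neighbourhood $U$ of $\gamma$, chosen so that either $U\cap F=\emptyset$ or $U\cap F$ is a small disc around the single intersection point $p=\gamma\cap F$ (such a $U$ exists because $\gamma$ meets $F$ transversely and at most once).

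First I would fix coordinates $(x,y,z)$ on $U$ with $\gamma=\{x=z=0,\;y\in[0,1]\}$ and $\partial_z$ positively transverse to $\xi$, so that a defining form can be written as $\alpha=dz+a(x,y,z)\,dx+b(x,y,z)\,dy$ with $b(0,y,0)\equiv 0$ (the Legendrian condition along $\gamma$). When $\gamma$ crosses $F$ at $p=(0,y_0,0)$, the $x$- and $z$-coordinates can additionally be corrected by functions of $y$ that vanish to first order at $y_0$, so that simultaneously $\gamma$ becomes the $y$-axis in $U$ and $F=\{y=y_0\}$ in a neighbourhood of $p$. The hypothesis that $\gamma$ meets $F$ at most once is crucial here: a second crossing would generically force the $F$-compatibility constraint at two incompatible places along $\gamma$.

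For the perturbation I would take
\[
\sigma \;=\; \epsilon\, x\,\rho(x,z)\,\chi(y)\,dy,
\]
where $\rho\ge 0$ is a smooth radial bump with $\rho(0,0)=1$ supported in a thin disc in the $(x,z)$-plane, and $\chi\ge 0$ is a cutoff with $\chi>0$ on $(0,1)$ and support slightly beyond $[0,1]$. Since $\sigma$ is a multiple of $dy$ and $dy|_{TF}=0$ near $p$, one has $\sigma|_{TF}\equiv 0$ on $F\cap U$, so $\alpha'|_F=\alpha|_F$ and hence $F(\xi')=F(\xi)$. A direct computation produces
\[
\alpha'\wedge d\alpha' \;=\; \alpha\wedge d\alpha + \epsilon\,\chi(y)\bigl(\rho + x\partial_x\rho - ax\partial_z\rho + x\rho\,\partial_z a\bigr)\,dx\wedge dy\wedge dz,
\]
and along $\gamma$ the bracketed factor collapses to $\rho(0,0)=1$. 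Combined with the original confoliation inequality and the strict positivity of $\alpha\wedge d\alpha$ at the endpoints of $\gamma$, this forces $\alpha'\wedge d\alpha'>0$ on an open neighbourhood of $\gamma$. Choosing $\rho$ to decay slowly enough that $\rho+x\partial_x\rho\ge 0$ on its entire support (for instance a Gaussian-type bump restricted to $|x|<r_0/2$) and taking $\epsilon>0$ sufficiently small keeps $\alpha'\wedge d\alpha'\ge 0$ globally, so $\xi'$ is still a positive confoliation. Since $\epsilon$ controls $\sigma$ directly in $C^k$, the resulting $\xi'$ lies in any prescribed $C^k$-neighbourhood of $\xi$.

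The main obstacle is the coordinate-matching carried out in the second paragraph: setting up a single chart on $U$ that is adapted to the Legendrian structure of $\gamma$ along its whole length while also realising $F$ as a coordinate hyperplane near $p$. Once this chart exists, the explicit choice $\sigma\propto dy$ automatically disentangles the two competing requirements ``contact in a neighbourhood of $\gamma$'' and ``characteristic foliation on $F$ unchanged'', and the remaining verification is a routine bounded-coefficient estimate controlled by the single small parameter $\epsilon$.
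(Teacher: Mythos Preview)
There is a genuine gap: your perturbation cannot remain a confoliation where $\gamma$ passes through the fully foliated part of $\xi$.

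Take the simplest case $\alpha=dz$ (so $\alpha\wedge d\alpha\equiv 0$) on the part of the tube where $\xi$ is integrable. Your own computation then reduces to
\[
\alpha'\wedge d\alpha' \;=\; \epsilon\,\chi(y)\bigl(\rho+x\partial_x\rho\bigr)\,dx\wedge dy\wedge dz,
\]
and since $\chi>0$ on $(0,1)$ you would need $\rho+x\partial_x\rho\ge 0$ on the whole $(x,z)$-support of $\rho$. But $\rho+x\partial_x\rho=\partial_x(x\rho)$, and for any smooth $\rho\ge 0$ with compact $x$-support and $\rho(0,z)>0$ the function $x\mapsto x\rho(x,z)$ vanishes at $x=0$ and at the ends of the support, is positive for small $x>0$, and must therefore decrease somewhere on $\{x>0\}$. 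Hence $\rho+x\partial_x\rho$ is strictly negative on an open set and $\xi'$ fails to be a confoliation there. Your suggested fix, a ``Gaussian-type bump restricted to $|x|<r_0/2$'', does not have compact support; once you cut it off you are back to the same obstruction. The point is structural: by placing the contactness-producing derivative in the transverse $x$-direction you are forced to absorb a sign change transversally to $\gamma$, precisely where you have no contact buffer.

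The paper's route (Lemma~2.8.2 of \cite{confol}) avoids this by aligning the relevant derivative with $\gamma$ itself. One chooses a local Legendrian line field extending $\dot\gamma$ so that $\alpha=dz+a(x,y,z)\,dy$ with $\gamma$ along the $x$-axis and the confoliation condition reading $\partial_x a\ge 0$. Perturbing $a\mapsto a+\epsilon\,\phi(x)\eta(y,z)$ with $\eta\ge 0$ a transverse bump, one needs only $\phi'(x)\ge 0$ except near the ends of the $x$-support of $\phi$, and those ends can be placed inside $H(\xi)$ where the strict inequality $\partial_x a>0$ absorbs a small negative $\phi'$. When $\gamma\cap F=\{p\}$ one needs in addition $\phi(x_0)\eta=0$ on $F=\{x=x_0\}$ to keep $F(\xi)$ unchanged; taking $\phi(x_0)=0$ forces $\phi$ to change sign at $x_0$ and hence to decrease near \emph{both} ends, which is exactly why the hypothesis that \emph{both} endpoints lie in $H(\xi)$ is used in this case (as the paper remarks).

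Your coordinate set-up and the idea of killing $\sigma|_{TF}$ by taking $\sigma\in\langle dy\rangle$ are nice, but to close the argument you must move the sign-change of the cutoff from the transverse direction to the longitudinal one, so that it lands in $H(\xi)$.
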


\begin{proof}
The case $\gamma\cap F=\emptyset$ corresponds to Lemma 2.8.2. in \cite{confol}, the case $\gamma\cap F=\{p\}$ is very similar and only this case uses the assumption that both endpoints of $\sigma$ lie in $H(\xi)$. 
\end{proof}

The following lemma is standard in the setting of foliations: One can thicken a closed leaf to obtain a smooth foliation which is close to the original one and contains a family of closed leaves. Once there is such a family, one can modify the foliation such that a compact leaf whose holonomy was never sometimes attractive on one sides has sometimes attractive holonomy one one side after the modification. 

The main difficulty in the context of confoliations is the fact that now compact leaves of $\xi$ may have boundary.  

\begin{lem} \mlabel{l:thickening}
Let $(M,\xi)$ be a manifold with confoliation, $L\subset M$ a compact embedded surface tangent to $\xi$ and $F\subset M$ a closed oriented surface which is generically embedded and does not intersect $\partial L$. We require that each connected component of $\partial L$ can be connected to $H(\xi)$ by a Legendrian curve which is disjoint from $\ring{L}\cup F$. 

Then there is a smooth confoliation $\xi'$ which is $C^0$-close to $\xi$ such that $F(\xi')$ is homeomorphic to the singular foliation obtained from $F(\xi)$ by thickening the closed leaves of cycles of $F(\xi)$ which are also contained in $L$.    
\end{lem}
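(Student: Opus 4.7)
The plan is a two-step modification: first enlarge the contact region of $\xi$ to absorb a neighbourhood of $\partial L$, then replace $\xi$ on a tubular neighbourhood of a slight interior retract of $L$ by a plane field which is horizontally foliated in a slab around $L$, matched to $\xi$ outside through the contact collar produced in the first step.

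For the first step, let $\gamma_1,\ldots,\gamma_r$ be the hypothesized Legendrian curves, each connecting a component of $\partial L$ to $H(\xi)$ in $M \setminus (\ring{L} \cup F)$. Cover each $\gamma_i$ by short Legendrian sub-arcs, chosen so that successive sub-arcs have both endpoints in the progressively-enlarged contact region, and apply \lemref{l:approx with control1} iteratively. Because the modification is supported in an arbitrarily small neighbourhood of $\bigcup_i \gamma_i$, which is disjoint from $\ring{L} \cup F$, the resulting $C^0$-close confoliation $\xi_1$ satisfies $\xi_1 \equiv \xi$ on $\ring{L} \cup F$; in particular $F(\xi_1) = F(\xi)$, the surface $L$ is still tangent to $\xi_1$, and an open neighbourhood of $\partial L$ lies inside $H(\xi_1)$.

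For the second step, pick a compact subsurface $L_0 \subset \ring{L}$ with $\partial L_0$ inside $H(\xi_1)$, and so close to $\partial L$ that every cycle of $F(\xi_1)$ lying in $L$ is contained in $\ring{L_0}$ (possible since $F \cap \partial L = \emptyset$). Choose a tubular neighbourhood $U \cong L_0 \times (-\epsilon,\epsilon)$ whose second factor is positively transverse to $\xi_1$, and write $\xi_1|_U = \ker(dt - \eta_t)$ for a smooth family $\eta_t$ of 1-forms on $L_0$ with $\eta_0 \equiv 0$. Fix a smooth nondecreasing $\psi : (-\epsilon,\epsilon) \to (-\epsilon,\epsilon)$ with $\psi \equiv 0$ on $[-\delta_1,\delta_1]$ and $\psi(t) = t$ for $|t| \geq \delta_2$, and a cutoff $\chi : L_0 \to [0,1]$ equal to $1$ on a compact set containing every cycle of $F(\xi_1) \cap L$ and equal to $0$ on a collar of $\partial L_0$ lying inside $H(\xi_1)$. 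Define
\begin{equation*}
\xi' := \ker\!\bigl( dt - \bigl[\chi(x)\,\eta_{\psi(t)} + (1-\chi(x))\,\eta_t\bigr]\bigr)
\end{equation*}
on $U$ and $\xi' := \xi_1$ outside $U$. On the slab where $\chi = 1$ and $|t| \leq \delta_1$ the distribution $\xi'$ is the horizontal foliation $\ker(dt)$, so every cycle of $F(\xi)$ inside $L$ is embedded in a band of parallel closed leaves of $F(\xi')$, producing the desired thickening; away from this band $F(\xi')$ is homeomorphic to $F(\xi)$.

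The principal technical obstacle is verifying that $\xi'$ remains a positive confoliation throughout $U$. Writing $\alpha' = dt - \eta'$ with $\eta'$ the bracket above, one computes
\begin{equation*}
\alpha' \wedge d\alpha' \;=\; -\,dt \wedge \bigl(d_L \eta' + \eta' \wedge \partial_t \eta'\bigr),
\end{equation*}
and the analogous inequality $d_L \eta_t + \eta_t \wedge \partial_t \eta_t \leq 0$ (in the orientation making $\xi_1$ positive) is the confoliation condition for $\xi_1$. On the foliated slab, $\eta' \equiv 0$ and $\alpha' \wedge d\alpha' = 0$. On the $\chi$-transition, which is supported in $H(\xi_1)$ where the confoliation inequality is strict, the open nature of the contact condition lets any $C^0$-small interpolation of defining forms remain a positive contact structure. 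On the $\psi$-transition, one uses the hypothesis $\eta_0 \equiv 0$: the defect $(\psi'(t) - 1)\,\eta_{\psi(t)} \wedge \partial_s\eta_s|_{s=\psi(t)}$ between the two inequalities is quadratically small in $\psi(t)$ (because $\omega \wedge \omega = 0$ for 1-forms $\omega$ on the surface $L_0$), while the leading term $d_L \eta_{\psi(t)}$ controls its sign; this can be made as small as wished by choosing $\delta_1, \delta_2$ close together and $\psi$ uniformly close to the identity. The same choice yields that $\xi'$ is $C^0$-close to $\xi$, completing the proof.
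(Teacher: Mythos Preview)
Your verification that $\xi'$ is a confoliation on the $\psi$-transition has a genuine gap. You claim ``the leading term $d_L\eta_{\psi(t)}$ controls its sign,'' but this is false: since $\eta_0=0$, the confoliation inequality $d_L\eta_s+\eta_s\wedge\partial_s\eta_s\le 0$ holds with equality at $s=0$, and since it holds for $s$ on both sides of $0$, differentiating gives $d_L\dot\eta_0=0$ as well. Hence $d_L\eta_{\psi(t)}=O(\psi(t)^2)$, the \emph{same} order as your ``defect,'' and neither dominates. Concretely, take $\eta_s=s\,dx+s^2(x-1)\,dy$ on a region with $x<0$: then $d_L\eta_s+\eta_s\wedge\partial_s\eta_s=s^2x\,dx\wedge dy\le 0$, but
\[
d_L\eta_{\psi(t)}+\psi'(t)\,\eta_{\psi(t)}\wedge\partial_s\eta_s\big|_{s=\psi(t)}=\psi(t)^2\bigl[1+\psi'(t)(x-1)\bigr]\,dx\wedge dy,
\]
which is strictly positive at $x=-1$ wherever $0<\psi'(t)<\tfrac12$. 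Such $t$ always exist, since $\psi'$ must rise smoothly from $0$ at $t=\delta_1$; making $\delta_1,\delta_2$ close together only forces $\psi'$ to become large elsewhere and does not help.

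There is a related problem with the $\chi$-transition. You want $\partial L_0\subset H(\xi_1)$ with $L_0\subset\ring{L}$, but your Step~1 modifies $\xi$ only off $\ring{L}$, so $\ring{L}$ remains tangent to $\xi_1$ and hence disjoint from $H(\xi_1)$; no open neighbourhood of $\partial L$ can lie in $H(\xi_1)$ while $\ring{L}$ stays integral. Thus the appeal to openness of the contact condition (which is in any case a $C^1$ condition, not $C^0$) has no footing on this region.

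The paper avoids both issues by a different mechanism. It flattens $\xi$ on $L\times I$ by pushing the two halves $L\times[0,1]$ and $L\times[-1,0]$ outward via diffeomorphisms of the interval and inserting the product foliation in the gap $L\times[-\eps,\eps]$; since this is a pushforward by diffeomorphisms, the confoliation condition is preserved automatically. The interpolation back to $\xi$ is then carried out not inside $L$ but in collars $U_i\simeq S^1\times J\times I$ attached along $\partial L$ and lying in $M\setminus(\ring{L}\cup F)$, where the hypothesized Legendrian arcs to $H(\xi)$ provide a genuine contact region at the far end, and one extends across $U_i$ by the slope-monotonicity argument of Step~2 in \lemref{l:reminder on holonomy approx}.
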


\begin{proof}
Let $I=[-1,1]$ and $J=[-1,0]$. We fix a tubular neighbourhood $U\simeq L\times I$ of $L=L\times\{0\}$.

For each boundary component $B_i$ of $L$ we choose $U_i\simeq S^1\times J\times I\subset M$ in the complement of $\ring{L}\cup F$. We assume that the third factor of $U_i$ is transverse to $\xi$ while the foliation $\mathcal{J}$ whose fibers correspond to the second factor is Legendrian and that $S^1\times\{(0,0)\}=B_{0,i}$ and $S^1\times\{(-1,0)\}=B_{-1,i}$ intersect $H(\xi)$. Let $A_{j,i}=S^1\times\{j\}\times I\subset \partial U_i$ for $j\in\{-1,0\}$.

Without loss of generality we may assume that $B_{-1,i}$ is completely contained in the contact region and transverse to $\xi$. Otherwise we apply \lemref{l:approx with control1} along segments of $B_{-1,i}$ and replace $U_i$ with a new set $U_i'$ with the desired property. 

We will now construct a confoliation $\xi'$ on $U\cup \bigcup_i U_i$ which coincides with $\xi$ near $\partial U$ and has the desired properties. 

The restriction of $\xi'$ to $U$ is defined in two steps. First we flatten $\xi$ in a neighbourhood $U\simeq L\times I$ using the push forward of $\xi$ using a smooth homeomorphism $g$ of $I$ which is $C^\infty$-tangent to the zero map and coincides with the identity outside a neighbourhood of $0$. 

We push forward $\xi$ on $L\times[0,1]$ respectively $L\times[-1,0]$ using a diffeomorphism $[0,1]\lra[\eps,1]$ respectively $[-1,-\eps]$. The confoliation on $(L\times [-1,-\eps])\cup (L\times [-\eps,\eps]) \cup (L\times[\eps,1] )\simeq U$ (with $\eps>0$), which is the product foliation on $L\times[-\eps,\eps]$, is smooth and contains a family of compact leaves. Moreover, we can choose the diffeomorphisms appearing in the construction such that $\xi\eing{U}$ is as close to $\xi'\eing{U}$ in the $C^0$-topology as we want.

We can choose $\xi'\eing{U}$ such that $A_{0,i}(\xi)$ and $A_{0,i}(\xi')$ coincide outside of the region where the slope of $A_{0,i}(\xi)$ is very small compared to the slope of $A_{-1,i}(\xi)$.  By construction the slope of $A_{0,i}(\xi')$ is much smaller than the slope of $A_{-1,i}(\xi)=A_{-1,i}(\xi)$. As in the second step in the proof of \lemref{l:reminder on holonomy approx} (or Lemma 2.5.1. of \cite{confol}) one can extend $\xi'$ to a smooth confoliation on $M$ such that $\xi'$ is close to $\xi$ (the foliation $\mathcal{J}$ corresponds to the $y$-coordinate in \cite{confol}). The claim about $F(\xi')$ follows immediately from the construction. 
\end{proof}

\begin{rem} \mlabel{r:force holonomy}
After a trivially foliated bundle $L\times[-\eps,\eps]$ is added to the confoliation, it is possible to replace the trivially foliated piece by a foliation on $L\times[-\eps,\eps]$ such that the boundary leaves $L\times\{\pm\eps\}$ have sometimes attractive holonomy on side lying in $L\times[-\eps,\eps]$. The following statements follow from the construction explained in \cite{confol} on p. 39. (This construction carries over to surfaces with boundary after the surface is doubled.) 

When the Euler characteristic of $L$ is negative, then one can replace the product foliation on $L\times[-\eps,\eps]$ by a foliation such that the holonomy along every homotopically non trivial curve in $L\times\{\eps\}$ or $L\times\{-\eps\}$ is sometimes attractive on one side. 

If the Euler characteristic of the compact surface with boundary $L$ is not negative, then $L$ is diffeomorphic to $S^2,D^2,T^2$ or $S^1\times I$. The case $S^2$ will not occur unless the confoliation in question is actually a product foliation by spheres. But these are excluded. If $L\simeq S^1\times I$, then the suspension of a suitable diffeomorphism yields the same result as in the case of $\chi(L)<0$ (without doubling the surface). The case $L\simeq D^2$ will be excluded by the last requirement of \defref{d:overtwisted star} in the application we have in mind. Finally, the case $L\simeq T^2$ is exceptional because of Kopell's lemma (cf. the footnote on p.~39 of \cite{confol}). But if $L=T^2$, then it is easy to arrange that the holonomy is attractive along a given homotopically non-trivial curve. 

This modification changes the characteristic foliation on $F$, but only an open set which was foliated by closed leaves and cycles before the perturbation. In particular overtwisted stars are not affected. 
\end{rem}

The following proposition from \cite{confol} adapts a famous result of Sacksteder \cite{sacksteder} to laminations so that it can be applied to the fully foliated part of confoliations. 

\begin{prop}[Proposition 1.2.13 in \cite{confol}] \mlabel{p:sacksteder}
Let $(M,\xi)$ be a $C^k$-confoliation, $k\ge 2$. All minimal sets of the fully foliated part of $\xi$ are either closed leaves or exceptional minimal sets. Each exceptional minimal set contains a simple closed curve along which $\xi$ has non-trivial infinitesimal holonomy. 

In particular exceptional minimal sets are isolated and there are only finitely many of them. 
\end{prop}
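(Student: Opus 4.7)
The plan is to adapt Sacksteder's classical $C^2$ argument to the codimension-one lamination formed by the fully foliated part of $\xi$. First I would establish the dichotomy. Let $N$ be a minimal set of the fully foliated part and fix a smooth transversal arc $T$ meeting $N$; then $N\cap T$ is closed and, by minimality, perfect. If $N\cap T$ contains an interval $J$, its Legendrian saturation is an open $\xi$-invariant subset of $N$; since no Legendrian path from the fully foliated part ever reaches $H(\xi)$, minimality forces $N$ to be a single compact leaf. Otherwise $N\cap T$ is totally disconnected, hence a Cantor set, and $N$ is exceptional by definition.

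Second, let $E$ be an exceptional minimal set. I would produce a simple closed Legendrian curve in a leaf of $E$ whose holonomy has derivative different from $1$ at the base point. Fix $p\in E\cap T$ and let $\Gamma$ denote the holonomy pseudogroup of $\xi$ acting on $T$. Since the action of $\Gamma$ on $E\cap T$ is minimal and the latter is a Cantor set, the orbit of $p$ returns to arbitrarily small neighbourhoods of $p$ via elements $g_n\in\Gamma$ that contract $T$ near $p$. Sacksteder's bounded-distortion lemma, which crucially requires $C^2$ smoothness to control $|\log g_n'|$ along compositions confined to a compact region, extracts from the sequence $(g_n)$ an element $g\in\Gamma$ fixing some $q\in E$ with $|g'(q)|<1$. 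The Legendrian loop in a leaf of $\xi$ realizing $g$ is the desired curve with non-trivial infinitesimal holonomy.

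Finally, isolation and finiteness follow because the contracting loop around $q$ admits a foliation-chart neighbourhood $U_E$ such that the basin of attraction of $g$ on $T\cap U_E$ lies entirely in $E$; hence distinct exceptional minimal sets admit disjoint such neighbourhoods, and compactness of $M$ produces only finitely many. I expect the bounded-distortion step to be the main obstacle, since $\Gamma$ comes from a lamination rather than from a foliation on a closed manifold and the classical Sacksteder estimates must be verified in this weaker setting. The saving point is that $E$ is disjoint from $H(\xi)$ and the fully foliated part is invariant under Legendrian holonomy, so the orbits whose distortion we need to control never escape into the contact region; combined with the compactness of $E\subset M$, this should allow the standard argument to run without essential change.
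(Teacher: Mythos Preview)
The paper does not supply its own proof of this proposition: it is quoted verbatim as Proposition~1.2.13 from Eliashberg--Thurston's \emph{Confoliations} and used as a black box (the only follow-up is the notation $\mathcal{E}(\xi)$ for the finite collection of exceptional minimal sets). So there is no in-paper argument to compare against.

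Your outline is the standard route and matches what Eliashberg--Thurston do: treat the fully foliated part as a $C^2$ lamination, observe that a minimal set is transversally either a point, an interval, or a Cantor set, and in the Cantor case run Sacksteder's bounded-distortion argument to produce a hyperbolic holonomy element. Your identification of the delicate point---that Sacksteder's estimates must be checked for a lamination rather than a foliation of a closed manifold---is exactly right, and your resolution (the minimal set is compact and the relevant pseudogroup orbits stay inside the fully foliated part) is the correct one. One small gap in your dichotomy paragraph: when $N\cap T$ contains an interval you conclude that $N$ is a single compact leaf, but this is not automatic---an open saturated set could in principle have every leaf dense (the ``$N=M$'' case for a genuine foliation). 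You should either note that this case is excluded because the fully foliated part is a proper lamination once $H(\xi)\neq\emptyset$, or allow for it explicitly; the statement as quoted tacitly assumes the minimal set is not all of $M$.
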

We denote the finite set consisting of the exceptional minimal sets of the fully foliated part of $\xi$ by $\mathcal{E}(\xi)$.

In the following $F$ will be an embedded surface containing an overtwisted star $(Q,V,\alpha)$. We write $\Omega_Q$ for $\cup_{v\in V}\gamma_v$. If $\gamma_v, v\in V$ is a cycle containing hyperbolic singularities of $F(\xi)$, then the confoliation $\xi$ can be modified such that the cycle has a neighbourhood which is foliated by closed leaves of the characteristic foliation of the modified confoliation (cf. \lemref{l:thickening}). We will therefore assume that $\gamma_v$ is either a closed leaf of $F(\xi)$ or a quasi-minimal set but not a cycle containing hyperbolic singularities. (By the definition of an overtwisted star, $\gamma_v$ is not an elliptic singularity.)   

\begin{lem} \mlabel{l:approx with control2}
Let $\xi$ be a confoliation and $F$ an embedded connected surface containing an overtwisted star $(Q,V,\alpha)$ and $v\in V$.
\begin{itemize}
\item[a)] If $\gamma_v$ is contained in a closed leaf of $\xi$, then in every $C^0$-neighbourhood of $\xi$ there is a confoliation $\xi'$ such that $F(\xi')$ contains an overtwisted star $(Q',V',\alpha')$ which is naturally identified with $(Q,V,\alpha)$ and $\gamma'_v, (v\in V'\simeq V)$ passes through the contact region of $\xi'$. 
\item[b)] Assume that $\gamma$ is contained in an exceptional minimal set, $\gamma$ has attractive linear holonomy, and $\gamma$ is transverse to $F$. Then every $C^0$-neighbourhood of $\xi$ contains a confoliation $\xi'$ such that $F(\xi')$ contains an overtwisted star which can be naturally identified with $(Q,V,\alpha)$ 
and $|\mathcal{E}(\xi')|<|\mathcal{E}(\xi)|$.
\end{itemize} 
\end{lem}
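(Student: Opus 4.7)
For part (a), the plan is to insert a thin foliated slab around the closed leaf $L\supset\gamma_v$, force sometimes attractive holonomy along a curve parallel to $\gamma_v$, and then apply \lemref{l:reminder on holonomy approx} to introduce a contact region meeting $\gamma_v$.

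First I would verify the hypotheses of \lemref{l:thickening} for $L$: since $\alpha(\partial Q)$ contains an elliptic singularity that may be arranged to lie in $H(\xi)$ after a small perturbation of $F$ supported away from $\gamma_v$, every boundary component of $L$ can be connected to $H(\xi)$ by a Legendrian arc disjoint from $\ring{L}\cup F$. Applying \lemref{l:thickening} yields a confoliation $\xi_1$ which is $C^0$-close to $\xi$, contains a product foliated slab $L\times[-\eps,\eps]$, and whose characteristic foliation on $F$ is obtained from $F(\xi)$ by thickening the closed leaves inside $F\cap L$; in particular the overtwisted star persists on $F$. Next, by \remref{r:force holonomy}, I would replace the product foliation on the slab by one whose central leaf has sometimes attractive holonomy along a non null-homotopic simple closed curve $\eta\subset L$ parallel to $\gamma_v$. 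Such an $\eta$ exists because $\gamma_v$ cannot be null-homotopic in $L$ (otherwise it would bound an integral disc of $\xi$, contradicting (iii) of \defref{d:overtwisted star}) and $L\not\simeq S^2$, so $L$ is not an exceptional case for Kopell's obstruction. As remarked in \remref{r:force holonomy}, this modification affects $F$ only on open subsets foliated by closed leaves and cycles, so overtwisted stars are untouched. A final application of \lemref{l:reminder on holonomy approx} to $\eta$ produces the desired $\xi'$. The natural identification of $(Q,V,\alpha)$ with $(Q',V',\alpha')$ follows from the fact that all modifications are supported in a fixed tubular neighbourhood of $L$ and are $C^0$-small, so each edge of $Q$ transports to the corresponding edge of $Q'$.

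For part (b), the plan is a nearly direct application of \lemref{l:reminder on holonomy approx}. Since $\gamma$ has attractive linear holonomy it has in particular sometimes attractive holonomy, so the lemma produces $\xi'$ which is $C^0$-close to $\xi$, agrees with $\xi$ outside a tubular neighbourhood $U$ of $\gamma$, and is a contact structure on $U$. Transversality of $\gamma$ to $F$ implies that $F(\xi')$ differs from $F(\xi)$ only on small discs around the finite set $F\cap\gamma$, so the combinatorics of the Legendrian polygon survive under a canonical identification. The inequality $|\mathcal{E}(\xi')|<|\mathcal{E}(\xi)|$ then follows from two points: the exceptional minimal set $\mathcal{E}_0\in\mathcal{E}(\xi)$ containing $\gamma$ is destroyed, since by minimality every leaf of $\xi$ inside $\mathcal{E}_0$ accumulates on $\gamma$, yet every point near $\gamma$ now lies in $H(\xi')$, which is disjoint from the fully foliated part of $\xi'$; and no new exceptional minimal set appears, because any minimal set of $\xi'$ disjoint from $U$ is already a minimal set of $\xi$, while no minimal set can be contained in $U\subset H(\xi')$ by \propref{p:sacksteder}, since $U$ contains no closed leaves at all.

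The delicate step is part (a), specifically the verification that the Legendrian polygon $(Q,V,\alpha)$ canonically transports to a polygon on $F(\xi')$. This requires tracking leaves of $F(\xi)$ whose $\omega$-limit set is $\gamma_v$ through the thickening and holonomy-forcing steps, and confirming that the qualitative structure of recurrence near $\gamma_v$ is preserved well enough for the virtual vertex $v$ and its associated limit set to retain their combinatorial roles in the new overtwisted star.
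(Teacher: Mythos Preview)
Your plan for part (a) is close in spirit to the paper's but misses a key maneuver. After thickening and forcing sometimes attractive holonomy, the paper does \emph{not} apply \lemref{l:reminder on holonomy approx} to a single curve $\eta$ parallel to $\gamma_v$. Instead it first performs a small isotopy carrying $F$ to $F'$ so that $\gamma_v$ is pushed into the interior of the slab (to $\gamma_v\times\{\eps\}$), then applies \lemref{l:reminder on holonomy approx} to the two curves $\gamma_v\times\{0\}$ and $\gamma_v\times\{2\eps\}$ on either side of $F'$, and finally uses \lemref{l:approx with control1} along a short Legendrian arc joining the two new contact regions through $F'$. The point of this last step is that \lemref{l:approx with control1} produces $\xi'$ with $F'(\xi)=F'(\xi')$ exactly, so there is nothing to track: the overtwisted star on $F'$ is literally unchanged. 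Your single application to $\eta$ either leaves $\gamma_v$ outside the new contact region (if $\eta$ is far from $F$) or alters $F(\xi)$ in a way you yourself flag as delicate and do not resolve. Also, a minor point: $L$ is a \emph{closed} leaf, so $\partial L=\emptyset$ and the hypothesis of \lemref{l:thickening} you spend effort verifying is vacuous.

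Your treatment of part (b) has a genuine and serious gap, and in fact you have inverted where the difficulty lies: part (b) is the hard part, not part (a). The assertion that ``the combinatorics of the Legendrian polygon survive under a canonical identification'' is precisely what requires work. When you modify $\xi$ near $\gamma$, the characteristic foliation on $F$ changes near each point of $X=\gamma\cap\Omega_Q$, and leaves of $\alpha(Q)$ that previously accumulated on some $\gamma_v$ intersecting $\gamma$ can now escape and acquire new $\omega$-limit sets. If along the way such a leaf hits a hyperbolic singularity of the wrong sign, the resulting polygon is no longer an overtwisted star. The paper handles this by exploiting the non-trivial infinitesimal holonomy of $\gamma$ to gain freedom in the choice of the diffeomorphism $g$ from \lemref{l:reminder on holonomy approx}: one carefully selects intervals $\tau_\pm$ near each point of $X$ so that no leaf passing through $\hat\tau_+(x)$ meets a stable leaf of a hyperbolic singularity, and so that leaves from $\hat\tau_+(x)$ do not prematurely re-enter $\hat\tau_-(y)$ for any other $y\in X$. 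This forces all affected leaves of the basin to avoid hyperbolic singularities entirely, and requires treating all $v\in V$ with $\gamma_v\cap\gamma\neq\emptyset$ simultaneously. Without this analysis, there is no reason the modified surface carries an overtwisted star at all. Your argument for $|\mathcal{E}(\xi')|<|\mathcal{E}(\xi)|$ is fine, but it is the easy half of part (b).
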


\begin{proof}
First we prove a). Let $L$ be the closed leaf containing $\gamma_v$. Since $\gamma_v$ is the $\omega$-limit set of leaves in $F(\xi)$ it has attractive holonomy on one side and $F\cap L$ consists of a family of cycles. In particular, $L\cap\alpha(Q)=\emptyset$ because an overtwisted star with virtual vertices does not contain closed cycles of the characteristic foliation. 

We use \lemref{l:thickening} and \remref{r:force holonomy} to ensure that $\gamma_v$ has sometimes attractive holonomy on both sides. Unfortunately this property 
is not stable under arbitrary isotopies of $\gamma_v$ in general. But by \lemref{l:neg-curv} there is an annulus $A\simeq \gamma_v\times[0,1]$ such that $\gamma_v=\gamma_v\times\{0\}=F\cap A$ and all curves in $A$ have attracting holonomy on the side where $\alpha(Q)$ approaches $\gamma_v$ while isotopies do not change the nature of the holonomy on the other side of $L$ since there the confoliation is actually a foliation. 

Therefore there is a small isotopy of $F$ which maps $(Q,V,\alpha)$ to an overtwisted star $(Q',V',\alpha')$ on the isotoped surface $F'$ such that $\gamma_v$ is mapped to $\gamma_v\times\{\eps\}$ where $0<\eps<1/2$. Then we can apply \lemref{l:reminder on holonomy approx} to $\gamma_v\times\{0\}$ and $\gamma_v\times\{2\eps\}$.

After this there is a Legendrian arc intersecting $F'$ exactly once in a point of $\gamma_v$ and both endpoints of this arc lie in the contact region. Hence this arc satisfies the assumptions of \lemref{l:approx with control1}. Therefore there is a confoliation $\xi'$ with the desired properties such that $F'(\xi)=F'(\xi')$. This finishes the proof of a).

Now we prove b). We shall use notations from the proof of \lemref{l:reminder on holonomy approx}. In the proof we will use the freedom in the choice of the function $g$ in the proof of \lemref{l:reminder on holonomy approx}. For this we need the fact that $\gamma$ has non-trivial infinitesimal holonomy since then there are only very few restriction on $g$ in the proof of \lemref{l:reminder on holonomy approx}, cf. also Lemma 2.5.2 in \cite{confol}.  

Fix a neighbourhood $U\simeq S^1_x\times[-1,1]_y\times[-1,1]_z$ such that $\gamma=S^1\times\{(0,0)\}$ and the coordinates $x,y,z$ have all the properties used in the proof of \lemref{l:reminder on holonomy approx}. In particular, the foliation by the second factor is Legendrian and coincides with $F(\xi)$ on $F\cap U$ while the third factor is positively transverse to $\xi$. We require that $U$ intersects $F$ only in neighbourhoods of points in $\gamma\cap\Omega_Q=:X$.

Let us make an orientation assumption in order to simplify the presentation: We assume that the orientation of the Legendrian foliation on $S^1\times[-1,1]\times[-1,1]$ given by the second factor coincides with the orientation of $F(\xi)$ near points of $\gamma\cap\gamma_v, v\in V$, ie. in \figref{b:near-tentacle} the foliation is oriented from left to right. When this assumption is not satisfied for some $y\in\gamma\cap\Omega_Q$, then one has to interchange the roles of $\hat{\tau}_-(y)$ and $\hat{\tau}_+(y)$ in some of the following arguments.

By transversality $\gamma$ intersects $F$ in a finite number of points. Since $\gamma$ is contained in the fully foliated part of $\xi$, $\gamma$ cannot intersect $\alpha(Q)$ since every point of $\alpha(Q)$ is connected to $H(\xi)$ by a Legendrian arc. We can ignore the points in $F\cap\gamma$ which do not belong to $\overline{\alpha(Q)}$ if we deform $\xi$ on a neighbourhood of $\gamma$ which is small enough.

Because $F$ is smoothly embedded and $\xi$ is $C^2$-smooth, $F(\xi)$ is also of class $C^2$. As we have already mentioned in \secref{ss:legpoly} the $\omega$-limit set $\gamma_v$ with $v\in V$ is either a quasi-minimal set or we may assume (after a small isotopy of $F$) that $\gamma_v$ is a closed leaf of $F(\xi)$. We distinguish the following cases.
\begin{itemize}
\item[(i)] $\gamma_v$ is quasi-minimal. Since there are interior points of $\alpha(Q)$ arbitrarily close to $\gamma_v$, there is no segment $\tau$ transverse to $F(\xi)$ such that $\tau\cap\gamma_v$ is dense in $\tau$. Then $\gamma_v\cap\tau$ is a Cantor set (cf. \cite{guiterrez}). The intersection between two different quasi-minimal sets cannot contain a recurrent orbit by Maier's theorem (Theorem 2.4.1 in \cite{flows}) and the number of quasi-minimal sets of $F(\xi)$ is bounded by the genus of $F$ according to Theorem 2.4.5. in \cite{flows}. 
\item[(ii)] $\gamma_v$ is a closed leaf of $F(\xi)$ whose holonomy is attractive on the side from which $\alpha(Q)$ accumulates on $\gamma_v$ while it is repulsive on the other side and $\alpha(Q)$ spirals onto $\gamma_v$ on the attractive side. In this case, $\alpha(Q)$ cannot enter a one-sided neighbourhood of $\gamma_v$ on the side where the holonomy is repulsive. 
\item[(iii)] $\gamma_v$ is a closed leaf of $F(\xi)$ whose holonomy is attractive on one side and either there is a sequence of closed leaves of $F(\xi)$ on the other side of $\gamma_v$ which converge to $\gamma_v$ or $\gamma_v$ has attractive holonomy on both sides.
\end{itemize}

If $\gamma_v$ belongs to class (iii) and $U$ is small enough (ie. contained in the interior of an annulus each of whose boundary is tangent to $F(\xi)$ or transverse to $F(\xi)$ such that $F(\xi)$ points into the annulus), then any modification of $F(\xi)$ with support in $U\cap F$ will result in a singular foliation on $F$ such that all leaves of the characteristic foliation which enter a neighbourhood of $\gamma_v$ containing $U$ will remain in $U$ forever even after the modification. When no singularities are created during the modification, then the modification replaces $(Q,V,\alpha)$ by an overtwisted star $(Q',V',\alpha')$ such that $|V|=|V'|$. In this case $\gamma_{v}\neq\gamma'_{v}$ but $\gamma_v'$ is a closed leaf of $F(\xi')$ which passes through $H(\xi')$ (by the proof of \lemref{l:reminder on holonomy approx}. We keep this case separated from the others although all three of them may occur in one single perturbation of $\xi$. 

The following argument is complicated due to a difficulty in case (ii). If $\alpha(Q)$ accumulates on $\gamma_v$ and the holonomy of $\gamma_v$ is repulsive on the side where points of $\gamma$ are pushed to by the diffeomorphism $G$ appearing in the proof of \lemref{l:reminder on holonomy approx}, then it is impossible to say something about the new $\omega$-limit set of leaves in $\alpha(Q)$ which accumulated on $\gamma_v$ unless $G$ is chosen carefully: It is possible that leaves which accumulated on $\gamma_v$ accumulate on $\gamma_{v'}$ when the characteristic foliation is modified near $\gamma_{v}$. However it is possible that $\gamma_{v'}$ is also changed when $\xi$ is replaced by $\xi'$. Therefore one has to treat all $v\in V$ such that $\gamma_v$ belongs to (i),(ii) simultaneously. 

For non-empty open intervals $\tau_-\subset[-1,0)$ and $\tau_+\subset(0,1]$ we write $\hat{\tau}_\pm(y):=\{y\}\times[-1,1]\times\overline{\tau_\pm}$ for $y\in\gamma$. We will fix $\tau_\pm$ in the following.

We require that $\tau_+$ is chosen such that the $\omega$-limit 
of a leaf intersecting $\hat{\tau}_+(y)$ is never a hyperbolic singularity for all $y\in X$. Because 
\begin{itemize}
\item there are only finitely many hyperbolic singularities on $F$ and 
\item $\alpha(Q)$ intersects every interval transverse to $\gamma_v$ in an open set (note that there are singular folioations on surfaces with dense quasiminimal sets; in particular stable leaves of hyperbolic singularities in such quasi-minimal sets may be dense in the surface)
\item $\alpha(\partial Q)$ is disjoint from $\gamma_v$ which intersect $\gamma$ even if $\gamma_v$ is quasi-minimal (this is true because every point of $\alpha(Q)$ is connected to $H(\xi)$ by a Legendrian curve while $\gamma$ is part of the fully foliated set)

\end{itemize}
this condition can be satisfied.
Next we impose additional restrictions on $\tau_-$: 

We choose $\tau_-$ such that no point in $\hat{\tau}_+(x), x\in X,$ is connected to $\hat{\tau}_-(y),y\in X,$ by a leaf of $F(\xi)$ which is disjoint from $\{(y,0)\}\times[\inf(\tau_-),\sup(\tau_+)]$. In other words, we require that leaves of $F(\xi)$ which come from $\hat{\tau}_+(x)$ do not intersect $\hat{\tau}_-(y)$ when they meet the piece of $\{(y,-1)\}\times[-1,1]\subset (U\cap F)$ which lies between the lower endpoint of $\hat{\tau}_-(y)$ and the upper endpoint of $\hat{\tau}_+(y)$ for the first time. In order to satisfy this condition it might be necessary to shorten $\tau_+$.  

Obviously there is a choice for $\tau_+,\tau_-$ which satisfies these requirements for $x,y\in X$ whenever the limit set $\gamma_v$ which corresponds to $y$ is not the $\omega$-limit set of leaves intersecting $\hat{\tau}_+(x)$. 

If $y$ is contained in a closed leaf of $F(\xi)$, then one can also satisfy the requirement for $x,y\in X$ provided that $\tau_+$ is so short that the translates of $\hat{\tau}_+(x)$ along leaves of $F(\xi)$ do not cover the segment $\hat{\tau}_-(y)$). We shorten $\tau_+$ whenever this is necessary. Finally, when $y$ is part of a quasi-minimal set and the leaves of $F(\xi)$ which intersect $\hat{\tau}_+(x)$ accumulate on this quasi-minimal set the above requirement can be satisfied by shortening $\tau_\pm$ again. Now one can construct $\tau_-$ in a finite number of steps and shortening $\tau_\pm$ at each step. 

Let $t_-\in\tau_-$. We fix the diffeomorphism $g:[-1,1]\lra[-1,1]$ in the proof of \lemref{l:reminder on holonomy approx} such that $g$ maps the entire interval $(t_-,\sup(\tau_+))$ into $\tau_+$ and the support of $g$ is contained in $(\inf(\tau_-),\sup(\tau_+))$. 
The role of the parameters $\zeta,\zeta'$ from the proof of \lemref{l:reminder on holonomy approx} is now played by $\sup(\tau_+),\inf(\tau_-)$.

If $\xi$ is modified by the procedure described in the proof of \lemref{l:reminder on holonomy approx} using the diffeomorphism $g$ chosen above, then one obtains a confoliation $\xi'$ such that all leaves of $F(\xi')$ starting at the elliptic singularity in the center of the original overtwisted whose $\omega$-limit set was $\gamma_v$ such that $\gamma_v\cap\gamma\neq\emptyset$ never meet a hyperbolic singularity of $F(\xi')$. 

Since all elliptic singularities on the boundary of the basin of the elleiptic singularity in $\alpha(Q)$ are automatically negative and all hyperbolic singularities on the boundary of the basin where already present in $\alpha(\partial Q)$ there is an overtwisted star $(Q',V',\alpha')$ and $V'$ can be viewed as a subset of $V$ by construction. Moreover, $|\mathcal{E}(\xi')|<|\mathcal{E}(\xi)|$.
\end{proof}

Now we can finally show that there are no overtwisted stars when $\xi$ is symplectically fillable.  

\begin{thm} \mlabel{t:no polygons if filled}
Let $(M,\xi)$ be a $C^k$-confoliation, $k\ge 2$, which is symplectically fillable. Then no oriented embedded surface contains an overtwisted star.  
\end{thm}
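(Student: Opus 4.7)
The plan is to argue by contradiction. Suppose $(M,\xi)$ is symplectically fillable and that some embedded oriented surface $F$ contains an overtwisted star $(Q,V,\alpha)$. The strategy is to perform a sequence of $C^0$-small deformations of $\xi$, each of which preserves the overtwisted star and, because symplectic fillability is a $C^0$-open condition (cf.\ the remark after \defref{d:symp filling}), preserves symplectic fillability. The final deformation will produce a contact structure in which the star becomes a genuine overtwisted disc, contradicting tightness via \thmref{t:fillable confol are tight}.

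First I would reduce to the case that for every $v\in V$ the limit set $\gamma_v$ is a closed leaf of $F(\xi)$. If $\gamma_v$ is a cycle containing hyperbolic singularities, \lemref{l:thickening} together with \remref{r:force holonomy} replace a neighbourhood of $\gamma_v$ by one that is foliated by closed leaves with sometimes attractive holonomy, without destroying the star. If $\gamma_v$ is contained in an exceptional minimal set $\mathcal{M}$ of the fully foliated part of $\xi$, then by \propref{p:sacksteder} $\mathcal{M}$ contains a simple closed curve with non-trivial (hence one-sided attractive) infinitesimal holonomy, and \lemref{l:approx with control2}(b) produces a $C^0$-small perturbation with an overtwisted star naturally identified with $(Q,V,\alpha)$ and with strictly fewer exceptional minimal sets. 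Iterating finitely often (there are only finitely many such minimal sets by \propref{p:sacksteder}), I may assume each $\gamma_v$ is a closed leaf of the characteristic foliation.

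Next, for each $v\in V$ in turn I would apply \lemref{l:approx with control2}(a) to obtain a $C^0$-small perturbation $\xi'$ with an overtwisted star $(Q',V',\alpha')$ identified with $(Q,V,\alpha)$ such that every $\gamma'_v$ meets the contact region $H(\xi')$. The perturbations are localised near the individual $\gamma_v$, so they may be performed independently. Since the composition is still $C^0$-close to $\xi$, the resulting $\xi'$ remains symplectically fillable, hence tight. Now I would $C^0$-approximate $\xi'$ by a positive contact structure $\xi''$ using \thmref{t:El-Th approx} together with the refined construction of \lemref{l:reminder on holonomy approx} (exactly as in the final remarks of \secref{s:example}): because each $\gamma'_v$ now carries sometimes attractive holonomy on the side from which $\alpha'(Q')$ spirals in, the approximation procedure replaces $\gamma'_v$ by an attracting closed leaf, and a further $C^0$-small perturbation (or a $C^\infty$-small isotopy of $F$ in the contact region) converts this closed leaf into a single elliptic singularity whose sign is opposite to the common sign of the singularities on $\alpha(\partial Q)$; the sign assignment is forced by the orientation conventions underlying \lemref{l:signs in cycles on spheres}, compare \figref{b:near-tentacle}.

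After these modifications the overtwisted star has become an honestly embedded disc $D\subset F$ whose piecewise Legendrian boundary carries only singularities of one sign and whose interior contains exactly one elliptic singularity of the opposite sign. A standard elimination argument (\lemref{l:elim}) in the contact region then produces an embedded disc satisfying the hypotheses of \defref{d:tight confol} but violating its conclusion, i.e.\ an overtwisted disc for $\xi''$. Since $\xi''$ is a contact structure $C^0$-close to $\xi$ it is symplectically fillable, hence tight by \thmref{t:fillable confol are tight}, contradicting the existence of this overtwisted disc. The main obstacle in executing the plan is the second step: one must manage the perturbations near different $\gamma_v$ simultaneously, ensuring that opening one $\gamma_v$ into $H(\xi')$ does not disturb the $\omega$-limit behaviour of the leaves accumulating on the other $\gamma_{v'}$; this is precisely the delicate simultaneous-choice argument already carried out in the proof of \lemref{l:approx with control2}(b), so the approach is to reuse that orchestration rather than redo it from scratch.
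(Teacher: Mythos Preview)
Your outline has the right overall shape---deform $\xi$ by $C^0$-small fillability-preserving moves until the star becomes a disc violating \defref{d:tight confol}---but there are two genuine gaps.

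\textbf{First, you conflate two different dynamical pictures.} The limit sets $\gamma_v$ are subsets of the \emph{surface} $F$, classified by the dynamics of the singular foliation $F(\xi)$: closed leaves, cycles, or quasi-minimal sets of $F(\xi)$. The hypotheses of \lemref{l:approx with control2}, by contrast, refer to the ambient confoliation: part (a) requires $\gamma_v$ to lie in a \emph{closed leaf of $\xi$}, and part (b) requires $\gamma$ to sit in an exceptional minimal set of the \emph{fully foliated part of $\xi$}. A closed leaf of $F(\xi)$ may well lie in a non-compact integral surface $L$ of $\xi$ (so (a) does not apply), and a quasi-minimal set of $F(\xi)$ need not lie in any exceptional minimal set of $\xi$ (so (b) does not apply either). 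Your case split ``cycle with hyperbolics / exceptional minimal set / closed leaf'' therefore does not exhaust the possibilities. The paper handles this by analysing the maximal integral surface $L\supset\gamma_{v_0}$ and its space of ends $L^\infty$: one first pushes all ends into $H(\xi)$ by treating the ambient minimal sets they accumulate on (this is where \propref{p:sacksteder} and \lemref{l:approx with control2} are actually used), then separately isotopes $F$ so that any remaining quasi-minimal sets of $F(\xi)$ pass through the contact region, and only then cuts $L$ down to a compact piece so that \lemref{l:thickening} and \remref{r:force holonomy} apply. Skipping this orchestration leaves the hypotheses of the lemmas you invoke unverified.

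\textbf{Second, you omit the injectivity step.} Even after $V=\emptyset$, the polygon $(Q,\emptyset,\alpha)$ need not be injective: $\alpha$ may identify edges, so $\alpha(Q)$ is not an embedded disc and your sentence ``the overtwisted star has become an honestly embedded disc'' is unjustified. The paper spends a separate step eliminating identified edges via \lemref{l:elim} and, in the degenerate case where both unstable leaves of a pseudovertex end at the same elliptic point, a folding construction; only after this does one obtain an embedded disc contradicting tightness. Your final paragraph needs this argument before the contradiction goes through.
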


\begin{proof}
Let $(X,\omega)$ be a symplectic filling of $\xi$. Assume that $F$ is an embedded surface containing an overtwisted star $(Q,V,\alpha)$. It is sufficient to treat only the case of closed surfaces when the elliptic singularity in the interior of $\alpha(Q)$ is positive. 

In the first part of the proof we show how to reduce the number of virtual vertices. Because overtwisted stars are not required to be injective as Legendrian polygons, we show in a second step how to obtain an embedded disc violating \defref{d:tight confol} starting from an overtwisted star $(Q,\emptyset,\alpha)$. The confoliation is modified several times but all confoliations appearing in the proof will be $C^0$-close to $\xi$. In particular they are symplectically fillable. Therefore the assumption that $(M,\xi)$ admits an overtwisted star leads to a contradiction to \thmref{t:fillable confol are tight}.

Notice that in the presence of an overtwisted star $\xi$ cannot be a foliation everywhere. Therefore $M$ is not a minimal set of the fully foliated part of $\xi$ and $\xi$ is not a foliation without holonomy. 

{\em Step 1:} If $V\neq\emptyset$, then $\xi$ can be approximated by a confoliation which admits an overtwisted star with less virtual vertices than $(Q,V,\alpha)$. We fix $v_0\in V$. If $\gamma_0:=\gamma_{v_0}$ intersects $H(\xi)$, then an application of \lemref{l:create} yields a surface carrying an overtwisted star with less virtual vertices after a $C^0$-small isotopy of $F$. Now assume $\gamma_0\cap H(\xi)=\emptyset$. 

Let $L$ be the maximal connected open immersed hypersurface of $M$ which is tangent to $\xi$ and contains $\gamma_0$. If $L=\emptyset$, then there is a Legendrian segment $\sigma$ satisfying the hypothesis of \lemref{l:approx with control1}. After applying this lemma, $\gamma_v$ intersects the contact region of the modified confoliation and we are done.

Now assume $L\neq\emptyset$ and let $L^\infty$ be the space of ends of $L$. We say that an end $e\in L^\infty$ lies in $H(\xi)$ if for every compact set $K\subset L$ there is a Legendrian curve from $H(\xi)$ to the connected component of $L\setminus K$ corresponding to $e$.  

{\em Step 1a:} If $L^\infty\neq\emptyset$, then we approximate $\xi$ such that all ends of $L$ lies in the contact region of the modified confoliation. 
  
The set of ends in $H(\xi)$ is open in $L^\infty$, therefore its complement $L^\infty_{fol}$ is compact. To each $e\in L^{\infty}_{fol}$ we associate a minimal set $\mathcal{M}(e)\subset\lim_e L$ of the fully foliated part of $\xi$ (this is explained in \cite{cc}, p. 115). Recall that $M$ cannot be a minimal set of the fully foliated part of $\xi$. According to \cite{hh}, p.19, all minimal sets are either closed leaves or exceptional minimal sets. Note that we allow that $L$ is contained in $\mathcal{M}(e)$.

If $\mathcal{M}(e)$ is a closed leaf of $\xi$ whose holonomy along a curve $\gamma$ transverse to $F$ is sometimes attractive, then we can apply \lemref{l:approx with control2} (a) to $\gamma_v$ if there is $v\in V$ with $\gamma_v\subset \mathcal{M}(e)$. If $L$ contains no limit set of $\alpha(Q)$, then the procedure from the proof of \lemref{l:reminder on holonomy approx} can be applied directly to any curve $\gamma\subset \mathcal{M}(e)$ with sometimes attractive holonomy. We can ensure the existence of such a curve by \lemref{l:thickening} and \remref{r:force holonomy}.

If $\mathcal{M}(e)$ is an exceptional minimal set, then according to \propref{p:sacksteder} there is a simple closed curve $\gamma$ in a leaf $L_\gamma\subset \mathcal{M}(e)$ with non-trivial infinitesimal holonomy. Every curve in $L_\gamma$ which is isotopic to $\gamma$ through Legendrian curves has the same property by Lemma 1.3.17 in \cite{confol}. In particular we may assume that $\gamma$ is transverse to $F$. 

Using \lemref{l:approx with control2} (b) we approximate $\xi$ by a confoliation $\xi'$ such that $L_\gamma$ meets $H(\xi')$. 

If $\mathcal{M}(e)$ was an exceptional minimal set, this process might have changed the overtwisted star in the sense that type of the $\omega$-limit sets of virtual vertices may have changed. But recall that by the proof of \lemref{l:approx with control2} we can view $V'$ as a subset of $V$. We use $\gamma'_v$ to denote the $\omega$-limit set of leaves which start at the elliptic singularity in the center of the overtwisted star and accumulated on $\gamma_v, v\in V$ before the modification. 

We iterate the procedure from the very beginning with $v_0\in V'$ and with an integral surface of $\xi'$ containing $\gamma'_0$. Since $\mathcal{E}(\xi)$ is finite and $|\mathcal{E}(\xi')|<|\mathcal{E}(\xi)|$ this phenomenon can occur only finitely many times.

After finitely many steps no exceptional minimal sets will occur in the above procedure. In later applications of the above construction $\gamma_0'=\gamma_0$ and the maximal integral surface of $\xi'$ containing $\gamma_0'$ is contained in the maximal integral surface of $\xi$ containing $\gamma_0$. Because the inclusion induces a continuous mapping between the spaces of ends and by the compactness of $L^\infty_{fol}$ we are done after finitely many steps. We continue to write $F$ for the embedded surface, $\xi$ for the confoliation, and $(Q,V,\alpha)$ for the overtwisted star etc.

{\em Step 1b:} We isotope $F$ such that all quasi-minimal sets of the characteristic foliation on the resulting surface pass through the contact region. As we have already noted in the proof of \lemref{l:approx with control2}, $F(\xi)$ has only finitely many quasi-minimal sets (this number is bounded by the genus of $F$). Let $\gamma_w,w\in V$ be a quasi-minimal set of $F(\xi)$ which is disjoint from $H(\xi)$. 
 
According to Theorem 2.3.3 in \cite{flows} there is an uncountable number of leaves of $F(\xi)$ which are recurrent (in both directions) and dense in $\gamma_w$ while there is only a finite number of pseudovertices of $(Q,V,\alpha)$ and only finite number of virtual vertices. Therefore there is $p_w\in\gamma_w$ which can be connected to $H(\xi)$ by a Legendrian arc $\sigma$ transverse to $F$ such that $\sigma$ does not meet $\alpha(\partial Q)$ and $\sigma$ never intersects closed components of $\Omega_Q$. At this point we use the fact that every end of the union of integral hypersurfaces containing $\gamma_w$ lies in $H(\xi)$. If $\sigma$ intersects $\Omega_Q$ in some other quasi-minimal set $\gamma_{w'}, w'\in V$ before it meets $H(\xi)$, then we replace $\gamma_w$ by $\gamma_{w'}$. Thus we may assume that $\sigma$ meets $F$ in $p_w$ and nowhere else.  

By Lemma 2.8.2 in \cite{confol} there is a confoliation $\xi'$ $C^k$-close to $\xi$ such that $F(\xi')=F(\xi)$, $\sigma$ is tangent to $\xi$ and $\xi'$ and a neighbourhood of $p_w$ in $F$ lies in $\overline{H(\xi')}$. We will denote $\xi'$ again by $\xi$. 

Choose a neighbourhood $U\simeq\sigma\times[-1,1]\times[-1,1]$ of $\sigma$ such that $\sigma=\sigma\times\{(0,0)\}$ and $(\{p_w\}\times[-1,1]\times[-1,1])\subset F$. Moreover, we require that the foliation by the first factor is Legendrian while the foliation corresponding to the second factor is transverse to $\xi$ and $\ring{U}\subset H(\xi)$. Finally we assume that the foliation which corresponds to the second factor is Legendrian when it is restricted to $F$. 

Now we apply an isotopy to $F$ whose effect on the characteristic foliation on $F$ is the same as the effect of the map $G$ appearing in the proof of \lemref{l:reminder on holonomy approx}. We explain this under the following orientation assumptions (the other cases can be treated in the same way):

The orientation of $F(\xi)$ coincides with the second factor of $U\simeq\sigma\times[-1,1]\times[-1,1]$ and the coorientation of $F$ points away from $U$.  In \figref{b:near-tentacle} the left respectively right edge of the rectangle corresponds to $\{(p_w,-1)\}\times[-1,1]$ respectively $\{(p_w,1)\}\times[-1,1]$, the foliation is oriented from left to right, the coorientation of $\xi$ points upwards and the coorientation of $F$ points towards the reader. 

Choose $-1<x<0<y<1$ such that the points $(p_w,-1,x), (p_w,1,y)\in F$ 
\begin{itemize}
\item[(i)] do not lie on a stable or unstable leaf of a hyperbolic singularity and they are not connected by a leaf of $F(\xi)$. 
\item[(ii)] can be connected by a smooth Legendrian arc $\lambda$ in $U$ whose projection to $\sigma\times[-1,1]$ is embedded and $\lambda$ is $C^\infty$ tangent to $F$. Moreover, we assume that the projection of $\lambda$ to $\sigma\times[-1,1]$ is transverse to the first factor.  
\end{itemize}
The curve $\lambda$ and $x,y$ exist because of the orientation assumptions and \lemref{l:neg-curv}. Now fix $x',y'$ close to $x,y$ such that $x<x'<0<y'<y$. 

Using a flow along the first factor of $U$ we can move $\{p_w\}\times[-1,1]$ to a curve which is close to the projection of $\lambda$ to $\sigma\times[-1,1]$. When we apply this flow to $F$, the surface is pulled into $U$ and we obtain a surface $F'$ isotopic to $F$ which coincides with $F$ outside of $\{p_w\}\times(-1,1)\times(x,y)$.

By the assumptions on $\lambda$ we can choose $F'$ such that $F'((xi)$ compresses the transverse segment $\{(p_w,-1)\} \times (x',y)$ onto $\{(p_w,1)\}\times(y',y)$ such that no leaf of intersecting $\{(p_w,1)\}\times(y',y)$ is part of a stable or unstable leaf of $F(\xi)$. Moreover, we may assume that leaves which start at points of $\{(p_w,1)\}\times(y',y)$ meet the segment $\{(p_w,-1)\}\times[x',y]$ before the enter the region where $F'\neq F$ for the first time.           
The new $\omega$-limit set is now a closed leaf of $F'(\xi)$ which passes through $\{(p_w,1)\}\times(y',y)$.

This modification may have created quasi-minimal sets on $F'$ which were not present in $F(\xi)$. But if this happens, then the new quasi-minimal sets intersect the contact region by construction. Thus after finitely many steps (this number is bounded by the genus of $F$) we have isotoped $F$ such that all quasi-minimal sets of the characteristic foliation on the resulting surface pass through the contact region. Now we apply \lemref{l:create}. We obtain a surface $F''$ containing an overtwisted star $(Q'',V'',\alpha'')$ such that there is a natural inclusion $V''\subset V$ and all $\gamma_v, v\in V''$  are cycles of $F''(\xi)$. In the next step we treat the remaining virtual vertices. We will denote $F''$ by $F$, $Q''$ by $Q$, etc.

{\em Step 1c:} Let $\gamma_0$ be the limit set which corresponds to the virtual vertex $v_0\in V$ of an overtwisted star $(Q,V,\alpha)$. We assume that $\gamma_v$ is a cycle for all $v\in V$ and
all ends of the maximal integral surface $L_0$ containing $\gamma_0$ lie in the contact region. 

Choose a submanifold $L_0'\subset L_0$ of dimension $2$ such that $L_0'$ contains all closed components of $\Omega_Q\cap L_0$. Since each end of $L_0$ lies in $H(\xi)$ we can choose $L_0'$ so that each boundary component is connected to $H(\xi)$ by a Legendrian curve which does not intersect the interior of $L_0'$. After a $C^\infty$-small perturbation (we use again Lemma 2.8.1 from \cite{confol}) of $\xi$ we may assume that the boundary of $L_0'$ is contained in the contact region of the resulting confoliation $\xi'$. This perturbation might affect the characteristic foliation on $F$, but since the modification of the confoliation does not affect $\Omega_Q$ and all components of $\Omega_Q$ are cycles of $F(\xi)$ which are also present in $F(\xi')$, there still is an overtwisted star $(Q',V'\alpha')$ on $F$ together with a natural inclusion $V'\hookrightarrow V$. 

Now we can apply \lemref{l:thickening} and \remref{r:force holonomy}. From \lemref{l:approx with control2} a) we obtain a confoliation $\xi''$ which is $C^0$-close to $\xi'$ such that $F(\xi'')$ contains an overtwisted star $(Q'',V'',\alpha'')$ with $V''\subset V'$ and all $\omega$-limit sets $\gamma_w'', w\in V''$ which were contained in $L_0$ now intersect the contact region of $\xi''$. After an application of \lemref{l:create} we can reduce the number of virtual vertices. 

{\em Step 2:} We show that we can assume that the map $\alpha$ associated to the overtwisted star $(Q,\emptyset,\alpha)$ in $F$ is injective.

Assume that the Legendrian polygon $(Q,\emptyset,\alpha)$ is not injective. Then there are two edges $e_1,e_2$ of $Q$ such that $\alpha(e_1)=\alpha(e_2)$. (Recall that by our genericity assumption no two different hyperbolic singularities of $F(\xi)$ are connected by leaves. Therefore configurations like the one shown in \figref{b:identify} cannot appear.) 

Let $y$ be the image of the pseudovertex on $e_1$ by the map $\alpha$. Then $y$ is a negative hyperbolic singularity of $F(\xi)$. The $\omega$-limit sets of the stable leaves of $y$ are negative elliptic singularities $y_1,y_2$ in $\alpha(\partial Q)$ and we may assume that these singularities are contained in $H(\xi)$ (because they are $\omega$-limit sets, they do not lie in the interior of the foliated part of $\xi$). 

We eliminate $y_1$ and $y$ using \lemref{l:elim}. This reduces the number of edges of the polygon which are identified unless $y_1=y_2$. The case when $y_1=y_2$ requires slightly more work:

After perturbing the surface on a neighbourhood of $y_1$ we may assume that the two unstable leaves of $y$ form a smooth closed Legendrian curve $\gamma'$. We eliminate $y_1,y$ such that $\gamma'$ is a closed leaf of the characteristic foliation on the resulting surface. We obtained a Legendrian polygon $(Q',V',\alpha')$ on a surface $F'$ with $Q'\simeq D^2$ and $V'$ consists of all vertices of $Q'$ which were mapped to $y_1$ by $\alpha'$. By construction $\gamma_{v'}=\gamma'$ for all $v'\in V'$. 

Since $y_1\in H(\xi')$ we can approximate $\xi'$ by a confoliation $\xi''$ which coincides with $\xi'$ outside a tubular neighbourhood of $\gamma'$ and is a contact structure near $\gamma'$. This can be done without changing the characteristic foliation on the surface by \lemref{l:approx with control1}.

Next we apply a standard procedure from contact topology called folding to $\gamma'$. This is described in \cite{honda} (on p. 325). We obtain a surface $F''$ which contains an overtwisted star $(Q'',V'',\alpha')$ such that $V'$ consists of two elements with $Q''\simeq Q'$, $V''=V'$ but now elements of $V''$ correspond to different $\omega$-limit sets depending on which side of $\gamma'$ the corresponding leaves of $\alpha(Q)$ accumulated. 

In order to continue we create a pair of negative singularities along the closed leaves in $\overline{\alpha''(Q'')}$. We  eliminate all pseudovertices successively and we obtain a confoliation $\widetilde{\xi}$ on $M$ together with an overtwisted star $(\widetilde{Q},\widetilde{V}=\emptyset,\widetilde{\alpha})$ on a surface $\widetilde{F}$ which has no virtual vertices and is injective as a Legendrian polygon. $\widetilde{\alpha}$ becomes injective after finitely many perturbations of $\widetilde{F}$ as in \figref{b:split}. 

Because $\widetilde{\alpha}(\partial \widetilde{Q})$ passes through the contact region of $\widetilde{\xi'}$ the disc $D=\widetilde{\alpha}(\widetilde{Q})$ violates \defref{d:tight confol}. This concludes the proof of the theorem. 
\end{proof} 

This proof can be modified to yield a proof of \thmref{t:fillable confol are tight} using the well known fact that symplectically fillable contact structures are tight and without referring to results of R.~Hind in \cite{hind} which are used in \cite{confol}. Let us outline the argument.

Given a disc $D$ as in \defref{d:tight confol} assume first that the holonomy of $\partial D$ in $D$ is non-trivial. We try follow the construction above to find a confoliation $\xi'$ such that $\partial D$ remains Legendrian and $\xi'$ is $C^0$-close to $\xi$. This attempt must fail since otherwise we could continue to modify $\xi'$ into a symplectically fillable contact structure such that $D$ becomes an overtwisted disc. This contradicts the fact that symplectically fillable contact structures are tight. 

The only point at which the above construction can break down is the application of \remref{r:force holonomy} in the case when $\partial D$ bounds a disc $D'$ in the maximal surface which contains $\partial D$ and is tangent to the confoliation. In order to show that $e(\xi)[D\cup D']=0$ one chooses an embedded sphere $S$ close (and homologous) to $D\cup D'$. Then $e(\xi)[S]=0$ follows from the tightness contact structures which are $C^0$-close to the original one.

It remains to treat the case when the holonomy of $\partial D$ in $D$ is trivial. Then one has to show that either $\partial D$ is a vanishing cycle (cf. Chapter 9 in \cite{cc2}) or one can replace $D$ by a smaller disc which has Legendrian boundary along which the holonomy of the characteristic foliation on the disc is not trivial. If $\partial D$ is a vanishing cycle, then one uses results due to S.~Novikov \cite{No} to establish the existence of a solid torus whose boundary $T$ is a leaf of the confoliation. This contradicts $\int_T\omega>0$ because this inequality means that $T$ represents a non-trivial homology class. 




\end{document}